\documentclass[pdftex,a4paper,12pt]{scrartcl}

\usepackage[T1]{fontenc}
\usepackage{lmodern} 

\usepackage[pdftex]{graphicx}
\usepackage{amsmath,amssymb,amsthm}
\usepackage{mathtools}
\usepackage{exscale}
\usepackage{hyperref}
\usepackage[noabbrev,capitalize]{cleveref}
\usepackage{enumitem}
\usepackage{comment}
\usepackage{color} 

\usepackage{tikz}
\usetikzlibrary{cd,calc,decorations.markings}

\tikzset{
  quadratic/.style={
    to path={
      (\tikztostart) .. controls
      ($#1!1/3!(\tikztostart)$) and ($#1!1/3!(\tikztotarget)$)
      .. (\tikztotarget)
    }
  },
  quadinit/.style={
    to path={
      (\tikztostart)
      to[quadratic={($#1!1/2!(\tikztostart)$)}]
      ($1/4*(\tikztostart)+1/2*#1+1/4*(\tikztotarget)$)
    }
  },
  quadtail/.style={
    to path={
      ($1/4*(\tikztostart)+1/2*#1+1/4*(\tikztotarget)$)
      to[quadratic={($#1!1/2!(\tikztotarget)$)}]
      ($(\tikztotarget)$)
    }
  }
}

\tikzset{
  ->-/.style={
    decoration={
      markings,
      mark=at position .5 with {\arrow{>}}
    },
    postaction={decorate}
  },
  -stealth-/.style={
    decoration={
      markings,
      mark=at position .5 with {\arrow{stealth}}
    },
    postaction={decorate}
  }
}

\tikzset{
  x=.9cm,
  y=.9cm,
  xlen/.style={
    x={(0pt,#1)}
  },
  ylen/.style={
    y={(#1,0pt)}
  }
}

\makeatletter
\DeclareFontFamily{OT1}{lzc}{}
\DeclareFontShape{OT1}{lzc}{m}{it}{<->[1.15]pzcmi}{}
\DeclareMathAlphabet{\mathlzc}{OT1}{lzc}{m}{it}

\providecommand{\overbar}[2][1.5]{\mkern #1mu\overline{\mkern-#1mu#2\mkern-#1mu}\mkern #1mu}

\newcommand{\blank}{\mkern1mu\mathchar"7B\mkern1mu}

\newcommand\opposite{\mathrm{op}}

\newcommand\Cob{\mathlzc{Cob}_2^\ell}
\newcommand\Hsmooth[2][]{%
  \ifx\relax#1\relax%
  \expandafter\@firstoftwo%
  \else%
  \expandafter\@secondoftwo%
  \fi%
  {#2_{\mathsf{H}}}%
  {#2_{\mathsf{H},#1}}%
}
\newcommand\Vsmooth[1]{#1_{\mathsf{V}}}
\newcommand\Dresol[1]{#1_{\mathord{\times}}}
\newcommand\Presol[1]{#1_{\mathord{+}}}
\newcommand\Nresol[1]{#1_{\mathord{-}}}
\newcommand\PNresol[1]{#1_{\mathord{\pm}}}

\newcommand\mnphantom@impl[2]{%
  \sbox0{\ensuremath{#1#2}}\kern-.6\wd0}

\newcommand\mnphantom[1]{\mathpalette\mnphantom@impl{#1}}

\DeclarePairedDelimiterX\dblBrac[1]{\lbrack}{\rbrack}{%
  \expandafter\ifx\delimsize\middle%
  \expandafter\@firstoftwo%
  \else%
  \expandafter\@secondoftwo%
  \fi{\mnphantom{\left\lbrack\vphantom{#1}\right.}}{\mnphantom{\delimsize\lbrack}}%
  \delimsize\lbrack\mathopen{}#1\mathclose{}\delimsize\rbrack%
  \expandafter\ifx\delimsize\middle%
  \expandafter\@firstoftwo%
  \else%
  \expandafter\@secondoftwo%
  \fi{\mnphantom{\left.\vphantom{#1}\right\rbrack}}{\mnphantom{\delimsize\rbrack}}
}

\def\@replace#1#2#3#4\@mid@repl#5\@end@repl{%
  \romannumeral-`0%
  \ifx\relax#4\relax%
  \expandafter\@firstoftwo%
  \else%
  \expandafter\@secondoftwo%
  \fi%
  {%
    \ifx#1#3%
    \expandafter\@firstoftwo%
    \else%
    \expandafter\@secondoftwo%
    \fi%
    {#5#2}{#5#3}%
  }{%
    \ifx#1#3%
    \expandafter\@firstoftwo%
    \else%
    \expandafter\@secondoftwo%
    \fi%
    {\@replace#1#2#4\@mid@repl#5#2\@end@repl}%
    {\@replace#1#2#4\@mid@repl#5#3\@end@repl}%
  }%
}

\newcommand\labelseq[2]{%
  \expandafter\expandafter\expandafter\ifcase\@replace\\\or#1\@mid@repl#2\relax\or\@end@repl\else\@ctrerr\fi%
}

\makeatother


\makeatletter
\newcommand{\pgfsize}[2]{
 \pgfextractx{\@tempdima}{\pgfpointdiff{\pgfpointanchor{current bounding box}{south west}}
 {\pgfpointanchor{current bounding box}{north east}}}
 \global#1=\@tempdima
 \pgfextracty{\@tempdima}{\pgfpointdiff{\pgfpointanchor{current bounding box}{south west}}
 {\pgfpointanchor{current bounding box}{north east}}}
 \global#2=\@tempdima
}

\newcommand\saveTikzBox[3][]{%
  \expandafter\newlength\csname tikz@box@#2@wid\endcsname%
  \expandafter\newlength\csname tikz@box@#2@hei\endcsname%
  \expandafter\newsavebox\csname tikz@box@#2\endcsname%
  \expandafter\savebox\csname tikz@box@#2\endcsname{%
    \tikz[#1]{%
      #3%
      \pgfextractx{\@tempdima}{\pgfpointdiff{\pgfpointanchor{current bounding box}{south west}}{\pgfpointanchor{current bounding box}{north east}}}
      \expandafter\global\csname tikz@box@#2@wid\endcsname=\@tempdima
      \pgfextracty{\@tempdima}{\pgfpointdiff{\pgfpointanchor{current bounding box}{south west}}{\pgfpointanchor{current bounding box}{north east}}}
      \expandafter\global\csname tikz@box@#2@hei\endcsname=\@tempdima
    }%
  }%
}

\newcommand\useTikzBox[1]{\expandafter\usebox\csname tikz@box@#1\endcsname}

\newcommand\defTikzBox[3][]{%
  \saveTikzBox[#1]{#2}{#3}%
  \expandafter\newcommand\csname#2\endcsname{\useTikzBox{#2}}%
}
\makeatother

\defTikzBox[baseline=-.5ex]{diagCrossPosUp}{%
  \draw[red,very thick,-stealth] (.5,-.6) -- (-.5,.6);
  \fill[white] (0,0) circle(.15);
  \draw[red,very thick,-stealth] (-.5,-.6) -- (.5,.6);
}

\newcommand\diagCrossPosUpWith[2][]{%
  \ooalign{\diagCrossPosUp\crcr\hss\ensuremath{\mathllap{#1}\hspace{1em}\mathrlap{#2}}\hss}
}

\defTikzBox[baseline=-.5ex]{diagCrossNegUp}{%
  \draw[red,very thick,-stealth] (-.5,-.6) -- (.5,.6);
  \fill[white] (0,0) circle(.15);
  \draw[red,very thick,-stealth] (.5,-.6) -- (-.5,.6);
}

\newcommand\diagCrossNegUpWith[2][]{%
  \ooalign{\diagCrossNegUp\crcr\hss\ensuremath{\mathllap{#1}\hspace{1em}\mathrlap{#2}}\hss}
}

\defTikzBox[baseline=-.5ex]{diagSmoothV}{%
  \draw[red,very thick] (-.5,-.6) .. controls(0,0) .. (-.5,.6);
  \draw[red,very thick] (.5,-.6) .. controls(0,0) .. (.5,.6);
}

\defTikzBox[baseline=-.5ex]{diagSmoothUp}{%
  \draw[red,very thick,-stealth] (-.5,-.6) .. controls(0,0) .. (-.5,.6);
  \draw[red,very thick,-stealth] (.5,-.6) .. controls(0,0) .. (.5,.6);
}

\defTikzBox[baseline=-.5ex]{diagSmoothUptwL}{%
  \draw[blue,very thick,-stealth] (-.5,-.6) .. controls(0,0) .. (-.5,.6);
  \draw[red,very thick,-stealth] (.5,-.6) .. controls(0,0) .. (.5,.6);
}

\defTikzBox[baseline=-.5ex]{diagSmoothUptwLRA}{%
  \useasboundingbox (-.5,-.8) rectangle (.9,.8);
  \draw[blue,very thick,-stealth] (-.5,-.6) .. controls(0,0) .. (-.5,.6);
  \draw[red,very thick,-stealth] (.5,-.6) .. controls(0,0) .. (.5,.6);
  \draw[red,very thick,dotted] (.5,.6) ..controls+(.5,.6)and+(.5,-.6) .. (.5,-.6);
}

\defTikzBox[baseline=-.5ex]{diagSmoothUptwR}{%
  \draw[red,very thick,-stealth] (-.5,-.6) .. controls(0,0) .. (-.5,.6);
  \draw[blue,very thick,-stealth] (.5,-.6) .. controls(0,0) .. (.5,.6);
}

\defTikzBox[baseline=-.5ex]{diagSmoothUptwRLA}{%
  \useasboundingbox (-.9,-.8) rectangle (.5,.8);
  \draw[red,very thick,-stealth] (-.5,-.6) .. controls(0,0) .. (-.5,.6);
  \draw[red,very thick,dotted] (-.5,.6) ..controls+(-.5,.6)and+(-.5,-.6) .. (-.5,-.6);
  \draw[blue,very thick,-stealth] (.5,-.6) .. controls(0,0) .. (.5,.6);
}

\defTikzBox[baseline=-.5ex]{diagSmoothUptwB}{%
  \draw[blue,very thick,-stealth] (-.5,-.6) .. controls(0,0) .. (-.5,.6);
  \draw[blue,very thick,-stealth] (.5,-.6) .. controls(0,0) .. (.5,.6);
}

\defTikzBox[baseline=-.5ex]{diagSmoothDown}{%
  \draw[red,very thick,stealth-] (-.5,-.6) .. controls(0,0) .. (-.5,.6);
  \draw[red,very thick,stealth-] (.5,-.6) .. controls(0,0) .. (.5,.6);
}

\defTikzBox[baseline=(current bounding box.center)]{diagSmoothUptwBDA}{%
  \useasboundingbox (-.6,-1.2) rectangle (.6,.5);
  \draw[blue,very thick,-stealth] (-.5,-.6) .. controls(0,0) .. (-.5,.6);
  \draw[blue,very thick,-stealth] (.5,-.6) .. controls(0,0) .. (.5,.6);
  \draw[blue,very thick,dotted] (.4,-.5) .. controls+(.6,-.75)and+(-.6,-.75) .. (-.4,-.5);
}

\defTikzBox[baseline=(current bounding box.center)]{diagSmoothUptwBUA}{%
  \useasboundingbox (-.6,-.7) rectangle (.6,1);
  \draw[blue,very thick,-stealth] (-.5,-.6) .. controls(0,0) .. (-.5,.6);
  \draw[blue,very thick,-stealth] (.5,-.6) .. controls(0,0) .. (.5,.6);
  \draw[blue,very thick,dotted] (.4,.5) .. controls+(.6,.75)and+(-.6,.75) .. (-.4,.5);
}

\defTikzBox[baseline=-.5ex]{diagSmoothH}{%
  \draw[red,very thick] (-.5,-.6) .. controls(0,0) .. (.5,-.6);
  \draw[red,very thick] (-.5,.6) .. controls(0,0) .. (.5,.6);
}

\defTikzBox[baseline=-.5ex]{diagSmoothW}{%
  \draw[gray,line width=3,-stealth] (0,-.3) -- (0,.3);
  \draw[red,very thick,-stealth] (-.5,-.6) to[out=60,in=180] (0,-.3);
  \draw[red,very thick,-stealth] (.5,-.6) to[out=120,in=0] (0,-.3);
  \draw[red,very thick,-stealth] (0,.3) to[out=180,in=-60] (-.5,.6);
  \draw[red,very thick,-stealth] (0,.3) to[out=0,in=-120] (.5,.6);
}

\defTikzBox[baseline=-.5ex]{diagSmoothWtwU}{%
  \draw[gray,line width=3,-stealth] (0,-.3) -- (0,.3);
  \draw[red,very thick,-stealth] (-.5,-.6) to[out=60,in=180] (0,-.3);
  \draw[red,very thick,-stealth] (.5,-.6) to[out=120,in=0] (0,-.3);
  \draw[blue,very thick,-stealth] (0,.3) to[out=180,in=-60] (-.5,.6);
  \draw[blue,very thick,-stealth] (0,.3) to[out=0,in=-120] (.5,.6);
}


\defTikzBox[baseline=(current bounding box.center)]{diagSmoothWtwUDA}{%
  \useasboundingbox (-.6,-1.2) rectangle (.6,.5);
  \draw[gray,line width=3,-stealth] (0,-.3) -- (0,.3);
  \draw[red,very thick,-stealth] (-.5,-.6) to[out=60,in=180] (0,-.3);
  \draw[red,very thick,-stealth] (.5,-.6) to[out=120,in=0] (0,-.3);
  \draw[red,very thick,dotted] (.4,-.5) .. controls+(.6,-.75)and+(-.6,-.75) .. (-.4,-.5);
  \draw[blue,very thick,-stealth] (0,.3) to[out=180,in=-60] (-.5,.6);
  \draw[blue,very thick,-stealth] (0,.3) to[out=0,in=-120] (.5,.6);
}

\defTikzBox[baseline=-.5ex]{diagSmoothWtwD}{%
  \draw[gray,line width=3,-stealth] (0,-.3) -- (0,.3);
  \draw[blue,very thick,-stealth] (-.5,-.6) to[out=60,in=180] (0,-.3);
  \draw[blue,very thick,-stealth] (.5,-.6) to[out=120,in=0] (0,-.3);
  \draw[red,very thick,-stealth] (0,.3) to[out=180,in=-60] (-.5,.6);
  \draw[red,very thick,-stealth] (0,.3) to[out=0,in=-120] (.5,.6);
}

\defTikzBox[baseline=(current bounding box.center)]{diagSmoothWtwDUA}{%
  \useasboundingbox (-.6,-.7) rectangle (.6,1);
  \draw[gray,line width=3,-stealth] (0,-.3) -- (0,.3);
  \draw[blue,very thick,-stealth] (-.5,-.6) to[out=60,in=180] (0,-.3);
  \draw[blue,very thick,-stealth] (.5,-.6) to[out=120,in=0] (0,-.3);
  \draw[red,very thick,-stealth] (0,.3) to[out=180,in=-60] (-.5,.6);
  \draw[red,very thick,-stealth] (0,.3) to[out=0,in=-120] (.5,.6);
  \draw[red,very thick,dotted] (.4,.5) .. controls+(.6,.75)and+(-.6,.75) .. (-.4,.5);
}

\defTikzBox[baseline=-.5ex]{diagSmoothWtwB}{%
  \draw[gray,line width=3,-stealth] (0,-.3) -- (0,.3);
  \draw[blue,very thick,-stealth] (-.5,-.6) to[out=60,in=180] (0,-.3);
  \draw[blue,very thick,-stealth] (.5,-.6) to[out=120,in=0] (0,-.3);
  \draw[blue,very thick,-stealth] (0,.3) to[out=180,in=-60] (-.5,.6);
  \draw[blue,very thick,-stealth] (0,.3) to[out=0,in=-120] (.5,.6);
}

\defTikzBox[baseline=-.5ex]{diagSmoothWtwBLA}{%
  \useasboundingbox (-.9,-.8) rectangle (.5,.8);
  \draw[gray,line width=3,-stealth] (0,-.3) -- (0,.3);
  \draw[blue,very thick,-stealth] (-.5,-.6) to[out=60,in=180] (0,-.3);
  \draw[blue,very thick,-stealth] (.5,-.6) to[out=120,in=0] (0,-.3);
  \draw[blue,very thick,-stealth] (0,.3) to[out=180,in=-60] (-.5,.6);
  \draw[blue,very thick,-stealth] (0,.3) to[out=0,in=-120] (.5,.6);
  \draw[blue,very thick,dotted] (-.5,.6) ..controls+(-.5,.6)and+(-.5,-.6) .. (-.5,-.6);
}

\defTikzBox[baseline=-.5ex]{diagSmoothWtwBRA}{%
  \useasboundingbox (-.5,-.8) rectangle (.9,.8);
  \draw[gray,line width=3,-stealth] (0,-.3) -- (0,.3);
  \draw[blue,very thick,-stealth] (-.5,-.6) to[out=60,in=180] (0,-.3);
  \draw[blue,very thick,-stealth] (.5,-.6) to[out=120,in=0] (0,-.3);
  \draw[blue,very thick,-stealth] (0,.3) to[out=180,in=-60] (-.5,.6);
  \draw[blue,very thick,-stealth] (0,.3) to[out=0,in=-120] (.5,.6);
  \draw[blue,very thick,dotted] (.5,.6) ..controls+(.5,.6)and+(.5,-.6) .. (.5,-.6);
}

\defTikzBox[baseline=-.5ex]{diagSingUp}{%
  \draw[red,very thick,-stealth] (-.5,-.6) -- (.5,.6);
  \draw[red,very thick,-stealth] (.5,-.6) -- (-.5,.6);
  \fill[blue] (0,0) circle (.1);
}


\defTikzBox[baseline=-.5ex,scale=.75]{diagFiSing}{%
  \draw[red,very thick,-stealth] (-.5,-1) -- (0,0) .. controls+(.5,1)and+(0,.5) .. (0.7,0) .. controls+(0,-.5)and+(.5,-1) .. (0,0) -- (-.5,1);
  \fill[blue] (0,0) circle(.1);
}




\defTikzBox[baseline=-.5ex,scale=.75]{diagFiH}{%
  \draw[red,very thick] (.7,0) .. controls+(0,-.5)and+(.4,-.8) .. (.15,-.3) .. controls(.05,-.1) and (-.05,-.1) .. (-.15,-.3) -- (-.5,-1);
  \draw[red,very thick] (.7,0) .. controls+(0,.5)and+(.4,.8) .. (.15,.3) .. controls(.05,.1) and (-.05,.1) .. (-.15,.3) -- (-.5,1);
}

\defTikzBox[baseline=-.5ex,scale=.75]{diagFiV}{%
  \draw[red,very thick] (.7,0) .. controls+(0,-.5)and+(.4,-.8) .. (.15,-.3) .. controls(.05,-.1) and (.05,.1) .. (.15,.3) .. controls+(.4,.8)and+(0,.5) .. (.7,0);
  \draw[red,very thick] (-.5,1) -- (-.15,.3) .. controls(-.05,.1)and(-.05,-.1) .. (-.15,-.3) -- (-.5,-1);
}


\defTikzBox[baseline=-.5ex,scale=.75]{diagFiNil}{%
  \draw[red,very thick] (-.5,1) -- (-.15,.3) .. controls(-.05,.1)and(-.05,-.1) .. (-.15,-.3) -- (-.5,-1);
}


\defTikzBox[baseline=(current bounding box.center)]{diagTrefoilCruxOl}{%
  \draw[blue,very thick] (-.325,.3125) arc (210:-30:.375);
  \draw[red,very thick] (-.433,.125) arc(90:390:.375);
  \draw[blue,very thick] (.433,.125) arc(90:-210:.375);
  \draw[red,very thick] (-.433,.125) .. controls (-.361,.125) and (-.315,.141) .. (-.297,.172);
  \draw[blue,very thick] (-.297,.172) .. controls (-.279,.203) and (-.289,.25) .. (-.325,.3125);
  \draw[red,very thick] (-.108,-.0625) .. controls (-.144,0) and (-.153,.047) .. (-.135,.078);
  \draw[blue,very thick] (-.135,.078) .. controls (-.117,.109) and (-.072,.125) .. (0,.125);
  \draw[blue,very thick] (.433,.125) .. controls (.288,.125)and(.252,.1875) .. (.325,.3125);
  \draw[blue,very thick] (.108,-.0625) .. controls (.18,.0625)and(.144,.125) .. (0,.125);
}

\defTikzBox[baseline=(current bounding box.center)]{diagTrefoilCruxlO}{%
  \draw[blue,very thick] (-.325,.3125) arc (210:-90:.375);
  \draw[red,very thick] (-.433,.125) arc(90:330:.375);
  \draw[red,very thick] (.108,-.4375) arc(-150:150:.375);
  \draw[red,very thick] (-.433,.125) .. controls (-.361,.125) and (-.315,.141) .. (-.297,.172);
  \draw[blue,very thick] (-.297,.172) .. controls (-.279,.203) and (-.289,.25) .. (-.325,.3125);
  \draw[red,very thick] (-.108,-.0625) .. controls (-.144,0) and (-.153,.047) .. (-.135,.078);
  \draw[blue,very thick] (-.135,.078) .. controls (-.117,.109) and (-.072,.125) .. (0,.125);
  \draw[red,very thick] (-.108,-.0625) .. controls (-.036,-.1875)and(.036,-.1875) .. (.108,-.0625);
  \draw[red,very thick] (-.108,-.4375) .. controls (-.036,-.3125)and(.036,-.3125) .. (.108,-.4375);
}

\defTikzBox[baseline=(current bounding box.center)]{diagTrefoilCruxll}{%
  \draw[blue,very thick] (-.325,.3125) arc (210:-90:.375);
  \draw[red,very thick] (-.433,.125) arc(90:390:.375);
  \draw[red,very thick] (.433,-.25) circle (.375);
  \draw[red,very thick] (-.433,.125) .. controls (-.361,.125) and (-.315,.141) .. (-.297,.172);
  \draw[blue,very thick] (-.297,.172) .. controls (-.279,.203) and (-.289,.25) .. (-.325,.3125);
  \draw[red,very thick] (-.108,-.0625) .. controls (-.144,0) and (-.153,.047) .. (-.135,.078);
  \draw[blue,very thick] (-.135,.078) .. controls (-.117,.109) and (-.072,.125) .. (0,.125);
}

\defTikzBox[baseline=-.5ex]{diagRvSingP}{%
  \draw[red,very thick,-stealth] (-.5,-.7) .. controls (.5,0) .. (-.5,.7);
  \fill[white] (0,.35) circle(.15);
  \draw[red,very thick,-stealth] (.5,-.7) .. controls (-.5,0) .. (.5,.7);
  \fill[blue] (0,-.35) circle(.1);
}

\defTikzBox[baseline=-.5ex]{diagRvSingN}{%
  \draw[red,very thick,-stealth] (.5,-.7) .. controls (-.5,0) .. (.5,.7);
  \fill[white] (0,.35) circle(.15);
  \draw[red,very thick,-stealth] (-.5,-.7) .. controls (.5,0) .. (-.5,.7);
  \fill[blue] (0,-.35) circle(.1);
}

\defTikzBox[baseline=-.5ex]{diagCruxCompCrx}{%
  \useasboundingbox (-.5,-1) rectangle (.5,1);
  \draw[red,very thick] (.1,0) .. controls+(0,-.125)and+(-.2,.25) .. (.4,-.5);
  \draw[red,very thick,dotted] (.4,-.5) .. controls+(.6,-.75)and+(-.6,-.75) .. (-.4,-.5);
  \draw[red,very thick] (-.4,-.5) .. controls+(.2,.25)and+(0,-.125) .. (-.1,0);
  \draw[blue,very thick] (.1,0) .. controls+(0,.125)and+(-.2,-.25) .. (.4,.5);
  \draw[blue,very thick,dotted] (.4,.5) .. controls+(.6,.75)and+(-.6,.75) .. (-.4,.5);
  \draw[blue,very thick] (-.4,.5) .. controls+(.2,-.25)and+(0,.125) .. (-.1,0);
}

\defTikzBox[baseline=-.5ex]{diagCruxCompH}{%
  \useasboundingbox (-.5,-1) rectangle (.5,1);
  \draw[red,very thick,dotted] (.4,-.5) .. controls+(.6,-.75)and+(-.6,-.75) .. (-.4,-.5);
  \draw[red,very thick] (.4,-.5) .. controls(0,0) .. (-.4,-.5);
  \draw[red,very thick,dotted] (.4,.5) .. controls+(.6,.75)and+(-.6,.75) .. (-.4,.5);
  \draw[red,very thick] (.4,.5) .. controls(0,0) .. (-.4,.5);
}

\defTikzBox[baseline=-.5ex]{diagCruxCompV}{%
  \useasboundingbox (-.5,-1) rectangle (.5,1);
  \draw[red,very thick] (.4,-.5) .. controls(0,0) .. (.4,.5);
  \draw[red,very thick] (-.4,-.5) .. controls(0,0) .. (-.4,.5);
  \draw[red,very thick,dotted] (.4,-.5) .. controls+(.6,-.75)and+(-.6,-.75) .. (-.4,-.5);
  \draw[red,very thick,dotted] (.4,.5) .. controls+(.6,.75)and+(-.6,.75) .. (-.4,.5);
}

\defTikzBox[baseline=-.5ex]{diagNonCruxH}{%
  \useasboundingbox (-.8,-.6) rectangle (.8,.6);
  \draw[red,very thick] (.4,-.5) .. controls(0,0) .. (-.4,-.5);
  \draw[red,very thick] (.4,.5) .. controls(0,0) .. (-.4,.5);
  \draw[red,very thick,dotted] (.4,-.5) .. controls+(.6,-.75)and+(.6,.75) .. (.4,.5);
  \draw[red,very thick,dotted] (-.4,-.5) .. controls+(-.6,-.75)and+(-.6,.75) .. (-.4,.5);
}

\defTikzBox[baseline=-.5ex]{diagNonCruxV}{%
  \useasboundingbox (-.8,-.6) rectangle (.8,.6);
  \draw[red,very thick] (.4,-.5) .. controls(0,0) .. (.4,.5);
  \draw[red,very thick] (-.4,-.5) .. controls(0,0) .. (-.4,.5);
  \draw[red,very thick,dotted] (.4,-.5) .. controls+(.6,-.75)and+(.6,.75) .. (.4,.5);
  \draw[red,very thick,dotted] (-.4,-.5) .. controls+(-.6,-.75)and+(-.6,.75) .. (-.4,.5);
}

\defTikzBox[baseline=-.5ex,xlen=.4pt,ylen=-.5pt]{BordDeltaP}{%
  \draw[red,very thick] (47.54,16.43) -- (33.46,16.43);
  \draw[red,very thick,dotted] (33.46,16.43) -- (-32.46,16.43);
  \draw[red,very thick] (33.46,24.57) -- (-46.54,24.57);
  \draw[black] (18.66,-17.66) .. controls (18.66,9.01)and(-17.66,5.324) .. (-17.66,-21.34);
  \draw[black] (47.54,16.43) -- (47.54,-23.57);
  \draw[black] (33.46,24.57) -- (33.46,-15.43);
  \draw[black,thick,dotted] (-32.46,16.43) -- (-32.46,-16.1);
  \draw[black] (-32.46,-16.1) -- (-32.46,-23.57);
  \draw[black] (-46.54,24.57) -- (-46.54,-15.43);
  \draw[red,very thick] (47.54,-23.57) to[quadratic={(27.54,-23.57)}] (20.5,-19.5) to[quadratic={(13.46,-15.43)}] (33.46,-15.43);
  \draw[red,very thick]  (-32.46,-23.57) to[quadratic={(-12.46,-23.57)}] (-19.5,-19.5) to[quadratic={(-26.54,-15.43)}] (-46.54,-15.43);
}

\defTikzBox[baseline=-.5ex,xlen=.4pt,ylen=-.5pt]{BordDeltaN}{%
  \draw[red,very thick]  (47.54,16.43) to[quadratic={(39.43,16.43)}] (33.46,17.1);
  \draw[red,very thick,dotted]  (33.46,17.1) to[quadratic={(24.69,18.08)}] (20.5,20.5) to[quadratic={(18.67,21.56)}] (18.67,22.34);
  \draw[red,very thick] (18.67,22.34) to[quadratic={(18.67,24.57)}] (33.46,24.57);
  \draw[red,very thick,dotted]  (-32.46,16.43) to[quadratic={(-17.67,16.43)}] (-17.67,18.66);
  \draw[red,very thick] (-17.67,18.66) to[quadratic={(-17.67,19.44)}] (-19.5,20.5) to[quadratic={(-26.54,24.57)}] (-46.54,24.57);
  \draw[black]  (18.66,22.34) .. controls (18.66,-4.324)and(-17.66,-8.01) .. (-17.66,18.66);
  \draw[black] (47.54,16.43) -- (47.54,-23.57);
  \draw[black] (33.46,24.57) -- (33.46,-15.43);
  \draw[black,thick,dotted] (-32.46,16.43) -- (-32.46,-15.43);
  \draw[black] (-32.46,-15.43) -- (-32.46,-23.57);
  \draw[black] (-46.54,24.57) -- (-46.54,-15.43);
  \draw[red,very thick] (47.54,-23.57) -- (-32.46,-23.57);
  \draw[red,very thick] (33.46,-15.43) -- (-46.54,-15.43);
}

\defTikzBox[baseline=-.5ex,xlen=.4pt,ylen=-.4pt]{BordPhiFst}{%
  \draw[red,very thick] (47.54,36.43) -- (33.46,36.43);
  \draw[red,very thick,dotted]  (33.46,36.43) -- (-32.46,36.43);
  \draw[red,very thick] (33.46,44.57) -- (-46.54,44.57);
  \draw[black] (18.66,2.343) .. controls (18.66,29.01)and(-17.66,25.32) .. (-17.66,-1.343) .. controls (-17.66,-28.01)and(18.66,-24.32) .. (18.66,2.343);
  \draw[black] (47.54,36.43) -- (47.54,-43.57);
  \draw[black] (33.46,44.57) -- (33.46,-35.43);
  \draw[black,thick,dotted] (-32.46,36.43) -- (-32.46,-35.43);
  \draw[black] (-32.46,-35.43) -- (-32.46,-43.57);
  \draw[black] (-46.54,44.57) -- (-46.54,-35.43);
  \draw[red,very thick] (47.54,-43.57) -- (-32.46,-43.57);
  \draw[red,very thick] (33.46,-35.43) -- (-46.54,-35.43);
}

\defTikzBox[baseline=-.5ex,xlen=.4pt,ylen=-.4pt]{BordPhiSnd}{%
  \draw[red,very thick] (47.54,36.43) -- (33.46,36.43);
  \draw[red,very thick,dotted]  (33.46,36.43) -- (-32.46,36.43);
  \draw[red,very thick] (33.46,44.57) -- (-46.54,44.57);
  \draw[black] (23.92,32.57) .. controls+(0,-10)and+(0,15) .. (50.92,4.57) .. controls+(0,-15)and+(0,10) .. (23.92,-23.43);
  \draw[black] (23.92,10.57) .. controls+(0,5)and+(0,7) .. (37.92,4.57) .. controls+(0,-7)and+(0,-5) .. (23.92,-1.43);
  \draw[black] (47.54,36.43) -- (47.54,13);
  \draw[black,thick,dotted] (47.54,13) -- (47.54,-3.7);
  \draw[black] (47.54,-3.7) -- (47.54,-43.57);
  \draw[black] (33.46,44.57) -- (33.46,22.9);
  \draw[black,thick,dotted] (33.46,22.9) -- (33.46,11.1);
  \draw[black] (33.46,11.1) -- (33.46,-1.8);
  \draw[black,thick,dotted] (33.46,-1.8) -- (33.46,-13.7);
  \draw[black] (33.46,-13.7) -- (33.46,-35.43);
  \draw[black,thick,dotted] (-32.46,36.43) -- (-32.46,-35.43);
  \draw[black] (-32.46,-35.43) -- (-32.46,-43.57);
  \draw[black] (-46.54,44.57) -- (-46.54,-35.43);
  \draw[red,very thick] (47.54,-43.57) -- (-32.46,-43.57);
  \draw[red,very thick] (33.46,-35.43) -- (-46.54,-35.43);
}

\defTikzBox[baseline=-.5ex,xlen=.4pt,ylen=-.4pt]{BordPhiSndVar}{%
  \draw[red,very thick] (47.54,36.43) -- (33.46,36.43);
  \draw[red,very thick,dotted]  (33.46,36.43) -- (-32.46,36.43);
  \draw[red,very thick] (33.46,44.57) -- (-46.54,44.57);
  \draw[black] (38,24.43) .. controls+(0,-10)and+(0,15) .. (65,-3.57) .. controls+(0,-15)and+(0,10) .. (38,-31.57);
  \draw[black] (38,2.43) .. controls+(0,5)and+(0,7) .. (52,-3.57) .. controls+(0,-7)and+(0,-5) .. (38,-9.57);
  \draw[black] (47.54,36.43) -- (47.54,14.7);
  \draw[black,thick,dotted] (47.54,14.7) -- (47.54,2.8);
  \draw[black] (47.54,2.8) -- (47.54,-10);
  \draw[black,thick,dotted] (47.54,-10) -- (47.54,-21.8);
  \draw[black] (47.54,-21.8) -- (47.54,-43.57);
  \draw[black] (33.46,44.57) -- (33.46,-35.43);
  \draw[black,thick,dotted] (-32.46,36.43) -- (-32.46,-35.43);
  \draw[black] (-32.46,-35.43) -- (-32.46,-43.57);
  \draw[black] (-46.54,44.57) -- (-46.54,-35.43);
  \draw[red,very thick] (47.54,-43.57) -- (-32.46,-43.57);
  \draw[red,very thick] (33.46,-35.43) -- (-46.54,-35.43);
}

\defTikzBox[xlen=.4pt,ylen=-.5pt]{BordFourTuL}{%
  \draw[red,very thick] (62.33,-61.34) to[quadratic={(62.33,-60.56)}] (60.5,-59.5) to[quadratic={(53.46,-55.43)}] (33.46,-55.43) to[quadratic={(18.67,-55.43)}] (18.67,-57.66);
  \draw[red,very thick] (18.67,-57.66) to[quadratic={(18.67,-58.44)}] (20.5,-59.5) to[quadratic={(27.54,-63.57)}] (47.54,-63.57) to[quadratic={(62.33,-63.57)}] (62.33,-61.34);
  \draw[red,very thick] (-17.33,-61.34) to[quadratic={(-17.33,-60.56)}] (-19.5,-59.5) to[quadratic={(-26.46,-55.43)}] (-46.46,-55.43) to[quadratic={(-61.67,-55.43)}] (-61.67,-57.66);
  \draw[red,very thick] (-61.67,-57.66) to[quadratic={(-61.67,-58.44)}] (-59.5,-59.5) to[quadratic={(-52.54,-63.57)}] (-32.54,-63.57) to[quadratic={(-17.33,-63.57)}] (-17.33,-61.34);
  \draw[black] (62.34,-61.34) .. controls (62.34,-34.68)and(18.66,-30.99) .. (18.66,-57.66);
  \draw[black]  (-17.66,-61.34) -- (-17.66,58.66);
  \draw[black]  (-61.34,-57.66) -- (-61.34,62.34);
  \draw[black]  (62.34,58.66) .. controls (62.34,31.99)and(18.66,35.68) .. (18.66,62.34);
  \draw[red,very thick] (62.33,58.34) to[quadratic={(62.33,59.56)}] (60.5,60.5) to[quadratic={(53.46,64.43)}] (33.46,64.43) to[quadratic={(18.67,64.43)}] (18.67,62.66);
  \draw[red,very thick,dotted] (18.67,62.66) to[quadratic={(18.67,61.44)}] (20.5,60.5) to[quadratic={(27.54,56.57)}] (47.54,56.57) to[quadratic={(62.33,56.57)}] (62.33,58.34);
  \draw[red,very thick] (-17.33,58.34) to[quadratic={(-17.33,59.56)}] (-19.5,60.5) to[quadratic={(-26.46,64.43)}] (-46.46,64.43) to[quadratic={(-61.67,64.43)}] (-61.67,62.66);
  \draw[red,very thick,dotted] (-61.67,62.66) to[quadratic={(-61.67,61.44)}] (-59.5,60.5) to[quadratic={(-52.54,56.57)}] (-32.54,56.57) to[quadratic={(-17.33,56.57)}] (-17.33,58.34);
}

\defTikzBox[xlen=.4pt,ylen=-.5pt]{BordFourTuR}{%
  \draw[red,very thick] (62.33,-61.34) to[quadratic={(62.33,-60.56)}] (60.5,-59.5) to[quadratic={(53.46,-55.43)}] (33.46,-55.43) to[quadratic={(18.67,-55.43)}] (18.67,-57.66);
  \draw[red,very thick] (18.67,-57.66) to[quadratic={(18.67,-58.44)}] (20.5,-59.5) to[quadratic={(27.54,-63.57)}] (47.54,-63.57) to[quadratic={(62.33,-63.57)}] (62.33,-61.34);
  \draw[red,very thick] (-17.33,-61.34) to[quadratic={(-17.33,-60.56)}] (-19.5,-59.5) to[quadratic={(-26.46,-55.43)}] (-46.46,-55.43) to[quadratic={(-61.67,-55.43)}] (-61.67,-57.66);
  \draw[red,very thick] (-61.67,-57.66) to[quadratic={(-61.67,-58.44)}] (-59.5,-59.5) to[quadratic={(-52.54,-63.57)}] (-32.54,-63.57) to[quadratic={(-17.33,-63.57)}] (-17.33,-61.34);
  \draw[black] (-17.66,-61.34) .. controls (-17.66,-34.68)and(-61.34,-30.99) .. (-61.34,-57.66);
  \draw[black] (62.34,-61.34) -- (62.34,58.66);
  \draw[black] (18.66,-57.66) -- (18.66,62.34);
  \draw[black] (-17.66,58.66) .. controls (-17.66,31.99)and(-61.34,35.68) .. (-61.34,62.34);
  \draw[red,very thick] (62.33,58.34) to[quadratic={(62.33,59.56)}] (60.5,60.5) to[quadratic={(53.46,64.43)}] (33.46,64.43) to[quadratic={(18.67,64.43)}] (18.67,62.66);
  \draw[red,very thick,dotted] (18.67,62.66) to[quadratic={(18.67,61.44)}] (20.5,60.5) to[quadratic={(27.54,56.57)}] (47.54,56.57) to[quadratic={(62.33,56.57)}] (62.33,58.34);
  \draw[red,very thick] (-17.33,58.34) to[quadratic={(-17.33,59.56)}] (-19.5,60.5) to[quadratic={(-26.46,64.43)}] (-46.46,64.43) to[quadratic={(-61.67,64.43)}] (-61.67,62.66);
  \draw[red,very thick,dotted] (-61.67,62.66) to[quadratic={(-61.67,61.44)}] (-59.5,60.5) to[quadratic={(-52.54,56.57)}] (-32.54,56.57) to[quadratic={(-17.33,56.57)}] (-17.33,58.34);
}

\defTikzBox[xlen=.4pt,ylen=-.5pt]{BordFourTuD}{%
  \draw[red,very thick] (62.33,-61.34) to[quadratic={(62.33,-60.56)}] (60.5,-59.5) to[quadratic={(53.46,-55.43)}] (33.46,-55.43) to[quadratic={(18.67,-55.43)}] (18.67,-57.66);
  \draw[red,very thick] (18.67,-57.66) to[quadratic={(18.67,-58.44)}] (20.5,-59.5) to[quadratic={(27.54,-63.57)}] (47.54,-63.57) to[quadratic={(62.33,-63.57)}] (62.33,-61.34);
  \draw[red,very thick] (-17.33,-61.34) to[quadratic={(-17.33,-60.56)}] (-19.5,-59.5) to[quadratic={(-26.46,-55.43)}] (-46.46,-55.43) to[quadratic={(-61.67,-55.43)}] (-61.67,-57.66);
  \draw[red,very thick] (-61.67,-57.66) to[quadratic={(-61.67,-58.44)}] (-59.5,-59.5) to[quadratic={(-52.54,-63.57)}] (-32.54,-63.57) to[quadratic={(-17.33,-63.57)}] (-17.33,-61.34);
  \draw[black]  (18.66,-57.66) .. controls (18.66,-30.99)and(-17.66,-34.68) .. (-17.66,-61.34);
\draw[black]  (62.34,-61.34) .. controls (62.34,14.34)and(-61.34,18.66) .. (-61.34,-57.66);
\draw[black]  (62.34,58.66) .. controls (62.34,31.99)and(18.66,35.68) .. (18.66,62.34);
\draw[black]  (-17.66,58.66) .. controls (-17.66,31.99)and(-61.34,35.68) .. (-61.34,62.34);
  \draw[red,very thick] (62.33,58.34) to[quadratic={(62.33,59.56)}] (60.5,60.5) to[quadratic={(53.46,64.43)}] (33.46,64.43) to[quadratic={(18.67,64.43)}] (18.67,62.66);
  \draw[red,very thick,dotted] (18.67,62.66) to[quadratic={(18.67,61.44)}] (20.5,60.5) to[quadratic={(27.54,56.57)}] (47.54,56.57) to[quadratic={(62.33,56.57)}] (62.33,58.34);
  \draw[red,very thick] (-17.33,58.34) to[quadratic={(-17.33,59.56)}] (-19.5,60.5) to[quadratic={(-26.46,64.43)}] (-46.46,64.43) to[quadratic={(-61.67,64.43)}] (-61.67,62.66);
  \draw[red,very thick,dotted] (-61.67,62.66) to[quadratic={(-61.67,61.44)}] (-59.5,60.5) to[quadratic={(-52.54,56.57)}] (-32.54,56.57) to[quadratic={(-17.33,56.57)}] (-17.33,58.34);
}

\defTikzBox[xlen=.4pt,ylen=-.5pt]{BordFourTuU}{%
  \draw[red,very thick] (62.33,-61.34) to[quadratic={(62.33,-60.56)}] (60.5,-59.5) to[quadratic={(53.46,-55.43)}] (33.46,-55.43) to[quadratic={(18.67,-55.43)}] (18.67,-57.66);
  \draw[red,very thick] (18.67,-57.66) to[quadratic={(18.67,-58.44)}] (20.5,-59.5) to[quadratic={(27.54,-63.57)}] (47.54,-63.57) to[quadratic={(62.33,-63.57)}] (62.33,-61.34);
  \draw[red,very thick] (-17.33,-61.34) to[quadratic={(-17.33,-60.56)}] (-19.5,-59.5) to[quadratic={(-26.46,-55.43)}] (-46.46,-55.43) to[quadratic={(-61.67,-55.43)}] (-61.67,-57.66);
  \draw[red,very thick] (-61.67,-57.66) to[quadratic={(-61.67,-58.44)}] (-59.5,-59.5) to[quadratic={(-52.54,-63.57)}] (-32.54,-63.57) to[quadratic={(-17.33,-63.57)}] (-17.33,-61.34);
  \draw[black]  (62.34,-61.34) .. controls (62.34,-34.68)and(18.66,-30.99) .. (18.66,-57.66);
  \draw[black]  (-17.66,-61.34) .. controls (-17.66,-34.68)and(-61.34,-30.99) .. (-61.34,-57.66);
  \draw[black]  (62.34,58.66) .. controls (62.34,-17.66)and(-61.34,-13.34) .. (-61.34,62.34);
  \draw[black]  (18.66,62.34) .. controls (18.66,35.68)and(-17.66,31.99) .. (-17.66,58.66);
  \draw[red,very thick] (62.33,58.34) to[quadratic={(62.33,59.56)}] (60.5,60.5) to[quadratic={(53.46,64.43)}] (33.46,64.43) to[quadratic={(18.67,64.43)}] (18.67,62.66);
  \draw[red,very thick,dotted] (18.67,62.66) to[quadratic={(18.67,61.44)}] (20.5,60.5) to[quadratic={(27.54,56.57)}] (47.54,56.57) to[quadratic={(62.33,56.57)}] (62.33,58.34);
  \draw[red,very thick] (-17.33,58.34) to[quadratic={(-17.33,59.56)}] (-19.5,60.5) to[quadratic={(-26.46,64.43)}] (-46.46,64.43) to[quadratic={(-61.67,64.43)}] (-61.67,62.66);
  \draw[red,very thick,dotted] (-61.67,62.66) to[quadratic={(-61.67,61.44)}] (-59.5,60.5) to[quadratic={(-52.54,56.57)}] (-32.54,56.57) to[quadratic={(-17.33,56.57)}] (-17.33,58.34);
}

\defTikzBox[baseline=-.5ex,xlen=.3pt,ylen=-.5pt]{BordROneDeltaP}{%
  \draw[red,very thick] (-93.59,28.63) -- (66.41,28.63);
  \draw[red,very thick,dotted] (-52.46,16.43) to[quadratic={(-59.5,20.5)}] (-39.5,20.5) -- (40.5,20.5) to[quadratic={(58.99,20.5)}] (66.41,17.02);
  \draw[red,very thick] (66.41,17.02) to[quadratic={(67.01,16.74)}] (67.54,16.43) to[quadratic={(71.71,14.03)}] (66.41,13.04);
  \draw[red,very thick,dotted] (66.41,13.04) to[quadratic={(62.75,12.37)}] (54.59,12.37) -- (-25.41,12.37) to[quadratic={(-45.41,12.37)}] (-52.46,16.43);
  \draw[black] (-24.71,-17.28) .. controls (-24.71,9.391)and(11.62,13.08) .. (11.62,-13.59);
  \draw[black,thick,dotted] (-54.29,18.28) -- (-54.29,-11.37);
  \draw[black] (-54.29,-11.37) -- (-54.29,-21.72);
  \draw[black] (69.38,14.59) -- (69.38,-25.41);
  \draw[black] (-93.59,28.63) -- (-93.59,-11.37);
  \draw[black] (66.41,28.63) -- (66.41,-11.37);
  \draw[red,very thick] (-93.59,-11.37) -- (-53.59,-11.37) to[quadratic={(-33.59,-11.37)}] (-26.54,-15.43) to[quadratic={(-19.5,-19.5)}] (-39.5,-19.5) to[quadratic={(-59.5,-19.5)}] (-52.46,-23.57) to[quadratic={(-45.41,-27.63)}] (-25.41,-27.63) -- (54.59,-27.63) to[quadratic={(74.59,-27.63)}] (67.54,-23.57) to[quadratic={(60.5,-19.5)}] (40.5,-19.5) to[quadratic={(20.5,-19.5)}] (13.46,-15.43) to[quadratic={(6.41,-11.37)}] (26.41,-11.37) -- (66.41,-11.37);
}

\defTikzBox[baseline=-.5ex,xlen=.3pt,ylen=-.5pt]{BordROneDeltaN}{%
  \draw[red,very thick] (-93.59,28.63) -- (-53.59,28.63) to[quadratic={(-33.59,28.63)}] (-26.54,24.57) to[quadratic={(-24.71,23.51)}] (-24.71,22.73);
  \draw[red,very thick,dotted] (-24.71,22.73) to[quadratic={(-24.71,20.5)}] (-39.5,20.5) to[quadratic={(-59.5,20.5)}] (-52.46,16.43) to[quadratic={(-45.41,12.37)}] (-25.41,12.37) -- (-22.3,12.37);
  \draw[red,very thick] (-22.3,12.37) -- (7.225,12.37);
  \draw[red,very thick,dotted] (7.225,12.37) -- (54.59,12.37) to[quadratic={(62.75,12.37)}] (66.41,13.04);
  \draw[red,very thick] (66.41,13.04) to[quadratic={(71.71,14.03)}] (67.54,16.43) to[quadratic={(67.01,16.74)}] (66.41,17.02);
  \draw[red,very thick,dotted] (66.41,17.02) to[quadratic={(58.99,20.5)}] (40.5,20.5) to[quadratic={(20.5,20.5)}] (13.46,24.57) to[quadratic={(11.62,25.63)}] (11.62,26.41);
  \draw[red,very thick] (11.62,26.41) to[quadratic={(11.62,28.63)}] (26.41,28.63) -- (66.41,28.63);
  \draw[black] (-24.71,22.72) .. controls (-24.71,-3.942)and(11.62,-0.2563) .. (11.62,26.41);
  \draw[black,thick,dotted] (-54.29,18.28) -- (-54.29,-11.37);
  \draw[black] (-54.29,-11.37) -- (-54.29,-21.72);
  \draw[black] (69.38,14.59) -- (69.38,-25.41);
  \draw[black] (-93.59,28.63) -- (-93.59,-11.37);
  \draw[black] (66.41,28.63) -- (66.41,-11.37);
  \draw[red,very thick] (-93.59,-11.37) -- (66.41,-11.37);
  \draw[red,very thick] (-52.46,-23.57) to[quadratic={(-59.5,-19.5)}] (-39.5,-19.5) -- (40.5,-19.5) to[quadratic={(60.5,-19.5)}] (67.54,-23.57) to[quadratic={(74.59,-27.63)}] (54.59,-27.63) -- (-25.41,-27.63) to[quadratic={(-45.41,-27.63)}] (-52.46,-23.57);
}

\defTikzBox[baseline=-.5ex,xlen=.3pt,ylen=-.5pt]{BordROnePBarFstVarV}{%
  \draw[red,very thick] (-93.59,48.63) -- (-53.59,48.63) to[quadratic={(-33.59,48.63)}] (-26.54,44.57) to[quadratic={(-24.71,43.51)}] (-24.71,42.73);
  \draw[red,very thick,dotted] (-24.71,42.73) to[quadratic={(-24.71,40.5)}] (-39.5,40.5) to[quadratic={(-59.5,40.5)}] (-52.46,36.43) to[quadratic={(-45.41,32.37)}] (-25.41,32.37) -- (-22.3,32.37);
  \draw[red,very thick] (-22.3,32.37) -- (7.225,32.37);
  \draw[red,very thick,dotted] (7.225,32.37) -- (54.59,32.37) to[quadratic={(62.75,32.37)}] (66.41,33.04);
  \draw[red,very thick] (66.41,33.04) to[quadratic={(71.71,34.03)}] (67.54,36.43) to[quadratic={(67.01,36.74)}] (66.41,37.02);
  \draw[red,very thick,dotted] (66.41,37.02) to[quadratic={(58.99,40.5)}] (40.5,40.5) to[quadratic={(20.5,40.5)}] (13.46,44.57) to[quadratic={(11.62,45.63)}] (11.62,46.41);
  \draw[red,very thick] (11.62,46.41) to[quadratic={(11.62,48.63)}] (26.41,48.63) -- (66.41,48.63);
  \draw[black] (-24.71,42.72) .. controls (-24.71,16.06)and(11.62,19.74) .. (11.62,46.41);
  \draw[black,thick,dotted] (-54.29,38.28) .. controls(-54.29,1.97)and(47.59,-4.42) .. (66.41,24.62);
  \draw[black] (66.41,24.62) .. controls(68.33,27.57)and(69.38,30.9) .. (69.38,34.59);
  %
  \draw[black] (-54.29,-41.72) .. controls(-54.29,-37.87)and(-53.14,-34.42) .. (-51.07,-31.37);
  \draw[black,thick,dotted] (-51.07,-31.37) .. controls(-34.48,-6.99)and(41.37,-8.11) .. (63.38,-31.37);
  \draw[black] (63.38,-31.37) .. controls(67.18,-35.39)and(69.38,-40.08) .. (69.38,-45.41);
  \draw[black] (30.92,32.57) .. controls+(0,-10)and+(0,15) .. (77.92,4.57) .. controls+(0,-15)and+(0,10) .. (30.92,-23.43);
  \draw[black] (30.92,10.57) .. controls+(0,5)and+(0,7) .. (54.92,4.57) .. controls+(0,-7)and+(0,-5) .. (30.92,-1.43);
  \draw[black] (-93.59,48.63) -- (-93.59,-31.37);
  \draw[black] (66.41,48.63) -- (66.41,16);
  \draw[black,thick,dotted] (66.41,16) -- (66.41,-6.8);
  \draw[black] (66.41,-6.8) -- (66.41,-31.37);
  \draw[red,very thick] (-93.59,-31.37) -- (66.41,-31.37);
  \draw[red,very thick] (-52.46,-43.57) to[quadratic={(-59.5,-39.5)}] (-39.5,-39.5) -- (40.5,-39.5) to[quadratic={(60.5,-39.5)}] (67.54,-43.57) to[quadratic={(74.59,-47.63)}] (54.59,-47.63) -- (-25.41,-47.63) to[quadratic={(-45.41,-47.63)}] (-52.46,-43.57);
}

\defTikzBox[baseline=-.5ex,xlen=.3pt,ylen=-.5pt]{BordROneNSndV}{%
  \draw[red,very thick] (-93.59,48.63) -- (66.41,48.63);
  \draw[red,very thick] (66.41,37.02) to[quadratic={(67.01,36.74)}] (67.54,36.43) to[quadratic={(71.71,34.03)}] (66.41,33.04);
  \draw[red,very thick,dotted] (66.41,33.04) to[quadratic={(62.75,32.37)}] (54.59,32.37) -- (-25.41,32.37) to[quadratic={(-45.41,32.37)}] (-52.46,36.43) to[quadratic={(-59.5,40.5)}] (-39.5,40.5) -- (40.5,40.5) to[quadratic={(58.99,40.5)}] (66.41,37.02);
  \draw[black,thick,dotted] (-54.29,38.28) .. controls(-54.29,1.97)and(47.59,-4.42) .. (66.41,24.62);
  \draw[black] (66.41,24.62) .. controls(68.33,27.57)and(69.38,30.9) .. (69.38,34.59);
  %
  \draw[black] (-54.29,-41.72) .. controls(-54.29,-37.93)and(-53.41,-34.48) .. (-51.8,-31.37);
  \draw[black,thick,dotted] (-51.8,-31.37) ..controls(-36.54,-1.93)and(44.38,-3.19) .. (64.73,-31.37);
  \draw[black] (64.73,-31.37) .. controls(67.7,-35.48)and(69.38,-40.17) .. (69.38,-45.41);
  \draw[black] (-24.71,-37.28) .. controls +(0,20)and+(0,20) .. (11.62,-33.59);
  \draw[black] (30.92,32.57) .. controls+(0,-10)and+(0,15) .. (77.92,4.57) .. controls+(0,-15)and+(0,10) .. (30.92,-23.43);
  \draw[black] (30.92,10.57) .. controls+(0,5)and+(0,7) .. (54.92,4.57) .. controls+(0,-7)and+(0,-5) .. (30.92,-1.43);
  \draw[black] (-93.59,48.63) -- (-93.59,-31.37);
  \draw[black] (66.41,48.63) -- (66.41,16);
  \draw[black,thick,dotted] (66.41,16) -- (66.41,-6.8);
  \draw[black] (66.41,-6.8) -- (66.41,-31.37);
  \draw[red,very thick] (-93.59,-31.37) -- (-53.59,-31.37) to[quadratic={(-33.59,-31.37)}] (-26.54,-35.43) to[quadratic={(-19.5,-39.5)}] (-39.5,-39.5) to[quadratic={(-59.5,-39.5)}] (-52.46,-43.57) to[quadratic={(-45.41,-47.63)}] (-25.41,-47.63) -- (54.59,-47.63) to[quadratic={(74.59,-47.63)}] (67.54,-43.57) to[quadratic={(60.5,-39.5)}] (40.5,-39.5) to[quadratic={(20.5,-39.5)}] (13.46,-35.43) to[quadratic={(6.41,-31.37)}] (26.41,-31.37) -- (66.41,-31.37);
}

\defTikzBox[baseline=-.5ex,xlen=.3pt,ylen=-.5pt]{BordFIVId}{%
  \draw[red,very thick] (-93.59,48.63) -- (66.41,48.63);
  \draw[red,very thick] (66.41,37.02) to[quadratic={(67.01,36.74)}] (67.54,36.43) to[quadratic={(71.71,34.03)}] (66.41,33.04);
  \draw[red,very thick,dotted] (66.41,33.04) to[quadratic={(62.75,32.37)}] (54.59,32.37) -- (-25.41,32.37) to[quadratic={(-45.41,32.37)}] (-52.46,36.43) to[quadratic={(-59.5,40.5)}] (-39.5,40.5) -- (40.5,40.5) to[quadratic={(58.99,40.5)}] (66.41,37.02);
  \draw[black,thick,dotted] (-54.29,38.28) -- (-54.29,-31.37);
  \draw[black] (-54.29,-31.37) -- (-54.29,-41.72);
  \draw[black] (69.38,34.59) -- (69.38,-45.41);
  \draw[black] (-93.59,48.63) -- (-93.59,-31.37);
  \draw[black] (66.41,48.63) -- (66.41,-31.37);
  \draw[red,very thick] (-93.59,-31.37) -- (66.41,-31.37);
  \draw[red,very thick] (-52.46,-43.57) to[quadratic={(-59.5,-39.5)}] (-39.5,-39.5) -- (40.5,-39.5) to[quadratic={(60.5,-39.5)}] (67.54,-43.57) to[quadratic={(74.59,-47.63)}] (54.59,-47.63) -- (-25.41,-47.63) to[quadratic={(-45.41,-47.63)}] (-52.46,-43.57);
}

\defTikzBox[baseline=-.5ex,xlen=.3pt,ylen=-.5pt]{BordFIPhiFst}{%
  \draw[red,very thick] (-93.59,48.63) -- (66.41,48.63);
  \draw[red,very thick] (66.41,37.02) to[quadratic={(67.01,36.74)}] (67.54,36.43) to[quadratic={(71.71,34.03)}] (66.41,33.04);
  \draw[red,very thick,dotted] (66.41,33.04) to[quadratic={(62.75,32.37)}] (54.59,32.37) -- (-25.41,32.37) to[quadratic={(-45.41,32.37)}] (-52.46,36.43) to[quadratic={(-59.5,40.5)}] (-39.5,40.5) -- (40.5,40.5) to[quadratic={(58.99,40.5)}] (66.41,37.02);
  \draw[black] (21.62,3.28) .. controls+(0,20)and+(0,20) .. (-34.71,-0.41);
  \draw[black] (21.62,3.28) .. controls+(0,-20)and+(0,-20) .. (-34.71,-0.41);
  \draw[black,thick,dotted] (-54.29,38.28) -- (-54.29,-31.37);
  \draw[black] (-54.29,-31.37) -- (-54.29,-41.72);
  \draw[black] (69.38,34.59) -- (69.38,-45.41);
  \draw[black] (-93.59,48.63) -- (-93.59,-31.37);
  \draw[black] (66.41,48.63) -- (66.41,-31.37);
  \draw[red,very thick] (-93.59,-31.37) -- (66.41,-31.37);
  \draw[red,very thick] (-52.46,-43.57) to[quadratic={(-59.5,-39.5)}] (-39.5,-39.5) -- (40.5,-39.5) to[quadratic={(60.5,-39.5)}] (67.54,-43.57) to[quadratic={(74.59,-47.63)}] (54.59,-47.63) -- (-25.41,-47.63) to[quadratic={(-45.41,-47.63)}] (-52.46,-43.57);
}

\defTikzBox[baseline=-.5ex,xlen=.3pt,ylen=-.5pt]{BordFIPhiSnd}{%
  \draw[red,very thick] (-93.59,48.63) -- (66.41,48.63);
  \draw[red,very thick] (66.41,37.02) to[quadratic={(67.01,36.74)}] (67.54,36.43) to[quadratic={(71.71,34.03)}] (66.41,33.04);
  \draw[red,very thick,dotted] (66.41,33.04) to[quadratic={(62.75,32.37)}] (54.59,32.37) -- (-25.41,32.37) to[quadratic={(-45.41,32.37)}] (-52.46,36.43) to[quadratic={(-59.5,40.5)}] (-39.5,40.5) -- (40.5,40.5) to[quadratic={(58.99,40.5)}] (66.41,37.02);
  \draw[black] (3.92,32.57) .. controls+(0,-10)and+(0,15) .. (50.92,4.57) .. controls+(0,-15)and+(0,10) .. (3.92,-23.43);
  \draw[black] (3.92,10.57) .. controls+(0,5)and+(0,7) .. (27.92,4.57) .. controls+(0,-7)and+(0,-5) .. (3.92,-1.43);
  \draw[black,thick,dotted] (-54.29,38.28) -- (-54.29,-31.37);
  \draw[black] (-54.29,-31.37) -- (-54.29,-41.72);
  \draw[black] (69.38,34.59) -- (69.38,-45.41);
  \draw[black] (-93.59,48.63) -- (-93.59,-31.37);
  \draw[black] (66.41,48.63) -- (66.41,-31.37);
  \draw[red,very thick] (-93.59,-31.37) -- (66.41,-31.37);
  \draw[red,very thick] (-52.46,-43.57) to[quadratic={(-59.5,-39.5)}] (-39.5,-39.5) -- (40.5,-39.5) to[quadratic={(60.5,-39.5)}] (67.54,-43.57) to[quadratic={(74.59,-47.63)}] (54.59,-47.63) -- (-25.41,-47.63) to[quadratic={(-45.41,-47.63)}] (-52.46,-43.57);
}

\defTikzBox[baseline=-.5ex,xlen=.3pt,ylen=-.5pt]{BordFIHmtpFst}{%
  \draw[red,very thick] (-93.59,48.63) -- (66.41,48.63);
  \draw[red,very thick] (66.41,37.02) to[quadratic={(67.01,36.74)}] (67.54,36.43) to[quadratic={(71.71,34.03)}] (66.41,33.04);
  \draw[red,very thick,dotted] (66.41,33.04) to[quadratic={(62.75,32.37)}] (54.59,32.37) -- (-25.41,32.37) to[quadratic={(-45.41,32.37)}] (-52.46,36.43) to[quadratic={(-59.5,40.5)}] (-39.5,40.5) -- (40.5,40.5) to[quadratic={(58.99,40.5)}] (66.41,37.02);
  \draw[black,thick,dotted] (-54.29,38.28) .. controls(-54.29,11.05)and(47.59,5.5) .. (66.41,27.13);
  \draw[black] (66.41,27.13) .. controls(68.33,29.34)and(69.38,31.82) .. (69.38,34.59);
  %
  \draw[black] (-54.29,-41.72) -- (-54.29,-31.37);
  \draw[black,thick,dotted] (-54.29,-31.37) .. controls(-54.29,14.59)and(47.59,14.9) .. (66.41,-15.26);
  \draw[black] (66.41,-15.26) .. controls(66.33,-18.33)and(69.38,-21.72) .. (69.38,-25.41) -- (69.38,-45.41);
  \draw[black] (11.62,-11.72) .. controls+(0,10)and+(0,10) .. (-24.71,-15.41);
    \draw[black] (11.62,-11.72) .. controls+(0,-10)and+(0,-10) .. (-24.71,-15.41);
  \draw[black] (-93.59,48.63) -- (-93.59,-31.37);
  \draw[black] (66.41,48.63) -- (66.41,-31.37);
  \draw[red,very thick] (-93.59,-31.37) -- (66.41,-31.37);
  \draw[red,very thick] (-52.46,-43.57) to[quadratic={(-59.5,-39.5)}] (-39.5,-39.5) -- (40.5,-39.5) to[quadratic={(60.5,-39.5)}] (67.54,-43.57) to[quadratic={(74.59,-47.63)}] (54.59,-47.63) -- (-25.41,-47.63) to[quadratic={(-45.41,-47.63)}] (-52.46,-43.57);
}

\defTikzBox[baseline=-.5ex,xlen=.3pt,ylen=-.5pt]{BordFIHmtpSnd}{%
  \draw[red,very thick] (-93.59,48.63) -- (66.41,48.63);
  \draw[red,very thick] (66.41,37.02) to[quadratic={(67.01,36.74)}] (67.54,36.43) to[quadratic={(71.71,34.03)}] (66.41,33.04);
  \draw[red,very thick,dotted] (66.41,33.04) to[quadratic={(62.75,32.37)}] (54.59,32.37) -- (-25.41,32.37) to[quadratic={(-45.41,32.37)}] (-52.46,36.43) to[quadratic={(-59.5,40.5)}] (-39.5,40.5) -- (40.5,40.5) to[quadratic={(58.99,40.5)}] (66.41,37.02);
  \draw[black] (-24.71,12.72) .. controls+(0,-10)and+(0,-10) .. (11.62,16.41);
  \draw[black] (-24.71,12.72) .. controls+(0,10)and+(0,10) .. (11.62,16.41);
  \draw[black,thick,dotted] (-54.29,38.28) -- (-54.29,23.28) .. controls(-54.29,-13.03)and(47.59,-19.42) .. (66.41,9.62);
  \draw[black] (66.41,9.62) .. controls(68.33,12.57)and(69.38,15.9) .. (69.38,19.59) -- (69.38,34.59);
  %
  \draw[black] (-54.29,-41.72) .. controls(-54.29,-37.66)and(-52.02,-34.21) .. (-48.1,-31.37);
  \draw[black,thick,dotted] (-48.1,-31.37) .. controls(-28.49,-17.14)and(32.49,-17.9) .. (57.99,-31.37);
  \draw[black] (57.99,-31.37) .. controls(65.04,-35.09)and(69.38,-39.79) .. (69.38,-45.41);
  \draw[black] (-93.59,48.63) -- (-93.59,-31.37);
  \draw[black] (66.41,48.63) -- (66.41,-31.37);
  \draw[red,very thick] (-93.59,-31.37) -- (66.41,-31.37);
  \draw[red,very thick] (-52.46,-43.57) to[quadratic={(-59.5,-39.5)}] (-39.5,-39.5) -- (40.5,-39.5) to[quadratic={(60.5,-39.5)}] (67.54,-43.57) to[quadratic={(74.59,-47.63)}] (54.59,-47.63) -- (-25.41,-47.63) to[quadratic={(-45.41,-47.63)}] (-52.46,-43.57);
}

\defTikzBox[baseline=-.5ex,xlen=.3pt,ylen=-.5pt]{BordFIHmtpTrd}{%
  \draw[red,very thick] (-93.59,48.63) -- (66.41,48.63);
  \draw[red,very thick] (66.41,37.02) to[quadratic={(67.01,36.74)}] (67.54,36.43) to[quadratic={(71.71,34.03)}] (66.41,33.04);
  \draw[red,very thick,dotted] (66.41,33.04) to[quadratic={(62.75,32.37)}] (54.59,32.37) -- (-25.41,32.37) to[quadratic={(-45.41,32.37)}] (-52.46,36.43) to[quadratic={(-59.5,40.5)}] (-39.5,40.5) -- (40.5,40.5) to[quadratic={(58.99,40.5)}] (66.41,37.02);
  \draw[black,thick,dotted] (-54.29,38.28) .. controls(-54.29,1.97)and(47.59,-4.42) .. (66.41,24.62);
  \draw[black] (66.41,24.62) .. controls(68.33,27.57)and(69.38,30.9) .. (69.38,34.59);
  %
  \draw[black] (-54.29,-41.72) .. controls(-54.29,-37.93)and(-53.41,-34.48) .. (-51.8,-31.37);
  \draw[black,thick,dotted] (-51.8,-31.37) ..controls(-36.54,-1.93)and(44.38,-3.19) .. (64.73,-31.37);
  \draw[black] (64.73,-31.37) .. controls(67.7,-35.48)and(69.38,-40.17) .. (69.38,-45.41);
  \draw[black] (3.92,32.57) .. controls+(0,-10)and+(0,15) .. (50.92,4.57) .. controls+(0,-15)and+(0,10) .. (3.92,-23.43);
  \draw[black] (3.92,10.57) .. controls+(0,5)and+(0,7) .. (27.92,4.57) .. controls+(0,-7)and+(0,-5) .. (3.92,-1.43);
  \draw[black] (-93.59,48.63) -- (-93.59,-31.37);
  \draw[black] (66.41,48.63) -- (66.41,-31.37);
  \draw[red,very thick] (-93.59,-31.37) -- (66.41,-31.37);
  \draw[red,very thick] (-52.46,-43.57) to[quadratic={(-59.5,-39.5)}] (-39.5,-39.5) -- (40.5,-39.5) to[quadratic={(60.5,-39.5)}] (67.54,-43.57) to[quadratic={(74.59,-47.63)}] (54.59,-47.63) -- (-25.41,-47.63) to[quadratic={(-45.41,-47.63)}] (-52.46,-43.57);
}

\defTikzBox[baseline=-.5ex,scale=.5]{diagTwistCruxOO}{%
  \useasboundingbox (-3,-2.2) rectangle (3,1.7);
  \node[inner sep=1,circle] (CU) at (0,1.5) {};
  \node[inner sep=1,circle] (CM) at (0,.7) {};
  \node[inner sep=1,circle] (D) at (-1.5,-1.5) {};
  \node[inner sep=5,circle] (CL) at (-.5,-1.5) {};
  \node[inner sep=5,circle] (CR) at (1.5,-1.5) {};
  \draw[red,very thick] (CU.north) .. controls+(0:.2)and+(-.5,0) .. (0.7,1.67) .. controls+(3,0)and+(2,0) .. (CR.south);
  \draw[red,very thick] (CU.north) .. controls+(180:.2)and+(.5,0) .. (-0.7,1.67) .. controls+(-3,0)and+(-1.5,0) .. (-2,-2) to[out=0,in=-90] (D.west);
  \draw[blue,very thick] (CM.south) .. controls+(0:.7)and+(90:1) .. (1.81,-0.58) to[out=-90,in=0] (CR.north);
  \draw[blue,very thick] (CM.south) .. controls+(180:.7)and+(90:1) .. (-1.81,-0.58) to[out=-90,in=90] (D.west);
  \draw[red,very thick] (CU.south) to[out=0,in=0,looseness=2] (CM.north);
  \draw[red,very thick] (CM.north) to[out=180,in=180,looseness=2] (CU.south);
  \draw[red,very thick] (D.east) to[out=-90,in=180] (CL.south) -- (CR.south);
  \draw[blue,very thick] (D.east) to[out=90,in=180] (CL.north) -- (CR.north);
}

\defTikzBox[baseline=-.5ex,scale=.5]{diagTwistCruxlO}{%
  \useasboundingbox (-3,-2.2) rectangle (3,1.7);
  \node[inner sep=1,circle] (CU) at (0,1.5) {};
  \node[inner sep=1,circle] (CM) at (0,.7) {};
  \node[inner sep=1,circle] (D) at (-1.5,-1.5) {};
  \node[inner sep=5,circle] (CL) at (-.5,-1.5) {};
  \node[inner sep=5,circle] (CR) at (1.5,-1.5) {};
  \draw[red,very thick] (CU.east) to[out=90,in=180] (0.7,1.67) .. controls+(3,0)and+(2,0) .. (CR.south);
  \draw[red,very thick] (CU.west) to[out=90,in=0] (-0.7,1.67) .. controls+(-3,0)and+(-1.5,0) .. (-2,-2) to[out=0,in=-90] (D.west);
  \draw[blue,very thick] (CM.south) .. controls+(0:.7)and+(90:1) .. (1.81,-0.58) to[out=-90,in=0] (CR.north);
  \draw[blue,very thick] (CM.south) .. controls+(180:.7)and+(90:1) .. (-1.81,-0.58) to[out=-90,in=90] (D.west);
  \draw[red,very thick] (CU.east) .. controls+(0,-.1)and+(1,0) .. (CM.north);
  \draw[red,very thick] (CU.west) .. controls+(0,-.1)and+(-1,0) .. (CM.north);
  \draw[red,very thick] (D.east) to[out=-90,in=180] (CL.south) -- (CR.south);
  \draw[blue,very thick] (D.east) to[out=90,in=180] (CL.north) -- (CR.north);
}

\defTikzBox[baseline=-.5ex,scale=.5]{diagTwistCruxOl}{%
  \useasboundingbox (-3,-2.2) rectangle (3,1.7);
  \node[inner sep=1,circle] (CU) at (0,1.5) {};
  \node[inner sep=1,circle] (CM) at (0,.7) {};
  \node[inner sep=1,circle] (D) at (-1.5,-1.5) {};
  \node[inner sep=5,circle] (CL) at (-.5,-1.5) {};
  \node[inner sep=5,circle] (CR) at (1.5,-1.5) {};
  \draw[red,very thick] (CU.north) .. controls+(0:.2)and+(-.5,0) .. (0.7,1.67) .. controls+(3,0)and+(2,0) .. (CR.south);
  \draw[red,very thick] (CU.north) .. controls+(180:.2)and+(.5,0) .. (-0.7,1.67) .. controls+(-3,0)and+(-1.5,0) .. (-2,-2) to[out=0,in=-90] (D.west);
  \draw[blue,very thick] (CM.south) ++(.2,0) to[out=180,in=-90] (CM.east);
  \draw[blue,very thick] (CM.south) ++(.2,0) .. controls+(0:.7)and+(90:1) .. (1.81,-0.58) to[out=-90,in=0] (CR.north);
  \draw[blue,very thick] (CM.south) ++(-.2,0) to[out=0,in=-90] (CM.west);
  \draw[blue,very thick] (CM.south) ++(-.2,0) .. controls+(180:.7)and+(90:1) .. (-1.81,-0.58) to[out=-90,in=90] (D.west);
  \draw[blue,very thick] (CU.south) .. controls+(1,0)and+(0,.1) .. (CM.east);
  \draw[blue,very thick] (CU.south) .. controls+(-1,0)and+(0,.1) .. (CM.west);
  \draw[red,very thick] (D.east) to[out=-90,in=180] (CL.south) -- (CR.south);
  \draw[blue,very thick] (D.east) to[out=90,in=180] (CL.north) -- (CR.north);
}

\defTikzBox[baseline=-.5ex,scale=.5]{diagTwistCruxHOOO}{%
  \useasboundingbox (-3,-2.2) rectangle (3,1.7);
  \node[inner sep=1,circle] (CU) at (0,1.5) {};
  \node[inner sep=1,circle] (CM) at (0,.7) {};
  \node[inner sep=5,circle] (D) at (-1.5,-1.5) {};
  \node[inner sep=5,circle] (CL) at (-.5,-1.5) {};
  \node[inner sep=5,circle] (CR) at (1.5,-1.5) {};
  \draw[red,very thick] (CU.north) .. controls+(0:.2)and+(-.5,0) .. (0.7,1.67) .. controls+(3,0)and+(2,0) .. (CR.south);
  \draw[red,very thick] (CU.north) .. controls+(180:.2)and+(.5,0) .. (-0.7,1.67) .. controls+(-3,0)and+(-2,0) .. (D.south);
  \draw[red,very thick] (CM.south) .. controls+(0:.7)and+(90:1) .. (1.81,-0.58) to[out=-90,in=0] (CR.north);
  \draw[red,very thick] (CM.south) .. controls+(180:.7)and+(90:1) .. (-1.81,-0.58) to[out=-90,in=180] (D.north);
  \draw[red,very thick] (CU.south) to[out=0,in=0,looseness=2] (CM.north);
  \draw[red,very thick] (CM.north) to[out=180,in=180,looseness=2] (CU.south);
  \draw[red,very thick] (D.south) to[out=0,in=180] (CL.south) -- (CR.south);
  \draw[red,very thick] (D.north) to[out=0,in=180] (CL.north) -- (CR.north);
}

\defTikzBox[baseline=-.5ex,scale=.5]{diagTwistCruxHlOO}{%
  \useasboundingbox (-3,-2.2) rectangle (3,1.7);
  \node[inner sep=1,circle] (CU) at (0,1.5) {};
  \node[inner sep=1,circle] (CM) at (0,.7) {};
  \node[inner sep=5,circle] (D) at (-1.5,-1.5) {};
  \node[inner sep=5,circle] (CL) at (-.5,-1.5) {};
  \node[inner sep=5,circle] (CR) at (1.5,-1.5) {};
  \draw[red,very thick] (CU.east) to[out=90,in=180] (0.7,1.67) .. controls+(3,0)and+(2,0) .. (CR.south);
  \draw[red,very thick] (CU.west) to[out=90,in=0] (-0.7,1.67) .. controls+(-3,0)and+(-2,0) .. (D.south);
  \draw[red,very thick] (CM.south) .. controls+(0:.7)and+(90:1) .. (1.81,-0.58) to[out=-90,in=0] (CR.north);
  \draw[red,very thick] (CM.south) .. controls+(180:.7)and+(90:1) .. (-1.81,-0.58) to[out=-90,in=180] (D.north);
  \draw[red,very thick] (CU.east) .. controls+(0,-.1)and+(1,0) .. (CM.north);
  \draw[red,very thick] (CU.west) .. controls+(0,-.1)and+(-1,0) .. (CM.north);
  \draw[red,very thick] (D.south) to[out=0,in=180] (CL.south) -- (CR.south);
  \draw[red,very thick] (D.north) to[out=0,in=180] (CL.north) -- (CR.north);
}

\defTikzBox[baseline=-.5ex,scale=.5]{diagTwistCruxHOlO}{%
  \useasboundingbox (-3,-2.2) rectangle (3,1.7);
  \node[inner sep=1,circle] (CU) at (0,1.5) {};
  \node[inner sep=1,circle] (CM) at (0,.7) {};
  \node[inner sep=5,circle] (D) at (-1.5,-1.5) {};
  \node[inner sep=5,circle] (CL) at (-.5,-1.5) {};
  \node[inner sep=5,circle] (CR) at (1.5,-1.5) {};
  \draw[red,very thick] (CU.north) .. controls+(0:.2)and+(-.5,0) .. (0.7,1.67) .. controls+(3,0)and+(2,0) .. (CR.south);
  \draw[red,very thick] (CU.north) .. controls+(180:.2)and+(.5,0) .. (-0.7,1.67) .. controls+(-3,0)and+(-2,0) .. (D.south);
  \draw[red,very thick] (CM.south) ++(.2,0) to[out=180,in=-90] (CM.east);
  \draw[red,very thick] (CM.south) ++(.2,0) .. controls+(0:.7)and+(90:1) .. (1.81,-0.58) to[out=-90,in=0] (CR.north);
  \draw[red,very thick] (CM.south) ++(-.2,0) to[out=0,in=-90] (CM.west);
  \draw[red,very thick] (CM.south) ++(-.2,0) .. controls+(180:.7)and+(90:1) .. (-1.81,-0.58) to[out=-90,in=180] (D.north);
  \draw[red,very thick] (CU.south) .. controls+(1,0)and+(0,.1) .. (CM.east);
  \draw[red,very thick] (CU.south) .. controls+(-1,0)and+(0,.1) .. (CM.west);
  \draw[red,very thick] (D.south) to[out=0,in=180] (CL.south) -- (CR.south);
  \draw[red,very thick] (D.north) to[out=0,in=180] (CL.north) -- (CR.north);
}

\defTikzBox[baseline=-.5ex,scale=.5]{diagTwistCruxHllO}{%
  \useasboundingbox (-3,-2.2) rectangle (3,1.7);
  \node[inner sep=1,circle] (CU) at (0,1.5) {};
  \node[inner sep=1,circle] (CM) at (0,.7) {};
  \node[inner sep=5,circle] (D) at (-1.5,-1.5) {};
  \node[inner sep=5,circle] (CL) at (-.5,-1.5) {};
  \node[inner sep=5,circle] (CR) at (1.5,-1.5) {};
  \draw[red,very thick] (CU.east) to[out=90,in=180] (0.7,1.67) .. controls+(3,0)and+(2,0) .. (CR.south);
  \draw[red,very thick] (CU.west) to[out=90,in=0] (-0.7,1.67) .. controls+(-3,0)and+(-2,0) .. (D.south);
  \draw[red,very thick] (CM.south) ++(.2,0) to[out=180,in=-90] (CM.east);
  \draw[red,very thick] (CM.south) ++(.2,0) .. controls+(0:.7)and+(90:1) .. (1.81,-0.58) to[out=-90,in=0] (CR.north);
  \draw[red,very thick] (CM.south) ++(-.2,0) to[out=0,in=-90] (CM.west);
  \draw[red,very thick] (CM.south) ++(-.2,0) .. controls+(180:.7)and+(90:1) .. (-1.81,-0.58) to[out=-90,in=180] (D.north);
  \draw[red,very thick] (CU.east) to[out=-90,in=90,looseness=1.5] (.5,1.1) to[out=-90,in=90,looseness=1.5] (CM.east);
  \draw[red,very thick] (CU.west) to[out=-90,in=90,looseness=1.5] (-.5,1.1) to[out=-90,in=90,looseness=1.5] (CM.west);
  \draw[red,very thick] (D.south) to[out=0,in=180] (CL.south) -- (CR.south);
  \draw[red,very thick] (D.north) to[out=0,in=180] (CL.north) -- (CR.north);
}

\theoremstyle{plain}
\newtheorem{mainthm}{Main~Theorem}
\newtheorem{theorem}{Theorem}[section]
\newtheorem{proposition}[theorem]{Proposition}
\newtheorem{corollary}[theorem]{Corollary}
\newtheorem*{corollary*}{Corollary}
\newtheorem{lemma}[theorem]{Lemma}

\theoremstyle{definition}
\newtheorem{definition}[theorem]{Definition}

\theoremstyle{remark}
\newtheorem{example}[theorem]{Example}
\newtheorem{remark}[theorem]{Remark}
\newtheorem*{notation}{Notation}

\numberwithin{equation}{section}
\numberwithin{figure}{section}
\numberwithin{table}{section}

\hyphenation{pseudo-func-tor}
\hyphenation{pre-sheaf}
\hyphenation{pre-sheaves}

\title{Decomposition of the first Vassiliev derivative of Khovanov homology and its application}
\date{\today}
\author{Jun Yoshida}

\begin{document}
\maketitle

\begin{abstract}
Khovanov homology extends to singular links via a categorified analogue of Vassiliev skein relation.
In view of Vassiliev theory, the extended Khovanov homology can be seen as Vassiliev derivatives of Khovanov homology.
In this paper, we develop a new method to compute the first derivative.
Namely, we introduce a complex, called a crux complex, and prove that the Khovanov homologies of singular links with unique double points are homotopic to cofibers of endomorphisms on crux complexes.
Since crux complexes are actually small for some links, the result enables a direct computation of the first derivative of Khovanov homology.
Furthermore, it together with a categorified Vassiliev skein relation provides a brand-new method for the computation of Khovanov homology.
In fact, we apply the result to determine the Khovanov complexes of all twist knots in a universal way.
\end{abstract}

\tableofcontents

\section*{Introduction}
\label{sec:intro}
\addcontentsline{toc}{section}{Introduction}

In this paper, we compute Khovanov homology on singular links which was introduced by Ito and the author~\cite{ItoYoshida2020UKH}.
Namely, it is actually the homotopy cofiber of a morphism, called the \emph{genus-one morphism}, that realizes crossing change in the spirit of Vassiliev theory, so we regard Khovanov homology on singular links as \emph{Vassiliev derivatives} of Khovanov homology.
The goal is to describe the first Vassiliev derivative in terms of smaller complexes.
This result, together with a \emph{categorified Vassiliev skein relation}, will enable a direct computation of Khovanov homology.
In fact, as an application, we determine the Khovanov complexes of all twist knots in a universal way.

Crossing change is a very basic operation in the knot theory.
For example, it is a major problem to determine the minimum crossing number and the unknotting number for each knot or link.
We note that there are two aspects toward crossing change.
First, it is an operation in link cobordism theory; namely, it is realized as the following sequence of cobordisms:
\begin{equation}
\label{eq:intro:crossing-change-cob}
\diagCrossNegUp{}=
\begin{tikzpicture}[baseline=-.5ex]
\node[circle,inner sep=2] (L) at (-.2,0) {};
\node[circle,inner sep=2] (R) at (.2,0) {};
\draw[red,very thick,-stealth] (-.6,-.6) -- (L) (L) to[out=56,in=180] (R.north) to[out=0,in=236] (.6,.6);
\draw[red,very thick,-stealth] (.6,-.6) to[out=124,in=0] (R.south) to[out=180,in=-56] (L.center) -- (-.6,.6);
\end{tikzpicture}
\xrightarrow{\text{saddle}}
\begin{tikzpicture}[baseline=-.5ex]
\node[circle,inner sep=2] (L) at (-.2,0) {};
\node[circle,inner sep=1] (R) at (.2,0) {};
\draw[red,very thick,-stealth] (-.6,-.6) -- (L) (L) to[out=56,in=90,looseness=3] (R.west) to[out=-90,in=-56,looseness=3] (L.center) -- (-.6,.6);
\draw[red,very thick,-stealth] (.6,-.6) to[out=124,in=-90] (R.east) to[out=90,in=236] (.6,.6);
\end{tikzpicture}
\xrightarrow{\simeq}
\diagSmoothUp
\xrightarrow{\simeq}
\begin{tikzpicture}[baseline=-.5ex]
\node[circle,inner sep=2] (L) at (-.2,0) {};
\node[circle,inner sep=1] (R) at (.2,0) {};
\draw[red,very thick,-stealth] (-.6,-.6) -- (L.center) to[out=56,in=90,looseness=3] (R.west) to[out=-90,in=-56,looseness=3] (L) (L) -- (-.6,.6);
\draw[red,very thick,-stealth] (.6,-.6) to[out=124,in=-90] (R.east) to[out=90,in=236] (.6,.6);
\end{tikzpicture}
\xrightarrow{\text{saddle}}
\begin{tikzpicture}[baseline=-.5ex]
\node[circle,inner sep=2] (L) at (-.2,0) {};
\node[circle,inner sep=2] (R) at (.2,0) {};
\draw[red,very thick,-stealth] (-.6,-.6) -- (L.center) to[out=56,in=180] (R.north) to[out=0,in=236] (.6,.6);
\draw[red,very thick,-stealth] (.6,-.6) to[out=124,in=0] (R.south) to[out=180,in=-56] (L) (L) -- (-.6,.6);
\end{tikzpicture}
=\diagCrossPosUp
\quad.
\end{equation}
As \eqref{eq:intro:crossing-change-cob} compares three local diagrams, it is involved with skein relations of polynomial link invariants; such as the Jones, HOMFLY-PT, and Alexander-Conway polynomials.
This viewpoint is popular especially in $3$-dimensional topology.
Another aspect comes from Vassiliev's study of the space of knots $\mathcal K=\mathrm{Emb}(S^1,\mathbb R^3)$ \cite{Vassiliev1990}.
He investigated the cohomology groups of $\mathcal K$ in terms of the \emph{discriminant} $\Sigma\subset C^\infty(S^1,\mathbb R^3)$.
From his point of view, crossing change is a path in the space of smooth maps $S^1\to\mathbb R^3$ that passes through $\Sigma$.
Identifying knot invariants with $0$-th cohomology classes of $\mathcal K$, he then defined a class of knot invariants, which are nowadays called \emph{Vassiliev invariants}, using the spectral sequence associated to a stratification of $\Sigma$.
Birman and Lin \cite{Birman1993,BirmanLin1993} characterized them more explicitly in terms of crossing change.
Indeed, for an $A$-valued knot invariant $v:\pi_0\mathcal K\to A$, and for a singular knot $K$ with exactly $r$ transverse double points, define $v^{(r)}(K)\in A$ inductively by $v^{(0)}=v$ and
\begin{equation}
\label{eq:intro:Vassiliev-skein}
v^{(r+1)}\left(\diagSingUp\right)
= v^{(r)}\left(\diagCrossPosUp\right)
- v^{(r)}\left(\diagCrossNegUp\right)
\quad.
\end{equation}
We call $v^{(r)}$ the \emph{$r$-th Vassiliev derivative} of $v$.
It turns out that $v^{(r)}$ defines an invariant for singular links.
Then, $v$ is a \emph{Vassiliev invariant of order $r$} if and only if $v^{(r)}\equiv 0$ and $v^{(r+1)}\not\equiv 0$.
The equation~\eqref{eq:intro:Vassiliev-skein} is called \emph{Vassiliev skein relation}.

As for knot/link homologies, crossing change has been discussed mainly from the first aspect.
For example, according to Clark, Morrison, and Walker \cite{ClarkMorrisonWalker2009}, Khovanov homology \cite{Khovanov2000} forms a functor out of the category of links and link cobordisms in $\mathbb R^4$.
This implies that the sequence~\eqref{eq:intro:crossing-change-cob} gives rise to a map on Khovanov homology groups.
More explicitly, unwinding the definition of the map associated with the Reidemeister move of type I, one can see that it agrees with the following composition:
\begin{equation}
\label{eq:intro:Kh-crossing-change-cob}
\mathit{Kh}\left(\diagCrossNegUp\right)
\to \mathit{Kh}\left(\diagSmoothH\right)
\to \mathit{Kh}\left(\diagCrossPosUp\right)
\quad.
\end{equation}
Hedden and Watson \cite{HeddenWatson2018} compute the homotopy cofiber of \eqref{eq:intro:Kh-crossing-change-cob} to obtain a categorified version of Jones skein relation.

In contrast to their work, Ito and the author constructed another morphism \cite{ItoYoshida2020UKH}:
\[
\widehat\Phi:\mathit{Kh}\left(\diagCrossNegUp\right)
\to \mathit{Kh}\left(\diagCrossPosUp\right)
\quad.
\]
As it is made from cobordisms of genus $1$, we call this the \emph{genus-one morphisms}.
In the spirit of Vassiliev theory, we showed the following results:

\begin{theorem}[\cite{ItoYoshida2020UKH}]%
\label{theo:UKH-singular}
For every singular link diagram $D$, there is a bigraded abelian group $\{\mathit{Kh}^{i,j}(D)\}_{i,j}$ with the following properties:
\begin{enumerate}[label=\upshape(\arabic*)]
  \item if $D$ has no double point, then $\mathit{Kh}^{i,j}(D)$ agrees with the Khovanov homology of $D$;
  \item there is a long exact sequence of the following form:
\begin{equation}
\label{eq:intro:cat-Vassiliev-skein}
\begin{tikzcd}[column sep=1em]
\cdots \ar[r]
& \mathit{Kh}^{i,j}\left(\diagCrossNegUp\right) \ar[r,"\widehat\Phi"]
& \mathit{Kh}^{i,j}\left(\diagCrossPosUp\right) \ar[r] \ar[d,phantom,""{coordinate,name=C}]
& \mathit{Kh}^{i,j}\left(\diagSingUp\right) \ar[dll,rounded corners=1em,to path={%
  -- ([xshift=1.5em]\tikztostart.east)
  |- (C)
  -| ([xshift=-1.5em]\tikztotarget.west)
  -- (\tikztotarget)}] & \\
& \mathit{Kh}^{i+1,j}\left(\diagCrossNegUp\right) \ar[r,"\widehat\Phi"]
& \mathit{Kh}^{i+1,j}\left(\diagCrossPosUp\right) \ar[r]
& \mathit{Kh}^{i+1,j}\left(\diagSingUp\right) \ar[r]
& \cdots
\end{tikzcd}
\quad.
\end{equation}
  \item each group $\mathit{Kh}^{i,j}(D)$ is invariant under moves of singular links; in other words, it defines an invariant for singular links.
\end{enumerate}
\end{theorem}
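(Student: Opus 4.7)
The plan is to construct $\mathit{Kh}^{\ast,\ast}(D)$ for a singular diagram $D$ as an iterated mapping cone of the genus-one morphism $\widehat\Phi$. Concretely, if $D$ has double points $p_1,\dots,p_r$, then for each $p_k$ there are two local resolutions $\Presol{D}^{(k)}$ and $\Nresol{D}^{(k)}$ replacing $p_k$ by $\diagCrossPosUp$ and $\diagCrossNegUp$ respectively. One first defines a cube $\{0,1\}^r\to\mathrm{Kom}$ whose vertex at $\varepsilon\in\{0,1\}^r$ is the Khovanov complex of the ordinary diagram obtained by applying $\Presol{(-)}^{(k)}$ if $\varepsilon_k=1$ and $\Nresol{(-)}^{(k)}$ if $\varepsilon_k=0$, and whose edges are (shifted copies of) $\widehat\Phi$ at the corresponding crossing. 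The totalization gives a complex whose cohomology is declared to be $\mathit{Kh}^{\ast,\ast}(D)$. For this definition to make sense one must first check that the cube is commutative; this reduces to verifying that $\widehat\Phi$ applied at two distinct double points commutes up to chain homotopy, which I would deduce from locality of the cobordism description of $\widehat\Phi$ together with the monoidal structure of the Bar-Natan/Khovanov TQFT.

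Property (1) is then immediate, as $r=0$ collapses the cube to a single vertex. Property (2) will follow from the standard distinguished triangle of a mapping cone applied at the singled-out double point: writing $D$ as a cone of $\widehat\Phi:\mathit{Kh}(\Nresol{D})\to\mathit{Kh}(\Presol{D})$ (with the remaining cube structure carried along), the long exact sequence~\eqref{eq:intro:cat-Vassiliev-skein} is the usual cofiber long exact sequence in each bidegree.

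The hard part is property (3), invariance under the moves of singular links. Classical Reidemeister moves performed in a region disjoint from the double points are handled by ordinary Khovanov invariance applied vertexwise in the cube, once one checks that the resulting homotopy equivalences intertwine the edge maps $\widehat\Phi$; this intertwining is what I would phrase as \emph{naturality of the genus-one morphism under cobordism}, and I would prove it by representing both the Reidemeister map and $\widehat\Phi$ as cobordism-induced morphisms and exhibiting an explicit isotopy of the composite surfaces. The genuinely singular moves — sliding a strand past a double point, and the singular Reidemeister moves that exchange the role of a double point with a nearby crossing — are more delicate: in each case one must produce a chain-level homotopy equivalence between the two iterated cones. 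I would construct these equivalences by using the cobordism expression of $\widehat\Phi$ on one side of the move, sliding the genus-one cobordism through the local modification using surface isotopies in $\mathbb R^3\times[0,1]$, and then recognising the result as the cone on the other side.

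The main obstacle will therefore be the last step, namely the naturality/locality of $\widehat\Phi$: every invariance statement reduces to showing that two a priori distinct compositions of cobordism-induced chain maps agree up to homotopy. I expect to organise these checks by working inside the $2$-category $\Cob$ introduced in the paper, where the required chain homotopies can be produced from explicit surface cobordisms between the relevant genus-one surfaces, and where movie-move arguments in the style of Clark–Morrison–Walker~\cite{ClarkMorrisonWalker2009} guarantee that the resulting equivalences are well defined up to sign.
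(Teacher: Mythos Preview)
Your outline is essentially the approach of the cited paper \cite{ItoYoshida2020UKH}; note that the present paper does not re-prove this theorem but quotes it as prior work. One small correction: the edges of your cube commute \emph{strictly}, not merely up to homotopy, since $\widehat\Phi$ is induced by a local $2$-bordism and the disjoint-union tensor product in $\Cob$ is honestly functorial. So no coherence machinery is needed to define the totalization.

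Where the present paper differs is organizational. Rather than forming a cube of ordinary Khovanov complexes with $\widehat\Phi$-edges, it builds a single $V^4(G)$-fold complex $\operatorname{Sm}(G)$ in which each double point contributes the four-term row
\[
\diagSmoothW \xrightarrow{-\delta_-} \diagSmoothUp \xrightarrow{\Phi} \diagSmoothUp \xrightarrow{-\delta_+} \diagSmoothW
\]
alongside the usual two-term rows for crossings, and then takes the total complex (\cref{def:UKH-as-mcplx}). \Cref{theo:UKH-compare} shows this agrees with the iterated-cone construction you describe. The payoff is that property~(2) becomes a formal consequence of the truncation/cone \cref{lem:mcplx-cone} (see \cref{prop:skein-mcone}), and the skein triangles for positive and negative resolutions appear on the same footing. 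Invariance (property~(3)) is not re-derived here; the paper simply invokes \cite{ItoYoshida2020UKH} via \cref{theo:UKH-compare}, whereas your sketch proposes to redo that verification move-by-move. Your plan for the singular moves is plausible but, as you acknowledge, the genuine work lies in exhibiting explicit chain homotopies realizing the naturality of $\widehat\Phi$ under each local move; that is exactly the content of \cite[Section~4]{ItoYoshida2020UKH}.
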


Notice that the sequence~\eqref{eq:intro:cat-Vassiliev-skein} yields Vassiliev skein relation~\eqref{eq:intro:Vassiliev-skein} on Euler characteristics.
In other words, \cref{theo:UKH-singular} extends Khovanov homology to singular links with a \emph{categorified Vassiliev skein relation}~\eqref{eq:intro:cat-Vassiliev-skein}.
As Khovanov homology categorifies the Jones polynomial, using the same terminologies as in Vassiliev theory, we think of Khovanov homology $\mathit{Kh}(\blank)$ on singular links as a \emph{Vassiliev derivative of Khovanov homology}.

The main theme of the paper is especially the first derivative; that is, Khovanov homology of singular links with unique double points.
In order to compute it, we reformulate our extension of Khovanov homology in terms of \emph{multi-fold complexes}, which are recursive analogues of double complexes.
Namely, using Bar-Natan's formalism \cite{BarNatan2005}, for each singular tangle diagram $D$, we construct a multi-fold complex $\operatorname{Sm}(D)$ in Bar-Natan' ``category of pictures'' with the following rows:
\begin{itemize}
  \item for each crossing,
\begin{equation}
\label{eq:intro:saddle}
\diagSmoothH \xrightarrow{\text{saddle}} \diagSmoothV
\quad;
\end{equation}
  \item for each double point,
\begin{equation}
\label{eq:intro:dPhid}
\diagSmoothH
\xrightarrow{\text{saddle}} \diagSmoothUp
\xrightarrow{\Phi} \diagSmoothUp
\xrightarrow{\text{saddle}} \diagSmoothH
\quad,
\end{equation}
here $\Phi$ is the morphism that appears in the definition of the genus-one morphism.
\end{itemize}
It will turn out that the total complex $\operatorname{Tot}(\operatorname{Sm}(D))$ agrees with the complex $\dblBrac{D}$ defined in \cite{ItoYoshida2020UKH}.
We note that, since the morphism \eqref{eq:intro:saddle} exactly forms the cubes in Khovanov's original definition, this construction is a natural extension of it.

A key observation is that, if $D$ is a singular link diagram, then the first and the last morphisms in the sequence~\eqref{eq:intro:dPhid} admit the kernel and the cokernel.
More precisely, after a discussion on the Frobenius algebra structure on $S^1$ in the picture category, the kernel and the cokernel are isomorphic with each other and described in terms of a twisted $S^1$-module structure on the interval $I=[0,1]$.
This observation leads us to define a multi-fold complex $\operatorname{Crx}(D)$, called the \emph{crux complex} of $D$, so that whose total complex $\dblBrac{D}_{\mathsf{crx}}$ fits into the following exact sequence:
\begin{equation}
\label{eq:intro:crux-dcplx}
0
\to\dblBrac{D}_{\mathsf{crx}}
\to \dblBrac*{\diagSmoothH}
\to \dblBrac*{\diagSmoothUp}
\to \dblBrac*{\diagSmoothUp}
\to \dblBrac*{\diagSmoothH}
\to \dblBrac{D}_{\mathsf{crx}}
\to 0
\quad.
\end{equation}
By computing the spectral sequence of the double complex explicitly, we will prove the following theorem.

\begin{mainthm}[\cref{theo:Kh-crux}]
\label{main:crux}
There is a morphism $\Xi:\dblBrac{D}_{\mathsf{crx}}[2]\to\dblBrac{D}_{\mathsf{crx}}[-2]$ together with a chain homotopy equivalence
\[
\dblBrac{D}\simeq\operatorname{Cone}(\Xi)
\quad.
\]
\end{mainthm}

Applying the TQFT $Z_{h,t}$ associated with the Frobenius algebra $C_{h,t}$ in \cite{LaudaPfeiffer2009} (see also~\cite{Khovanov2006}), we further obtain the following.

\begin{corollary*}[\cref{cor:crux-homology-longex}]
There is a long exact sequence
\begin{equation}
\label{eq:intro:crux-longex}
\cdots
\to H^{i-2}Z_{h,t}\dblBrac{D}_{\mathsf{crx}}
\xrightarrow{\Xi_\ast} H^{i+2}Z_{h,t}\dblBrac{D}_{\mathsf{crx}}
\to H^iZ_{h,t}\dblBrac{D}
\to H^{i-1}Z_{h,t}\dblBrac{D}_{\mathsf{crx}}
\xrightarrow{\Xi_\ast}\cdots
\quad.
\end{equation}
\end{corollary*}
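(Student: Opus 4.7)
The plan is to deduce this directly from \cref{main:crux} by applying $Z_{h,t}$ termwise to the equivalence $\dblBrac{D}\simeq\operatorname{Cone}(\Xi)$ and then invoking the standard long exact sequence of a mapping cone.

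First, since $Z_{h,t}$ is an additive functor out of Bar-Natan's picture category, it extends termwise to cochain complexes, commutes with direct sums, and in particular commutes with the mapping cone construction and preserves chain homotopy equivalences. Applying $Z_{h,t}$ to the equivalence of \cref{main:crux} thus yields a chain homotopy equivalence
\[
Z_{h,t}\dblBrac{D}\;\simeq\;\operatorname{Cone}\bigl(Z_{h,t}\Xi\bigr)
\]
of cochain complexes of abelian groups, where $Z_{h,t}\Xi\colon Z_{h,t}\dblBrac{D}_{\mathsf{crx}}[2]\to Z_{h,t}\dblBrac{D}_{\mathsf{crx}}[-2]$.

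Next, the mapping cone fits into the usual distinguished triangle
\[
Z_{h,t}\dblBrac{D}_{\mathsf{crx}}[2]
\xrightarrow{Z_{h,t}\Xi}
Z_{h,t}\dblBrac{D}_{\mathsf{crx}}[-2]
\to\operatorname{Cone}(Z_{h,t}\Xi)
\to Z_{h,t}\dblBrac{D}_{\mathsf{crx}}[3],
\]
which gives a long exact sequence in cohomology. Using the identification $H^i(C[n])=H^{i-n}(C)$, the two outer terms at cohomological degree $i$ become $H^{i-2}Z_{h,t}\dblBrac{D}_{\mathsf{crx}}$ and $H^{i+2}Z_{h,t}\dblBrac{D}_{\mathsf{crx}}$ respectively, the connecting term becomes $H^{i-1}Z_{h,t}\dblBrac{D}_{\mathsf{crx}}$, and the middle term is replaced by $H^iZ_{h,t}\dblBrac{D}$ via the equivalence above. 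This yields exactly the stated long exact sequence, with connecting maps induced by $Z_{h,t}\Xi$, which we denote by $\Xi_\ast$.

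Essentially the entire argument is formal homological algebra: the only substantive content is the functoriality of $Z_{h,t}$ on cochain complexes and its compatibility with mapping cones, both of which follow automatically from additivity. Consequently there is no real obstacle in this proof, and all of the conceptual work has already been packaged into \cref{main:crux}.
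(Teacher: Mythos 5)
Your proof is correct and is essentially the only sensible route from \cref{theo:Kh-crux} to the corollary: the paper offers no separate proof precisely because the argument is the standard one you lay out — an additive $k$-linear functor preserves shifts and mapping cones, hence $Z_{h,t}\dblBrac{D}\simeq\operatorname{Cone}(Z_{h,t}\Xi)$, and the long exact sequence of a mapping cone then gives the stated sequence after unwinding the shifts $[2]$ and $[-2]$. The only small blemish is the last term of your distinguished triangle, $Z_{h,t}\dblBrac{D}_{\mathsf{crx}}[3]$: with the paper's mapping-cone convention $\operatorname{Cone}(f)^n=Y^n\oplus X^{n+1}$ and its shift convention $X[r]^n=X^{n-r}$, the natural quotient of $\operatorname{Cone}(Z_{h,t}\Xi)$ is $Z_{h,t}\dblBrac{D}_{\mathsf{crx}}[2][-1]=Z_{h,t}\dblBrac{D}_{\mathsf{crx}}[1]$, not $[3]$ — you appear to have flipped the sign of the triangulated shift relative to the indexing convention $H^i(C[n])=H^{i-n}(C)$ you adopted. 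This does not affect the resulting degree bookkeeping in the long exact sequence, which you state correctly, but it is worth reconciling the two conventions for clarity.
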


Specifically, since the morphism $\Xi$ is homogeneous with respect to \emph{Euler grading}, the sequence~\eqref{eq:intro:crux-longex} can be described degreewisely (\cref{cor:crux-longex-gr}).

A major advantage of \cref{main:crux} is that the complex $\dblBrac{D}_{\mathsf{crx}}$ is in general smaller than $\dblBrac{D}$; the rank does not exceed the quarter of the latter.
This feature enables us to compute directly the first Vassiliev derivative of Khovanov homology.
Combining it with a categorified Vassiliev skein relation~\eqref{eq:intro:cat-Vassiliev-skein}, we obtain a brand-new method for computation of Khovanov homology.
In fact, we apply it to determine the Khovanov complexes of twist knots.

\begin{mainthm}
\label{main:twistknot}
There is a chain homotopy equivalence
\[
\dblBrac*{%
  \begin{tikzpicture}[scale=.5,baseline=-2ex]
  \useasboundingbox (-3.5,-4) rectangle (3.5,2);
  \node[inner sep=2,circle] (CU) at (0,1.5) {};
  \node[inner sep=2,circle] (CM) at (0,.7) {};
  \node[inner sep=1.5,circle] (D) at (-1.5,-1.5) {};
  \node[inner sep=1.5,circle] (CL) at (-.5,-1.5) {};
  \node[inner sep=1.5,circle] (CR) at (1.5,-1.5) {};
  \draw[red,very thick] (CU) .. controls+(30:.2)and+(-.5,0) .. (0.7,1.67) .. controls+(3,0)and+(1.5,0) .. (2,-2) to[out=180,in=-60] (CR.center) -- +(120:.4);
  \draw[red,very thick,-stealth-] (CU.center) .. controls+(150:.2)and+(.5,0) .. (-0.7,1.67) .. controls+(-3,0)and+(-1.5,0) .. (-2,-2) to[out=0,in=-120] (D);
  \draw[red,very thick] (CM.center) .. controls+(-30:.5)and+(60:2) .. (CR) -- +(-120:.4);
  \draw[red,very thick,-stealth-] (D.center) .. controls+(120:2)and+(210:.5) .. (CM);
  \draw[red,very thick] (CU.center) to[out=-30,in=30] (CM);
  \draw[red,very thick] (CM.center) to[out=150,in=210] (CU);
  \draw[red,very thick] (D) to[out=60,in=120,looseness=2] (CL) -- +(-60:.4);
  \draw[red,very thick] (D.center) to[out=-60,in=240,looseness=2] (CL);
  \draw[red,very thick] (CL) -- +(60:.4);
  \draw[red,very thick,dotted] (CL) ++(60:.4) to[out=60,in=120] +(.5,0);
  \draw[red,very thick,dotted] (CL) ++(-60:.4) to[out=-60,in=240] +(.5,0);
  \draw[red,very thick,dotted] (CR) ++(120:.4) to[out=120,in=60] +(-.5,0);
  \draw[red,very thick,dotted] (CR) ++(240:.4) to[out=240,in=-60] +(-.5,0);
  \node at (.5,-1.5) {$\cdots$};
  \node[below] at (0,-2) {$\underbrace{\rule{4em}{0pt}}_{\text{$r$ negative crossings}}$};
  \end{tikzpicture}
}
\simeq \begin{dcases*}
\dblBrac{\text{unknot}}\oplus\bigoplus_{i=1}^{r/2}C(2i-1) & if $r$ is even,\\
\dblBrac{\text{trefoil}}\oplus\bigoplus_{i=1}^{(r-1)/2}C(2i) & if $r$ is odd,
\end{dcases*}
\]
here $C(k)$ is the complex of the form
\[
\cdots
\to 0
\to S^1
\xrightarrow{\mu\Delta} S^1
\to \overset{-k-(-1)^k}{\rule{0pt}{2.5ex}0}
\to S^1
\xrightarrow{\mu\Delta} S^1
\to 0 \to \cdots
\quad.
\]
\end{mainthm}

Another application of \cref{main:crux} is the next result.

\begin{mainthm}[\cref{theo:reducible}]
\label{main:reducible}
The genus-one morphism $\widehat\Phi$ is a chain homotopy equivalence on reducible crossings.
\end{mainthm}

Note that \cref{main:reducible} completely distinguishes $\widehat\Phi$ from the link-cobordism theoretic crossing change~\eqref{eq:intro:crossing-change-cob}.
Indeed, the morphism \eqref{eq:intro:crossing-change-cob} is rarely an isomorphism as it has a non-zero quantum degree.
For this reason, we expect that the morphism $\widehat\Phi$ would exhibit Khovanov homology in the context of Vassiliev theory.

The structure of the paper is as follows.
First, in \cref{sec:UKH-mcplx}, after a review on Bar-Natan's picture category, we define Khovanov complex for singular tangles in terms of multi-fold complexes; here, Khovanov's sign convention on cubes is generalized to multi-fold complexes.
Thanks to this formulation, Viro's exact sequence and the categorified Vassiliev skein relation~\eqref{eq:intro:cat-Vassiliev-skein} immediately follows.
Then, in \cref{sec:absex}, we provide basic tools to compute the spectral sequence for \eqref{eq:intro:crux-dcplx}.
The Frobenius algebra $S^1$ is investigated in \cref{sec:fstder}.
Using this result, we define the crux complexes for singular links with unique double points and prove \cref{main:crux} in a more precise form.
Finally, we apply \cref{main:crux} to some link diagrams; \cref{main:twistknot} and \cref{main:reducible} are proved here.

\subsection*{Acknowledgment}
I am grateful to Professor Noboru~Ito for exciting discussions and his kind supports.
I also thank Professor Toshitake~Kohno for his comments and encouragement.
Professor Louis H.~Kauffman gave us constructive feedbacks in MSCS Quantum Topology Seminar.
This work was supported by JSPS KAKENHI Grant Number JP20K03604.

\section{Khovanov complex via multi-fold complexes}
\label{sec:UKH-mcplx}

In this first section, we construct Khovanov complexes of singular tangle diagrams in terms of multi-fold complexes.
We use Bar-Natan's formalism \cite{BarNatan2005}; for a base ring $k$, we define a $k$-linear additive category $\Cob(Y_0,Y_1)$ of cobordisms between oriented compact $0$-manifolds $Y_0$ and $Y_1$.
Khovanov complexes are constructed as complexes in $\Cob(Y_0,Y_1)$.
The key is that, instead of constructing complexes directly, we construct multi-fold complexes in the same spirit as Khovanov's cubes of smoothings \cite{Khovanov2000}.

\subsection{The modules of weighted signs}
\label{sec:UKH-mcplx:modst}

Before discussing multi-fold complexes, we first prepare a technical notion.
Recall that Khovanov \cite{Khovanov2000} defined a module $E_S$ for every finite set $S$ to establish a functorial one-to-one correspondence between commutative cubes and skew-commutative ones.
In this subsection, we generalize the module to ``weighted'' sets to take the total complexes of multi-fold complexes in \cref{sec:UKH-mulcplx:mult-cplx}.

For a finite set $S$, we denote by $\operatorname{Ord}(S)$ the set of total orders on $S$.
We represent an element of $\operatorname{Ord}(S)$ as a word $\vec a=a_1\dots a_n$ in $S$ when it corresponds to the total order $S=\{a_1<\dots<a_n\}$.
Let us denote by $\mathbb F_2$ the field of two elements.
Then, for each map $\alpha:S\to\mathbb F_2$, we define a $k$-module $E_S(\alpha)$ as the quotient of the free $k$-module $k\cdot\operatorname{Ord}(S)$ by the relation
\[
a_1\dots a_{i-1}a_i a_{i+1} a_{i+2}\dots a_n
\sim (-1)^{\alpha(a_i)\alpha(a_{i+1})}a_1\dots a_{i-1}a_{i+1}a_ia_{i+2}\dots a_n
\quad.
\]
Hence, $E_S(\alpha)$ is a free $k$-module of rank $1$.
We call $E_S(\alpha)$ the \emph{module of $\alpha$-weighted signs} on $S$.
For $\vec a=a_1\dots a_n\in\operatorname{Ord}(S)$, we denote by $[\vec a]\in E_S(\alpha)$ the element represented by $\vec a$.

\begin{notation}
We often identify a map $\alpha:S\to\mathbb Z$ with an element of the free $\mathbb F_2$-module generated by $S$ by the map $\mathbb F_2^S\to \mathbb F_2S;\ \alpha\mapsto \sum_{a\in S}\alpha(a)\cdot a$.
Specifically, for each element $a\in S$, we write $\alpha+ a$ the map given by
\[
[\alpha+ a](a')=
\begin{cases}
\alpha(a')+1 & a'=a\ ,\\
\alpha(a') & a'\neq a\ .
\end{cases}
\]
\end{notation}

We introduce some elementary operations on modules of weight signs.
Consider the $k$-homomorphism $k\cdot\operatorname{Ord}(S)\to k\cdot\operatorname{Ord}(S)$ given by
\[
a'_1\dots a'_{i-1}a a'_{i+1}\dots a'_n
\mapsto (-1)^{\alpha(a'_1)+\dots+\alpha(a'_{i-1})}a'_1\dots a'_{i-1}a a'_{i+1}\dots a'_n
\quad.
\]
An easy calculation shows that it induces a $k$-homomorphism $\lambda_a:E_S(\alpha)\to E_S(\alpha+a)$.
Specifically, we have
\begin{equation}
\label{eq:lambda-left}
\lambda_a([a\vec a'])=[a\vec a']
\quad.
\end{equation}
Hence, we obtain $\lambda_a^2=\mathrm{id}$.
Similarly the $k$-homomorphism $k\cdot\operatorname{Ord}(S)\to k\cdot\operatorname{Ord}(S)$ given by
\[
a'_1\dots a'_{i-1}a a'_{i+1}\dots a'_n
\mapsto (-1)^{\alpha(a'_{i+1})+\dots+\alpha(a'_n)}a'_1\dots a'_{i-1}a a'_{i+1}\dots a'_n
\]
induces a $k$-homomorphism $\rho_a:E_S(\alpha)\to E_S(\alpha+a)$.
In contrast to the equations~\eqref{eq:lambda-left}, we have
\begin{equation}
\label{eq:rho-right}
\rho_a([\vec a'a])=[\vec a'a]
\end{equation}
so we also have $\rho_a^2=\mathrm{id}$.
If we write $|\alpha|\coloneqq\sum_{a\in S}\alpha(a)$, then we have
\[
\begin{gathered}
\lambda_a=(-1)^{|\alpha|-\alpha(a)}\rho_a:E_S(\alpha)\to E_S(\alpha +a)
\quad.
\end{gathered}
\]

\begin{lemma}
\label{lem:rho-lambda-com}
In the situation above, let $a,b\in S$ be two distinct elements.
Then, the following equations hold in the module of $k$-homomorphisms $E_S(\alpha)\to E_S(\alpha+a+b)$:
\[
\lambda_a\lambda_b=-\lambda_b\lambda_a
\ ,\quad
\rho_a\rho_b=-\rho_b\rho_a
\ ,\quad
\lambda_a\rho_b=\rho_b\lambda_a
\quad.
\]
\end{lemma}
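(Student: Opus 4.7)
Since $E_S(\alpha)$ is a free $k$-module of rank one, each of the three identities amounts to the equality of scalars acting on a single convenient representative of $\operatorname{Ord}(S)$. My plan is to exploit the pinning formulas \eqref{eq:lambda-left} and \eqref{eq:rho-right} by choosing, for each identity, a total order in which $a$ and $b$ sit at the positions where the corresponding operator is trivial.

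For $\lambda_a\lambda_b=-\lambda_b\lambda_a$, pick any representative of the form $\vec{x}=ab\vec{c}$ where $\vec{c}$ is an ordering of $S\setminus\{a,b\}$. By \eqref{eq:lambda-left}, $\lambda_a[ab\vec{c}]=[ab\vec{c}]$ already in $E_S(\alpha+a)$; applying $\lambda_b$ now requires moving $b$ from position $2$ past the single entry $a$, whose weight in the shifted module is $(\alpha+a)(a)=\alpha(a)+1$, producing a sign $(-1)^{\alpha(a)+1}$. In the reverse composition one first computes $\lambda_b[ab\vec{c}]$ with the original weight $\alpha$, picking up $(-1)^{\alpha(a)}$, and then $\lambda_a$ is free by \eqref{eq:lambda-left}. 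The two outputs therefore differ by $-1$, as claimed.

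The identity $\rho_a\rho_b=-\rho_b\rho_a$ is obtained dually with the representative $\vec{c}\,ab$: $\rho_b$ is trivial on it by \eqref{eq:rho-right}, while $\rho_a$ must cross $b$, whose weight in the intermediate module $E_S(\alpha+b)$ is $\alpha(b)+1$; reversing the order of composition shifts this weight and reproduces the sign flip. (Alternatively, once $\lambda_a\lambda_b=-\lambda_b\lambda_a$ is known, the identity for $\rho$ follows formally from $\lambda_a=(-1)^{|\alpha|-\alpha(a)}\rho_a$ by tracking how $|\alpha|$ updates under successive weight shifts.) Finally, for the mixed relation $\lambda_a\rho_b=\rho_b\lambda_a$, take a representative $a\vec{c}\,b$ with $a$ leftmost and $b$ rightmost. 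In either order of composition, both operators act by \eqref{eq:lambda-left} and \eqref{eq:rho-right} with trivial sign, independent of any intermediate weight shift, and both compositions send $[a\vec{c}\,b]$ to $[a\vec{c}\,b]\in E_S(\alpha+a+b)$.

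The only subtle step is bookkeeping of the shifted weight on the intermediate module $E_S(\alpha+a)$ or $E_S(\alpha+b)$: this is precisely what differentiates the two orders of composition and produces the anti-commutation signs in the first two identities, while its effect cancels out in the mixed identity because the two operators act on disjoint ``ends'' of the word. No further argument beyond direct substitution is required once the representatives have been chosen.
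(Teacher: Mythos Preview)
Your argument is correct. The paper states this lemma without proof, treating it as a routine verification, so there is no ``paper's approach'' to compare against. Your method of evaluating each composition on a single well-chosen representative of $\operatorname{Ord}(S)$---placing $a$ and $b$ at the positions where \eqref{eq:lambda-left} or \eqref{eq:rho-right} trivialize one of the two operators---is exactly the kind of direct check the author presumably had in mind, and your bookkeeping of the shifted weight $\alpha+a$ (respectively $\alpha+b$) on the intermediate module is the point that produces the anticommutation sign. One could also derive all three identities uniformly from \cref{lem:modsign-alt} by fixing a single total order and comparing the explicit scalars given there, but your approach is just as clean.
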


In contrast to the highly technical definition of the $k$-module $E_S(\alpha)$, one can realize it in a more obvious way using the next lemma.

\begin{lemma}
\label{lem:modsign-alt}
Let $S$ be a finite set and $\alpha:S\to\mathbb F_2$ a map.
If $S=\{a_1<\dots<a_n\}$ is a total ordering on $S$, say $\vec a\coloneqq a_1\dots a_n$, then the inclusion $\{\vec a\}\hookrightarrow\operatorname{Ord}(S)$ induces an isomorphism $k\cong E_S(\alpha)$.
Moreover, with respect to this isomorphism, for each $a\in S$, the diagrams below commute:
\[
\begin{gathered}
\begin{tikzcd}[column sep=3em]
k \ar[r,"{(-1)^{\sum_{a'<a}\alpha(a')}}"] \ar[d,"\cong"'] & k \ar[d,"\cong"] \\
E_S(\alpha) \ar[r,"\lambda_a"] & E_S(\alpha+a)
\end{tikzcd}
\ ,\quad
\begin{tikzcd}[column sep=3em]
k \ar[r,"{(-1)^{\sum_{a'>a}\alpha(a')}}"] \ar[d,"\cong"'] & k \ar[d,"\cong"] \\
E_S(\alpha) \ar[r,"\rho_a"] & E_S(\alpha+a)
\end{tikzcd}
\quad.
\end{gathered}
\]
\end{lemma}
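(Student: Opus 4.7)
The plan is to identify $E_S(\alpha)$ with $k$ via the chosen generator $[\vec a]$ and then evaluate $\lambda_a$ and $\rho_a$ on that generator using their defining formulas on representatives in $k\cdot\operatorname{Ord}(S)$.

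For the isomorphism, note that any other $\vec b\in\operatorname{Ord}(S)$ can be transformed into $\vec a$ by a sequence of adjacent transpositions, so the defining relation of $E_S(\alpha)$ expresses $[\vec b]$ as a unit multiple of $[\vec a]$. Hence $[\vec a]$ generates $E_S(\alpha)$. Combined with the previously noted fact that $E_S(\alpha)$ is free of rank $1$ (most transparently realized as the super tensor product $\bigotimes_{a\in S} L_{\alpha(a)}$, where $L_0,L_1$ are the even and odd copies of $k$ and where reordering tensor factors reproduces the Koszul sign, matching the defining relation exactly), it follows that the $k$-linear map $k\to E_S(\alpha)$ sending $1$ to $[\vec a]$ is an isomorphism.

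For the commutativity of the left square, suppose $a=a_i$ inside $\vec a = a_1\dots a_n$. Applying the defining formula of $\lambda_a$ on $k\cdot\operatorname{Ord}(S)$ to the representative $a_1\dots a_n$, the symbol $a$ sits in position $i$ and the prefix weight sum is $\alpha(a_1)+\dots+\alpha(a_{i-1})=\sum_{a'<a}\alpha(a')$, so
$$
\lambda_a([\vec a]) \;=\; (-1)^{\sum_{a'<a}\alpha(a')}\,[\vec a] \;\in\; E_S(\alpha+a).
$$
Transporting along the identification $E_S(\alpha+a)\cong k$, which also sends $[\vec a]$ to $1$, yields multiplication by the stated sign. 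The right square is handled in the same way using $\rho_a$, which produces the complementary suffix sum $\alpha(a_{i+1})+\dots+\alpha(a_n)=\sum_{a'>a}\alpha(a')$.

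I expect no real obstacle. The lemma is essentially an unpacking of the definitions; the only verification beyond direct computation is the well-definedness of $\lambda_a$ and $\rho_a$ on the quotient, which amounts to compatibility with the Koszul relation on adjacent transpositions and is handled by routine sign bookkeeping (automatic in the super-tensor-product model).
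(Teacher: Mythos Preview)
Your proof is correct and is exactly the kind of direct verification from the definitions that the paper intends; in fact the paper states the lemma without proof, treating it as immediate. Your computation of $\lambda_a([\vec a])$ and $\rho_a([\vec a])$ by reading off the prefix and suffix sign sums is precisely the content of the lemma, and your justification of the isomorphism via surjectivity together with the paper's stated fact that $E_S(\alpha)$ is free of rank~$1$ is sound.
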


In the following sections, we consider \emph{tensor product} with $E_S(\alpha)$ in arbitrary $k$-linear category in the sense in \cite{Kel05}.
Recall that, if $A$ is a $k$-linear category, then for an object $X\in\mathcal A$ and a $k$-module $M$, $X\otimes M$ is defines as an object equipped with an isomorphism of $k$-modules
\begin{equation}
\label{eq:tensor-universal}
\mathcal A(X\otimes M,Y)
\cong \mathrm{Hom}_k(M,\mathcal A(X,Y))
\end{equation}
which is natural with respect to $Y\in\mathcal A$.
Although such an object $X\otimes M$ may or may not exist, we call $X\otimes M$ the \emph{tensor product} of $X$ with $M$ if it does.
Specifically, it always exists if $M$ is free of rank $1$.
Furthermore, if we denote by $\mathbf{Free}_k^1\subset\mathbf{Mod}_k$ the full subcategory of free $k$-modules of rank $1$, then the assignment $(X,M)\mapsto X\otimes M$ can be realized as an essentially unique $k$-bilinear functor such that \eqref{eq:tensor-universal} is also natural with respect to $X$ and $M$.

\begin{corollary}
\label{cor:tensor-ES-alt}
Let $\mathcal A$ be a $k$-linear category, and let $S$ be a finite totally-ordered set.
Then, there is a natural isomorphism $\mathrm{Id}_{\mathcal A}\cong(\blank)\otimes E_S(\alpha)$ for each $\alpha:S\to\mathbb Z$ such that the diagrams below commute:
\[
\begin{gathered}
\begin{tikzcd}
X \ar[r,"{(-1)^{\sum_{a'<a}\alpha(a')}\mathrm{id}_X}"] \ar[d,"\cong"'] & X \ar[d,"\cong"] \\
X\otimes E_S(\alpha) \ar[r,"X\otimes \lambda_a"] & X\otimes E_S(\alpha+a)
\end{tikzcd}
\ ,\quad
\begin{tikzcd}
X \ar[r,"{(-1)^{\sum_{a'>a}\alpha(a')}\mathrm{id}_X}"] \ar[d,"\cong"'] & X \ar[d,"\cong"] \\
X\otimes E_S(\alpha) \ar[r,"X\otimes \rho_a"] & X\otimes E_S(\alpha+a)
\end{tikzcd}
\quad.
\end{gathered}
\]
\end{corollary}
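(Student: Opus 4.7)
The plan is to reduce the corollary directly to \cref{lem:modsign-alt} by transporting that statement through the tensor-product construction. The starting observation is that every $E_S(\alpha)$ is free of rank $1$, so the tensor product $X\otimes E_S(\alpha)$ is guaranteed to exist for every object $X\in\mathcal A$ and, by the universal property~\eqref{eq:tensor-universal}, may be promoted to an essentially unique $k$-bilinear functor
\[
(\blank)\otimes(\blank):\mathcal A\times\mathbf{Free}_k^1\to\mathcal A
\quad.
\]
First I would fix, once and for all, the canonical natural isomorphism $X\cong X\otimes k$ arising from this bilinear functor, together with the fact that any $k$-linear map $k\to k$, being multiplication by some scalar $c\in k$, is sent to $c\cdot\mathrm{id}_X:X\otimes k\to X\otimes k$.

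Next I would use the chosen total order $S=\{a_1<\dots<a_n\}$ and invoke \cref{lem:modsign-alt} to obtain the isomorphism $k\cong E_S(\alpha)$, $1\mapsto[a_1\dots a_n]$. Tensoring with $X$ and composing with $X\cong X\otimes k$ gives the desired natural isomorphism $X\cong X\otimes E_S(\alpha)$. Naturality in $X$ follows from naturality of the canonical isomorphism $X\cong X\otimes k$, which is built into the bilinear functor.

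It remains to verify the two commutative squares. Here the strategy is simply to apply the bilinear functor to the squares already produced by \cref{lem:modsign-alt}: under the isomorphism $k\cong E_S(\alpha)$ the map $\lambda_a$ corresponds to multiplication by $(-1)^{\sum_{a'<a}\alpha(a')}$, and similarly $\rho_a$ corresponds to multiplication by $(-1)^{\sum_{a'>a}\alpha(a')}$. Since $(\blank)\otimes(\blank)$ is $k$-bilinear, the map $X\otimes\lambda_a$ is identified with $(-1)^{\sum_{a'<a}\alpha(a')}\mathrm{id}_X$ through the chosen isomorphisms, and analogously for $\rho_a$. This exactly matches the squares demanded by the corollary.

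The only subtle point, and hence the main thing to spell out carefully, is the compatibility between the \emph{choice} of canonical isomorphism $X\cong X\otimes E_S(\alpha)$ and the fact that $E_S(\alpha)$ depends on $\alpha$ while the chosen total order on $S$ is fixed. Once one recognises that the bilinear functor is determined only up to unique isomorphism by its values on objects of the form $(X,k)$, the issue evaporates: the isomorphism $\mathrm{Id}_\mathcal A\cong(\blank)\otimes E_S(\alpha)$ is forced, and \cref{lem:modsign-alt} is then transported verbatim.
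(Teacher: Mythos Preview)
Your proposal is correct and matches the paper's approach: the paper gives no explicit proof of the corollary, treating it as an immediate consequence of \cref{lem:modsign-alt} together with the preceding paragraph on the $k$-bilinear tensor functor $\mathcal A\times\mathbf{Free}_k^1\to\mathcal A$. Your write-up simply makes that implicit argument explicit, and the one point you flag as ``subtle'' (compatibility of the chosen isomorphisms as $\alpha$ varies) is exactly handled by fixing the total order once and for all, as the paper does in the statement.
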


\begin{remark}
In the following sections, we mainly consider maps $\alpha:S\to\mathbb Z$ instead of $S\to\mathbb F_2$.
In this case, by abuse of notation, we also denote by $\alpha$ the composition of $\alpha$ with the canonical projection $\mathbb Z\to\mathbb F_2$, which is a ring homomorphism.
In this point of view, the operations $\lambda_a$ and $\rho_a$ define both maps $E_S(\alpha)\to E_S(\alpha+a)$ and $E_S(\alpha)\to E_S(\alpha-a)$ since $E_S(\alpha+a)=E_S(\alpha-a)$ in the convention.
\end{remark}

\subsection{Multi-fold complexes}
\label{sec:UKH-mulcplx:mult-cplx}

Throughout the section, we fix a $k$-linear category $\mathcal A$.

\begin{definition}
Let $S$ be a finite set.
Then, an \emph{$S$-fold complex} in $\mathcal A$ is a family $X^\bullet=\{X^\alpha\}_\alpha$ of objects of $\mathcal A$ indexed by maps $\alpha:S\to\mathbb Z$ equipped with a morphism $d_a^\alpha:X^\alpha\to X^{\alpha+a}$ for each $a\in S$ and $\alpha:S\to\mathbb Z$ such that
\begin{equation}
\label{eq:multcplx-diff}
d_a^{\alpha+a}\circ d_a^\alpha=0
\ ,\quad
d_a^{\alpha+b}\circ d_b^\alpha=d_b^{\alpha+a}\circ d_a^\alpha\quad \text{($a\neq b$)}
\quad.
\end{equation}
We call the morphisms $d_a^\alpha$ the \emph{differential} on $X^\bullet$.
\end{definition}

If $X^\bullet$ is an $S$-fold complex, then we often omit the superscript of its differentials and just write $d_a=d_a^\alpha$.
Hence, the equations~\eqref{eq:multcplx-diff} are written as $d_a d_a=0$ and $d_a d_b=d_b d_a$ respectively.

\begin{definition}
Let $S$ be a finite set, and let $X^\bullet$ and $Y^\bullet$ be $S$-fold complexes in $\mathcal A$.
Then, a \emph{morphism of $S$-fold complexes} is a family of morphisms $f^\alpha:X^\alpha\to Y^\alpha\in\mathcal A$ such that
\[
d^\alpha_a f^\alpha=f^{\alpha+a}d_a^\alpha:X^\alpha\to Y^{\alpha+a}
\]
for each $a\in S$ and $\alpha:S\to\mathbb Z$.
\end{definition}

As with the case of differentials, we also omit the superscript of morphisms of $S$-fold complexes unless there is danger of confusion.

Since morphisms of $S$-fold complexes compose in an obvious way, they form a category, which we denote by $\mathbf{MCh}_S(\mathcal A)$.
The category $\mathbf{MCh}_S(\mathcal A)$ is canonically $k$-linear, and it is additive (resp.~abelian) if $\mathcal A$ is so.

\begin{example}
\label{ex:0fold-cplx}
There is a canonical identification $\mathbf{MCh}_\varnothing(\mathcal A)\cong\mathcal A$.
\end{example}

\begin{example}
\label{ex:1fold-cplx}
If $S=\{a\}$ is a singleton, we have $\mathbf{MCh}_{\{a\}}(\mathcal A)\simeq \mathbf{Ch}(\mathcal A)$ the category of chain complexes and morphisms of them.
\end{example}

\begin{example}
\label{ex:mulfold-cplx-disj}
For two finite sets $S$ and $T$, there is an equivalence of categories
\[
\mathbf{MCh}_{S\amalg T}(\mathcal A)
\simeq \mathbf{MCh}_S(\mathbf{MCh}_T(\mathcal A))
\quad.
\]
In particular, in view of \cref{ex:1fold-cplx}, we have $\mathbf{MCh}_{S\amalg\{a\}}(\mathcal A)\cong\mathbf{Ch}(\mathbf{MCh}_S(\mathcal A))$, which is the reason of the terminology.
\end{example}

An $S$-fold complex $X^\bullet$ is said to be \emph{bounded} if $X^\alpha=0$ for all but finitely many indices $\alpha$.
In case $\mathcal A$ is additive as a $k$-linear category, we assign to a bounded $S$-fold complex $X^\bullet$ a chain complex $\operatorname{Tot}(X^\bullet)$ as follows: as a graded object, it is given by
\[
\operatorname{Tot}(X^\bullet)^i
\coloneqq \bigoplus_{|\alpha|=i} X^\alpha\otimes E_S(\alpha)
\quad,
\]
here we use the tensor product defined in \eqref{eq:tensor-universal} (see also \cref{cor:tensor-ES-alt}).
The differential $d^i:\operatorname{Tot}(X^\bullet)^i\to \operatorname{Tot}(X^\bullet)^{i+1}$ is then the sum of morphisms of the form
\[
\sum_{a\in S}d_a^\alpha\otimes\rho_a:X^\alpha\otimes E_S(\alpha)\to\bigoplus_{a\in S}X^{\alpha+a}\otimes E_S(\alpha+a)
\]
for $\alpha:S\to\mathbb Z$.
\Cref{lem:rho-lambda-com} implies $d^{i+1}\circ d^i=0$, so it makes $\operatorname{Tot}(X^\bullet)$ into a chain complex in $\mathcal A$.
We call $\operatorname{Tot}(X^\bullet)$ the \emph{total complex} of $X^\bullet$.

We denote by $\mathbf{MCh}_S^{\mathsf b}(\mathcal A)\subset\mathbf{MCh}_S(\mathcal A)$ the full subcategory spanned by bounded $S$-fold complexes.
The construction above actually gives rise to a $k$-linear functor
\begin{equation}
\label{eq:MCh-Tot}
\operatorname{Tot}:\mathbf{MCh}_S^{\mathsf b}(\mathcal A)\to\mathbf{Ch}^{\mathsf b}(\mathcal A)
\end{equation}
in an obvious way.

\begin{remark}
The equivalence in \cref{ex:1fold-cplx} is realized as the functor \eqref{eq:MCh-Tot}.
\end{remark}

\subsection{The category of cobordisms and local relations}
\label{sec:UKH-mulcplx:cob}

We quickly review cobordisms with boundaries and Bar-Natan's local relations.
In this paper, we rather use the formulation in terms of cobordisms; we write $\mathbb R_+\coloneqq[0,\infty)\subset\mathbb R$ the half open interval.

\begin{definition}[\cite{Janich1968}, see also {\cite[Proposition~2.1.7]{Laures2000}}]
A \emph{$\langle2\rangle$-manifold} is a manifold with corners $M$ together with a pair of submanifolds $\partial_0M$ and $\partial_1M$ such that there is a neat embedding $M\hookrightarrow\mathbb R_+^2\times\mathbb R^n$ for some positive integer $n$ with
\[
\partial_0M = M\cap (\{0\}\times\mathbb R_+\times\mathbb R^n)
\ ,\quad\partial_1M = M\cap(\mathbb R_+\times\{0\}\times\mathbb R^n)
\quad.
\]
\end{definition}

\begin{definition}[\cite{SchommerPries2009}]
Let $Y_0$ and $Y_1$ be two compact oriented $0$-manifolds; i.e.~finite sets with labels in $\{\mathord-,\mathord+\}$.
We define a category $\mathbf{Cob}_2(Y_0,Y_1)$ as follows:
\begin{itemize}
  \item objects are $1$-dimensional cobordisms $W:Y_0\to Y_1$;
  \item morphisms $W_0\to W_1$ are diffeomorphism classes of $2$-bordisms; i.e.~compact oriented $2$-dimensional $\langle2\rangle$-manifolds $S$ equipped with a diffeomorphism
\[
\partial_0 S \cong\overbar{W_0}\cup W_1
\ ,\quad
\partial_1 S\cong (\overbar{Y_0}\amalg Y_1)\times[0,1]
\]
which agree with each other on the boundaries, where $\overbar{W_0}$ and $\overbar{Y_0}$ are respectively the manifolds $W_0$ and $Y_0$ with the reversed orientations;
  \item the composition is defined in terms of the gluing of $\langle2\rangle$-manifolds along boundaries (see \cite{SchommerPries2009}).
\end{itemize}
\end{definition}

\begin{example}
The category $\mathbf{Cob}_2(\varnothing,\varnothing)$ is exactly the category of oriented $1$-manifolds and cobordisms between them.
\end{example}

\begin{notation}
In this paper, we always use the ``bottom-to-top'' convention for cobordisms and the ``left-to-right'' one for $2$-bordisms as in \cref{fig:example-2bord}.
\begin{figure}[htbp]
\centering
\begin{tikzpicture}[xlen=.5pt,ylen=-.5pt,scale=.8]
\node at (0,0) {$S$};
\node at (-100,0) {$\partial_1^- S$};
\node at (100,0) {$\partial_1^+ S$};
\node[left] at (0,60) {$\xrightarrow{\simeq}\;\partial_0^-S$};
\node[right] at (0,-60) {$\partial_0^+S\;\xleftarrow{\simeq}$};
\begin{scope}[transform canvas={xshift=-120}]
\node at (-100,0) {$Y_0$};
\node at (100,0) {$Y_1$};
\node at (0,60) {$W_0$};
\draw[red,very thick] (-59.5,40.5) -- (-19.5,40.5) to[quadratic={(0.5,40.5)}] (0.5,20.5) to[quadratic={(0.5,0.5)}] (-19.5,0.5) to[quadratic={(-39.5,0.5)}] (-39.5,-19.5) to[quadratic={(-39.5,-39.5)}] (-19.5,-39.5) -- (60.5,-39.5);
\draw[red,very thick] (60.5,40.5) to[quadratic={(40.5,40.5)}] (40.5,20.5) to[quadratic={(40.5,0.5)}] (60.5,0.5);
\fill[blue] (-59.5,40.5) circle (.5ex);
\fill[blue] (60.5,-39.5) circle(.5ex);
\fill[blue] (60.5,40.5) circle(.5ex);
\fill[blue] (60.5,0.5) circle(.5ex);
\end{scope}
\draw[red,very thick] (-73.59,48.63) -- (-33.59,48.63) to[quadratic={(-13.59,48.63)}] (-6.545,44.57) to[quadratic={(-4.71,43.51)}] (-4.71,42.73);
\draw[red,very thick,dotted] (-4.71,42.73) to[quadratic={(-4.71,40.5)}] (-19.5,40.5) to[quadratic={(-39.5,40.5)}] (-32.46,36.43) to[quadratic={(-25.41,32.37)}] (-5.41,32.37) -- (-3.76,32.37);
\draw[red,very thick] (-3.76,32.37) -- (29.86,32.37);
\draw[red,very thick,dotted] (29.86,32.37) -- (60.5,32.37);
\draw[red,very thick] (60.5,32.37) -- (74.59,32.37);
\draw[red,very thick] (46.41,48.63) to[quadratic={(31.62,48.63)}] (31.62,46.41);
\draw[red,very thick,dotted] (31.62,46.41) to[quadratic={(31.62,45.63)}] (33.46,44.57) to[quadratic={(37.64,42.15)}] (46.41,41.17);
\draw[red,very thick] (46.41,41.17) to[quadratic={(52.39,40.5)}] (60.5,40.5);
\draw[black] (-4.71,42.72) .. controls+(0,-40)and+(0,-40) .. (31.62,46.41);
\draw[black,thick,dotted] (-34.29,38.28) -- (-34.29,-31.37);
\draw[black] (-34.29,-31.37) -- (-34.29,-41.72);
\draw[black] (9.38,-45.41) .. controls(9.38,-40.08)and(10.03,-35.4) .. (11.14,-31.37);
\draw[black,thick,dotted] (11.14,-31.37) .. controls(17.61,-8.11)and(39.9,-7) .. (44.76,-31.37);
\draw[black] (44.76,-31.37) .. controls(45.37,-34.42)and(45.71,-37.87) .. (45.71,-41.72);
\draw[blue,very thick] (-73.59,48.63) -- (-73.59,-31.37);
\draw[blue,very thick] (46.41,48.63) -- (46.41,-31.37);
\draw[blue,very thick] (60.5,40.5) -- (60.5,-39.5);
\draw[blue,very thick] (74.59,32.37) -- (74.59,-47.63);
\draw[red,very thick] (-73.59,-31.37) -- (46.41,-31.37);
\draw[red,very thick] (-32.46,-43.57) to[quadratic={(-39.5,-39.5)}] (-19.5,-39.5) to[quadratic={(0.5,-39.5)}] (7.545,-43.57) to[quadratic={(14.59,-47.63)}] (-5.41,-47.63) to[quadratic={(-25.41,-47.63)}] (-32.46,-43.57);
\draw[red,very thick] (60.5,-39.5) to[quadratic={(40.5,-39.5)}] (47.54,-43.57) to[quadratic={(54.59,-47.63)}] (74.59,-47.63);
\begin{scope}[transform canvas={xshift=120}]
\node at (-100,0) {$Y_0$};
\node at (100,0) {$Y_1$};
\node at (0,-60){$W_1$};
\draw[red,very thick] (-59.5,40.5) -- (60.5,40.5);
\draw[red,very thick] (-39.5,-19.5) to[quadratic={(-39.5,0.5)}] (-19.5,0.5) to[quadratic={(0.5,0.5)}] (0.5,-19.5) to[quadratic={(0.5,-39.5)}] (-19.5,-39.5) to[quadratic={(-39.5,-39.5)}] (-39.5,-19.5);
\draw[red,very thick] (60.5,0.5) to[quadratic={(40.5,0.5)}] (40.5,-19.5) to[quadratic={(40.5,-39.5)}] (60.5,-39.5);
\fill[blue] (-59.5,40.5) circle (.5ex);
\fill[blue] (60.5,-39.5) circle(.5ex);
\fill[blue] (60.5,40.5) circle(.5ex);
\fill[blue] (60.5,0.5) circle(.5ex);
\end{scope}
\end{tikzpicture}
\caption{Example of a $2$-bordism (orientation omitted).}
\label{fig:example-2bord}
\end{figure}
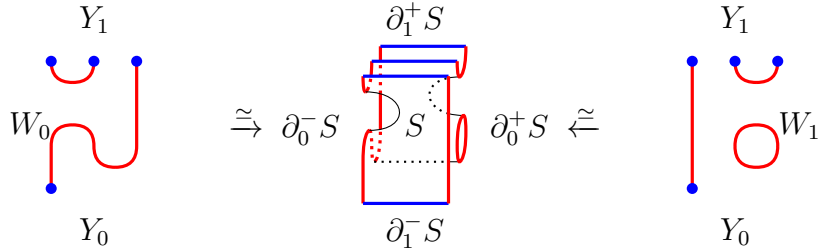
\end{notation}

For a fixed base ring $k$, we denote by $k\mathbf{Cob}_2(Y_0,Y_1)$ the $k$-linear category freely generated by $\mathbf{Cob}_2(Y_0,Y_1)$; i.e.~it has the same objects as $\mathbf{Cob}_2(Y_0,Y_1)$ and $k\mathbf{Cob}_2(Y_0,Y_1)(W_0,W_1)$ is the free $k$-module generated by the set $\mathbf{Cob}_2(Y_0,Y_1)(W_0,W_1)$.
Bar-Natan \cite{BarNatan2005} introduced the following three relations on the morphisms of $k\mathbf{Cob}_2(Y_0,Y_1)$:
\begin{enumerate}[label=\upshape(\unexpanded{\labelseq{$S$\\$T$\\$4Tu$}}{\value*})]
  \item\label{relBN:S} $S\amalg S^2\sim0$, here $S^2$ is the $2$-sphere.
  \item\label{relBN:T} $S\amalg T^2\sim 2\cdot W$, here $T^2$ is the torus $S^1\times S^2$;
  \item\label{relBN:4Tu} $S_0+S_1-S_2-S_3\sim 0$ if $S_0$, $S_1$, $S_2$, and $S_3$ are identical outside disks and tubes that are depicted as in \cref{tab:4Tu-terms}.
\begin{table}[t]
\centering
\begin{tabular}{c|c|c|c}
  $S_0$ & $S_1$ & $S_2$ & $S_3$ \\\hline\rule{0pt}{11ex}
  \BordFourTuL & \BordFourTuR & \BordFourTuU & \BordFourTuD
\end{tabular}
\caption{Terms in the relation \ref{relBN:4Tu}}
\label{tab:4Tu-terms}
\end{table}
\end{enumerate}
We define a $k$-linear category $\Cob(Y_0,Y_1)$ as the additive closure of the quotient category of $k\mathbf{Cob}_2(Y_0,Y_1)$ by the three relations above; more explicitly
\begin{itemize}
  \item objects of $\Cob(Y_0,Y_1)$ are finite sequences $(W_1,\dots,W_n)$ of oriented cobordisms $W_i:Y_0\to Y_1$;
  \item morphisms are matrices of formal sums of (diffeomorphism classes of) $2$-bordisms modulo the relations \ref{relBN:S}, \ref{relBN:T}, and \ref{relBN:4Tu}.
\end{itemize}
For more detailed description, we refer the reader to \cite[Section~3]{BarNatan2005}.
The following is straightforward from the construction, where we denote by $\mathbf{Mod}_k$ the category of $k$-modules and $k$-homomorphisms.

\begin{lemma}
\label{lem:CobB-additive}
For every compact oriented $0$-manifolds $Y_0$ and $Y_1$, the category $\Cob(Y_0,Y_1)$ is an additive category with an isomorphism
\[
(W_1,\dots,W_p)\oplus(W'_1,\dots,W'_q)
\cong (W_1,\dots,W_p,W'_1,\dots,W'_q)
\quad.
\]
Furthermore, $k$-linear functors $\Cob(Y_0,Y_1)\to\mathbf{Mod}_k$ correspond essentially in one-to-one to $k$-linear functors $k\mathbf{Cob}(Y_0,Y_1)\to\mathbf{Mod}_k$ which identifies morphisms connected by the relations \ref{relBN:S}, \ref{relBN:T}, and \ref{relBN:4Tu}.
\end{lemma}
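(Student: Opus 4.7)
The statement breaks into two independent assertions: first, that $\Cob(Y_0,Y_1)$ is additive with the claimed direct sum; second, that $k$-linear functors out of it correspond to relation-respecting $k$-linear functors out of $k\mathbf{Cob}_2(Y_0,Y_1)$. Both follow from the standard universal properties of the ``matrix category'' (additive closure) construction combined with quotients by $k$-bilinear relations, so my plan is simply to assemble these.

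For the first part, I would recall the general construction: for any $k$-linear category $\mathcal C$, let $\mathrm{Mat}(\mathcal C)$ be the category whose objects are finite sequences $(C_1,\dots,C_p)$ of objects of $\mathcal C$, and whose morphisms $(C_1,\dots,C_p)\to(C'_1,\dots,C'_q)$ are $q\times p$ matrices of morphisms in $\mathcal C$, composed by the usual matrix product using the $k$-bilinear composition of $\mathcal C$. A direct check (zero object = empty sequence; biproduct = concatenation with the evident matrix insertions and projections) shows that $\mathrm{Mat}(\mathcal C)$ is $k$-linear and additive, and that the sequence-concatenation formula for the biproduct holds. Since $\Cob(Y_0,Y_1)$ is by definition $\mathrm{Mat}(\mathcal C)$ with $\mathcal C=k\mathbf{Cob}_2(Y_0,Y_1)/(\ref{relBN:S},\ref{relBN:T},\ref{relBN:4Tu})$, this gives additivity and the isomorphism claimed.

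For the second part, I would invoke the two universal properties in sequence. First, the quotient $\mathcal C$ satisfies the universal property that $k$-linear functors $\mathcal C\to\mathcal D$ correspond bijectively to $k$-linear functors $k\mathbf{Cob}_2(Y_0,Y_1)\to\mathcal D$ that send the morphisms in each of the three relations \ref{relBN:S}, \ref{relBN:T}, \ref{relBN:4Tu} to identified morphisms in $\mathcal D$; this is the standard universal property of a $k$-linear quotient by a congruence generated by a set of relations, and it applies to any $k$-linear target $\mathcal D$, in particular to $\mathbf{Mod}_k$. Second, the additive closure $\mathrm{Mat}(\mathcal C)$ has the universal property that, for any additive $k$-linear category $\mathcal D$, restriction along the canonical embedding $\mathcal C\hookrightarrow\mathrm{Mat}(\mathcal C)$ induces an equivalence between $k$-linear additive functors $\mathrm{Mat}(\mathcal C)\to\mathcal D$ and $k$-linear functors $\mathcal C\to\mathcal D$ (a $k$-linear functor into an additive category uniquely extends to sequences by sending a sequence to the biproduct and a matrix to the corresponding biproduct morphism). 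Since $\mathbf{Mod}_k$ is additive, any $k$-linear functor $\mathrm{Mat}(\mathcal C)\to\mathbf{Mod}_k$ is automatically additive (it preserves zero objects and binary biproducts because these are characterized by the matrix calculus of idempotents and split inclusions inside $\mathrm{Mat}(\mathcal C)$, and $k$-linear functors preserve such absolute limits). Composing the two correspondences yields the claimed bijection.

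The only point that needs a moment of care is the automaticity of additivity for a $k$-linear functor into an additive $k$-linear target, which justifies why ``essentially one-to-one'' (rather than up to natural isomorphism) is the right level of precision. Everything else is routine bookkeeping in the matrix calculus, with no genuine obstruction; the lemma is essentially a repackaging of the universal properties of additive closure and quotient categories applied to the specific data $k\mathbf{Cob}_2(Y_0,Y_1)$ and the relations \ref{relBN:S}, \ref{relBN:T}, \ref{relBN:4Tu}.
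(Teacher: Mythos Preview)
Your proposal is correct and follows essentially the same approach as the paper. The paper's proof is considerably terser---it dismisses the first half as ``easily verified'' and for the second half simply observes that every object is a direct sum $\bigoplus_i W_i$ and that ``every $k$-linear functor preserves direct products,'' so a functor is determined by its restriction---but this is exactly the content of your additive-closure universal property combined with the automatic additivity of $k$-linear functors into additive targets.
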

\begin{proof}
The first half is easily verified.
As a result, if we write $W\coloneqq(W)\in\Cob(Y_0,Y_1)$ for each cobordism $W:Y_0\to Y_1$, then every object of $\Cob(Y_0,Y_1)$ can be written in the form $\bigoplus_iW_i$ with $W_i\in\mathbf{Cob}_2(Y_0,Y_1)$.
Since every $k$-linear functor preserves direct products, it follows that a functor $F:\Cob(Y_0,Y_1)\to\mathbf{Mod}_k$ is determined by its restriction to $k\mathbf{Cob}(Y_0,Y_1)\to\mathbf{Mod}_k$.
Hence, the result follows.
\end{proof}

In addition to the direct sums, we use the disjoint union of manifolds as a kind of tensor product.
Namely, it defines a functor
\[
\mathbf{Cob}_2(Y_0,Y_1)\times\mathbf{Cob}_2(Y_0',Y_1')
\to \mathbf{Cob}_2(Y_0\amalg Y_0',Y_1\amalg Y_1')
\quad.
\]
It then turns out that it further induces a $k$-bilinear functor
\begin{equation}
\label{eq:disj-tensor}
\Cob(Y_0,Y_1)\times\Cob(Y_0',Y_1')
\to \Cob(Y_0\amalg Y_0',Y_1\amalg Y_1')
\quad.
\end{equation}
For cobordisms $W,W'$, and for $2$-bordisms $S,S'$, we denote by $W\otimes W'=W\amalg W'$ and $S\otimes S'=S\amalg S'$ the images under the functor.
Specifically, with this product, the category $\Cob(\varnothing,\varnothing)$ becomes a symmetric monoidal category.

As examples, we construct a family of symmetric monoidal $k$-linear functors $\Cob(\varnothing,\varnothing)\to\mathbf{Mod}_k$.

\begin{definition}[{\cite{Khovanov2006}}, {\cite[Definition~1.1]{LaudaPfeiffer2009}}]
\label{def:FrobCht}
For elements $h,t\in k$, we define a Frobenius algebra $C_{h,t}\coloneqq k[x]/(x^2-hx-t)$ with
\[
\begin{gathered}
\Delta(1)=1\otimes x+x\otimes 1 - h 1\otimes 1
\quad,\\
\Delta(x)=x\otimes x + t1\otimes 1
\quad,\\
\varepsilon(1)=0\ ,\quad \varepsilon(x)=1
\quad.
\end{gathered}
\]
\end{definition}

Recall that every Frobenius algebra $A$ over $k$ gives rise to a $k$-linear functor $Z_A:k\mathbf{Cob}_2(\varnothing,\varnothing)\to\mathbf{Mod}_k$ with $Z_A(S^1)= A$.

\begin{lemma}
\label{lem:Cht-induce}
For every pair of elements $h,t\in k$, the associated $k$-linear functor $Z_{C_{h,t}}:k\mathbf{Cob}_2(\varnothing,\varnothing)\to\mathbf{Mod}_k$ induces a symmetric monoidal $k$-linear functor $\Cob(\varnothing,\varnothing)\to\mathbf{Mod}_k$.
\end{lemma}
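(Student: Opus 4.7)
The approach is to invoke \cref{lem:CobB-additive}: it suffices to show that $Z_{C_{h,t}}:k\mathbf{Cob}_2(\varnothing,\varnothing)\to\mathbf{Mod}_k$ identifies any two morphisms related by the local relations \ref{relBN:S}, \ref{relBN:T}, and \ref{relBN:4Tu}. Once this is verified, the induced functor $\Cob(\varnothing,\varnothing)\to\mathbf{Mod}_k$ is automatically symmetric monoidal, because disjoint union of $1$-manifolds and tensor product of $k$-modules are the ambient symmetric monoidal structures and the underlying TQFT $Z_{C_{h,t}}$ is symmetric monoidal by construction.

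The relations \ref{relBN:S} and \ref{relBN:T} are immediate numerical checks. The $2$-sphere factors as $\varepsilon\circ\eta$, so $Z_{C_{h,t}}(S^2)=\varepsilon(1)=0$, and monoidality forces $Z_{C_{h,t}}(W\amalg S^2)=0$, matching \ref{relBN:S}. The torus factors as $\varepsilon\circ\mu\circ\Delta\circ\eta$; using the formulas in \cref{def:FrobCht} we compute
\[
\mu\Delta(1)=\mu(1\otimes x+x\otimes 1-h\cdot 1\otimes 1)=2x-h
\quad,
\]
hence $Z_{C_{h,t}}(T^2)=\varepsilon(2x-h)=2$, which matches the factor $2$ in \ref{relBN:T}.

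The main obstacle is \ref{relBN:4Tu}. Fix a common ambient bordism containing the four local pictures, so that $Z_{C_{h,t}}(S_i)$ differs between the four cases only in a tensor factor determined by how the two tubes are attached. Expanding each picture in terms of the copairing element $\delta\coloneqq\Delta(1)=1\otimes x+x\otimes 1-h\cdot 1\otimes 1$ and the multiplication $\mu$, the relation $S_0+S_1-S_2-S_3=0$ becomes an identity in a tensor power of $C_{h,t}$ which follows from (i) the commutativity of the ring $C_{h,t}=k[x]/(x^2-hx-t)$ and (ii) the Frobenius symmetry $\sum_i e_i\otimes e^i=\sum_i e^i\otimes e_i$ in $C_{h,t}\otimes C_{h,t}$, where $\{1,x\}$ and $\{x-h,1\}$ are mutually dual with respect to the pairing $(a,b)\mapsto\varepsilon(ab)$ (a straightforward calculation using $\varepsilon(1)=0$, $\varepsilon(x)=1$, and $x^2=hx+t$). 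Both facts are immediate from the explicit formulas, so \ref{relBN:4Tu} is satisfied and the proof is complete.
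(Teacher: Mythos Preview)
Your treatment of \ref{relBN:S} and \ref{relBN:T} is fine and matches the paper's approach exactly. The gap is in your handling of \ref{relBN:4Tu}.

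You assert that, after expanding in terms of the copairing $\delta=\Delta(1)$ and the multiplication, the identity ``follows from (i) the commutativity of $C_{h,t}$ and (ii) the Frobenius symmetry $\sum_i e_i\otimes e^i=\sum_i e^i\otimes e_i$.'' But properties (i) and (ii) hold for \emph{every} commutative Frobenius algebra, while \ref{relBN:4Tu} does not: for instance the product ring $k^n$ with $\varepsilon(a_1,\dots,a_n)=\sum_i a_i$ is a commutative Frobenius algebra with symmetric copairing, yet the map
\[
\mathrm{id}\otimes\eta\varepsilon+\eta\varepsilon\otimes\mathrm{id}-(\eta\otimes\eta)(\varepsilon\mu)-(\Delta\eta)(\varepsilon\otimes\varepsilon)
\]
applied to $e_1\otimes e_2$ is nonzero as soon as $n\ge 3$. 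So (i) and (ii) cannot be the reason \ref{relBN:4Tu} holds; what is actually being used is the specific rank-$2$ structure of $C_{h,t}$. The paper makes this explicit: it writes down the algebraic identity above as a map $A\otimes A\to A\otimes A$ and leaves its verification for $A=C_{h,t}$ as a direct computation on the basis $\{1,x\}$. You should do the same --- write the identity out and check it on $1\otimes 1$, $1\otimes x$, $x\otimes 1$, $x\otimes x$ using $x^2=hx+t$, $\varepsilon(1)=0$, $\varepsilon(x)=1$, and $\Delta(1)=1\otimes x+x\otimes 1-h\,1\otimes 1$ --- rather than appealing to general properties that do not suffice.
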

\begin{proof}[Sketch of the proof]
For every Frobenius algebra $A=(A,\mu,\eta,\Delta,\varepsilon)$ over $k$, unwinding the construction of the functor $Z_A$, one can see that the relations \ref{relBN:S}, \ref{relBN:T}, and \ref{relBN:4Tu} are respectively equivalent to the following equations:
\[
\begin{gathered}
\varepsilon\eta=0\,,\  \varepsilon\mu\Delta\eta=2\ \in\mathrm{Hom}_k(k,k)
\quad,\\
\mathrm{id}\otimes\eta\varepsilon + \eta\varepsilon\otimes\mathrm{id}-(\eta\otimes\eta)\circ(\varepsilon\mu)-(\Delta\eta)\circ(\varepsilon\otimes\varepsilon)= 0\ \in\mathrm{Hom}_k(A\otimes A,A\otimes A)
\quad.
\end{gathered}
\]
As for the Frobenius algebra $A=C_{h,t}$, these equations are easily verified by the direct computations.
\end{proof}

\begin{definition}
\label{def:TQFT-ht}
For elements $h,t\in k$, we write
\[
Z_{h,t}:\Cob(\varnothing,\varnothing)\to\mathbf{Mod}_k
\]
the symmetric monoidal $k$-linear functor induced by the Frobenius algebra $C_{h,t}$ in \cref{def:FrobCht}.
\end{definition}

\begin{remark}
\label{rem:Euler-gr}
A morphism $S:W_0\to W_1\in\mathbf{Cob}_2(Y_0,Y_1)$ has a canonical grading given by
\begin{equation}
\label{eq:Euler-gr-def}
\deg S
\coloneqq \chi(S)-\frac{\chi(W_0)+\chi(W_1)}{2}
\quad,
\end{equation}
where $\chi(\blank)$ is the Euler characteristic.
We call it the \emph{Euler grading}.
By virtue of the Mayer-Vietoris sequence, we have $\deg(S'\circ S)=\deg S'+\deg S$.
Furthermore, since the relations~\ref{relBN:S}, \ref{relBN:T}, and \ref{relBN:4Tu} are homogeneous with respect to the grading, it also induces a grading on the morphisms in $\Cob(Y_0,Y_1)$.
In other words, $\Cob(Y_0,Y_1)$ can be regarded to be enriched over graded $k$-modules.
In this point of view, the functor $Z_{h,t}$ in \cref{def:TQFT-ht} respects gradings if $k$ is a graded ring with $\deg h=-2$ and $\deg t=-4$.
Indeed, we have a grading on $C_{h,t}$ with $\deg 1=1$ and $\deg x=-1$ so that the operations in the Frobenius algebra structure reflects Euler characteristics.
In this case, the TQFT $Z_{h,t}$ is said to be \emph{Euler-graded} \cite{LaudaPfeiffer2009}.
\end{remark}

\subsection{The multi-fold complex of smoothings}
\label{sec:UKH-mulcplx:UKH-mcplx}

Bar-Natan \cite{BarNatan2005} defined Khovanov complex for tangle diagrams in the category $\Cob(Y_0,Y_1)$.
As it yields variants of Khovanov homology through TQFTs, we sometimes refer to it as \emph{universal Khovanov complex} \cite{ItoYoshida2020UKH} to distinguish it from Khovanov's original complex.
For a technical reason, we extend it to a bit more general graphs.

\begin{notation}
For a graph $G=(V,E)$, we denote by $V^n(E)\subset V(E)$ the set of $n$-valent vertices.
\end{notation}

\begin{definition}[cf.~\cite{KhovanovRozansky2008}]
A \emph{singular tangle-like graph} is a planar oriented finite graph $G=(V(G),E(G))$ neatly embedded in the strip $\mathbb R\times[0,1]$ equipped with
\begin{itemize}
  \item a subset $E^{\mathsf{wide}}(G)\subset E(G)$, whose elements are called \emph{wide edges};
  \item a subset $c^\#(G)\subset V^4(G)$, whose elements are called \emph{double points}; and
  \item a map $u:V^4(G)\setminus c^\sharp(G)\to E(G)$
\end{itemize}
such that
\begin{enumerate}[label=\upshape(\roman*)]
  \item every vertex is of the form in \cref{fig:smtgr-wide}, where wide edges are drawn with gray lines;
  \item for each $v\in V^4(G)$, $v$ is the source of $u(v)$.
\end{enumerate}
\begin{figure}[t]
\centering
\begin{tikzpicture}[baseline=(current bounding box.center)]
\draw (-.5,.5) node[left]{$\mathbb R^2\times\{0,1\}$} -- (.5,.5);
\fill[blue] (0,.5) circle(.1);
\draw[red,very thick,-stealth] (0,-.6) -- (0,.5);
\end{tikzpicture}
\;,\quad
\begin{tikzpicture}[baseline=(current bounding box.center)]
\draw (-.5,.5) node[left]{$\mathbb R^2\times\{0,1\}$} -- (.5,.5);
\fill[blue] (0,.5) circle(.1);
\draw[red,very thick,stealth-] (0,-.6) -- (0,.5);
\end{tikzpicture}
\;,\quad
\begin{tikzpicture}[baseline=(current bounding box.center)]
\fill[blue] (0,0) circle(.1);
\draw[red,very thick,-stealth] (-.5,-.6) to[out=60,in=180] (0,0);
\draw[red,very thick,-stealth] (.5,-.6) to[out=120,in=0] (0,0);
\draw[gray,line width=3,-stealth] (0,0) -- (0,.6);
\end{tikzpicture}
\;,\quad
\begin{tikzpicture}[baseline=(current bounding box.center)]
\fill[blue] (0,0) circle(.1);
\draw[gray,line width=3,-stealth] (0,-.6) -- (0,0);
\draw[red,very thick,-stealth] (0,0) to[out=180,in=-60] (-.5,.6);
\draw[red,very thick,-stealth] (0,0) to[out=0,in=-120] (.5,.6);
\end{tikzpicture}
\;, or\quad
\begin{tikzpicture}[baseline=(current bounding box.center)]
\fill[blue] (0,0) circle(.1);
\draw[red,very thick,-stealth] (-.5,-.6) -- (0,0);
\draw[red,very thick,-stealth] (.5,-.6) -- (0,0);
\draw[red,very thick,-stealth] (0,0) -- (-.5,.6);
\draw[red,very thick,-stealth] (0,0) -- (.5,.6);
\end{tikzpicture}
\caption{Vertices in singular tangle-like graphs}
\label{fig:smtgr-wide}
\end{figure}
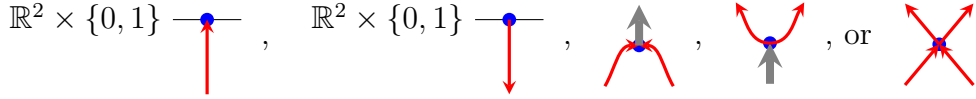
\end{definition}

If $G$ is a singular tangle-like graph, then we depict its quadrivalent vertices as in \cref{tab:quad-vert-pics}.
\begin{table}[t]
\centering
\begin{tabular}{c||c|c|c}
  $G$ & $v\in c^\sharp(G)$ & $u(v)=e_1$ & $u(v)=e_2$ \\\hline
  \begin{tikzpicture}[baseline=-.5ex]
  \fill[blue] (0,0) node[left=.5em]{$v$} circle(.1);
  \draw[red,very thick,-stealth] (-.5,-.6) -- (0,0);
  \draw[red,very thick,-stealth] (.5,-.6) -- (0,0);
  \draw[red,very thick,-stealth] (0,0) -- (-.5,.6) node[left]{$e_1$};
  \draw[red,very thick,-stealth] (0,0) -- (.5,.6) node[right]{$e_2$};
  \end{tikzpicture}
    & \diagSingUp & \diagCrossNegUp & \diagCrossPosUp
\end{tabular}
\caption{Pictures around quadrivalent vertices}
\label{tab:quad-vert-pics}
\end{table}
Quadrivalent vertices of the forms in \cref{tab:quad-vert-pics} are respectively called \emph{double points}, \emph{negative crossings}, and \emph{positive crossings}.
Hence, forgetting the wide edges and the orientations, one obtains an unoriented tangle diagram.
Conversely any (oriented) singular tangle diagram is nothing but a singular tangle-like graph with no wide edges.

Let $G$ be a singular tangle-like graph.
For every map $\alpha:V^4(G)\to\mathbb Z$, we say $\alpha$ lies in the effective range if
\begin{itemize}
  \item $-2\le\alpha(v)\le 1$ for each double point $v$;
  \item $-1\le\alpha(v)\le 0$ for each negative crossing $v$; and
  \item $0\le \alpha(v)\le 1$ for each positive crossing $v$.
\end{itemize}
In this case, we define a singular tangle-like graph $G_\alpha$ by replacing all the quadrivalent vertices of $G$ as in \cref{tab:quad-repl}.
\begin{table}[t]
\centering
\begin{tabular}{c||c|c|c|c}
  $G$ & $\alpha(v)=-2$ & $\alpha(v)=-1$ & $\alpha(v)=0$ & $\alpha(v)=1$ \\\hline\rule[-3.5ex]{0pt}{8ex}
  \diagSingUp & \diagSmoothW & \diagSmoothUp & \diagSmoothUp & \diagSmoothW \\\hline\rule[-3.5ex]{0pt}{8ex}
  \diagCrossNegUp && \diagSmoothW & \diagSmoothUp &  \\\hline\rule[-3.5ex]{0pt}{8ex}
  \diagCrossPosUp &&& \diagSmoothUp & \diagSmoothW
\end{tabular}
\caption{Replacing quadrivalent vertices}
\label{tab:quad-repl}
\end{table}
We call $G_\alpha$ the \emph{$\alpha$-smoothing} of $G$.
Note that since $G_\alpha$ has no quadrivalent vertex, the union of all non-wide edges of $G_\alpha$ forms a neat submanifold of $\mathbb R^2\times[0,1]$, which we denote by $|G_\alpha|$.
To specify an orientation on $|G_\alpha|$, we introduce the following notion.

\begin{definition}
For a singular tangle-like graph $G$, a \emph{checkerboard coloring} on the complement of $G$ is a mapping
\[
\chi:\pi_0((\mathbb R^2\times[0,1])\setminus G)\to\{\mathsf{black},\mathsf{white}\}
\quad,
\]
here $\pi_0(\blank)$ is the set of connected components, which satisfies the following conditions:
\begin{enumerate}[label=\upshape(\roman*)]
  \item if $C,C'\in\pi_0((\mathbb R^2\times[0,1])\setminus G)$ are adjacent across a non-wide edge, then $\chi(C)\neq\chi(C')$;
  \item if $C,C'\in\pi_0((\mathbb R^2\times[0,1])\setminus G)$ are adjacent across a wide edge, then $\chi(C)=\chi(C')$.
\end{enumerate}
In this case, a component $C\in\pi_0((\mathbb R^2\times[0,1])\setminus G)$ is said to be \emph{black} (resp.~\emph{white}) if $\chi(C)=\mathsf{black}$ (resp.~$\chi(C)=\mathsf{white}$).
\end{definition}

Let $G$ be a singular tangle-like graph.
If a map $\alpha:V^4(G)\to\mathbb Z$ lies in the effective range, then there is a canonical surjection $\pi_0((\mathbb R^2\times[0,1])\setminus G_\alpha)\to \pi_0((\mathbb R^2\times[0,1])\setminus G)$.
For a checkerboard coloring $\chi$ on $(\mathbb R^2\times[0,1])\setminus G$, it turns out that the composition
\[
\pi_0((\mathbb R^2\times[0,1])\setminus G_\alpha)
\twoheadrightarrow \pi_0((\mathbb R^2\times[0,1])\setminus G)
\xrightarrow\chi
\{\mathsf{black},\mathsf{white}\}
\]
is a checkerboard coloring, which we again denote by $\chi$ by abuse of notation.
In this case, the submanifold $|G_\alpha|\subset \mathbb R^2\times[0,1]$ coincides with the boundary of the union of the black components.
We hence endow $|G_\alpha|$ with the orientation induced from this identification and denote by $|G_\alpha|^\chi$ the resulting oriented manifold.
Specifically, since the oriented $0$-dimensional submanifold $\partial|G_\alpha|^\chi\subset \mathbb R^2\times\{0,1\}$ does not depend on $\alpha$; we write
\[
\partial^-|G|^\chi\coloneqq \overbar{|G_0|^\chi\cap(\mathbb R^2\times\{0\})}
\ ,\quad
\partial^+|G|^\chi\coloneqq |G_0|^\chi\cap(\mathbb R^2\times\{1\})
\quad.
\]
Then, we think of $|G_\alpha|^\chi$ as an object of the category $k\mathbf{Cob}_2(\partial^-|G|^\chi,\partial^+|G|^\chi)$ and hence of $\Cob(\partial^-|G|^\chi,\partial^+|G|^\chi)$.
From this point of view, we also write $|G_\alpha|^\chi=0\in \Cob(\partial^-|G|^\chi,\partial^+|G|^\chi)$ for the maps $\alpha:V^4(G)\to\mathbb Z$ that lie outside the effective range.

\begin{remark}
Every singular tangle-like graph has exactly two checkerboard colorings.
Namely, if $\chi$ is one, then the other is obtained by swapping all the values of $\chi$, which we denote by $-\chi$.
The oriented manifold $|G_\alpha|^{-\chi}$ is identified with $|G_\alpha|^\chi$ with the reversed orientation.
\end{remark}

We introduce three morphisms in $\Cob(\partial^-|G|^\chi,\partial^+|G|^\chi)$ as the last ingredients.

\begin{definition}
\label{def:delta-Phi}
Let $H$ and $H'$ be singular tangle-like graphs without quadrivalent vertices which differ only on a small region in $\mathbb R^2\times[0,1]$ where we have
\[
H=\;\diagSmoothW
\ ,\quad H'=\;\diagSmoothUp
\quad.
\]
For any checkerboard coloring $\chi$ on $(\mathbb R^2\times[0,1])\setminus H$, we may think of it also as a checkerboard coloring on $(\mathbb R^2\times[0,1])\setminus H'$.
In this case, we define morphisms $\delta_-:|H|^\chi\to |H'|^\chi$, $\delta_+:|H'|^\chi\to |H|^\chi$, and $\Phi:|H'|^\chi\to |H'|^\chi$ in $\Cob(\partial^-|H|^\chi,\partial^+|H|^\chi)$ as follows:
\begin{gather}
\label{eq:delta-}
\delta_-\coloneqq\BordDeltaN\::\:\left|\diagSmoothW\right|^\chi\to\left|\diagSmoothUp\right|^\chi
\quad,\\
\label{eq:delta+}
\delta_+\coloneqq\BordDeltaP\::\:\left|\diagSmoothUp\right|^\chi\to\left|\diagSmoothW\right|^\chi
\quad,\\
\label{eq:Phi}
\Phi\coloneqq\BordPhiFst\;-\;\BordPhiSnd\::\:\left|\diagSmoothUp\right|^\chi\to\left|\diagSmoothUp\right|^\chi
\quad.
\end{gather}
\end{definition}

\begin{lemma}
\label{lem:deltaPhi=0}
In the situation in \cref{def:delta-Phi}, the compositions $\delta_+\Phi$ and $\Phi\delta_-$ are zero.
\end{lemma}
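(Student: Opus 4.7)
The plan is to prove both identities by expanding each composition along the two summands of $\Phi$ and exhibiting a geometric equivalence in $\Cob(\partial^-|H|^\chi,\partial^+|H|^\chi)$, using the local relations \ref{relBN:S}, \ref{relBN:T}, and \ref{relBN:4Tu}. By $k$-linearity,
\[
\delta_+\Phi \;=\; \delta_+\circ\BordPhiFst \;-\; \delta_+\circ\BordPhiSnd ,
\]
so it suffices to exhibit both terms as the same element of $\Cob$.

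First, I would compose $\delta_+$ on top of the first picture. The black closed loop in $\BordPhiFst$ represents a $2$-sphere or, once the bubble relation is excluded, a disjoint torus running alongside the identity piece. The composite $\delta_+\circ\BordPhiFst$ therefore glues to a $2$-bordism of the form (saddle) $\amalg\, T^2$, which by relation \ref{relBN:T} is equivalent to $2\,\delta_+$.

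Next, I would treat $\delta_+\circ\BordPhiSnd$. Here $\BordPhiSnd$ carries a tube/handle feature attached to the identity strands, and after gluing with the saddle the two attaching discs of that handle end up within the connected component coming from $\delta_+$. The strategy is to apply \ref{relBN:4Tu} to four small discs placed around a cross-section of the handle, using one of the four resulting configurations as $\delta_+\circ\BordPhiSnd$ itself. Two of the other three configurations split the tube into a pair of caps on separate pieces and, by \ref{relBN:S} (spheres vanishing), drop out; the remaining configuration, after a planar isotopy, is exactly the cobordism $\delta_+\amalg T^2$ produced in step one. Applying \ref{relBN:T} again yields $\delta_+\circ\BordPhiSnd \sim 2\,\delta_+$. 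Taking the difference gives $\delta_+\Phi=2\delta_+-2\delta_+=0$.

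The identity $\Phi\delta_-=0$ is proved in an entirely dual fashion, pre-composing $\delta_-$ below each summand of $\Phi$ and running the same sphere/torus/4Tu bookkeeping on the other side. I expect the main obstacle to be precisely the step for $\BordPhiSnd$: one has to pick the four discs for the 4Tu relation so that the composite with the saddle identifies cleanly with (identity on the target) $\amalg\, T^2$, and then verify that the unwanted terms in the 4Tu relation really do contain a separating sphere (so that \ref{relBN:S} kills them). This is essentially a careful geometric surgery computation; once the right four-disc configuration is chosen, the two compositions match and the lemma follows.
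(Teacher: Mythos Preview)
Your reading of the two summands of $\Phi$ is off, and this derails the whole argument. The closed black contour in $\BordPhiFst$ is not a disjoint torus or sphere; it is the outline of a tube connecting the two parallel sheets of the identity cobordism on $|H'|^\chi$. Similarly, $\BordPhiSnd$ is the identity on the two sheets with a handle attached to just one of them. Both summands therefore have Euler characteristic $0$ and Euler grading $-2$, as recorded in \cref{rem:Kh-grading}; a disjoint torus would leave the grading at $0$, so your proposed identity $\delta_+\circ\BordPhiFst\sim 2\,\delta_+$ already violates homogeneity and cannot hold.

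Once the pictures are read correctly, none of the local relations is needed. After composing with the saddle $\delta_+$, the two sheets become connected; the surfaces $\delta_+\circ\BordPhiFst$ and $\delta_+\circ\BordPhiSnd$ are then both connected, oriented, have the same prescribed boundary, and the same Euler characteristic. By the classification of compact oriented surfaces they are diffeomorphic relative to the boundary, hence represent the same morphism already in $\mathbf{Cob}_2(\partial^-|H|^\chi,\partial^+|H|^\chi)$ before any quotient by \ref{relBN:S}, \ref{relBN:T}, or \ref{relBN:4Tu} is taken. The dual observation handles $\Phi\delta_-$. This is exactly the paper's one-line proof.
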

\begin{proof}
The result immediately follows from the diffeomorphisms
\[
\BordPhiFst\BordDeltaP\;\cong\;\BordPhiSnd\BordDeltaP
\ ,\quad
\BordDeltaN\BordPhiFst\;\cong\;\BordDeltaN\BordPhiSnd
\quad.
\]
\end{proof}

\begin{remark}
\label{rem:Phi-reverse}
Although the oriented manifold $|H'|^\chi$ does not depend on the orientations on the edges, the morphism $\Phi$ does.
Indeed, if $\overbar{H'}$ is the singular tangle-like graph obtained by reversing the edges of $H'$, then the morphism $\Phi=\Phi_{\overbar{H'}}$ is given by
\[
\Phi_{\overbar{H'}}=\BordPhiFst\;-\;\BordPhiSndVar\::\:\left|\diagSmoothDown\right|^\chi\to\left|\diagSmoothDown\right|^\chi
\quad.
\]
One can however see $\Phi_{\overbar{H'}}=-\Phi_{H'}$ by virtue of the relation \ref{relBN:4Tu}.
\end{remark}

For a singular tangle-like graph $G$ and a checkerboard coloring on the complement, we now define a $V^4(G)$-fold complex $\operatorname{Sm}(G^\chi)^\bullet$ in $\Cob(\partial^-|G|^\chi,\partial^+|G|^\chi)$ by $\operatorname{Sm}(G^\chi)^\alpha\coloneqq |G_\alpha|^\chi$ for each $\alpha:V^4(G)\to\mathbb Z$ and the differentials $\{d_v:|G_\alpha|^\chi\to|G_{\alpha+v}|^\chi\}$ defined so that, for each $v\in V^4(G)$,
\begin{itemize}
  \item if $v$ is a double point, $d_v$ is given by
\[
\cdots\to 0
\to \overset{\alpha(v)=-2}{\left|\diagSmoothW\right|^\chi}
\xrightarrow{-\delta_-} \overset{\alpha(v)=-1}{\left|\diagSmoothUp\right|^\chi}
\xrightarrow{\Phi} \overset{\alpha(v)=0}{\left|\diagSmoothUp\right|^\chi}
\xrightarrow{-\delta_+} \overset{\alpha(v)=1}{\left|\diagSmoothW\right|^\chi}
\to 0\to\cdots
\quad;
\]
  \item if $v$ is a negative crossing, $d_v$ is given by
\[
\cdots\to 0
\to \overset{\alpha(v)=-1}{\left|\diagSmoothW\right|^\chi}
\xrightarrow{\delta_-}\overset{\alpha(v)=0}{\left|\diagSmoothUp\right|^\chi}
\to 0\to\cdots
\quad;
\]
  \item if $v$ is a positive crossing, $d_v$ is given by
\[
\cdots\to 0
\to \overset{\alpha(v)=0}{\left|\diagSmoothUp\right|^\chi}
\xrightarrow{-\delta_+}\overset{\alpha(v)=1}{\left|\diagSmoothW\right|^\chi}
\to 0\to\cdots
\quad.
\]
\end{itemize}
In view of \cref{lem:deltaPhi=0}, the family $\{\operatorname{Sm}(G^\chi)^\alpha,d_v\}$ actually forms a $V^4(G)$-fold complex.

\begin{definition}
\label{def:UKH-as-mcplx}
For a singular tangle-like graph $G$ with a checkerboard coloring $\chi$ on the complement, we set
\[
\dblBrac{G^\chi}\coloneqq \operatorname{Tot}(\operatorname{Sm}(G^\chi)^\bullet)
\]
and call it the \emph{universal Khovanov complex} of $G$ with respect to $\chi$.
\end{definition}

\begin{remark}
The checkerboard coloring $\chi$ is often omitted from the notation of the universal Khovanov complex when its choice is not essential in discussion.
\end{remark}

In \cite{ItoYoshida2020UKH}, the author and Ito discussed a modified version of Bar-Natan's complex of cobordisms for tangle diagrams, which was called the \emph{universal Khovanov complex}.
Namely, if $D$ is an ordinary tangle diagram (i.e.~with no double point) seen as a (singular) tangle-like graph, then the complex $\dblBrac{D^\chi}$ is exactly the one.
Furthermore, they showed that the morphism $\Phi$ defined in \eqref{eq:Phi} induces a morphism of chain complexes
\begin{equation}
\label{eq:PhiHat}
\widehat\Phi:\dblBrac*{\diagCrossNegUp}\to\dblBrac*{\diagCrossPosUp}
\end{equation}
which is invariant under the moves of singular tangle diagrams \cite[Section~4]{ItoYoshida2020UKH}.
Using this fact, they extend the universal Khovanov complex to singular tangle diagrams by taking iterated mapping cones for double points.

\begin{notation}
For a singular tangle-like graph $G$, we say a connected component $C$ of the complement $(\mathbb R\times[0,1])\setminus G$ is \emph{negatively unbounded} if $C\cap((-\infty,0]\times[0,1])$ is unbounded.
It turns out that the negatively unbounded component is always unique and that  a checkerboard coloring on the complement $(\mathbb R\times[0,1])\setminus G$ is determined by its value at it.
\end{notation}

\begin{theorem}
\label{theo:UKH-compare}
Let $D$ be a singular tangle diagram, and let $\chi_{\mathsf w}$ be the checkerboard coloring on the complement which has negatively unbounded white component.
Then the complex $\dblBrac{D^{\chi_{\mathsf w}}}$ given in \cref{def:UKH-as-mcplx} is isomorphic to the one given in \cite[Definition~5.2]{ItoYoshida2020UKH}.
\end{theorem}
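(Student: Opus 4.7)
The plan is to process the quadrivalent vertices of $G$ in two stages---first the ordinary crossings and then the double points---by exploiting the iterative nature of multi-fold complexes (\cref{ex:mulfold-cplx-disj}). Write $V^4(G)=C\amalg P$, where $C$ is the set of ordinary crossings and $P$ is the set of double points. Under the equivalence
\[
\mathbf{MCh}_{V^4(G)}(\Cob(\partial^-|G|^\chi,\partial^+|G|^\chi))\simeq\mathbf{MCh}_P\bigl(\mathbf{MCh}_C(\Cob(\partial^-|G|^\chi,\partial^+|G|^\chi))\bigr),
\]
the multi-fold complex $\operatorname{Sm}(G^\chi)^\bullet$ becomes a $P$-fold complex whose entries are $C$-fold complexes. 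The first step is to check that $\operatorname{Tot}$ is compatible with this decomposition: the canonical isomorphism $E_{C\amalg P}(\alpha)\cong E_C(\alpha|_C)\otimes E_P(\alpha|_P)$, together with \cref{lem:rho-lambda-com} and the shuffle signs produced by reordering concatenated sequences, gives a natural isomorphism $\operatorname{Tot}_{V^4(G)}(\operatorname{Sm}(G^\chi)^\bullet)\cong\operatorname{Tot}_P(\operatorname{Tot}_C(\operatorname{Sm}(G^\chi)^\bullet))$.

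Next, for each smoothing tuple $\beta\colon P\to\mathbb{Z}$ in the effective range at the double points, direct inspection of the replacement rules shows that the inner slice $\operatorname{Sm}(G^\chi)^{\bullet,\beta}$ is exactly the Bar-Natan cube of smoothings for the ordinary tangle-like graph $G_\beta$ obtained by replacing each double point by one of its four pictures according to $\beta$. Hence $\operatorname{Tot}_C(\operatorname{Sm}(G^\chi)^\bullet)^\beta$ is canonically identified with the universal Khovanov complex $\dblBrac{G_\beta^\chi}$ of \cite{BarNatan2005}. Because $\delta_\pm$ and $\Phi$ act by the identity outside a small disk, the outer $P$-fold differentials commute with the saddle maps that appear in $\operatorname{Tot}_C$, so after taking the inner total one obtains a $P$-fold complex of Bar-Natan complexes whose edges in the double-point directions are $\pm\delta_\pm$ and $\Phi$ extended by identity.

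It remains to identify the outer $\operatorname{Tot}_P$ with the iterated mapping cone from \cite[Definition~5.2]{ItoYoshida2020UKH}. Since the operations at distinct double points are local and pairwise commute up to the signs encoded by $E_P$, it suffices to treat a single double point. There, the length-$4$ complex
\[
\dblBrac*{\diagSmoothW}\xrightarrow{-\delta_-}\dblBrac*{\diagSmoothUp}\xrightarrow{\Phi}\dblBrac*{\diagSmoothUp}\xrightarrow{-\delta_+}\dblBrac*{\diagSmoothW}
\]
placed in degrees $-2,-1,0,1$ coincides with the mapping cone of $\widehat\Phi\colon\dblBrac*{\diagCrossNegUp}\to\dblBrac*{\diagCrossPosUp}$: a direct unpacking shows that $\dblBrac*{\diagCrossNegUp}[1]$ supplies the first two terms with differential $-\delta_-$, $\dblBrac*{\diagCrossPosUp}$ supplies the last two with differential $-\delta_+$, and the connecting map is $\Phi$, since by \cite{ItoYoshida2020UKH} the morphism $\widehat\Phi$ is induced on smoothings by $\Phi$ itself. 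The main obstacle is purely sign-bookkeeping: the shuffle signs in $E_{C\amalg P}(\alpha)\cong E_C\otimes E_P$, the internal signs from the $\rho_a$ operators on $E_P$, and the explicit signs $-\delta_\pm$ built into the double-point differential must reproduce the alternating signs that appear when iterating mapping cones. Once this sign calculation is pinned down via \cref{lem:modsign-alt}, the two complexes are identified term by term.
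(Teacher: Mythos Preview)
Your approach is correct and closely parallels the paper's, though organized differently. The paper argues by induction on the number of double points: at each step it peels off a single double point using \cref{prop:skein-mcone}, which already packages the identification of the relevant stupid truncation with a mapping cone of $\widehat\Phi$ (via \cref{lem:mcplx-cone}), and then invokes the induction hypothesis together with \cite[Corollary~5.3]{ItoYoshida2020UKH}. Your version instead factorizes the total complex all at once as $\operatorname{Tot}_P\circ\operatorname{Tot}_C$ and then handles the double points; this requires you to set up the associativity isomorphism $\operatorname{Tot}_{C\amalg P}\cong\operatorname{Tot}_P\circ\operatorname{Tot}_C$ explicitly (the paper only alludes to the underlying equivalence in \cref{ex:mulfold-cplx-disj} without proving compatibility with $\operatorname{Tot}$), and to carry out by hand the sign bookkeeping that the paper offloads entirely to \cref{lem:mcplx-cone} and \cref{prop:skein-mcone}. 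The inductive phrasing is more economical and avoids re-deriving machinery already in place, but your direct approach makes the two-stage structure visible in a single picture.
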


\begin{corollary}
\label{theo:UKH-inv}
The assignment $D\mapsto\dblBrac{D^{\mathsf w}}$ defines an invariant of singular tangles up to chain homotopies.
\end{corollary}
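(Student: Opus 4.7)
The plan is very short: this corollary is really a transport of structure along the isomorphism produced in \cref{theo:UKH-compare}. First I would invoke the main invariance theorem of \cite{ItoYoshida2020UKH}, which asserts that the universal Khovanov complex in the sense of that paper, applied to any singular tangle diagram $D$ with its canonical checkerboard coloring $\chi_{\mathsf w}$, is invariant up to chain homotopy equivalence under all moves of singular tangle diagrams (Reidemeister moves together with the extra moves specific to singular tangles). Then I would appeal to \cref{theo:UKH-compare}, which provides an isomorphism between the complex $\dblBrac{D^{\chi_{\mathsf w}}}$ defined in \cref{def:UKH-as-mcplx} via multi-fold complexes and the complex defined in \cite[Definition~5.2]{ItoYoshida2020UKH}.

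Concretely, if $D$ and $D'$ are two singular tangle diagrams related by a move, then the invariance result of \cite{ItoYoshida2020UKH} gives a chain homotopy equivalence between the old-style universal Khovanov complexes associated to $D$ and $D'$. Composing with the two isomorphisms furnished by \cref{theo:UKH-compare} on either end yields a chain homotopy equivalence between $\dblBrac{D^{\chi_{\mathsf w}}}$ and $\dblBrac{{D'}^{\chi_{\mathsf w}}}$ in the present sense. Since an isomorphism of chain complexes is in particular a chain homotopy equivalence, this composition is well-defined and respects the equivalence relation.

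There is no real obstacle here; the only point deserving a check is that the isomorphism of \cref{theo:UKH-compare} is in fact natural enough to be composed as above, but this is implicit in its construction since both complexes arise functorially from the same combinatorial datum (the diagram $D$ together with $\chi_{\mathsf w}$). Therefore the proof reduces to a one-line citation of \cref{theo:UKH-compare} combined with the invariance statement of \cite{ItoYoshida2020UKH}.
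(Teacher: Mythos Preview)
Your proposal is correct and matches the paper's approach exactly: the paper states this as a corollary with no separate proof, treating it as an immediate consequence of \cref{theo:UKH-compare} together with the invariance result established in \cite{ItoYoshida2020UKH}. Your explanation spells out precisely this transport-of-structure argument.
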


We prove \cref{theo:UKH-compare} in the next section.

\begin{remark}
\label{rem:Kh-grading}
The morphisms $\delta_\pm$ and $\Phi$ are homogeneous of degree $-1$ and $-2$ respectively with respect to the Euler grading (see \cref{rem:Euler-gr}).
Hence, if $Z_{h,t}:\Cob(\varnothing,\varnothing)\to\mathbf{Mod}_k$ is an Euler-graded TQFT in the sense of \cref{rem:Euler-gr}, the chain complex $Z_{h,t}\dblBrac{G^\chi}$ has an additional grading, sometimes referred to as the \emph{quantum degree}, given by
\[
Z_{h,t}(\dblBrac{G^\chi})^{i,j}
\coloneqq \bigoplus_{|\alpha|=i}Z_{h,t}(|G_\alpha|^\chi)^{j-q(\alpha)-\widetilde w(G)}
\subset Z_{h,t}(\dblBrac{G^\chi}^i)
\quad,
\]
where we set
\begin{itemize}
  \item $q(\alpha)\coloneqq\sum_v\alpha^q(v)$ to be the sum with
\[
\alpha^q(v)\coloneqq
\begin{cases*}
\alpha(v)-1 & if $v$ is a double point and $\alpha(v)<0$, \\
\alpha(v) & otherwise;
\end{cases*}
\]
  \item $\widetilde w(G)\coloneqq n_\times+n_+-n_-$ with $n_\times$, $n_+$, and $n_-$ being the numbers of double points, positive crossings, and negative crossings respectively.
\end{itemize}
Specifically, if $G$ is an ordinary link diagram, then $q(\alpha)=|\alpha|$ and $\widetilde w(G)$ equals the writhe of $G$.
Thanks to \cref{theo:UKH-compare}, it turns out that, for each $i,j\in\mathbb Z$, the $k$-module
\[
H^i\left(Z_{h,t}\dblBrac{G^\chi}^{\bullet,j}\right)
\]
defines a singular link invariant.
For instance, $\mathit{Kh}^{i,j}(G^\chi;k)\coloneqq H^i\left(Z_{0,0}\dblBrac{G^\chi}^{\bullet,j}\right)$ is exactly the \emph{Khovanov homology} of $G$ \cite{ItoYoshida2020}.
\end{remark}

\subsection{Mapping cones}
\label{sec:UKH-mulcplx:mcone}

Multi-fold complexes have intrinsic relationship to mapping cones.
To discuss it, we introduce several operations.

\begin{definition}
Let $\mathcal A$ be an additive category, and let $X^\bullet=\{X^\alpha,d_a^\alpha\}_{\alpha,a}$ be an $S$-fold complex in $\mathcal A$ for a finite set $S$.
For a map $\alpha_0:S\to\mathbb Z$, we define an $S$-fold complex $X[\alpha_0]^\bullet=\{X[\alpha_0]^\alpha,(d_{[\alpha_0]})_a^\alpha\}_{\alpha,a}$ by $X[\alpha_0]^\alpha\coloneqq X^{\alpha-\alpha_0}$ with the differential
\[
(d_{[\alpha_0]})^\alpha_a\coloneqq(-1)^{\alpha_0(a)}d_a^{\alpha-\alpha_0}
\]
and call it the \emph{shift} of $X^\bullet$ by $\alpha_0$.
\end{definition}

\begin{notation}
Following the convention in \cref{sec:UKH-mcplx:modst}, we regard each element of $S$ as a map $S\to\mathbb Z$; hence, for each $a_0\in S$ and $r\in\mathbb Z$, we may write $X[r\cdot a_0]^\bullet$ the shift by $r\cdot a_0$.
In particular, if $S$ is a singleton, we simply write $X[r]$ instead, which is consistent with the shift of ordinary chain complexes.
\end{notation}

Note that there is an isomorphism
\[
\operatorname{Tot}(X[\alpha_0]^\bullet)
\cong\operatorname{Tot}(X^\bullet)[|\alpha_0|]
\quad,
\]
which is, however, not the identity because of the sign convention in the shift.

\begin{definition}
Let $\mathcal A$ be an additive category, and let $X^\bullet=\{X^\alpha,d_a^\alpha\}_{\alpha,a}$ be an $S$-fold complex in $\mathcal A$ for a finite set $S$.
For each $a_0\in S$ and $r\in\mathbb Z$, we define an $S$-fold complex $\sigma_{a_0}^{\ge r}X^\bullet=\{\sigma_{a_0}^{\ge r}X^\alpha,(d^{\ge r})^\alpha_a\}_{\alpha,a}$ by
\[
\sigma_{a_0}^{\ge r}X^\alpha\coloneqq
\begin{cases}
0 & \alpha(a_0) < r\ ,\\
X^\alpha & \alpha(a_0) \ge r\ ,
\end{cases}
\qquad
(d^{\ge r})^\alpha_a\coloneqq
\begin{cases}
0 & \alpha(a_0) < r\ ,\\
d^\alpha_a & \alpha(a_0) \ge r\ .
\end{cases}
\]
Similarly, we define $\sigma_{a_0}^{\le r}X^\bullet=\{\sigma_{a_0}^{\le r}X^\alpha,(d^{\le r})^\alpha_a\}_{\alpha,a}$ by
\[
\sigma_{a_0}^{\le r}X^\alpha\coloneqq
\begin{cases}
X^\alpha & \alpha(a_0)\le r\ ,\\
0 & \alpha(a_0)\le r\ ,
\end{cases}
\qquad
(d^{\le r})^\alpha_a\coloneqq
\begin{cases}
d^\alpha_{a_0} & \alpha(a_0)< r\ ,\\
0 & \alpha(a_0)\ge r\ .
\end{cases}
\]
These are sometimes called \emph{stupid truncations} of $X^\bullet$.
\end{definition}

Recall that, if $f:X\to Y$ be a morphism of chain complexes in an additive category $\mathcal A$, then the \emph{mapping cone} of $f$ is the chain complex $\operatorname{Cone}(f)$ with $\operatorname{Cone}(f)^n\coloneqq Y^n\oplus X^{n+1}$ and the differential
\[
d_{\operatorname{Cone}(f)}^n\coloneqq
\begin{pmatrix}
d_Y^n & f^{n+1} \\ 0 & -d_X^{n+1}
\end{pmatrix}
Y^n\oplus X^{n+1}\to Y^{n+1}\oplus X^{n+2}
\quad.
\]
The following result is fundamental.

\begin{lemma}
\label{lem:mcplx-cone}
Let $\mathcal A$ be an additive category, and let $S$ be a finite set with an element $a_0\in S$.
For an $S$-fold complex $X^\bullet=\{X^\alpha,d_a^\alpha\}$, and for an integer $r\in\mathbb Z$,
the following hold.
\begin{enumerate}[label=\upshape(\arabic*)]
  \item\label{sub:mcplx-cone:mor} Define $\varphi^\alpha:(\sigma^{\le r-1}_{a_0}X)[a_0]^\alpha\to \sigma^{\ge r}_{a_0}X^\alpha$ by
\[
\varphi^\alpha\coloneqq
\begin{cases}
d^{\alpha-a_0}_{a_0} & \alpha(a_0)= r, \\
0 & \alpha(a_0)\neq r.
\end{cases}
\]
Then the family $\varphi=\{\varphi_r^\alpha\}_\alpha$ forms a morphism $(\sigma^{\le r-1}_{a_0}X)[a_0]^\bullet\to\sigma^{\ge r}_{a_0}X^\bullet$ of $S$-fold complexes.
  \item\label{sub:mcplx-cone:cone} We denote by $\widehat\varphi:\operatorname{Tot}((\sigma^{\le r-1}_{a_0}X)[a_0]^\bullet)\to\operatorname{Tot}(\sigma^{\ge r}_{a_0}X^\bullet)$ the morphism of chain complexes induced by the morphism $\varphi$ in \ref{sub:mcplx-cone:mor}.
Then, there is an isomorphism
\[
\operatorname{Tot}(X^\bullet)
\cong \operatorname{Cone}(\widehat\varphi)
\]
of chain complexes in $\mathcal A$.
\end{enumerate}
\end{lemma}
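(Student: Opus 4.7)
My approach is to handle part \ref{sub:mcplx-cone:mor} by a direct check of the defining identity, and then deduce part \ref{sub:mcplx-cone:cone} by interpreting $\operatorname{Tot}(X^\bullet)$ as the mapping cone attached to a degreewise split short exact sequence of chain complexes; the only nontrivial work is the sign chase for the identification of the connecting morphism with $\widehat\varphi$.

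For part \ref{sub:mcplx-cone:mor}, I would verify $d^\beta_{b} \varphi^\beta = \varphi^{\beta+b} (d_{[a_0]})^\beta_{b}$ case by case on whether $b=a_0$ or not, and on the $a_0$-level of $\beta$. The only potentially nonzero compositions involve summands at levels $r-1$ or $r$. When $b=a_0$ and $\beta(a_0)=r$, both sides vanish: the left by $d^\beta_{a_0}\circ d^{\beta-a_0}_{a_0}=0$ and the right because $\varphi^{\beta+a_0}=0$. When $b=a_0$ and $\beta(a_0)=r-1$, the left vanishes because $\varphi^\beta=0$ and the right vanishes because the sign $(-1)^{a_0(a_0)}=-1$ in the shift is multiplied into $d^\beta_{a_0}\circ d^{\beta-a_0}_{a_0}=0$. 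When $b\neq a_0$ and $\beta(a_0)=r$, both sides reduce to the crosswise commutation $d^\beta_{b}\circ d^{\beta-a_0}_{a_0}=d^{\beta-a_0+b}_{a_0}\circ d^{\beta-a_0}_{b}$ granted by the definition of an $S$-fold complex, with the shift sign $(-1)^{a_0(b)}=1$ causing no interference.

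For part \ref{sub:mcplx-cone:cone}, the key observation is that since the differential $d^\alpha_{a_0}$ raises the $a_0$-level while $d^\alpha_a$ for $a\neq a_0$ preserves it, the family $\sigma^{\ge r}_{a_0}X^\bullet$ is a sub-$S$-fold complex of $X^\bullet$ with quotient $\sigma^{\le r-1}_{a_0}X^\bullet$. Applying the additive functor $\operatorname{Tot}$ from \eqref{eq:MCh-Tot} then yields a degreewise split short exact sequence
\[
0\to \operatorname{Tot}\bigl(\sigma^{\ge r}_{a_0}X^\bullet\bigr)\to\operatorname{Tot}(X^\bullet)\to\operatorname{Tot}\bigl(\sigma^{\le r-1}_{a_0}X^\bullet\bigr)\to 0
\]
of chain complexes in $\mathcal A$, so $\operatorname{Tot}(X^\bullet)$ is the mapping cone on the associated connecting morphism $\operatorname{Tot}(\sigma^{\le r-1}_{a_0}X^\bullet)[-1]\to\operatorname{Tot}(\sigma^{\ge r}_{a_0}X^\bullet)$. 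Using the canonical identification $\operatorname{Tot}(Y^\bullet)[-1]\cong\operatorname{Tot}(Y[a_0]^\bullet)$, which is precisely what the sign convention $(d_{[a_0]})^\alpha_a=(-1)^{a_0(a)}d_a^{\alpha-a_0}$ was designed to produce, the claim reduces to showing that this connecting morphism agrees with $\widehat\varphi$.

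The main obstacle is the sign bookkeeping in this last identification. Concretely, after choosing the canonical splittings, the connecting morphism at the summand $X^\beta\otimes E_S(\beta)\subset\operatorname{Tot}(\sigma^{\le r-1}_{a_0}X^\bullet)$ with $\beta(a_0)=r-1$ is given by $d^\beta_{a_0}\otimes \rho_{a_0}\colon X^\beta\otimes E_S(\beta)\to X^{\beta+a_0}\otimes E_S(\beta+a_0)$, and one must match this with the image of $\varphi^{\beta+a_0}=d^\beta_{a_0}$ on the corresponding summand of $\operatorname{Tot}((\sigma^{\le r-1}_{a_0}X)[a_0]^\bullet)$. I would therefore fix the isomorphism $\operatorname{Tot}(Y^\bullet)[-1]\cong\operatorname{Tot}(Y[a_0]^\bullet)$ that sends $y\otimes s\in Y^{\gamma}\otimes E_S(\gamma)$ to $y\otimes\rho_{a_0}(s)\in Y^\gamma\otimes E_S(\gamma+a_0)=Y[a_0]^{\gamma+a_0}\otimes E_S(\gamma+a_0)$ and then verify compatibility with differentials. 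For $a\neq a_0$ within $\sigma^{\le r-1}$, the anticommutation $\rho_a\rho_{a_0}=-\rho_{a_0}\rho_a$ from \cref{lem:rho-lambda-com} contributes the sign that precisely cancels the overall $-1$ in the cone's $-d_{[a_0]}$ block; for $a=a_0$ staying within $\sigma^{\le r-1}$, the sign $-1$ from the shift and the use of $\rho_{a_0}^2=\mathrm{id}$ produce agreement; and on the crossing term the identification sends $d^\beta_{a_0}\otimes \rho_{a_0}$ directly to $\varphi^{\beta+a_0}\otimes\mathrm{id}=\widehat\varphi$ on the relevant summand. The rest of the proof is then automatic.
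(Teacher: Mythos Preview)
Your argument is correct and follows essentially the same route as the paper: both set up the explicit isomorphism given by $\mathrm{id}$ on the $\sigma^{\ge r}$ part and $\mathrm{id}\otimes\rho_{a_0}$ on the $\sigma^{\le r-1}$ part, and then match the three blocks of the differential using \cref{lem:rho-lambda-com}; your ``short exact sequence / connecting morphism'' wrapper is just a repackaging of the same computation. One small slip: with the paper's shift convention $X[\alpha_0]^\alpha=X^{\alpha-\alpha_0}$ one has $\operatorname{Tot}(Y[a_0]^\bullet)\cong\operatorname{Tot}(Y^\bullet)[1]$ (see the remark after the definition of the shift), so your $[-1]$ should read $[1]$; the rest of your sign chase is unaffected.
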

\begin{proof}
For each $\alpha:S\to\mathbb Z$ with $\alpha(a_0)=0$, we have the following commutative diagram:
\[
\begin{tikzcd}
\cdots \ar[r] & X^{\alpha+(r-2)a_0} \ar[r,"d_{a_0}"] \ar[d] & X^{\alpha+(r-1)a_0} \ar[r] \ar[d,"d_{a_0}"] & 0 \ar[r] \ar[d] & \cdots \\
\cdots \ar[r] & 0 \ar[r] & X^{\alpha+ra_0} \ar[r,"d_{a_0}"] & X^{\alpha+(r+1)a_0} \ar[r,"d_{a_0}"] & \cdots
\end{tikzcd}
\quad.
\]
It clearly defines the morphism $\varphi:(\sigma^{\le r-1}_{a_0}X)[a_0]^\bullet\to\sigma^{\ge r}_{a_0}X^\bullet$ of $S$-fold complexes as in \ref{sub:mcplx-cone:mor}.

To verify \ref{sub:mcplx-cone:cone}, note that we have an isomorphism
\begin{equation}
\label{eq:prf:mcplx-cone:mapdef}
\begin{split}
\operatorname{Tot}(X^\bullet)^n
&= \biggl(\bigoplus_{\substack{|\alpha|=n\\\alpha(a_0)\ge r}}X^\alpha\otimes E_\alpha\biggr) \oplus \biggl(\bigoplus_{\substack{|\alpha|=n\\\alpha(a_0)\le r-1}}X^\alpha\otimes E_\alpha\biggr) \\[1ex]
&\xrightarrow{\begin{psmallmatrix}\mathrm{id} & 0 \\ 0 & \mathrm{id}\otimes\rho_{a_0}
  \end{psmallmatrix}} \biggl(\bigoplus_{\substack{|\alpha|=n\\\alpha(a_0)\ge r}}X^\alpha\otimes E_\alpha\biggr) \oplus \biggl(\bigoplus_{\substack{|\alpha'|=n+1\\\alpha(a_0)\le r}}X^{\alpha'-a_0}\otimes E_{\alpha'}\biggr) \\[1ex]
&= \operatorname{Tot}(\sigma^{\ge r}_{a_0}X^\bullet)^n\oplus\operatorname{Tot}((\sigma^{\le r-1}_{a_0}X)[a_0]^\bullet)^{n+1} \\
&= \operatorname{Cone}\left(\operatorname{Tot}((\sigma^{\le r-1}_{a_0}X)[a_0]^\bullet)\xrightarrow{\varphi_\ast}\operatorname{Tot}(\sigma^{\ge r}_{a_0}X^\bullet)\right)^n
\quad.
\end{split}
\end{equation}
With respect to the first decomposition in \eqref{eq:prf:mcplx-cone:mapdef}, the differential on $\operatorname{Tot}(X^\bullet)^n$ is written as
\[
d_{\operatorname{Tot}(X^\bullet)}^n
=
\begin{multlined}[t]
\sum_{\substack{|\alpha|=n\\\alpha(a_0)\le r-1}}\sum_{a\in S\setminus\{a_0\}} d^\alpha_a\otimes\rho_a
+ \sum_{\substack{|\alpha|=n\\\alpha(a_0)< r-1}} d^\alpha_{a_0}\otimes\rho_{a_0} \\
+ \sum_{\substack{|\alpha|=n\\\alpha(a_0)=r-1}}d^\alpha_{a_0}\otimes\rho_{a_0}
+ \sum_{\substack{|\alpha|=n\\\alpha(a_0)\ge r}}\sum_{a\in S} d^\alpha_a\otimes\rho_a
\quad.
\end{multlined}
\]
By \cref{lem:rho-lambda-com}, we have
\begin{equation}
\label{eq:prf:mcplx-cone:term-fst}
\begin{split}
&(\mathrm{id}\otimes\rho_{a_0})
\circ\biggl(\sum_{\substack{|\alpha|=n\\\alpha(a_0)\le r-1}}\sum_{a\in S\setminus\{a_0\}} d^\alpha_a\otimes\rho_a+ \sum_{\substack{|\alpha|=n\\\alpha(a_0)<r-1}} d^\alpha_{a_0}\otimes\rho_{a_0}\biggr) \\
&= \biggl(-\sum_{\substack{|\alpha|=n\\\alpha(a_0)\le r-1}}\sum_{a\in S\setminus\{a_0\}} d^\alpha_a\otimes\rho_a+ \sum_{\substack{|\alpha|=n\\\alpha(a_0)<r-1}} d^\alpha_{a_0}\otimes\rho_{a_0}\biggr)\circ(\mathrm{id}\otimes\rho_{a_0}) \\
&= \biggl(-\sum_{|\alpha'|=n+1}\sum_{a\in S}(d^{\le r-1}_{[a_0]})^\alpha_a\otimes\rho_a\biggr)\circ(\mathrm{id}\otimes\rho_{a_0})
\end{split}
\end{equation}
and
\begin{gather}
\label{eq:prf:mcplx-cone:term-snd}
\sum_{\substack{|\alpha|=n\\\alpha(a_0)=r-1}}d^\alpha_{a_0}\otimes\rho_{a_0}
= \biggl(\sum_{|\alpha|=n+1}\varphi^\alpha\otimes\mathrm{id}\biggr)\circ(\mathrm{id}\otimes\rho_{a_0})
\quad,\\
\label{eq:prf:mcplx-cone:term-trd}
\sum_{\substack{|\alpha|=n\\\alpha(a_0)\ge r}}\sum_{a\in S} d^\alpha_a\otimes\rho_a
= \sum_{|\alpha|=n}\sum_{a\in S}(d^{\ge r})^\alpha_a\otimes\rho_a
\quad.
\end{gather}
Adding \eqref{eq:prf:mcplx-cone:term-fst}, \eqref{eq:prf:mcplx-cone:term-snd}, and \eqref{eq:prf:mcplx-cone:term-trd}, one can see that \eqref{eq:prf:mcplx-cone:mapdef} is actually an isomorphism of chain complexes.
This completes the proof.
\end{proof}

Using \cref{lem:mcplx-cone}, we obtain several important isomorphisms.

\begin{proposition}
\label{prop:skein-mcone}
Let $\Dresol{G}$, $\Nresol{G}$, $\Presol{G}$, $\Hsmooth{G}$, and $\Vsmooth{G}$ be singular tangle-like graphs which are identical except on a local part where they are of the form as in \cref{tab:skein-graphs}.
\begin{table}[t]
\centering
\begin{tabular}{c|c|c|c|c}
  $\Dresol{G}$ & $\Nresol{G}$ & $\Presol{G}$ & $\Hsmooth{G}$ & $\Vsmooth{G}$ \\\hline\rule[-3.5ex]{0pt}{8ex}
  \diagSingUp & \diagCrossNegUp & \diagCrossPosUp & \diagSmoothW & \diagSmoothUp
\end{tabular}
\caption{Local pictures in skein relations}
\label{tab:skein-graphs}
\end{table}
Then, for a checkerboard coloring $\chi$, the morphisms \eqref{eq:delta-}, \eqref{eq:delta+}, and \eqref{eq:Phi} respectively induces morphisms of chain complexes
\[
\widehat\delta_-:\dblBrac{\Hsmooth{G}^\chi}\to\dblBrac{\Vsmooth{G}^\chi}
\ ,\quad \widehat\delta_+:\dblBrac{\Vsmooth{G}^\chi}\to\dblBrac{\Hsmooth{G}^\chi}
\ ,\quad \widehat\Phi:\dblBrac{\Nresol{G}^\chi}\to\dblBrac{\Presol{G}^\chi}
\]
such that there are isomorphisms of chain complexes below:
\[
\dblBrac{\Nresol{G}^\chi}\cong\operatorname{Cone}(\widehat\delta_-)
\ ,\quad\dblBrac{\Presol{G}^\chi}\cong\operatorname{Cone}(\widehat\delta_+)[1]
\ ,\quad\dblBrac{\Dresol{G}^\chi}\cong\operatorname{Cone}(\widehat\Phi)
\quad.
\]
\end{proposition}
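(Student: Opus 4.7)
The plan is to derive all three isomorphisms uniformly from Lemma~\ref{lem:mcplx-cone}, splitting the multi-fold complex $\operatorname{Sm}(G^\chi)^\bullet$ along the single coordinate $v \in V^4(G)$ that indexes the local region where the five diagrams differ. Since every other coordinate plays the same role in all five graphs, the stupid truncations $\sigma^{\le r-1}_v$ and $\sigma^{\ge r}_v$ reduce, up to canonical identifications of sign modules, to the multi-fold complexes of the simpler graphs obtained by replacing $v$ with a smoothing or with a crossing of lower order.

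For the \emph{negative crossing} I would apply Lemma~\ref{lem:mcplx-cone} with $a_0 = v$ and $r = 0$. The support of $\operatorname{Sm}(\Nresol{G}^\chi)^\bullet$ at $v$ is $\alpha(v) \in \{-1, 0\}$, so $\sigma^{\ge 0}_v \operatorname{Sm}(\Nresol{G}^\chi)^\bullet$ is concentrated in the slice $\alpha(v) = 0$ with value the vertical smoothing; via the canonical identification of modules of weighted signs at $\alpha(v) = 0$ (which contributes no sign), its total complex is $\dblBrac{\Vsmooth{G}^\chi}$, and similarly $(\sigma^{\le -1}_v \operatorname{Sm}(\Nresol{G}^\chi))[v]^\bullet$ has total complex $\dblBrac{\Hsmooth{G}^\chi}$. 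The connecting morphism $\varphi$ from Lemma~\ref{lem:mcplx-cone} is $\delta_-$ componentwise, inducing the desired chain map $\widehat\delta_-$, and Lemma~\ref{lem:mcplx-cone} yields $\dblBrac{\Nresol{G}^\chi} \cong \operatorname{Cone}(\widehat\delta_-)$. The \emph{positive crossing} case proceeds analogously with $r = 1$: now $\sigma^{\ge 1}_v$ isolates the horizontal smoothing at $\alpha(v) = 1$, $\sigma^{\le 0}_v$ isolates the vertical smoothing at $\alpha(v) = 0$, and the connecting morphism is $-\delta_+$. Since both slices are now positioned one step higher in the grading ($\alpha(v) = 1$ after the shift $[v]$), the resulting cone carries an overall degree shift $[1]$, and the sign is absorbed by the canonical isomorphism $\operatorname{Cone}(-f) \cong \operatorname{Cone}(f)$, giving $\dblBrac{\Presol{G}^\chi} \cong \operatorname{Cone}(\widehat\delta_+)[1]$.

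For the \emph{double point} I take $a_0 = v$, $r = 0$. A direct computation using the sign formula $(d_{[v]})^\alpha_v = -d_v^{\alpha - v}$ for the shift shows that $(\sigma^{\le -1}_v \operatorname{Sm}(\Dresol{G}^\chi))[v]^\bullet \cong \operatorname{Sm}(\Nresol{G}^\chi)^\bullet$: the shift moves the support from $\alpha(v) \in \{-2, -1\}$ to $\alpha(v) \in \{-1, 0\}$, and the sign twist converts the $-\delta_-$ appearing in $\operatorname{Sm}(\Dresol{G}^\chi)^\bullet$ into the $+\delta_-$ of $\operatorname{Sm}(\Nresol{G}^\chi)^\bullet$. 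Similarly $\sigma^{\ge 0}_v \operatorname{Sm}(\Dresol{G}^\chi)^\bullet \cong \operatorname{Sm}(\Presol{G}^\chi)^\bullet$ directly. The connecting morphism is then $\Phi \colon |\Vsmooth{G}|^\chi \to |\Vsmooth{G}|^\chi$, and on total complexes it becomes the chain map $\widehat\Phi \colon \dblBrac{\Nresol{G}^\chi} \to \dblBrac{\Presol{G}^\chi}$; Lemma~\ref{lem:mcplx-cone} then delivers $\dblBrac{\Dresol{G}^\chi} \cong \operatorname{Cone}(\widehat\Phi)$. The principal obstacle throughout is the sign bookkeeping: one must verify that the canonical identifications of the modules of weighted signs at the distinguished values of $\alpha(v)$ intertwine correctly with the twists $\rho_v$ appearing in the definition of $\operatorname{Tot}$ and with the $(-1)^{\alpha_0(v)}$ sign introduced by the shift $[v]$, but once this is in place, the chain-map property of $\widehat\delta_\pm$ and $\widehat\Phi$ is automatic from the construction of $\varphi$ in Lemma~\ref{lem:mcplx-cone}.
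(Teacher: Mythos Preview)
Your proposal is correct and follows essentially the same approach as the paper: apply Lemma~\ref{lem:mcplx-cone} at the distinguished vertex $v$ with the appropriate cutoff $r$ in each of the three cases, identify the resulting stupid truncations with the multi-fold complexes of the simpler graphs, and read off the connecting morphism as $\delta_-$, $-\delta_+$, or $\Phi$. Your discussion of the sign bookkeeping (the shift converting $-\delta_-$ to $\delta_-$ for the double-point case, and the absorption of the sign in $\operatorname{Cone}(-f)\cong\operatorname{Cone}(f)$ for the positive-crossing case) is more explicit than the paper's, which simply records the six truncation identifications and invokes the lemma.
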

\begin{proof}
Let us denote by $v_\times\in V^4(\Dresol{G})$, $v_-\in V^4(\Nresol{G})$, and $v_+\in V^4(\Presol{G})$ the quadrivalent vertices in \cref{tab:skein-graphs}.
By direct computations, one obtains isomorphisms
\[
\begin{gathered}
\operatorname{Tot}(\sigma^{\le -1}_{v_-}\operatorname{Sm}(\Nresol{G}^\chi)[v_-]^\bullet)
\cong \dblBrac{\Hsmooth{G}^\chi}
\ ,\quad
\operatorname{Tot}(\sigma^{\ge 0}_{v_-}\operatorname{Sm}(\Nresol{G}^\chi)^\bullet)
\cong \dblBrac{\Vsmooth{G}^\chi}
\quad,\\
\operatorname{Tot}((\sigma^{\le 0}_{v_+}\operatorname{Sm}(\Presol{G}^\chi))[v_+]^\bullet)
\cong \dblBrac{\Vsmooth{G}^\chi}[1]
\ ,\quad
\operatorname{Tot}(\sigma^{\ge 1}_{v_+}\operatorname{Sm}(\Presol{G}^\chi)^\bullet)
\cong \dblBrac{\Hsmooth{G}^\chi}[1]
\quad,\\
\operatorname{Tot}((\sigma^{\le -1}_{v_\times}\operatorname{Sm}(\Dresol{G}^\chi))[v_\times]^\bullet)
\cong \dblBrac{\Nresol{G}^\chi}
\ ,\quad
\operatorname{Tot}(\sigma^{\ge 0}_{v_\times}\operatorname{Sm}(\Dresol{G}^\chi)^\bullet)
\cong \dblBrac{\Presol{G}^\chi}
\quad.
\end{gathered}
\]
The result hence follows from \cref{lem:mcplx-cone}.
\end{proof}

The isomorphisms in \cref{prop:skein-mcone} are categorified analogues of the Kauffman skein relation and the Vassiliev skein relation.

\begin{proof}[Proof of \cref{theo:UKH-compare}]
We prove the statement by induction on the number of double points.
As for ordinary tangle diagrams, there is nothing to prove.
Suppose the statement is true for diagrams with double point less than $n$, and let $D$ be a singular tangle diagram with $n$ double point.
For a double point $v\in c^\sharp(D)$, let us denote by $\Nresol{D}$ and $\Presol{D}$ the diagrams obtained by resolving $v$ into negative and positive crossings respectively.
Hence, \cref{prop:skein-mcone} yields an isomorphism
\[
\dblBrac*{D^{\chi_{\mathsf w}}}
\cong \operatorname{Cone}\left(
\dblBrac*{\Nresol{D}^{\chi_{\mathsf w}}}
\xrightarrow{\widehat\Phi} \dblBrac*{\Presol{D}^{\chi_{\mathsf w}}}
\right)
\quad.
\]
By induction hypothesis, the complexes $\dblBrac{\Nresol{D}^{\chi_{\mathsf w}}}$ and $\dblBrac{\Presol{D}^{\chi_{\mathsf W}}}$ are isomorphic to the ones defined in \cite[Definition~5.2]{ItoYoshida2020UKH}.
In addition, unwinding the construction of the morphism $\widehat\Phi$, one can verify that the morphism $\widehat\Phi$ defined in \cref{prop:skein-mcone} agrees with the morphism \eqref{eq:PhiHat} (see \cite[Section~3]{ItoYoshida2020UKH}).
Thus, in view of \cite[Corollary~5.3]{ItoYoshida2020UKH} and \cref{prop:skein-mcone}, we conclude that $\dblBrac{D^{\chi_{\mathsf w}}}$ is also isomorphic to the complex constructed in \cite[Definition~5.2]{ItoYoshida2020UKH}.
This completes the induction and hence the proof.
\end{proof}

\section{Absolute exact sequences}
\label{sec:absex}

Throughout the section, we fix a base ring $k$ and a $k$-linear category $\mathcal A$.

\subsection{Exactness and chain contractions}
\label{sec:absex:excontr}

In the discussion of chain complexes in an arbitrary $k$-linear category, one major problem is that images and kernels may not exist.
In particular, the notion of exactness no longer make sense.
We hence consider the absolute version instead.

\begin{definition}
A sequence
\begin{equation}
\label{eq:absex-seq}
\cdots\xrightarrow{f^{n-2}}X^{n-1}\xrightarrow{f^{n-1}}X^n\xrightarrow{f^n} X^{n+1}\xrightarrow{f^{n+1}}\cdots
\end{equation}
in $\mathcal A$ is said to be \emph{absolutely exact} if every $k$-linear functor $F:\mathcal A\to\mathcal B$ with $\mathcal B$ abelian carries \eqref{eq:absex-seq} into an exact sequence.
\end{definition}

\begin{proposition}
\label{prop:absex-contractible}
A sequence~\eqref{eq:absex-seq} is absolutely exact if and only if it satisfies the following conditions:
\begin{enumerate}[label=\upshape(\roman*)]
  \item\label{sub:absex-contractible:diff} for each $n\in\mathbb Z$, $f^{n+1}f^n=0$;
  \item\label{sub:absex-contractible:contr} the sequence is contractible as a chain complex; more precisely, there is a family of morphisms $\theta^n:X^n\to X^{n-1}$ such that
\begin{equation}
\label{eq:absex-contractible:theta}
f^{n-1}\theta^n + \theta^{n+1}f^n = \mathrm{id}_{X^n}
\end{equation}
for each $n\in\mathbb Z$.
\end{enumerate}
\end{proposition}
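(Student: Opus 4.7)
The direction from (i) and (ii) to absolute exactness is immediate: any $k$-linear functor $F:\mathcal A\to\mathcal B$ into an abelian category carries the identity $f^{n-1}\theta^n+\theta^{n+1}f^n=\mathrm{id}_{X^n}$ to a contracting chain homotopy of $F(X^\bullet)$, which in an abelian category forces acyclicity.

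For the converse, my plan is to extract $\theta^\bullet$ directly from the exactness of Hom-complexes. First, absolute exactness applied to the $k$-linear functor $\mathcal A(Y,-):\mathcal A\to\mathbf{Mod}_k$ yields that the covariant Hom-complex $\mathcal A(Y,X^\bullet)$ is exact for every $Y$. Viewing $\mathcal A(-,Y)$ as a covariant $k$-linear functor $\mathcal A\to\mathbf{Mod}_k^{\mathrm{op}}$ (and using that $\mathbf{Mod}_k^{\mathrm{op}}$ is still abelian), the same principle gives that the contravariant Hom-complex $\mathcal A(X^\bullet,Y)$ is exact in $\mathbf{Mod}_k$ for every $Y$. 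Condition (i) follows from the former combined with the faithfulness of Yoneda.

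Then I would build $\theta^\bullet$ by induction on $n$. Assuming $\theta^k$ for $k\le n$ have been chosen so that the homotopy relation holds through level $n-1$, set $g^n:=\mathrm{id}_{X^n}-f^{n-1}\theta^n$. A short calculation using the level-$(n-1)$ equation to rewrite $\theta^n f^{n-1}$ and the identity $f^{n-1}f^{n-2}=0$ shows that $g^n f^{n-1}=0$, so $g^n$ lies in the kernel of $-\circ f^{n-1}:\mathcal A(X^n,X^n)\to\mathcal A(X^{n-1},X^n)$. By the contravariant Hom-exactness with $Y=X^n$, this kernel coincides with the image of $-\circ f^n:\mathcal A(X^{n+1},X^n)\to\mathcal A(X^n,X^n)$; any preimage of $g^n$ yields the desired $\theta^{n+1}$ satisfying $\theta^{n+1}f^n=g^n$, that is, the level-$n$ equation.

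The main obstacle is the base of the induction. When the sequence is bounded below, so $X^n=0$ for $n\ll 0$, one takes $\theta^n=0$ there and the induction proceeds cleanly. For sequences unbounded on both sides I would instead argue through the Yoneda embedding $y:\mathcal A\to\widehat{\mathcal A}:=\mathbf{Fun}(\mathcal A^{\mathrm{op}},\mathbf{Mod}_k)$: since each $y(X^n)$ is projective in $\widehat{\mathcal A}$ (representables are projective because evaluation functors are exact) and the image complex is acyclic by absolute exactness, the standard splitting argument for bounded acyclic complexes of projectives produces a contracting homotopy in $\widehat{\mathcal A}$, which transports back to a contraction of $X^\bullet$ by the full faithfulness of $y$. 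In the applications of interest in this paper the sequences in question are always bounded, so the inductive construction above suffices.
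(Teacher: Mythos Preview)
Your argument for the ``if'' direction, for condition~(i), and for the inductive step in the bounded-below case is correct and clean. The gap is in your treatment of the unbounded case. You invoke ``the standard splitting argument for bounded acyclic complexes of projectives'' while explicitly working with a complex unbounded on both sides; that argument genuinely needs a bounded end to start the induction on the cycle objects, and it fails without one (an unbounded acyclic complex of projectives in an abelian category need not be contractible). Projectivity of the representables alone is not enough here. Your framework \emph{can} be salvaged: absolute exactness in fact gives that $\mathrm{Hom}_{\widehat{\mathcal A}}(y(X^\bullet),F)$ is exact for \emph{every} presheaf $F$ (since any $F$ is a $k$-linear functor $\mathcal A\to\mathbf{Mod}_k^{\mathrm{op}}$), and applying this with $F$ a cycle object $Z^n$ yields splittings of the short exact sequences $0\to Z^{n-1}\to y(X^{n-1})\to Z^n\to 0$ directly, with no induction needed. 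But that is not the ``standard'' projectivity argument you cite.

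The paper takes a different route that sidesteps this difficulty entirely. Rather than trying to produce all $\theta^n$ at once, it first constructs a single $\theta^1$ satisfying only the weaker condition $f^0\theta^1 f^0=f^0$, by applying absolute exactness to the \emph{image presheaf} $I_{f^0}(W):=\mathrm{im}\bigl(f^0_*:\mathcal A(W,X^0)\to\mathcal A(W,X^1)\bigr)$; this is a non-representable functor, so the argument genuinely uses more than Hom-exactness. A separate lemma (Lemma~\ref{lem:absex-indstep}) then shows that from any single $\theta^n$ with $f^{n-1}\theta^n f^{n-1}=f^{n-1}$ one can manufacture both $\theta^{n+1}$ and $\theta^{n-1}$ satisfying the full homotopy relations, so the construction propagates in both directions from the middle. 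This two-sided induction is what makes the unbounded case go through without any appeal to projectivity or to the presheaf category.
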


The proof is essentially a recursive use of the following lemma.

\begin{lemma}
\label{lem:absex-indstep}
Suppose a sequence~\eqref{eq:absex-seq} is absolutely exact.
Given a morphism $\theta^n:X^n\to X^{n-1}$ satisfying the equation $f^{n-1}\theta^nf^{n-1}=f^{n-1}$, there are morphisms $\theta^{n+1}:X^{n+1}\to X^n$ and $\theta^{n-1}:X^{n-1}\to X^{n-2}$ such that
\[
\theta^{n+1}f^n + f^{n-1}\theta^n = \mathrm{id}_{X^n}
\ ,\quad\theta^nf^{n-1} + f^{n-2}\theta^{n-1} = \mathrm{id}_{X^{n-1}}
\quad.
\]
\end{lemma}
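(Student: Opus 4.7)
The plan is to reduce the problem to two factorization questions and to solve both via Yoneda-style arguments enabled by absolute exactness. Set
\[
g\coloneqq \mathrm{id}_{X^n}-f^{n-1}\theta^n:X^n\to X^n,\qquad
g'\coloneqq \mathrm{id}_{X^{n-1}}-\theta^n f^{n-1}:X^{n-1}\to X^{n-1}.
\]
Using the hypothesis $f^{n-1}\theta^nf^{n-1}=f^{n-1}$, one checks immediately that $gf^{n-1}=0$ and $f^{n-1}g'=0$. Finding $\theta^{n+1}$ thus amounts to producing a morphism $\theta^{n+1}:X^{n+1}\to X^n$ with $\theta^{n+1}f^n=g$, while finding $\theta^{n-1}$ amounts to producing $\theta^{n-1}:X^{n-1}\to X^{n-2}$ with $f^{n-2}\theta^{n-1}=g'$.

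For $\theta^{n-1}$, I would apply the covariant $k$-linear functor $\mathrm{Hom}(X^{n-1},-):\mathcal A\to\mathbf{Mod}_k$ to the absolutely exact sequence. Since $\mathbf{Mod}_k$ is abelian, absolute exactness at $X^{n-1}$ translates to exactness at the middle of
\[
\mathrm{Hom}(X^{n-1},X^{n-2})\xrightarrow{(f^{n-2})_*}\mathrm{Hom}(X^{n-1},X^{n-1})\xrightarrow{(f^{n-1})_*}\mathrm{Hom}(X^{n-1},X^n).
\]
Because $(f^{n-1})_*(g')=f^{n-1}g'=0$, there exists $\theta^{n-1}$ with $f^{n-2}\theta^{n-1}=g'$, yielding $\theta^nf^{n-1}+f^{n-2}\theta^{n-1}=\mathrm{id}_{X^{n-1}}$.

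For $\theta^{n+1}$, the analogous argument requires a contravariant hom. The key observation is that $\mathrm{Hom}(-,X^n)$, viewed as a covariant functor $\mathcal A\to\mathbf{Mod}_k^{\opposite}$, is $k$-linear with abelian target, since the opposite of an abelian category is again abelian. Hence absolute exactness at $X^n$ gives exactness at the middle of
\[
\mathrm{Hom}(X^{n+1},X^n)\xrightarrow{(f^n)^*}\mathrm{Hom}(X^n,X^n)\xrightarrow{(f^{n-1})^*}\mathrm{Hom}(X^{n-1},X^n).
\]
Since $(f^{n-1})^*(g)=gf^{n-1}=0$, we obtain $\theta^{n+1}$ with $\theta^{n+1}f^n=g$, and therefore $\theta^{n+1}f^n+f^{n-1}\theta^n=\mathrm{id}_{X^n}$.

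The only point requiring any care is the use of a contravariant hom functor in the construction of $\theta^{n+1}$; once one observes that this is still captured by the definition of absolute exactness via the abelian category $\mathbf{Mod}_k^{\opposite}$, both halves of the lemma reduce to a short element-level diagram chase in $\mathbf{Mod}_k$.
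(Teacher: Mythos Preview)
Your proof is correct and follows essentially the same approach as the paper: both construct $\theta^{n+1}$ by applying the contravariant hom $\mathcal A(-,X^n)$ to obtain an exact sequence in $\mathbf{Mod}_k$ and lifting $\mathrm{id}_{X^n}-f^{n-1}\theta^n$ along $(f^n)^\ast$, and then obtain $\theta^{n-1}$ by the dual argument with the covariant hom. Your justification that $\mathrm{Hom}(-,X^n)$ can be viewed as a covariant $k$-linear functor into the abelian category $\mathbf{Mod}_k^{\opposite}$ is a nice explicit point that the paper leaves implicit.
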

\begin{proof}
Since \eqref{eq:absex-seq} is absolutely exact, we obtain the following exact sequence of $k$-modules:
\[
\mathcal A(X^{n+1},X^n)
\xrightarrow{(f^n)^\ast}\mathcal A(X^n,X^n)
\xrightarrow{(f^{n-1})^\ast}\mathcal A(X^{n-1},X^n)
\quad.
\]
The assumption on $\theta^n$ implies that the element $\mathrm{id}_{X^n}-f^{n-1}\theta^n\in\mathcal A(X^n,X^n)$ belongs to the kernel of the map $(f^{n-1})^\ast$.
It follows that there is an element $\theta^{n+1}\in\mathcal A(X^{n+1},X^n)$ with $(f^n)^\ast(\theta^{n+1})=\mathrm{id}_{X^n}-f^{n-1}\theta^n$; in other words,
\[
\theta^{n+1}f^n + f^{n-1}\theta^n = \mathrm{id}_{X^n}
\quad.
\]
Thus, $\theta^{n+1}$ is one of the required morphisms in the statement.
By the dual argument, one can also construct a morphism $\theta^{n-1}:X^{n-1}\to X^{n-2}$, which completes the proof.
\end{proof}

\begin{proof}[Proof of \cref{prop:absex-contractible}]
The ``if'' part follows from the fact that, in an abelian category, a chain complex is exact if and only if all the homology groups vanish.
Conversely, suppose \eqref{eq:absex-seq} is absolutely exact.
First, for each $n\in\mathbb Z$, we have the following exact sequence of $k$-modules:
\[
\mathcal A(X^n,X^n)
\xrightarrow{f^n_\ast} \mathcal A(X^n,X^{n+1})
\xrightarrow{f^{n+1}_\ast} \mathcal A(X^n,X^{n+2})
\quad.
\]
By chasing the images of the identity, we obtain $f^{n+1}f^n=0$, so the property~\ref{sub:absex-contractible:diff} follows.
On the other hand, in order to prove the property~\ref{sub:absex-contractible:contr}, it suffices to construct a morphism $\theta^1:X^1\to X^0$ with $f^0\theta^1f^0=f^0$.
Indeed, if such a morphism is given, one can inductively construct $\theta^n$ for every $n\in\mathbb Z$ by using \cref{lem:absex-indstep}.

To construct $\theta^1$, we define a functor $I_{f^0}:\mathcal A^\opposite\to\mathbf{Mod}_k$ so that, for each $W\in\mathcal A$, $I_{f^0}(W)$ is the image of the $k$-homomorphism
\[
f^0_\ast:\mathcal A(W,X^0)\to \mathcal A(W,X^1)
\]
and that the inclusion $I_{f^0}(W)\hookrightarrow\mathcal A(W,X^1)$ defines a natural transformation.
Since \eqref{eq:absex-seq} is absolutely exact, it induces the following exact sequence of $k$-modules:
\[
I_{f^0}(X^1)
\xrightarrow{(f^0)^\ast} I_{f^0}(X^0)
\xrightarrow{(f^{-1})^\ast}I_{f^0}(X^{-1})
\quad.
\]
Note that $f^0\in\mathcal A(X^0,X^1)$ lies in the submodule $I_{f^0}(X^0)$ and that the property~\ref{sub:absex-contractible:diff} implies $(f^{-1})^\ast(f^0)=0$.
It follows that there is an element $\beta\in I_{f^0}(X^1)$ with $(f^0)^\ast(\beta)=f^0$.
Furthermore, by definition of $I_{f^0}$, the element $\beta$ can be written in the form $\beta=f^0\theta^1$ for an element $\theta^1\in\mathcal A(X^1,X^0)$.
Finally, we obtain the equation $f^0\theta^1f^0 = f^0$, so $\theta^1$ is a required morphism.
As discussed above, the property~\ref{sub:absex-contractible:contr} hence follows, which completes the proof.
\end{proof}

We mainly use absolute exactness in the form of \cref{prop:absex-contractible}.
The following lemma is convenient.

\begin{lemma}
\label{lem:theta-sqzero}
Suppose we are given an absolutely exact sequence \eqref{eq:absex-seq} in an additive category $\mathcal A$.
Then, there is a family of morphisms $\theta^n:X^n\to X^{n-1}$ such that
\[
f^{n-1}\theta^n+\theta^{n+1}f^n = \mathrm{id}_{X^n}
\ ,\quad \theta^{n-1}\theta^n = 0
\]
for each $n\in\mathbb Z$.
\end{lemma}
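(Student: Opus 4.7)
The plan is to invoke \cref{prop:absex-contractible} to produce a first family of morphisms $\{\theta^n\}$ satisfying the contraction identity \eqref{eq:absex-contractible:theta} together with $f^{n+1}f^n = 0$, and then to perturb it by setting
\[
\widetilde\theta^n \coloneqq \theta^n\, f^{n-1}\theta^n,
\]
a formula analogous to the standard ``square-zero'' trick for contracting homotopies on chain complexes. Expanding via $f^{n-1}\theta^n = \mathrm{id}_{X^n} - \theta^{n+1}f^n$ will also give the useful alternative form $\widetilde\theta^n = \theta^n - \theta^n\theta^{n+1}f^n$.

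The first task will be to verify that $\{\widetilde\theta^n\}$ is still a contraction. The crucial observation is that the endomorphism $p^n \coloneqq \theta^{n+1}f^n$ is idempotent: the short computation
\[
p^n p^n = \theta^{n+1}(f^n\theta^{n+1})f^n = \theta^{n+1}\bigl(\mathrm{id}_{X^{n+1}} - \theta^{n+2}f^{n+1}\bigr)f^n = p^n
\]
uses only \eqref{eq:absex-contractible:theta} at level $n+1$ together with $f^{n+1}f^n = 0$. Dually $q^n \coloneqq f^{n-1}\theta^n = \mathrm{id}_{X^n} - p^n$ is idempotent. From here a direct substitution gives $f^{n-1}\widetilde\theta^n = q^n q^n = q^n$ and $\widetilde\theta^{n+1}f^n = p^n p^n = p^n$, so $f^{n-1}\widetilde\theta^n + \widetilde\theta^{n+1}f^n = q^n + p^n = \mathrm{id}_{X^n}$.

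It then remains to check the square-zero relation. Using $f^{n-2}\theta^{n-1} = \mathrm{id}_{X^{n-1}} - \theta^n f^{n-1}$ I would rewrite $\widetilde\theta^{n-1} = \theta^{n-1} - \theta^{n-1}\theta^n f^{n-1}$, and then expand
\[
\widetilde\theta^{n-1}\widetilde\theta^n = \theta^{n-1}\widetilde\theta^n - \theta^{n-1}\theta^n f^{n-1}\widetilde\theta^n = \theta^{n-1}\widetilde\theta^n - \theta^{n-1}\theta^n f^{n-1}\theta^n,
\]
the last equality because $f^{n-1}\widetilde\theta^n = f^{n-1}\theta^n$ by the previous paragraph. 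Substituting $\widetilde\theta^n = \theta^n - \theta^n\theta^{n+1}f^n$ and collecting terms will then give
\[
\widetilde\theta^{n-1}\widetilde\theta^n = \theta^{n-1}\theta^n - \theta^{n-1}\theta^n\bigl(\theta^{n+1}f^n + f^{n-1}\theta^n\bigr) = 0
\]
by a final application of \eqref{eq:absex-contractible:theta}.

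The only non-routine step is spotting the right perturbation $\widetilde\theta = \theta f \theta$; once the formula is in hand, every verification reduces to the two identities $f^{n+1}f^n = 0$ and $f^{n-1}\theta^n + \theta^{n+1}f^n = \mathrm{id}_{X^n}$ already supplied by \cref{prop:absex-contractible}. In particular no further structure on $\mathcal A$ beyond that of an additive $k$-linear category is used.
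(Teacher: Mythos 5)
Your proposal is correct and takes essentially the same approach as the paper: you define the corrected contraction by the same formula $\widetilde\theta^n = \theta^n f^{n-1}\theta^n$, observe the idempotency of $\theta^{n+1}f^n$ and $f^{n-1}\theta^n$, and derive both the contraction identity and the square-zero relation exactly as in the paper's proof.
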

\begin{proof}
In view of \cref{prop:absex-contractible}, there is a family of morphisms $\zeta^n:X^n\to X^{n-1}$ such that $f^{n-1}\zeta^n+\zeta^{n+1}f^n=\mathrm{id}$.
Notice that, in this case, the endomorphisms $f^{n-1}\zeta^n,\zeta^{n+1}f^n:X^n\to X^n$ are idempotent; indeed, we have
\begin{equation}
\label{eq:prf:theta-sqzero:idemp}
\begin{gathered}
f^{n-1}\zeta^nf^{n-1}\zeta^n
= f^{n-1}(\mathrm{id}-f^{n-2}\zeta^{n-1})\zeta^n
= f^{n-1}\zeta^n
\quad,\\
\zeta^{n+1}f^n\zeta^{n+1}f^n
= \zeta^{n+1}(\mathrm{id}-\zeta^{n+2}f^{n+1})f^n
= \zeta^{n+1}f^n
\quad.
\end{gathered}
\end{equation}
Now, we set $\theta^n\coloneqq\zeta^nf^{n-1}\zeta^n$.
Then, the equation~\eqref{eq:prf:theta-sqzero:idemp} implies
\[
f^{n-1}\theta^n+\theta^{n+1}f^n
= f^{n-1}\zeta^n + \zeta^{n+1}f^n = \mathrm{id}
\quad.
\]
We in addition have
\[
\begin{split}
\theta^{n-1}\theta^n
&= \zeta^{n-1}f^{n-2}\zeta^{n-1}\zeta^nf^{n-1}\zeta^n \\
&= \zeta^{n-1}(\mathrm{id}-\zeta^nf^{n-1})(\mathrm{id}-f^{n-2}\zeta^{n-1})\zeta^n \\
&= \zeta^{n-1}(\mathrm{id}-(f^{n-2}\zeta^{n-1}+\zeta^nf^{n-1})+\zeta^n f^{n-1}f^{n-2}\zeta^{n-1})\zeta^n \\
&= 0
\quad.
\end{split}
\]
Hence, the result follows.
\end{proof}

\subsection{Some exact sequences of cobordisms}
\label{sec:absex:exseq-cob}

We give several examples of absolutely exact sequences.
Namely, we discuss sequences in the $k$-linear category $\Cob(\mathord-,\mathord-)$ involved with the Reidemeister move of type I and the FI relation.

We first fix notations.
We denote by $I\in\Cob(\mathord-,\mathord-)$ and $S^1\in\Cob(\varnothing,\varnothing)$ the $1$-dimensional cobordisms diffeomorphic to the unit interval and the circle respectively.
Hence, we have
\[
I=\diagFiNil=\diagFiH
\ ,\quad I\otimes S^1=\diagFiV
\quad.
\]
The circle $S^1$ has a canonical structure of a Frobenius monoid object in $\Cob(\varnothing,\varnothing)$; namely, the multiplication $\mu:S^1\otimes S^1\to S^1$ and the comultiplication $\Delta:S^1\to S^1\otimes S^1$ consist of the saddles, the unit $\eta:\varnothing\to S^1$ given by the ``cup'', and the counit $\varepsilon:S^1\to\varnothing$ given by the ``cap.''
On the other hand, we consider the following morphisms in $\Cob(\mathord-,\mathord-)$ defined by the saddles:
\begin{equation}
\label{eq:interval-S1mod}
\mu\coloneqq\BordROneDeltaP:\diagFiV\to\diagFiH
\ ,\quad
\Delta\coloneqq\BordROneDeltaN:\diagFiH\to\diagFiV
\quad.
\end{equation}
Writing $I\in\Cob(\mathord-,\mathord-)$ the interval, one can regard these morphisms as $\mu:I\otimes S^1\to I$ and $\Delta:I\to I\otimes S^1$.

The following is essentially due to Bar-Natan \cite{BarNatan2005}.

\begin{proposition}
\label{prop:ROnePos-absex}
The sequence below is absolutely exact in $\Cob(\mathord-,\mathord-)$:
\begin{equation}
\label{eq:ROnePos-absex:seq}
0\to\diagFiH\xrightarrow{\Delta-\mu\Delta\otimes\eta}\diagFiV\xrightarrow{\mu}\diagFiH\to 0
\quad.
\end{equation}
More precisely, the following defines a chain contraction:
\begin{equation}
\label{eq:ROnePos-absex:contr}
\mathrm{id}_I\otimes\eta:\diagFiH\to\diagFiV
\ ,\quad \mathrm{id}_I\otimes\varepsilon:\diagFiV\to\diagFiH
\quad.
\end{equation}
\end{proposition}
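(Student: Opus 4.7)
The plan is to apply \cref{prop:absex-contractible}: absolute exactness of \eqref{eq:ROnePos-absex:seq} amounts to (i)~$\mu\circ(\Delta-\mu\Delta\otimes\eta)=0$ together with (ii)~the existence of a chain contraction. I will establish both by showing directly that the morphisms $\theta^1\coloneqq\mathrm{id}_I\otimes\varepsilon$ and $\theta^2\coloneqq\mathrm{id}_I\otimes\eta$ from \eqref{eq:ROnePos-absex:contr} provide such a contraction, which requires verifying the three identities
\begin{gather*}
\theta^1\circ(\Delta-\mu\Delta\otimes\eta)=\mathrm{id}_I,\\
\mu\circ\theta^2=\mathrm{id}_I,\\
(\Delta-\mu\Delta\otimes\eta)\circ\theta^1+\theta^2\circ\mu=\mathrm{id}_{I\otimes S^1}.
\end{gather*}

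Three of the four conditions above reduce to standard manipulations in $\Cob(\mathord-,\mathord-)$. Concretely, $\mu\circ\theta^2=\mathrm{id}_I$ is the $S^1$-module unit axiom, visible immediately from the surface-level definitions of $\mu$ and $\eta$. For the first contraction identity, the counit axiom gives $(\mathrm{id}_I\otimes\varepsilon)\circ\Delta=\mathrm{id}_I$, while $(\mathrm{id}_I\otimes\varepsilon)\circ(\mu\Delta\otimes\eta)=(\mu\Delta)\cdot(\varepsilon\eta)=0$ because $\varepsilon\eta=0$ is the first half of the $S$-relation~\ref{relBN:S}. The composition vanishing in (i) reduces analogously, since $\mu\circ(\mu\Delta\otimes\eta)=(\mu\Delta)\cdot\mu\circ(\mathrm{id}_I\otimes\eta)=\mu\Delta$ by the unit axiom, so that $\mu\circ(\Delta-\mu\Delta\otimes\eta)=\mu\Delta-\mu\Delta=0$.

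The substantive step is the third identity. I intend to rewrite it as
\[
\Delta\circ(\mathrm{id}_I\otimes\varepsilon)+(\mathrm{id}_I\otimes\eta)\circ\mu=\mathrm{id}_{I\otimes S^1}+(\mu\Delta)\otimes(\eta\varepsilon),
\]
and then derive it from the four-tube relation~\ref{relBN:4Tu} applied to the identity cobordism on $I\otimes S^1$. Specifically, I pick two small disks at the same time-slice — one on the strip $I\times[0,1]$ and one on the cylinder $S^1\times[0,1]$ — and observe that the four configurations $S_0,\dots,S_3$ of Table~\ref{tab:4Tu-terms}, obtained by joining these disks with tubes in the four possible ways, realize (up to diffeomorphism) the four cobordisms appearing on the two sides of the displayed equation.

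The main obstacle lies precisely in this last step: matching each of the four surgery patterns on the identity cobordism with the claimed morphism. This is a careful picture-by-picture verification of how each tube connects the interval strip to the cylinder and of how the resulting surface collapses, via the Frobenius structure of $S^1$, to the corresponding composite. Once this matching is established, \ref{relBN:4Tu} delivers the identity at once and completes the proof.
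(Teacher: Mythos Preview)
Your approach is correct and is exactly the paper's: the two outer contraction identities and the chain-complex condition follow from the unit/counit axioms together with \ref{relBN:S}, and the middle identity is an instance of \ref{relBN:4Tu}.

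One correction to your setup for that last step. The relation \ref{relBN:4Tu} requires \emph{four} disks in the local ball, not two, and the base cobordism is not the identity on $I\otimes S^1$. With only one disk on the strip and one on the cylinder there is no way to produce four distinct tube configurations, and the terms you would get (e.g.\ a single tube joining strip to cylinder) are not the ones in your displayed equation. The correct base is $C=\mathrm{id}_I\otimes(\eta\varepsilon)$ --- the strip together with a disk capping the incoming circle and a disk cupping the outgoing one --- with two of the four marked disks on the strip, one on the cap, and one on the cup. Then
\[
C_{12}=(\mu\Delta)\otimes(\eta\varepsilon),\quad
C_{34}=\mathrm{id}_{I\otimes S^1},\quad
C_{13}=(\mathrm{id}_I\otimes\eta)\mu,\quad
C_{24}=\Delta(\mathrm{id}_I\otimes\varepsilon),
\]
and $C_{12}+C_{34}=C_{13}+C_{24}$ is precisely your displayed identity. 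This is what the paper's four pictures encode; once you adjust the setup, your picture-by-picture verification goes through without difficulty.
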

\begin{proof}
We clearly have $\mu(\mathrm{id}_I\otimes\eta)=\mathrm{id}_I$.
Also, the $S$-relation implies $(\mathrm{id}_I\otimes\varepsilon)(\Delta-\mu\Delta\otimes\eta)=\mathrm{id}_I$.
Furthermore, by $4Tu$ relation, we obtain
\[
\begin{split}
&(\Delta-\mu\Delta\otimes\eta)(\mathrm{id}_I\otimes\varepsilon)+(\mathrm{id}_I\otimes\eta)\mu \\
&=\BordFIHmtpFst\;-\;\BordFIHmtpTrd\;+\;\BordFIHmtpSnd\\
&=\; \BordFIVId\; = \mathrm{id}_{I\otimes S^1}
\quad.
\end{split}
\]
This completes the proof.
\end{proof}

The dual argument shows the following.

\begin{proposition}
\label{prop:ROneNeg-absex}
The sequence below is absolutely exact in $\Cob(\mathord-,\mathord-)$:
\begin{equation}
\label{eq:ROneNeg-absex:seq}
0\to\diagFiH\xrightarrow{\Delta}\diagFiV\xrightarrow{\mu-\mu\Delta\otimes\varepsilon}\diagFiH\to 0
\quad.
\end{equation}
More precisely, the morphisms \eqref{eq:ROnePos-absex:contr} also defines a chain contraction for \eqref{eq:ROneNeg-absex:seq}.
\end{proposition}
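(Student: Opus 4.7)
The plan is to mimic the proof of Proposition~\ref{prop:ROnePos-absex}, swapping the roles of the unit/multiplication and the counit/comultiplication: where the first proof used the S-relation to kill $\varepsilon\Delta\mu\eta$-type terms in position $X^0$ and the $4Tu$-relation in the middle, here the same S-relation will kill $\varepsilon\eta = 0$ in position $X^2$, and a dual $4Tu$ configuration will do the middle. I will take the same candidate contraction $\mathrm{id}_I\otimes\eta$ and $\mathrm{id}_I\otimes\varepsilon$ and verify the three identities required by Proposition~\ref{prop:absex-contractible}.

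First I would check the endpoints. At $X^0 = I$, the contraction gives $(\mathrm{id}_I\otimes\varepsilon)\circ\Delta = \mathrm{id}_I$, which is simply the counit axiom of the Frobenius structure on $S^1$ applied fibrewise along the interval (the ``cap-off'' diffeomorphism). At $X^2 = I$, I need $(\mu - \mu\Delta\otimes\varepsilon)\circ(\mathrm{id}_I\otimes\eta) = \mathrm{id}_I$; the first summand is the unit axiom $\mu\circ(\mathrm{id}_I\otimes\eta) = \mathrm{id}_I$, and the second summand vanishes because it contains the factor $\varepsilon\eta$, which is the disjoint $2$-sphere killed by the S-relation~\ref{relBN:S}.

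The main step is the middle identity at $X^1 = I\otimes S^1$, namely
\[
\Delta\circ(\mathrm{id}_I\otimes\varepsilon) + (\mathrm{id}_I\otimes\eta)\circ(\mu - \mu\Delta\otimes\varepsilon) = \mathrm{id}_{I\otimes S^1}\quad.
\]
Expanding the composition, I would draw each of the three summands as a concrete $2$-bordism on $I\otimes S^1$: the first as a tube joining the $S^1$-input to the $I$-strand (cap on the right, then re-open along the strand); the second as a cap-then-cup using $\eta\circ\mu$; and the third, the correction term, as a genus-zero surface containing an isolated $\varepsilon\eta$-sphere that one can isolate and use~\ref{relBN:S} on. Up to planar-isotopy these are the same three bordism configurations that appear in the proof of Proposition~\ref{prop:ROnePos-absex} but reflected through a horizontal plane, so the $4Tu$-relation~\ref{relBN:4Tu} yields exactly the missing identity bordism.

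The hard part will be matching the bordism pictures to the standard $4Tu$ configuration in Table~\ref{tab:4Tu-terms}; one must verify that the three disks/tubes present in my three summands, together with the identity $\mathrm{id}_{I\otimes S^1}$, form the four local terms of a single $4Tu$ relation with the correct signs. Once the dictionary is set up (mirror of the proof of Proposition~\ref{prop:ROnePos-absex}), there is no further computation: absolute exactness of~\eqref{eq:ROneNeg-absex:seq} follows from Proposition~\ref{prop:absex-contractible}, and the same morphisms~\eqref{eq:ROnePos-absex:contr} give the contraction.
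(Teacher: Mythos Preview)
Your approach is correct and is exactly what the paper means by ``the dual argument'': dualize the two endpoint checks and reuse the $4Tu$ computation in the middle. Two small corrections will make the write-up clean.

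First, your description of the third summand $(\mathrm{id}_I\otimes\eta)\circ(\mu\Delta\otimes\varepsilon)$ is off. This composite equals $\mu\Delta\otimes\eta\varepsilon$: a genus-one handle on the interval strand together with a cap and a separate cup on the circle factor. There is no isolated sphere here ($\varepsilon\eta$ is a sphere, but $\eta\varepsilon$ is two disjoint disks), so the $S$-relation does not apply to this term. It is genuinely one of the four local pieces in the $4Tu$ identity, not something to be killed beforehand.

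Second, once you expand, the middle identity you need is
\[
\Delta(\mathrm{id}_I\otimes\varepsilon) + (\mathrm{id}_I\otimes\eta)\mu - \mu\Delta\otimes\eta\varepsilon = \mathrm{id}_{I\otimes S^1}\,,
\]
and if you also expand $(\Delta-\mu\Delta\otimes\eta)(\mathrm{id}_I\otimes\varepsilon)$ in the proof of Proposition~\ref{prop:ROnePos-absex} you get the very same equation, not a reflected one. So no new $4Tu$ matching is required: the middle step is literally the computation already carried out there, and only the two endpoint identities are swapped.
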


The exact sequences \cref{prop:ROnePos-absex} and \cref{prop:ROneNeg-absex} compose to yield a longer exact sequence.
Namely, by the $4Tu$ relations, we have
\[
\begin{split}
&(\Delta-\mu\Delta\otimes\eta)(\mu-\mu\Delta\otimes\varepsilon) \\
&= \BordROneDeltaP\BordROneDeltaN - \BordROneNSndV\BordROneDeltaN - \BordROneDeltaP\BordROnePBarFstVarV + \BordROneNSndV\BordROnePBarFstVarV\\
&= \BordFIPhiFst - \BordFIPhiSnd
= \Phi
\quad,
\end{split}
\]
here the last term is the morphism given in \eqref{eq:Phi}.

\begin{proposition}
\label{prop:FI-absex}
The solid part of the following sequence is absolutely exact in $\Cob(\mathord-,\mathord-)$:
\begin{equation}
\label{eq:FI-absex:seq}
\begin{tikzcd}
0 \ar[r] & \diagFiH \ar[r,shift left,"\Delta"] & \diagFiV \ar[r,shift left,"\Phi"] \ar[l,dashed,shift left,bend left,"\mathrm{id}_I\otimes\varepsilon"] & \diagFiV \ar[r,shift left,"\mu"] \ar[l,dashed,shift left,bend left,"\mathrm{id}_I\otimes\eta\varepsilon"] & \diagFiH \ar[r] \ar[l,dashed,shift left,bend left,"\mathrm{id}_I\otimes\eta"] & 0
\end{tikzcd}
\quad,
\end{equation}
where the dashed arrows give a chain contraction.
\end{proposition}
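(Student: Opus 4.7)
The plan is to apply \cref{prop:absex-contractible}: verify that consecutive compositions of the three solid arrows vanish, then verify that the dashed arrows give a genuine chain contraction of four identities. The vanishing $\Phi\Delta = 0$ and $\mu\Phi = 0$ is immediate from \cref{lem:deltaPhi=0}. The two outermost identities
\[
(\mathrm{id}_I \otimes \varepsilon)\Delta = \mathrm{id}_I, \qquad \mu(\mathrm{id}_I \otimes \eta) = \mathrm{id}_I
\]
are the counit and unit axioms for the $S^1$-module structure on $I$ and were already observed in the proofs of \cref{prop:ROnePos-absex,prop:ROneNeg-absex}.

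The substance lies in the two middle identities
\[
\Delta(\mathrm{id}_I \otimes \varepsilon) + (\mathrm{id}_I \otimes \eta\varepsilon)\Phi = \mathrm{id}_{I \otimes S^1},
\qquad
\Phi(\mathrm{id}_I \otimes \eta\varepsilon) + (\mathrm{id}_I \otimes \eta)\mu = \mathrm{id}_{I \otimes S^1}.
\]
Rather than wrestling with the $4Tu$ relation directly, I would exploit the factorization $\Phi = f' g$ displayed just before the statement, where $f' = \Delta - \mu\Delta \otimes \eta$ and $g = \mu - \mu\Delta \otimes \varepsilon$. Using the $S$-relation $\varepsilon\eta = 0$ together with the two unit/counit identities above, one obtains the two auxiliary simplifications
\[
(\mathrm{id}_I \otimes \eta\varepsilon)\, f' = \mathrm{id}_I \otimes \eta,
\qquad
g\,(\mathrm{id}_I \otimes \eta\varepsilon) = \mathrm{id}_I \otimes \varepsilon,
\]
each by a three-line computation in the tensor product of $\Cob(\mathord-,\mathord-)$ (for instance, the first unfolds as $(\mathrm{id}_I \otimes \eta\varepsilon)\Delta - \mu\Delta \otimes \eta\varepsilon\eta = (\mathrm{id}_I \otimes \eta)(\mathrm{id}_I \otimes \varepsilon)\Delta - 0 = \mathrm{id}_I \otimes \eta$).

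Plugging these simplifications into the left-hand sides of the two middle identities reduces them respectively to the chain-contraction identities already proved in \cref{prop:ROneNeg-absex} and \cref{prop:ROnePos-absex}. Concretely, the first identity becomes $\Delta(\mathrm{id}_I \otimes \varepsilon) + (\mathrm{id}_I \otimes \eta)g = \mathrm{id}_{I \otimes S^1}$, which is exactly the homotopy equation of \cref{prop:ROneNeg-absex}; the second becomes $f'(\mathrm{id}_I \otimes \varepsilon) + (\mathrm{id}_I \otimes \eta)\mu = \mathrm{id}_{I \otimes S^1}$, the homotopy equation of \cref{prop:ROnePos-absex}.

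Once the factorization of $\Phi$ is in hand, the whole argument is formal and requires no further appeal to the $4Tu$ relation beyond what is already encoded in that factorization and in the two previous propositions. The only mild obstacle I anticipate is a notational one — keeping straight which of the two ``twin'' propositions feeds which of the two middle identities — but no delicate topological computation is involved.
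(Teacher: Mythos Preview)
Your proposal is correct and is precisely the argument the paper leaves implicit: the paper establishes the factorization $\Phi=(\Delta-\mu\Delta\otimes\eta)(\mu-\mu\Delta\otimes\varepsilon)$ just before the proposition and states no separate proof, so you have supplied exactly the details intended. Your reduction of the two middle homotopy identities to those of \cref{prop:ROnePos-absex} and \cref{prop:ROneNeg-absex} via the auxiliary simplifications $(\mathrm{id}_I\otimes\eta\varepsilon)f'=\mathrm{id}_I\otimes\eta$ and $g(\mathrm{id}_I\otimes\eta\varepsilon)=\mathrm{id}_I\otimes\varepsilon$ is clean and correct.
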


Note that the sequence \eqref{eq:FI-absex:seq} equals (a shift of) the universal Khovanov complex
\begin{equation}
\label{eq:KhOfFI}
\dblBrac*{\diagFiSing}
\quad.
\end{equation}
It follows that \eqref{eq:KhOfFI} is contractible, which is exactly the FI relation of the universal Khovanov complex (\cite[Main Theorem~C]{ItoYoshida2020UKH}).

\subsection{Long exact sequences on homologies}%
\label{sec:absex:longex}

Let $\mathcal A$ be an additive category.
By a \emph{$2$-fold complex}, we mean a $\{\mathsf V,\mathsf H\}$-fold complex for formal symbols $\mathsf V$ and $\mathsf H$.
If $X^\bullet$ is a $2$-fold complex, we write
\[
X^{i,j}\coloneqq X^{i\mathsf H+j\mathsf V}
\quad;
\]
hence the differentials are written as $d^{i,j}_{\mathsf H}:X^{i,j}\to X^{i+1,j}$ and $d^{i,j}_{\mathsf V}:X^{i,j}\to X^{i,j+1}$, which we call \emph{horizontal} and \emph{vertical} differentials respectively.
For each integer $i\in\mathbb Z$, we write $X^{i,\bullet}=\{X^{i,j},d_{\mathsf V}^{i,j}\}_j$ the restricted chain complex.
In this point of view, the family $d_{\mathsf H}^i=\{d_{\mathsf H}^{i,j}\}_j$ defines a morphism of chain complexes $d_{\mathsf H}^i:X^{i,\bullet}\to X^{i+1,\bullet}$, where we often drop indices.
Consequently, the $2$-fold complex $X^\bullet$ can be seen as a sequence
\begin{equation}
\label{eq:horizontal-seq}
\cdots
\xrightarrow{d_{\mathsf H}} X^{i-1,\bullet}
\xrightarrow{d_{\mathsf H}} X^{i,\bullet}
\xrightarrow{d_{\mathsf H}} X^{i+1,\bullet}
\xrightarrow\cdots
\end{equation}
of morphisms of complexes in $\mathcal A$ with $d^2_{\mathsf H}=0$.

\begin{remark}
Some authors use the word ``bicomplex'' (or even ``double complex'') for our $2$-fold complex though they sometimes refer to a family $\{X^{i,j}\}$ with differential $d_{\mathsf H}:X^{i,j}\to X^{i+1,j}$ and $d_{\mathsf V}:X^{i,j}\to X^{i,j+1}$ such that $d_{\mathsf H}d_{\mathsf V}+d_{\mathsf V}d_{\mathsf H}=0$ instead of $d_{\mathsf H}d_{\mathsf V}=d_{\mathsf V}d_{\mathsf H}$.
For this reason, we avoid these terminologies in this paper.
\end{remark}

Especially, we are interested in the case where \eqref{eq:horizontal-seq} is absolutely exact in each degree; in this case, we see that it gives rise to long exact sequences.
More precisely, the goal of the subsection is to prove the following result.

\begin{proposition}
\label{prop:dblcplx-longex}
Let $X^\bullet=\{X^{i,j},d_{\mathsf H},d_{\mathsf V}\}$ be a $2$-fold complex in a $k$-linear category $\mathcal A$ such that the sequence \eqref{eq:horizontal-seq} is absolutely exact in each degree.
Then, for every pair of integers $p,q\in\mathbb Z$ with $q-p>1$, there is a morphism of chain complexes $\Xi:\operatorname{Tot}(\sigma^{\ge q}_{\mathsf H}X^\bullet)\to\operatorname{Tot}(\sigma^{\le p}_{\mathsf H}X^\bullet)[1]$ together with an isomorphism
\[
\operatorname{Tot}(\sigma^{\ge p+1}_{\mathsf H}\sigma^{\le q-1}_{\mathsf H}X^\bullet)
\cong \operatorname{Cone}\left(\Xi\right)
\quad.
\]
\end{proposition}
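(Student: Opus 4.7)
The strategy is to realize $M := \operatorname{Tot}(\sigma^{\ge p+1}_{\mathsf H}\sigma^{\le q-1}_{\mathsf H}X^\bullet)$ as the homotopy cofiber of a morphism $R \to L[1]$, where $L := \operatorname{Tot}(\sigma^{\le p}_{\mathsf H}X^\bullet)$ and $R := \operatorname{Tot}(\sigma^{\ge q}_{\mathsf H}X^\bullet)$, by combining two obvious short exact sequences of 2-fold complexes with the chain-contractibility of $\operatorname{Tot}(X^\bullet)$ inherited from the hypothesis.

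First I would set up the termwise-split short exact sequences of 2-fold complexes
\begin{equation*}
0 \to \sigma^{\ge p+1}_{\mathsf H}X^\bullet \to X^\bullet \to \sigma^{\le p}_{\mathsf H}X^\bullet \to 0,
\end{equation*}
\begin{equation*}
0 \to \sigma^{\ge q}_{\mathsf H}X^\bullet \to \sigma^{\ge p+1}_{\mathsf H}X^\bullet \to \sigma^{\ge p+1}_{\mathsf H}\sigma^{\le q-1}_{\mathsf H}X^\bullet \to 0,
\end{equation*}
whose compatibility with the differentials is immediate from the definition of $\sigma^{\le r}_{\mathsf H}$ (the horizontal differential at the boundary column is killed). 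Writing $N := \operatorname{Tot}(\sigma^{\ge p+1}_{\mathsf H}X^\bullet)$, taking total complexes yields the short exact sequences of chain complexes $0 \to N \to \operatorname{Tot}(X^\bullet) \to L \to 0$ and $0 \to R \to N \to M \to 0$.

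The main technical step is to show that $\operatorname{Tot}(X^\bullet)$ is chain-contractible. By \cref{prop:absex-contractible} together with \cref{lem:theta-sqzero}, for each $j$ the row $\{X^{i,j}\}_i$ carries a chain contraction $\theta^{i,j}:X^{i,j}\to X^{i-1,j}$ satisfying $d_{\mathsf H}\theta + \theta d_{\mathsf H} = \mathrm{id}$. Assembling these into a degree $-1$ endomorphism $\Theta = \theta\otimes\rho_{\mathsf H}$ on $\operatorname{Tot}(X^\bullet)$ and using \cref{lem:rho-lambda-com}, a direct computation gives
\begin{equation*}
d_{\operatorname{Tot}}\Theta + \Theta d_{\operatorname{Tot}} = \mathrm{id} + \nabla, \qquad \nabla = (\theta d_{\mathsf V} - d_{\mathsf V}\theta)\otimes\rho_{\mathsf H}\rho_{\mathsf V}.
\end{equation*}
The 2-fold relations $d_{\mathsf V}^2 = 0$ and $d_{\mathsf H}d_{\mathsf V} = d_{\mathsf V}d_{\mathsf H}$, combined with $d_{\mathsf H}\theta + \theta d_{\mathsf H} = \mathrm{id}$, imply that $\nabla$ commutes with $d_{\operatorname{Tot}}$. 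Since $\nabla$ strictly lowers horizontal degree and $X^\bullet$ is (horizontally) bounded, the geometric-series correction $\Theta' := \Theta\sum_{k\ge 0}(-\nabla)^k$ is a finite sum, and it satisfies $d_{\operatorname{Tot}}\Theta' + \Theta' d_{\operatorname{Tot}} = \mathrm{id}$, producing a genuine chain contraction of $\operatorname{Tot}(X^\bullet)$.

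From the first short exact sequence and the contractibility of $\operatorname{Tot}(X^\bullet)$, the map $\hat\varphi: L[1] \to N$ furnished by \cref{lem:mcplx-cone} at $r = p+1$ is a chain homotopy equivalence (since $\operatorname{Tot}(X^\bullet) \cong \operatorname{Cone}(\hat\varphi)$ is contractible). The second short exact sequence then exhibits $M$ as the homotopy cofiber of the inclusion $\iota: R \hookrightarrow N$; composing $\iota$ with a chain homotopy inverse $\hat\varphi^{-1}$ (which can be extracted from $\Theta'$ by standard perturbation formulas) produces the desired morphism $\Xi := \hat\varphi^{-1}\circ\iota: R \to L[1]$, and we conclude $M \simeq \operatorname{Cone}(\Xi)$. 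The hardest part of the argument is the chain-contractibility of $\operatorname{Tot}(X^\bullet)$: one must carefully track the signs arising from the modules $E_\alpha$ in the total differential so that the perturbation $\nabla$ really commutes with $d_{\operatorname{Tot}}$ and the geometric-series correction terminates to yield a genuine contraction rather than only acyclicity.
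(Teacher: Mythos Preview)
Your argument is correct and takes a genuinely different route from the paper. The paper constructs $\Xi$ \emph{explicitly}: it defines iterated morphisms $\Theta_r = \theta_{\mathsf H}(d_{\mathsf V}\theta_{\mathsf H})^{r-1}$, proves a key commutation identity for them (\cref{lem:Theta-diff}), and then writes down by hand both the map $\Xi$ and the two comparison maps $\alpha:\operatorname{Cone}(\Xi)\to M$, $\beta:M\to\operatorname{Cone}(\Xi)$ together with explicit chain homotopies $\Psi,\Phi$ witnessing that $-\alpha\beta$ and $-\beta\alpha$ are homotopic to the identity. Your approach instead packages the same ingredients into the single statement that $\operatorname{Tot}(X^\bullet)$ is contractible (via the perturbation $\Theta' = \Theta\sum_k(-\nabla)^k$, whose terms, if unwound, are essentially the paper's $\Theta_r$), and then reads off the result from standard triangulated/cofiber reasoning: $\hat\varphi:L[1]\to N$ is a homotopy equivalence because its cone is contractible, and composing a homotopy inverse with $\iota:R\hookrightarrow N$ gives $\Xi$.

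What each buys: your proof is shorter and more conceptual, and makes clear that the result is really just an octahedral-type statement about iterated cofibers. The paper's approach, on the other hand, produces a closed formula for $\Xi$ in terms of the $\widetilde\Theta_r$, and this is not incidental: it is exactly what is needed later to verify that $\Xi$ is homogeneous of Euler degree $2$ (\cref{theo:Kh-crux}) and to carry out the explicit computations in \cref{sec:app} (e.g.\ \cref{lem:Gr-split}), where one must know the actual components of the comparison maps. Your perturbation formula for $\Theta'$ could in principle be unwound to recover the same explicit $\Xi$, but as written your argument only establishes existence.

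One small point: both the paper and you obtain a chain \emph{homotopy equivalence} rather than a literal isomorphism of complexes; the word ``isomorphism'' in the statement should be read in the homotopy category.
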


\begin{remark}
In view of \cref{cor:tensor-ES-alt}, for a bounded $2$-fold complex $X^\bullet=\{X^{i,j},d_{\mathsf H}^{i,j},d_{\mathsf V}^{i,j}\}$, we realize its total complex as
\[
\operatorname{Tot}(X^\bullet)^n
= \bigoplus_{i+j=n}X^{i,j}
\]
with differential $d:\operatorname{Tot}(X^\bullet)^n\to\operatorname{Tot}(X^\bullet)^{n+1}$ given by
\[
d^{i,j}_{\mathsf H}+(-1)^i d^{i,j}_{\mathsf V}:X^{i,j}\to X^{i+1,j}\oplus X^{i,j+1}
\quad.
\]
\end{remark}

In order to prove \cref{prop:dblcplx-longex}, we construct a morphism $\Xi$ in the statement.

\begin{definition}
\label{def:Theta}
Let $\{X^{i,j},d_{\mathsf V}^{i,j},d_{\mathsf H}^{i,j}\}_{i,j}$ be a $2$-fold complex in $\mathcal A$ such that the sequence \eqref{eq:horizontal-seq} is absolutely exact in each degree; i.e.~for each $j\in\mathbb Z$, there is a family of morphisms $\theta_{\mathsf H}^j=\{\theta^{i,j}_{\mathsf H}:X^{i,j}\to X^{i-1,j}\}_i$ such that
\begin{gather}
\label{eq:theta-contraction}
d_{\mathsf H}^{i-1,j}\theta_{\mathsf H}^{i,j} + \theta_{\mathsf H}^{i+1,j}d_{\mathsf H}^{i,j} = \mathrm{id}_{X^{i,j}}
\quad,\\[1ex]
\label{eq:theta-sqzero}
\theta_{\mathsf H}^{i-1,j}\theta_{\mathsf H}^{i,j}=0
\quad,
\end{gather}
for each $i,j\in\mathbb H$ (see~\cref{lem:theta-sqzero}).
In this case, for each positive integer $r\in\mathbb Z$, we define a morphism $\Theta_r^{i,j}:X^{i,j}\to X^{i-r,j+r-1}$ inductively by
\begin{equation}
\label{eq:Theta}
\Theta_1^{i,j}\coloneqq\theta^{i,j}_{\mathsf H}
\ ,\quad
\Theta_{r+1}^{i,j}\coloneqq\theta_{\mathsf H}d_{\mathsf H}\Theta_r^{i,j}
\quad.
\end{equation}
\end{definition}

\begin{remark}
Even if the sequence~\eqref{eq:horizontal-seq} is absolutely exact in each degree, it is not necessarily an absolute exact sequence in $\mathbf{Ch}(\mathcal A)$.
Indeed, the family $\theta_{\mathsf H}^j$ in \cref{def:Theta} is not a morphism of chain complexes in general.
\end{remark}

\begin{lemma}
\label{lem:Theta-diff}
Let $\{X^{i,j},d_{\mathsf V}^{i,j},d_{\mathsf H}^{i,j}\}$ be as in \cref{def:Theta}.
Then, for each $i,j\in\mathbb Z$, we have
\begin{equation}
\label{eq:Theta-diff:dvh}
d_{\mathsf H}\Theta_{r+1}-d_{\mathsf V}\Theta_r
= (-1)^{r+1}(\Theta_{r+1}d_{\mathsf H} - \Theta_r d_{\mathsf V})
\end{equation}
\end{lemma}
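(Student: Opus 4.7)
The plan is to proceed by induction on $r \geq 1$, relying on three ingredients: the contraction identity $d_{\mathsf H}\theta_{\mathsf H} + \theta_{\mathsf H}d_{\mathsf H} = \mathrm{id}$ from~\eqref{eq:theta-contraction}, the commutation $d_{\mathsf H}d_{\mathsf V} = d_{\mathsf V}d_{\mathsf H}$ built into the definition of a $2$-fold complex, and $d_{\mathsf V}^2 = 0$. The nilpotency~\eqref{eq:theta-sqzero} of $\theta_{\mathsf H}$ turns out to be unnecessary. To keep the source and target of $\Theta_r$ matching the stated bidegree $X^{i,j} \to X^{i-r,j+r-1}$, I read the recursion~\eqref{eq:Theta} as $\Theta_{r+1} = \theta_{\mathsf H}d_{\mathsf V}\Theta_r$.

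For the base case $r = 1$, I would substitute $d_{\mathsf H}\theta_{\mathsf H} = \mathrm{id} - \theta_{\mathsf H}d_{\mathsf H}$ into $d_{\mathsf H}\Theta_2 = d_{\mathsf H}\theta_{\mathsf H}d_{\mathsf V}\theta_{\mathsf H}$ to extract a $d_{\mathsf V}\Theta_1$ that cancels on the left-hand side, commute $d_{\mathsf H}$ past $d_{\mathsf V}$, and invoke the contraction identity a second time on the remaining $d_{\mathsf H}\theta_{\mathsf H}$. The two residual terms are $-\theta_{\mathsf H}d_{\mathsf V} = -\Theta_1 d_{\mathsf V}$ and $\theta_{\mathsf H}d_{\mathsf V}\theta_{\mathsf H}d_{\mathsf H} = \Theta_2 d_{\mathsf H}$, matching the right-hand side with sign $(-1)^{1+1}=+1$.

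The inductive step follows the same script. Assuming the identity at stage $r$, the same contraction-and-commute manipulation yields
\begin{equation*}
d_{\mathsf H}\Theta_{r+2} - d_{\mathsf V}\Theta_{r+1} = -\theta_{\mathsf H}d_{\mathsf V}d_{\mathsf H}\Theta_{r+1}.
\end{equation*}
Substituting the inductive hypothesis $d_{\mathsf H}\Theta_{r+1} = d_{\mathsf V}\Theta_r + (-1)^{r+1}(\Theta_{r+1}d_{\mathsf H} - \Theta_r d_{\mathsf V})$ kills the $d_{\mathsf V}\Theta_r$ contribution against the preceding $d_{\mathsf V}$ via $d_{\mathsf V}^2 = 0$, while the remaining two terms reassemble, using $\Theta_{r+2} = \theta_{\mathsf H}d_{\mathsf V}\Theta_{r+1}$, into $(-1)^{r+2}(\Theta_{r+2}d_{\mathsf H} - \Theta_{r+1}d_{\mathsf V})$. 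The only potential pitfall is sign bookkeeping across the substitution, but the ambient minus sign promotes $(-1)^{r+1}$ to $(-1)^{r+2}$ automatically, so there is no genuine obstacle beyond careful computation.
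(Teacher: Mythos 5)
Your proof is correct and follows the same route as the paper's: both invoke the contraction identity $d_{\mathsf H}\theta_{\mathsf H}+\theta_{\mathsf H}d_{\mathsf H}=\mathrm{id}$, the commutativity $d_{\mathsf H}d_{\mathsf V}=d_{\mathsf V}d_{\mathsf H}$, and $d_{\mathsf V}^2=0$, with the inductive hypothesis substituted into $-\theta_{\mathsf H}d_{\mathsf V}d_{\mathsf H}\Theta_r$ so that the $d_{\mathsf V}^2$ term kills itself. You were also right to read the recursion~\eqref{eq:Theta} as $\Theta_{r+1}=\theta_{\mathsf H}d_{\mathsf V}\Theta_r$ rather than $\theta_{\mathsf H}d_{\mathsf H}\Theta_r$; the latter is a typo in the displayed formula (it would keep $\Theta_r$ at bidegree $(i-1,j)$ for all $r$), and the paper's own computations confirm the corrected form.
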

\begin{proof}
We prove the statement by induction on $r$.
For the base case $r=1$, we have
\[
\begin{multlined}[b]
d_{\mathsf H}\Theta_2 - d_{\mathsf V}\theta_{\mathsf H}
= d_{\mathsf H}\theta_{\mathsf H}d_{\mathsf V}\theta_{\mathsf H}
-(d_{\mathsf H}\theta_{\mathsf H}+\theta_{\mathsf H}d_{\mathsf H})d_{\mathsf V}\theta_{\mathsf H} \\
= -\theta_{\mathsf H}d_{\mathsf V}d_{\mathsf H}\theta_{\mathsf H}
= \theta_{\mathsf H}d_{\mathsf V}\theta_{\mathsf H}d_{\mathsf H}
- \theta_{\mathsf H}d_{\mathsf V}
= \Theta_2d_{\mathsf H} - \theta_{\mathsf H}d_{\mathsf V}
\end{multlined}
\quad.
\]
Hence, the equation~\eqref{eq:Theta-diff:dvh} holds for $r=1$.
On each induction step for $r\ge 2$, notice that the induction hypothesis implies
\[
d_{\mathsf H}\Theta_r
= d_{\mathsf V}\Theta_{r-1}+(-1)^r\left(\Theta_rd_{\mathsf H} - \Theta_{r-1}d_{\mathsf V}\right)
\quad.
\]
Thus, we obtain
\[
\begin{split}
d_{\mathsf H}\Theta_{r+1} - d_{\mathsf V}\Theta_r
&= d_{\mathsf H}\theta_{\mathsf H}d_{\mathsf V}\Theta_r
- (d_{\mathsf H}\theta_{\mathsf H}+\theta_{\mathsf H}d_{\mathsf H})d_{\mathsf V}\Theta_r \\
&= -\theta_{\mathsf H}d_{\mathsf V}d_{\mathsf H}\Theta_r \\
&= (-1)^{r+1}\left(\theta_{\mathsf H}d_{\mathsf V}\Theta_rd_{\mathsf H} - \theta_{\mathsf H}d_{\mathsf V}\Theta_{r-1}d_{\mathsf V}\right) \\
&= (-1)^{r+1}\left(\Theta_{r+1}d_{\mathsf H} - \Theta_rd_{\mathsf V}\right)
\quad,
\end{split}
\]
which completes the induction.
\end{proof}

\begin{remark}
In the following arguments, it is convenient to consider a normalized version
\begin{equation}
\label{eq:Theta-tilde}
\widetilde\Theta_r^{i,j}\coloneqq(-1)^{\frac12(r+1)(r+2i)}\Theta_r^{i,j}
:X^{i,j}\to X^{i-r,j+r-1}
\end{equation}
instead of $\Theta_r$ itself.
In terms of this, the equation~\eqref{eq:Theta-diff:dvh} is equivalent to the following:
\begin{equation}
\label{eq:Theta-tilde-dvh}
d_{\mathsf H}\widetilde\Theta^{i,j}_{r+1}
+(-1)^{i-r}d_{\mathsf V}\widetilde\Theta^{i,j}_r
= -\widetilde\Theta_{r+1}^{i+1,j}d_{\mathsf H}
- (-1)^i\widetilde\Theta^{i,j+1}_r d_{\mathsf V}
\quad.
\end{equation}
\end{remark}

Now, let $p$ and $q$ be integers with $q-p>1$.
For a bounded $2$-fold complex $X^\bullet=\{X^{i,j},d_{\mathsf H},d_{\mathsf V}\}$, we define a morphism $\Xi^n:\operatorname{Tot}(\sigma_{\mathsf H}^{\ge q}X^\bullet)^n\to \operatorname{Tot}(\sigma_{\mathsf H}^{\le p}X^\bullet)^{n-1}$ by
\[
-\sum_{r=i-p}^\infty \widetilde\Theta_r^{i,j}:
X^{i,j}\to\bigoplus_{\substack{i+j=n-1\\i\le p}} X^{i,j}
\quad.
\]

\begin{lemma}
\label{lem:Xi-chain}
In the situation above, the family $\Xi=\{\Xi^n\}_n$ forms a morphism of chain complexes $\Xi:\operatorname{Tot}(\sigma_{\mathsf H}^{\ge q}X^\bullet)\to\operatorname{Tot}(\sigma_{\mathsf V}^{\le p}X^\bullet)[1]$.
\end{lemma}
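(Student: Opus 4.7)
The plan is to verify the chain map condition directly on each summand. Since the shift convention of the paper gives $d_{D[1]}=-d_D$, showing that $\Xi$ maps into $[1]$ amounts to checking $d\circ\Xi+\Xi\circ d=0$, where $d$ denotes the appropriate total differential on either side. The principal tool is the normalized identity \eqref{eq:Theta-tilde-dvh} already recorded in \cref{lem:Theta-diff}, designed precisely to absorb the sign factors produced by the renormalization $\widetilde\Theta_r=(-1)^{\frac12(r+1)(r+2i)}\Theta_r$.

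Fix $x\in X^{i,j}$ with $i\ge q$, placed in total degree $n=i+j$. Using that the total differential on $X^{i-r,j+r-1}$ is $d_{\mathsf H}+(-1)^{i-r}d_{\mathsf V}$, I first expand
\[
d\,\Xi(x)\;=\;-\sum_{r\ge i-p}\bigl(d_{\mathsf H}\widetilde\Theta_r^{i,j}(x)+(-1)^{i-r}d_{\mathsf V}\widetilde\Theta_r^{i,j}(x)\bigr).
\]
The key boundary observation is that for $r=i-p$ the horizontal bidegree of $d_{\mathsf H}\widetilde\Theta_{i-p}^{i,j}(x)$ equals $p+1$, which lies outside the truncation $\sigma^{\le p}_{\mathsf H}X^\bullet$ and therefore vanishes in the target complex. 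The $d_{\mathsf H}$-sum thus effectively starts at $r=i-p+1$, and reindexing $r\mapsto r+1$ combines the two sums into
\[
d\,\Xi(x)\;=\;-\sum_{r\ge i-p}\bigl(d_{\mathsf H}\widetilde\Theta_{r+1}^{i,j}(x)+(-1)^{i-r}d_{\mathsf V}\widetilde\Theta_r^{i,j}(x)\bigr).
\]

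Each paired summand is exactly the left-hand side of \eqref{eq:Theta-tilde-dvh}, so substitution yields
\[
d\,\Xi(x)\;=\;\sum_{r\ge i-p}\widetilde\Theta_{r+1}^{i+1,j}d_{\mathsf H}x+(-1)^i\sum_{r\ge i-p}\widetilde\Theta_r^{i,j+1}d_{\mathsf V}x.
\]
A further reindexing $s=r+1$ in the first sum (so $s\ge (i+1)-p$) identifies the right-hand side with $-\Xi d(x)$, since $d(x)=d_{\mathsf H}x+(-1)^i d_{\mathsf V}x$ lies in $\operatorname{Tot}(\sigma^{\ge q}_{\mathsf H}X^\bullet)$ and $\Xi$ sums $\widetilde\Theta_s$ over $s\ge i'-p$ on each piece $X^{i',j'}$. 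This gives $d\Xi+\Xi d=0$, as required.

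The main obstacle is essentially bookkeeping, concentrated at the single boundary index $r=i-p$, where the vanishing in the truncation $\sigma^{\le p}_{\mathsf H}$ is exactly what makes the reindexing close up without a leftover term. Because \cref{lem:Theta-diff} has already packaged the sign-heavy recursion into the tidy form \eqref{eq:Theta-tilde-dvh}, the remainder of the verification is a purely combinatorial telescoping match; no further use of $\theta_{\mathsf H}^2=0$ or the contraction identity is needed in this step.
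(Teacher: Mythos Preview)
Your proof is correct and follows essentially the same approach as the paper: both arguments restrict to a summand $X^{i,j}$ with $i\ge q$, reindex the $d_{\mathsf H}$-sum using the boundary vanishing at $r=i-p$, and then invoke the normalized identity \eqref{eq:Theta-tilde-dvh} to match the two sides. Your exposition is in fact slightly more explicit than the paper's about why the $d_{\mathsf H}$-sum effectively starts at $r=i-p+1$ (the paper simply writes the sum with that lower limit without comment), but the logical content is identical.
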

\begin{proof}
For each $i,j\in\mathbb Z$ with $i\ge q$, we have
\[
\begin{split}
d_{[1]}\circ\left.\Xi\right|_{X^{i,j}}
&= \sum_{r=i-p+1}^\infty d_{\mathsf H}\circ\widetilde\Theta_r
\sum_{r=i-p}^\infty(-1)^{i-r}d_{\mathsf V}\circ\widetilde\Theta_r
\\
&= \sum_{r=i-p}^\infty\left(d_{\mathsf H}\widetilde\Theta_{r+1}+(-1)^{i-r}d_{\mathsf V}\widetilde\Theta_r\right)
\quad.
\end{split}
\]
On the other hand, we also have
\[
\begin{split}
\Xi\circ\left.d\right|_{X^{i,j}}
&= -\sum_{r=i-p+1}^\infty\widetilde\Theta_r\circ d_{\mathsf H}
- \sum_{r=i-p}^\infty\widetilde\Theta_r\circ(-1)^id_{\mathsf V} \\
&= -\sum_{r=i-p}^\infty\left(\Theta_{r+1}d_{\mathsf H}+(-1)^i\widetilde\Theta_rd_{\mathsf V}\right)
\quad.
\end{split}
\]
Thus, the result follows from \eqref{eq:Theta-tilde-dvh}.
\end{proof}

As a consequence of \cref{lem:Xi-chain}, we can consider the mapping cone $\operatorname{Cone}(\Xi)$ as a complex in $\mathcal A$.
More explicitly, for each $n\in\mathbb Z$, we have
\begin{equation}
\label{eq:CXi-explicit}
\begin{split}
\operatorname{Cone}(\Xi)^n
&=\operatorname{Tot}(\sigma_{\mathsf H}^{\le p}X)[1]^n\oplus\operatorname{Tot}(\sigma_{\mathsf H}^{\ge q}X)^{n+1} \\[1ex]
&\cong \bigoplus_{\substack{i+j=n-1\\i\le p}}X^{i,j}
\oplus\bigoplus_{\substack{i+j=n+1\\i\ge q}}X^{i,j}
\quad.
\end{split}
\end{equation}
Specifically, it comes equipped with the following exact sequence of morphisms of chain complexes in $\mathcal A$:
\begin{equation}
\label{eq:coneXi-ex}
0
\to\operatorname{Tot}(\sigma_{\mathsf H}^{\le p}X)[1]
\xrightarrow{\iota} \operatorname{Cone}(\Xi)
\xrightarrow{\pi} \operatorname{Tot}(\sigma_{\mathsf H}^{\ge q}X)[-1]
\to 0
\quad.
\end{equation}
On the other hand, in view of \cref{lem:mcplx-cone}, we also have morphisms of chain complexes below:
\begin{equation}
\label{eq:Xi-trmorph}
\begin{gathered}
\widehat\varphi_{(p)}:\operatorname{Tot}(\sigma_{\mathsf H}^{\le p}X)[1]\to\operatorname{Tot}(\sigma_{\mathsf H}^{\ge p+1}\sigma_{\mathsf H}^{\le q-1}X)
\quad,\\
\widehat\varphi_{(q)}:\operatorname{Tot}(\sigma_{\mathsf H}^{\ge p+1}\sigma_{\mathsf H}^{\le q-1}X)\to \operatorname{Tot}(\sigma_{\mathsf H}^{\ge q}X)[-1]
\quad.
\end{gathered}
\end{equation}
These morphisms together with the morphism $\Theta_r$ defined in \cref{eq:Theta} (or $\widetilde\Theta_r$ in \cref{eq:Theta-tilde}) are the main ingredients of the chain homotopy equivalences stated in \cref{prop:dblcplx-longex}.

\begin{lemma}
\label{lem:CXi2tot}
In the situation above, for each $n\in\mathbb Z$, we define a morphism $\alpha^n:\operatorname{Cone}(\Xi)^n\to\operatorname{Tot}(\sigma_{\mathsf H}^{\ge p+1}\sigma_{\mathsf H}^{\le q-1} X)^n$ in $\mathcal A$ by
\[
\left.\alpha^n\right|_{X^{i,j}}
\coloneqq
\begin{cases*}
\widehat\varphi_{(p)} & on $\operatorname{Tot}(\sigma_{\mathsf H}^{\le p}X)^{n-1}$,\\[1ex]
\displaystyle \sum_{r=i-q+1}^{i-p-1}\widetilde\Theta_r^{i,j} & on $X^{i,j}$ with $i \ge q$.
\end{cases*}
\]
Then, it defines a morphism of chain complexes $\operatorname{Cone}(\Xi)\to\operatorname{Tot}(\sigma_{\mathsf H}^{\ge p+1}\sigma_{\mathsf H}^{\le q-1}X)$.
\end{lemma}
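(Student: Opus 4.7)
The proof proceeds by a direct verification that $\alpha$ commutes with differentials, decomposing $\operatorname{Cone}(\Xi)^n$ along the sum in \eqref{eq:CXi-explicit} and treating the two summands separately.

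On the summand $\operatorname{Tot}(\sigma_{\mathsf H}^{\le p}X)[1]^n$ the cone differential reduces to $d_{[1]}$, since the only off-diagonal block of $d_{\operatorname{Cone}(\Xi)}$ comes from $\Xi$ and is supported on the other summand. Here $\alpha$ restricts to $\widehat\varphi_{(p)}$, which is already a morphism of chain complexes by \cref{lem:mcplx-cone}, so this case is automatic.

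For the remaining summand I would pick $x\in X^{i,j}$ with $i\ge q$ and $i+j=n+1$, and compare $d\,\alpha(x)$ with $\alpha(\Xi x - dx)$. Expanding $\alpha(x)=\sum_{r=i-q+1}^{i-p-1}\widetilde\Theta_r(x)$ and applying the total differential of $\operatorname{Tot}(\sigma_{\mathsf H}^{\ge p+1}\sigma_{\mathsf H}^{\le q-1}X)$ yields a double sum of $d_{\mathsf H}\widetilde\Theta_r(x)$ and $(-1)^{i-r}d_{\mathsf V}\widetilde\Theta_r(x)$ terms, where horizontal contributions that would exit the range $[p+1,q-1]$ are killed by the truncation $\sigma_{\mathsf H}^{\le q-1}$. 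On the other side, $\widehat\varphi_{(p)}(\Xi x)$ picks up exactly the $r=i-p$ summand of $\Xi(x)=-\sum_{r\ge i-p}\widetilde\Theta_r(x)$, because $\widehat\varphi_{(p)}$ is supported at horizontal index precisely $p$, while $\alpha(d_{\mathsf H} x)=\sum_{r=i-q+2}^{i-p}\widetilde\Theta_r d_{\mathsf H}(x)$ and $\alpha(d_{\mathsf V} x)=\sum_{r=i-q+1}^{i-p-1}\widetilde\Theta_r d_{\mathsf V}(x)$ after re-indexing according to the horizontal shifts. Summing the normalized identity \eqref{eq:Theta-tilde-dvh} over $r\in\{i-q+1,\dots,i-p-1\}$ then produces a telescoping cancellation whose surviving boundary terms match precisely $\widehat\varphi_{(p)}(\Xi x)$ at the upper end and the missing term of $\alpha(d_{\mathsf H} x)$ at the lower end, while the vertical boundary terms line up with $\alpha(d_{\mathsf V} x)$.

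The main obstacle will be sign bookkeeping. Three independent sign sources must be reconciled: the $(-1)^i$ alternations that arise when totalizing a $\{\mathsf H,\mathsf V\}$-fold complex through the $\rho$-operators of \cref{cor:tensor-ES-alt}, the sign introduced by the shift $(\sigma_{\mathsf H}^{\le p}X)[\mathsf H]$ on which $\widehat\varphi_{(p)}$ is defined, and the $-d$ convention in the second column of $d_{\operatorname{Cone}(\Xi)}$. The normalization \eqref{eq:Theta-tilde} of $\widetilde\Theta_r$ was designed precisely to absorb the first two so that \eqref{eq:Theta-tilde-dvh} takes a clean form, and once this is in place the remaining signs line up by a careful but routine calculation.
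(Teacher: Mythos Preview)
Your proposal is correct and follows essentially the same route as the paper: split $\operatorname{Cone}(\Xi)^n$ along \eqref{eq:CXi-explicit}, dispose of the $\operatorname{Tot}(\sigma_{\mathsf H}^{\le p}X)[1]$ summand by citing that $\widehat\varphi_{(p)}$ is already a chain map, and on $X^{i,j}$ with $i\ge q$ expand both $d\circ\alpha$ and $\alpha\circ d_{\operatorname{Cone}(\Xi)}$ and match them via \eqref{eq:Theta-tilde-dvh}. The paper carries out exactly this computation, writing the two sides explicitly and observing that summing \eqref{eq:Theta-tilde-dvh} over the relevant range of $r$ (with the $d_{\mathsf H}\widetilde\Theta_{i-q+1}$ term killed by the truncation $\sigma_{\mathsf H}^{\le q-1}$, and the $\widehat\varphi_{(p)}(\Xi x)=-d_{\mathsf H}\widetilde\Theta_{i-p}$ term appearing as the boundary contribution) forces equality. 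Your description of the boundary matching is slightly loose---it is not a telescoping sum so much as a termwise application of \eqref{eq:Theta-tilde-dvh} whose index ranges on the two sides differ by exactly the boundary pieces you name---but the substance is the same.
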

\begin{proof}
Since $\alpha$ is a morphism of chain complexes on the subcomplex $\operatorname{Tot}(\sigma^{\le p}X)[1]\subset\operatorname{Cone}(\Xi)$, it will be enough to show that it commutes with the differentials on each $X^{i,j}$ with $i\ge q$.
In this case, we have
\begin{equation}
\label{eq:prf:CXi2tot:da}
\begin{split}
d\circ\left.\alpha\right|_{X^{i,j}}
&= \sum_{r=i-q+2}^{i-p-1}d_{\mathsf H}\widetilde\Theta_r^{i,j} + \sum_{r=i-q+1}^{i-p-1}(-1)^{i-r}d_{\mathsf V}\widetilde\Theta_r^{i,j} \\
&= \sum_{r=i-q+1}^{i-p-2}\left(d_{\mathsf H}\widetilde\Theta_{r+1}^{i,j}+(-1)^{i-r}d_{\mathsf V}\widetilde\Theta_r^{i,j}\right) + (-1)^{p+1}d_{\mathsf V}\widetilde\Theta_{i-p-1}^{i,j}
\quad.
\end{split}
\end{equation}
On the other hand,
\begin{equation}
\label{eq:prf:CXi2tot:ad}
\begin{split}
\alpha\circ\left.d\right|_{X^{i,j}}
&= \alpha\circ\biggl(-d_{\mathsf H}-(-1)^id_{\mathsf V}-\sum_{r=i-p}^\infty\widetilde\Theta_r^{i,j}\biggr) \\
&= -\sum_{r=i-q+2}^{i-p}\widetilde\Theta^{i+1,j}_rd_{\mathsf H}
- (-1)^i\sum_{r=i-q+1}^{i-p-1}\widetilde\Theta^{i,j}_rd_{\mathsf V}
- d_{\mathsf H}\widetilde\Theta_{i-p}^{i,j} \\
&= -\sum_{r=i-q+1}^{i-p-1}\bigl(\widetilde\Theta^{i+1,j}_{r+1}d_{\mathsf H}+(-1)^i\widetilde\Theta^{i,j}_rd_{\mathsf V}\bigr)-d_{\mathsf H}\widetilde\Theta_{i-p}^{i,j}
\end{split}
\end{equation}
By virtue of the equation~\eqref{eq:Theta-tilde-dvh}, the comparison of \eqref{eq:prf:CXi2tot:da} with \eqref{eq:prf:CXi2tot:ad} shows that $\alpha$ commutes with the differentials, which completes the proof.
\end{proof}

\begin{lemma}
\label{lem:tot2CXi}
In the situation above, for each $n\in\mathbb Z$, we define a morphism $\beta^n:\operatorname{Tot}(\sigma^{\ge p+1}_{\mathsf H}\sigma_{\mathsf H}^{\le q-1}X)^n\to\operatorname{Cone}(\Xi)^n$ as follows: for $p+1\le i\le q-1$ with $i+j=n$,
\begin{equation}
\label{eq:tot2CXi-mor}
\left.\beta^n\right|_{X^{i,j}}
\coloneqq\sum_{r=i-p}^\infty\widetilde\Theta_r^{i,j}+\widehat\varphi_{(q)}:
X^{i,j}\to \bigoplus_{\substack{i+j=n-1\\i\le p}}X^{i,j}\oplus\operatorname{Tot}(\sigma_{\mathsf H}^{\ge q}X)^{n+1}
\quad.
\end{equation}
Then, it defines a morphism of chain complexes $\operatorname{Tot}(\sigma^{\ge p+1}_{\mathsf H}\sigma^{\le q-1}_{\mathsf H}X)\to\operatorname{Cone}(\Xi)$.
\end{lemma}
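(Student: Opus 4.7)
The proof runs in parallel with that of \cref{lem:CXi2tot}. For $x\in X^{i,j}$ with $p+1\le i\le q-1$, I would expand both $d_{\operatorname{Cone}(\Xi)}\circ\beta|_{X^{i,j}}$ and $\beta\circ d|_{X^{i,j}}$, splitting into the two summands of $\operatorname{Cone}(\Xi)^{n+1}=\operatorname{Tot}(\sigma^{\le p}_{\mathsf H}X)^n\oplus\operatorname{Tot}(\sigma^{\ge q}_{\mathsf H}X)^{n+2}$, and check that the two sides match.

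On the $\operatorname{Tot}(\sigma^{\le p}_{\mathsf H}X)^n$-component, the shifted total differential applied to $\sum_{r\ge i-p}\widetilde\Theta_r^{i,j}(x)$ yields $-\sum_{r>i-p}d_{\mathsf H}\widetilde\Theta_r^{i,j}(x)-\sum_{r\ge i-p}(-1)^{i-r}d_{\mathsf V}\widetilde\Theta_r^{i,j}(x)$ (the horizontal sum starts at $r=i-p+1$ because at $r=i-p$ the image $X^{p+1,\ast}$ is truncated away), together with the connecting contribution $\Xi(\widehat\varphi_{(q)}(x))=-\sum_{r\ge q-p}\widetilde\Theta_r^{q,j}(d_{\mathsf H}x)$, which appears only when $i=q-1$. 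Dually, $\beta\circ d$ produces $\sum_{r\ge i-p+1}\widetilde\Theta_r^{i+1,j}(d_{\mathsf H}x)+(-1)^i\sum_{r\ge i-p}\widetilde\Theta_r^{i,j+1}(d_{\mathsf V}x)$, with the first sum absent when $i=q-1$. Reindexing $r\mapsto r+1$ in the horizontal sums and grouping by $r\ge i-p$, the identity~\eqref{eq:Theta-tilde-dvh} pairs the two expressions term-by-term for $p+1\le i\le q-2$; when $i=q-1$, the extra horizontal term $-\widetilde\Theta_{r+1}^{q,j}(d_{\mathsf H}x)$ produced by \eqref{eq:Theta-tilde-dvh} is precisely canceled by $\Xi\widehat\varphi_{(q)}$.

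On the $\operatorname{Tot}(\sigma^{\ge q}_{\mathsf H}X)^{n+2}$-component both sides vanish for $p+1\le i\le q-2$ (the $\widehat\varphi_{(q)}$-component of $\beta$ is zero on $X^{i,j}$ and on $X^{i+1,j}$ with $i+1\le q-1$, while $d_{\mathsf V}(x)$ remains in the interior strip). For $i=q-1$, $d\circ\beta$ contributes $-d(d_{\mathsf H}x)=-(-1)^q d_{\mathsf V}d_{\mathsf H}(x)=(-1)^{q-1}d_{\mathsf H}d_{\mathsf V}(x)$ upon invoking $d_{\mathsf H}^2=0$ and $d_{\mathsf H}d_{\mathsf V}=d_{\mathsf V}d_{\mathsf H}$, while $\beta\circ d$ contributes $(-1)^{q-1}\widehat\varphi_{(q)}(d_{\mathsf V}x)=(-1)^{q-1}d_{\mathsf H}d_{\mathsf V}(x)$, so the two agree. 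The main obstacle is the bookkeeping at the horizontal truncation boundaries: verifying that the horizontal differentials suppressed at $i=p$ in the target and at $i=q-1$ in the source are absorbed, respectively, by the lower bound of the summation range in $\beta$ and by the connecting term $\Xi\widehat\varphi_{(q)}$, and tracking the signs carried through the normalization~\eqref{eq:Theta-tilde} together with the cone's shift convention. Once these boundary effects are aligned, the interior matching is a formal consequence of \eqref{eq:Theta-tilde-dvh}.
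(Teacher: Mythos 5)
Your proposal is correct and follows essentially the same route as the paper's proof: expand $d_{\operatorname{Cone}(\Xi)}\circ\beta$ and $\beta\circ d$ on each $X^{i,j}$ in the interior strip, shift the horizontal summation index, and match term-by-term via~\eqref{eq:Theta-tilde-dvh}, with the boundary cases at $i=q-1$ absorbed by $\Xi\widehat\varphi_{(q)}$. The only cosmetic difference is that you separate the two direct summands of $\operatorname{Cone}(\Xi)^{n+1}$ explicitly throughout, whereas the paper collects all contributions in a single display and treats $i=q-1$ and $p+1\le i\le q-2$ as two unified computations.
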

\begin{proof}
We show that $\beta$ commutes with the differentials on each $X^{i,j}$ with $p+1\le i\le q-1$.
First, for $i=q-1$, then, by \eqref{eq:Theta-tilde-dvh}, we have
\[
\begin{split}
&d\circ\left.\beta\right|_{X^{q-1,j}} \\
&=
-\sum_{\mathclap{r=q-p}}^\infty d_{\mathsf H}\widetilde\Theta_r^{q-1,j}
-\sum_{\mathclap{r=q-p-1}}^\infty(-1)^{q-r-1}d_{\mathsf V}\widetilde\Theta_r^{q-1,j}
- (-1)^qd_{\mathsf V}d_{\mathsf H}
- \sum_{\mathclap{r=q-p}}^\infty\widetilde\Theta_r^{q,j}d_{\mathsf H}
\\
&= -\sum_{\mathclap{r=q-p-1}}^\infty\;\left(d_{\mathsf H}\widetilde\Theta_{r+1}^{q-1,j}+(-1)^{q-r-1}d_{\mathsf V}\widetilde\Theta_r^{q-1,j}+\widetilde\Theta^{q,j}_{r+1}d_{\mathsf H}\right) - (-1)^qd_{\mathsf V}^{q,j}d_{\mathsf H}^{q-1,j}
\\
&= \sum_{r=q-p-1}^\infty(-1)^{q-1}\widetilde\Theta^{q-1,j+1}_r d_{\mathsf V} + (-1)^{q-1}d_{\mathsf H}^{q-1,j+1}d_{\mathsf V}^{q-1,j}
\\
&= \beta\circ \left.d\right|_{X^{q-1,j}}
\quad.
\end{split}
\]
Next, if $p+1\le i\le q-2$, the similar computation shows
\begin{equation}
\label{eq:prf:tot2CXi:db}
d\circ\left.\beta\right|_{X^{i,j}}
=-\sum_{\mathclap{r=i-p}}^\infty\;\left(d_{\mathsf H}\widetilde\Theta_{r+1}^{i,j}+(-1)^{i-r}d_{\mathsf V}\widetilde\Theta_r^{i,j}\right)
\quad.
\end{equation}
On the other hand, using the equation $\widehat\varphi_{(q)}\circ d_{\mathsf H}=0$, we also have
\begin{equation}
\label{eq:prf:tot2CXi:bd}
\begin{split}
\beta\circ \left.d\right|_{X^{i,j}}
&= \sum_{r=i-p+1}^\infty\widetilde\Theta^{i+1,j}_rd_{\mathsf H} + \sum_{r=i-p}^\infty(-1)^i\widetilde\Theta^{i,j+1}_rd_{\mathsf V} \\
&= \sum_{r=i-p}^\infty\left(\widetilde\Theta^{i+1,j}_{r+1}d_{\mathsf H}+(-1)^i\widetilde\Theta^{i,j+1}_r d_{\mathsf V}\right)
\quad.
\end{split}
\end{equation}
Comparing \eqref{eq:prf:tot2CXi:db} and \eqref{eq:prf:tot2CXi:bd}, one obtains $d\beta=\beta d$ on $X^{i,j}$ with $p+1\le i\le q-2$ by \eqref{eq:Theta-tilde-dvh}.
This together with the first case proves that $\beta$ is a morphism of chain complexes, as required.
\end{proof}

\begin{proof}[Proof of \cref{prop:dblcplx-longex}]
As a result of the arguments above, we now have a pair of morphisms of complexes below:
\[
\alpha:\operatorname{Cone}(\Xi)\rightleftarrows \operatorname{Tot}(\sigma^{\ge p+1}_{\mathsf H}\sigma_{\mathsf H}^{\le q-1}X):-\beta
\quad.
\]
Hence, it suffices to show that these morphisms are mutually chain homotopy inverses.

We first show that $-\alpha\beta$ is chain homotopic to the identity.
Notice that, for each $p+1\le i\le q-1$, we have
\begin{equation}
\label{eq:prf:dblcplx-longex:ab}
\alpha\circ\left.\beta\right|_{X^{i,j}}
=
\begin{cases}
d_{\mathsf H}\widetilde\Theta^{i,j}_{i-p} &\quad p+1\le i \le q-2\quad, \\[1ex]
\displaystyle d_{\mathsf H}\widetilde\Theta^{i,j}_{q-p-1}+\sum_{r=1}^{q-p-1}\widetilde\Theta^{q,j}_rd_{\mathsf H} &\quad i=q-1\quad.
\end{cases}
\end{equation}
We define a morphism $\Psi^n:\operatorname{Tot}(\sigma_{\mathsf H}^{\ge p+1}\sigma_{\mathsf H}^{\le q-1}X)^n\to \operatorname{Tot}(\sigma_{\mathsf H}^{\ge p+1}\sigma_{\mathsf H}^{\le q-1}X)^{n+1}$ so that, for each $p+1\le i\le q-1$ with $i+j=n$,
\[
\left.\Psi^n\right|_{X^{i,j}}
= \sum_{r=1}^{i-p-1}\widetilde\Theta_r^{i,j}:X^{i,j}\to\bigoplus_{i+j=n+1}X^{i,j}
\quad.
\]
For $p+1\le i\le q-1$, we have
\begin{equation}
\label{eq:prf:dblcplx-longex:dPsi}
\begin{split}
&\left.(d\Psi^n+\Psi^{n+1}d)\right|_{X^{i,j}} +\left.\alpha\beta\right|_{X^{i,j}} \\
&= \sum_{r=1}^{i-p-1}(d_{\mathsf H}+(-1)^{i-r}d_{\mathsf V})\widetilde\Theta^{i,j}_r
+ \sum_{r=1}^{i-p}\widetilde\Theta^{i+1,j}_rd_{\mathsf H}
+ \sum_{r=1}^{i-p-1}(-1)^i\widetilde\Theta^{i,j+1}_rd_{\mathsf V}
+ d_{\mathsf H}\widetilde\Theta^{i,j}_{i-p}\\
&=
\begin{multlined}[t]
\sum_{r=1}^{i-p-1}\left(d_{\mathsf H}\widetilde\Theta_{r+1}^{i,j}+(-1)^{i-r}d_{\mathsf V}\widetilde\Theta^{i,j}_r + \widetilde\Theta_{r+1}^{i+1,j}d_{\mathsf H}+(-1)^i\widetilde\Theta_r^{i,j+1}d_{\mathsf V}\right) \\
+ d_{\mathsf H}\widetilde\Theta_1^{i,j} + \widetilde\Theta_1^{i+1,j}d_{\mathsf H}
\quad.
\end{multlined}
\end{split}
\end{equation}
Since $\widetilde\Theta_1=-\theta$, by virtue of \eqref{eq:theta-contraction}, \eqref{eq:Theta-tilde-dvh}, and \eqref{eq:prf:dblcplx-longex:ab}, the equations~\eqref{eq:prf:dblcplx-longex:dPsi} yield the equation $d\Psi+\Psi d = -\alpha\beta-\mathrm{Id}$; in other words, $\Psi$ is a chain homotopy from the identity to $-\alpha\beta$.

It remains to show that $-\beta\alpha$ is also chain homotopic to the identity.
For $r,s\ge 1$, the equation~\eqref{eq:theta-sqzero} yields $\Theta_s\Theta_r=0$.
Hence, we obtain
\begin{equation}
\label{eq:prf:dblcplx-longex:ba}
\beta\circ\left.\alpha\right|_{X^{i,j}}=
\begin{cases}
0 & i< p\ ,\\[1ex]
\displaystyle\sum_{r=1}^\infty\widehat\Theta^{p+1,j}_r d_{\mathsf H} & i=p\ ,\\[3ex]
\displaystyle d_{\mathsf H}\widetilde\Theta^{i,j}_{i-q+1} & i\ge q\ .
\end{cases}
\end{equation}
We define a morphism $\Phi^n:\operatorname{Cone}(\Xi)^n\to\operatorname{Cone}(\Xi)^{n-1}$ by
\[
\left.\Phi^n\right|_{X^{i,j}}\coloneqq
\begin{cases}
\displaystyle \sum_{r=1}^{i-q}\widetilde\Theta_r^{i,j} & i\ge q\,\text{and}\,i+j=n+1\ ,\\[3ex]
\displaystyle \sum_{r=1}^\infty\widetilde\Theta_r^{i,j} & i\le p\,\text{and}\,i+j=n-1\ .
\end{cases}
\]
We show that the family $\Phi=\{\Phi^n\}$ forms a chain homotopy from the identity to $-\beta\alpha$.
Notice that, by the equation~\eqref{eq:theta-sqzero}, the compositions $\Xi\Phi$ and $\Phi\Xi$ both vanish.
Hence, for $i\le p$, we obtain
\begin{equation}
\label{eq:prf:dblcplx-longex:dPhi-cod}
\begin{split}
&\left.(d\Phi+\Phi d)\right|_{X^{i,j}} + \left.\beta\alpha\right|_{X^{i,j}} \\
&= \sum_{r=1}^\infty(d_{\mathsf H}+(-1)^{i-r})\widetilde\Theta_r^{i,j}
+ \sum_{r=1}^\infty\widetilde\Theta_r^{i+1,j}d_{\mathsf H} + \sum_{r=1}^\infty(-1)^i\widetilde\Theta_r^{i,j+1}d_{\mathsf V} \\
&= 
\begin{multlined}[t]
\sum_{r=1}^\infty\left(d_{\mathsf H}\widetilde\Theta^{i,j}_{r+1}+(-1)^{i-r}\widetilde\Theta_r^{i,j}+\widetilde\Theta^{i+1,j}_{r+1}d_{\mathsf H}+(-1)^i\widetilde\Theta^{i,j+1}_rd_{\mathsf V}\right) \\
+ d_{\mathsf H}\widetilde\Theta^{i,j}_1+ \widetilde\Theta^{i+1,j}_1d_{\mathsf H}
\end{multlined}
\end{split}
\end{equation}
On the other hand, for $i\ge q$, we obtain
\begin{equation}
\label{eq:prf:dblcplx-longex:dPhi-dom}
\begin{split}
&\left.(d\Phi+\Phi d)\right|_{X^{i,j}} + \left.\beta\alpha\right|_{X^{i,j}} \\
&= \sum_{r=1}^{i-q}(d_{\mathsf H}+(-1)^{i-r})\widetilde\Theta_r^{i,j}
+ \sum_{r=1}^{i-q+1}\widetilde\Theta_r^{i+1,j}d_{\mathsf H} + \sum_{r=1}^{i-q}(-1)^i\widetilde\Theta_r^{i,j+1}d_{\mathsf V} + d_{\mathsf H}\widetilde\Theta^{i,j}_{i-q+1} \\
&=
\begin{multlined}[t]
\sum_{r=1}^{i-q}\left(d_{\mathsf H}\widetilde\Theta^{i,j}_{r+1}+(-1)^{i-r}\widetilde\Theta_r^{i,j}+\widetilde\Theta^{i+1,j}_{r+1}d_{\mathsf H}+(-1)^i\widetilde\Theta^{i,j+1}_rd_{\mathsf V}\right) \\
+ d_{\mathsf H}\widetilde\Theta^{i,j}_1+ \widetilde\Theta^{i+1,j}_1d_{\mathsf H}
\end{multlined}
\end{split}
\end{equation}
Hence, by \eqref{eq:theta-contraction}, \eqref{eq:Theta-tilde-dvh}, and \eqref{eq:prf:dblcplx-longex:ab}, we obtain $d\Phi+\Phi d = -\beta\alpha-\mathrm{Id}$.
This implies $-\beta\alpha$ is chain homotopic to the identity, which completes the proof.
\end{proof}

\section{The first Vassiliev derivative}\label{sec:fstder}

In this section, we compute the universal Khovanov homology for links with single double points.
Note that it can be seen as the \emph{first Vassiliev derivative} of the universal Khovanov homology from the viewpoint that the last isomorphism in \cref{prop:skein-mcone} categorifies the Vassiliev skein relation.

\subsection{Twisted $S^1$-modules}\label{sec:fstder:twist}

We introduce ``twisted'' action and coaction of $S^1$ on the interval $I$.
To be more precise, we note that, in terms of the functor \eqref{eq:disj-tensor}, we have a $k$-linear functor
\begin{equation}
\label{eq:S1-freemod}
(\blank)\otimes S^1:\Cob(Y_0,Y_1)\to\Cob(Y_0,Y_1)
\end{equation}
for every oriented $0$-manifold $Y_0$ and $Y_1$.
As $S^1$ is a Frobenius monoid object in $\Cob(\varnothing,\varnothing)$ in the canonical way, we say an object $W\in\Cob(Y_0,Y_1)$ is an \emph{$S^1$-module} (resp.~\emph{$S^1$-comodule}) if it is equipped with morphisms $\mu^{}_W:W\otimes S^1\to W$ (resp.~$\Delta_W:W\to W\otimes S^1$) which makes the diagrams below commute:
\[
\begin{gathered}
\begin{tikzcd}
W\otimes S^1\otimes S^1 \ar[r,"\mu^{}_W\otimes S^1"] \ar[d,"W\otimes\mu"'] & W\otimes S^1 \ar[d,"\mu^{}_W"] \\
W\otimes S^1 \ar[r,"\mu^{}_W"] & W
\end{tikzcd}
\ ,\quad
\begin{tikzcd}[column sep=1em]
& W \ar[dl,"W\otimes\eta"'] \ar[dr,equal] & \\
W\otimes S^1 \ar[rr,"\mu^{}_W"] && W
\end{tikzcd}
\\
\mathllap{\biggr(resp.~\quad}
\begin{tikzcd}
W \ar[r,"\Delta_W"] \ar[d,"\Delta_W"'] & W\otimes S^1 \ar[d,"\Delta_W\otimes S^1"] \\
W\otimes S^1 \ar[r,"W\otimes S^1"] & W\otimes S^1\otimes S^1
\end{tikzcd}
\ ,\quad
\begin{tikzcd}[column sep=1em]
W \ar[rr,"\Delta_W"] \ar[dr,equal] && W\otimes S^1 \ar[dl,"W\otimes\varepsilon"] \\
& W &
\end{tikzcd}
\mathrlap{\quad\biggr).}
\end{gathered}
\]

Actually, on every object $W\in\Cob(Y_0,Y_1)$, $S^1$-module structures and $S^1$-comodule structures correspond in one-to-one.
Indeed, the functor~\eqref{eq:S1-freemod} is self-adjoint; namely, for cobordisms $W,W':Y_0\to Y_1$, we have the following bijection:
\begin{equation}
\label{eq:S1-selfadj}
\begin{array}[t]{ccc}
  \Cob(Y_0,Y_1)(W_0\otimes S^1,W_1) &\to& \Cob(Y_0,Y_1)(W_0,W_1\otimes S^1) \\
  S &\mapsto& (S\otimes S^1)(W_0\otimes\Delta\eta)
\end{array}
\quad.
\end{equation}
As easily verified, this adjunction associates an $S^1$-module structure $\mu^{}_W:W\otimes S^1\to W$ with an $S^1$-comodule structure $\Delta_W:W\to W\otimes S^1$.

\begin{remark}
The correspondence above can be generalized to the one between algebras and coalgebras over an arbitrary Frobenius monad (see \cite{Street2004} for details).
\end{remark}

\begin{definition}
\label{def:S1mod-compatible}
For an object $W\in\Cob(Y_0,Y_1)$, we say that an $S^1$-module structure and an $S^1$-comodule structure on $W$ are \emph{compatible} if they correspond with each other in the sense above.
\end{definition}

The compatibility implies not only that two structures correspond in a special way but that a variant of the Frobenius relation holds.

\begin{lemma}
\label{lem:S1mod-unique}
Let $W\in\Cob(Y_0,Y_1)$ be an object equipped with an $S^1$-module structure $\mu^{}_W:W\otimes S^1\to W$ and an $S^1$-comodule structure $\Delta_W:W\to W\otimes S^1$ which are compatible with each other.
Then, the following equations hold:
\[
(\mu^{}_W\otimes S^1)(W\otimes\Delta)
= \Delta_W\mu^{}_W
= (W\otimes\mu)(\Delta_W\otimes S^1)
:W\otimes S^1\to W\otimes S^1
\quad.
\]
\end{lemma}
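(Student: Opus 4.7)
The plan is to reduce the claimed equalities to the Frobenius relation on $S^1$ itself, which holds in $\Cob(\varnothing,\varnothing)$ because the standard pair-of-pants cobordisms admit the relevant diffeomorphism. The first step is to unwind the compatibility hypothesis: by the definition of the self-adjunction \eqref{eq:S1-selfadj}, the comodule structure is given explicitly by
\[
\Delta_W = (\mu^{}_W\otimes S^1)\circ(W\otimes\Delta\eta) : W\to W\otimes S^1
\quad,
\]
and, by the inverse bijection (which uses $\varepsilon\mu$ as the other half of the self-duality of the Frobenius object $S^1$), one also has
\[
\mu^{}_W = (W\otimes\varepsilon\mu)\circ(\Delta_W\otimes S^1) : W\otimes S^1\to W
\quad.
\]

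With these formulas in hand, the first equality $\Delta_W\mu^{}_W=(\mu^{}_W\otimes S^1)(W\otimes\Delta)$ is a short calculation. I would substitute the formula for $\Delta_W$ into $\Delta_W\mu^{}_W$, then use the interchange law (morphisms acting on disjoint tensor factors commute) to swap $W\otimes\Delta\eta$ past $\mu^{}_W$, getting
\[
\Delta_W\mu^{}_W = (\mu^{}_W\otimes S^1)\circ(\mu^{}_W\otimes S^1\otimes S^1)\circ(W\otimes S^1\otimes \Delta\eta)
\quad.
\]
Associativity of the $S^1$-action converts the two consecutive $\mu^{}_W$'s into $\mu^{}_W\circ(W\otimes\mu)$, so the problem collapses onto the factor $S^1$: it suffices to show $(\mu\otimes S^1)\circ(S^1\otimes\Delta\eta)=\Delta$ in $\Cob(\varnothing,\varnothing)$. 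This last identity is immediate from the Frobenius relation on $S^1$ combined with the unit axiom, namely $(\mu\otimes S^1)(S^1\otimes\Delta)=\Delta\mu$ and $\mu(S^1\otimes\eta)=\mathrm{id}$. In the cobordism picture, it is just the pants/co-pants diffeomorphism, and it descends to $\Cob(\varnothing,\varnothing)$ automatically.

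The second equality $\Delta_W\mu^{}_W=(W\otimes\mu)(\Delta_W\otimes S^1)$ follows by the mirror argument: starting instead from the formula $\mu^{}_W=(W\otimes\varepsilon\mu)(\Delta_W\otimes S^1)$, interchange, apply coassociativity of the $S^1$-coaction $\Delta_W$, and invoke the dual identity $(S^1\otimes\varepsilon\mu)(\Delta\otimes S^1)=\mu$ on $S^1$, which is again a consequence of the Frobenius relation together with the counit axiom. Alternatively, since both extreme terms are images of each other under the self-adjunction \eqref{eq:S1-selfadj} applied to $\Delta_W\mu^{}_W$, the equality can be read off by naturality of the adjunction.

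The only genuine subtlety is the bookkeeping of tensor factors and interchange moves—there is no deep obstacle, because the statement is literally the Frobenius relation for $S^1$ propagated through the action/coaction on $W$. Once the compatibility is expressed as the two explicit formulas above, the verifications reduce to local cobordism identities that are part of the definition of the Frobenius structure on $S^1$ in $\Cob(\varnothing,\varnothing)$.
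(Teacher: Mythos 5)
Your proof is correct and takes essentially the same approach as the paper: both arguments pass through the self-adjunction \eqref{eq:S1-selfadj} and reduce the claim to the Frobenius relations on $S^1$ together with the (co)associativity of the (co)action. The only difference is organizational — you substitute the explicit adjunction formulas for $\Delta_W$ and $\mu^{}_W$ and compute directly, while the paper applies the bijection to all three morphisms at once and observes that the resulting images in $\Cob(Y_0,Y_1)(W,W\otimes S^1\otimes S^1)$ agree.
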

\begin{proof}
It suffices to show that the three morphisms are identified by the map~\eqref{eq:S1-selfadj}.
Using the Frobenius relation on $S^1$ and the compatibility of $\mu_W$ and $\Delta_W$, one can see that the first and the third morphisms are mapped to the morphism $(W\otimes\Delta)(\Delta_W\otimes S^1)$ while the second to $(\Delta_W\otimes S^1)(W\otimes\Delta)$.
Therefore, the result follows from the co-associativity of $\Delta$ and $\Delta_W$.
\end{proof}

For example, the morphisms~\eqref{eq:interval-S1mod} exhibit the interval $I\in\Cob(\mathord-,\mathord-)$ simultaneously as an $S^1$-module and an $S^1$-comodule.
One can easily verify that these structures are compatible in the sense of \cref{def:S1mod-compatible}.
We further introduce ``twisted'' versions of these structures.

\begin{definition}
\label{def:twist-S1}
Define morphisms
\[
\begin{gathered}
\widetilde\mu\coloneqq\mu-\mu\Delta\otimes\varepsilon
:\diagFiV\to\diagFiH
\quad,\\
\widetilde\Delta\coloneqq\Delta-\mu\Delta\otimes\eta
:\diagFiH\to\diagFiV
\quad.
\end{gathered}
\]
\end{definition}

\begin{lemma}
\label{lem:twist-S1modcomod}
The morphisms in \cref{def:twist-S1} define structures of an $S^1$-module and an $S^1$-comodule on $I$ which are compatible with each other.
\end{lemma}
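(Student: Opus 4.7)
The plan is to verify the three required properties (module axioms for $\widetilde\mu$, comodule axioms for $\widetilde\Delta$, and compatibility) one by one, reducing each to standard identities in $\Cob(\mathord-,\mathord-)$ together with the local relations \ref{relBN:S} and \ref{relBN:4Tu}. I will write $h \coloneqq \mu\Delta : I\to I$; topologically $h$ attaches a handle to the interval, and since $h$ acts only on the $I$-strand, a direct cobordism sliding argument gives the interchange identities $\mu\circ(h\otimes\mathrm{id}_{S^1}) = h\circ\mu$ and $(h\otimes\mathrm{id}_{S^1})\circ\Delta = \Delta\circ h$. In this notation $\widetilde\mu = \mu - h\otimes\varepsilon$ and $\widetilde\Delta = \Delta - h\otimes\eta$.

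First, the unit axiom for $\widetilde\mu$ reduces at once to the $S$-relation $\varepsilon\eta = 0$, since $\widetilde\mu(\mathrm{id}_I\otimes\eta) = \mathrm{id}_I - h\otimes(\varepsilon\eta) = \mathrm{id}_I$; the counit axiom for $\widetilde\Delta$ is dual.

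Next, for associativity of $\widetilde\mu$, I will expand $\widetilde\mu(\widetilde\mu\otimes\mathrm{id}_{S^1})$ and $\widetilde\mu(\mathrm{id}_I\otimes\mu)$ into four respectively two summands. Using associativity of $\mu$ together with the interchange identities above, their difference factors as $h \circ E$, where
\[
E = (\mathrm{id}_I\otimes\varepsilon\mu) + h(\mathrm{id}_I\otimes\varepsilon\otimes\varepsilon) - \mu(\mathrm{id}_I\otimes\varepsilon\otimes\mathrm{id}_{S^1}) - \mu(\mathrm{id}_I\otimes\mathrm{id}_{S^1}\otimes\varepsilon)
\]
is a morphism $I\otimes S^1\otimes S^1\to I$. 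The four cobordisms in $E$ correspond exactly to the four ways of pairing up the two $S^1$-tubes with the $I$-strand and with the handle of $h$, so their alternating sum vanishes by the $4Tu$-relation \ref{relBN:4Tu}. Coassociativity of $\widetilde\Delta$ follows by the dual computation.

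Finally, compatibility in the sense of \cref{def:S1mod-compatible} is verified by computing the adjoint of $\widetilde\mu$ under the self-adjunction \eqref{eq:S1-selfadj}:
\[
(\widetilde\mu\otimes\mathrm{id}_{S^1})(\mathrm{id}_I\otimes\Delta\eta)
= (\mu\otimes\mathrm{id}_{S^1})(\mathrm{id}_I\otimes\Delta\eta) - h\otimes\bigl((\varepsilon\otimes\mathrm{id}_{S^1})\Delta\eta\bigr).
\]
The first summand equals $\Delta$ by the Frobenius zig-zag (the adjoint of the standard action $\mu$ on $I$ is the standard coaction $\Delta$, a direct diffeomorphism of cobordisms), and the second equals $h\otimes\eta$ by the counit identity $(\varepsilon\otimes\mathrm{id}_{S^1})\Delta = \mathrm{id}_{S^1}$; hence the adjoint of $\widetilde\mu$ equals $\Delta - h\otimes\eta = \widetilde\Delta$, which is precisely compatibility. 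The main obstacle will be identifying the correct $4Tu$-configuration for the associativity step, since one of the four ``tubes'' lives inside the handle factor $h$ rather than in the local region where the two $S^1$-components appear; once this configuration is set up, the remainder is routine expansion.
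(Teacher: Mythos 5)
Your proposal is correct and follows essentially the same route as the paper: unitality from the $S$-relation, associativity from the $4Tu$-relation (your identification of the correct $4Tu$-configuration, with two of the four disks on the $I$-strip so that one tube becomes the handle $h$ and the other is the merge $\varepsilon\mu$, is exactly what makes the step work), and compatibility from the self-adjunction computation. The paper is slightly more economical by establishing compatibility first, which lets it skip the explicit dual check of the comodule axioms, since under the Frobenius self-adjunction a module structure automatically corresponds to a comodule structure.
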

\begin{proof}
We have
\[
\begin{split}
(\widetilde\mu\otimes S^1)\circ(I\otimes\Delta\eta)
&= (\mu\otimes S^1)\circ(I\otimes\Delta\eta) - (\mu\Delta\otimes\varepsilon\otimes S^1)\circ(I\otimes\Delta\eta) \\
&= \Delta - \mu\Delta\otimes\eta = \widetilde\Delta
\quad.
\end{split}
\]
Hence, it suffices to show that $\widetilde\mu$ is an $S^1$-module structure on $I$.
By the relation $(S)$, we have
\[
\widetilde\mu\circ(I\otimes\eta) = \mathrm{id}_I
\quad.
\]
On the other hand, by the relation $(4Tu)$, we also have
\[
\begin{split}
\widetilde\mu\circ(\widetilde\mu\otimes S^1)
&=
\begin{multlined}[t]
\mu\circ(\mu\otimes S^1) - \mu\circ(\mu\Delta\otimes\varepsilon\otimes S^1) - (\mu\Delta\otimes\varepsilon)\circ(\mu\otimes S^1)\\ + (\mu\Delta\otimes\varepsilon)\circ(\mu\Delta\otimes\varepsilon\otimes S^1)
\end{multlined}
\\
&= \mu\circ(I\otimes\mu) - (\mu\Delta\otimes\varepsilon)\circ(I\otimes\mu) \\
&= \widetilde\mu\circ(I\otimes\mu)
\quad.
\end{split}
\]
They respectively imply the unitality and the associativity of the action $\widetilde\mu$, so the result follows.
\end{proof}

\subsection{The crux complex}
\label{sec:fstder:crux}

To concentrate on links, among tangles, by \emph{singular link-like graphs}, we mean a singular tangle-like graph $G$ with no univalent vertices.
Throughout this subsection, we fix a singular link-like graph $G$ with a unique double point, say $c^\sharp(G)=\{b_0\}$.
We also write $c(G)\coloneqq V^4(G)\setminus c^\sharp(G)$.
The goal of the subsection is to construct a $c(G)$-fold complex $\operatorname{Crx}(G^\chi)$.

\begin{notation}
In the situation above, we regard a map $\alpha:c(G)\to\mathbb Z$ as a map out of $V^4(G)$ with $\alpha(b_0)=0$.
This enables us to consider the $\alpha$-smoothing $G_\alpha$ of $G$ provided $\alpha$ lies in the effective range in the sense of \cref{sec:UKH-mulcplx:cob}.
In particular, $G_\alpha$ resolves the double point $b_0$ into two smooth edges (see \cref{tab:quad-repl}).
\end{notation}

\begin{definition}
A map $\alpha:c(G)\to\mathbb Z$ is called a \emph{$G$-crux map} if it satisfies the following conditions:
\begin{enumerate}[label=\upshape(\roman*)]
  \item it lies in the effective range;
  \item the two edges in $G_\alpha$ involved in the double point resolution belong to the same connected component of the $1$-manifold $|G_\alpha|$.
\end{enumerate}
\end{definition}

For a checkerboard coloring $\chi$ on the complement of $G$, we define an object $|G_\alpha|^\chi_{\mathsf{crx}}\in\Cob(\varnothing,\varnothing)$ by
\[
|G_\alpha|^\chi_{\mathsf{crx}}\coloneqq\begin{cases*}
|G_\alpha|^\chi & if $\alpha$ is a $G$-crux map, \\
0 & otherwise.
\end{cases*}
\]
If $\alpha:c(G)\to\mathbb Z$ is a $G$-crux map, then we specify a subarc of the closed $1$-manifold $|G_\alpha|^\chi_{\mathsf{crx}}$ as depicted as the blue line in \cref{fig:twistarc}, which we call the \emph{twisted arc}.
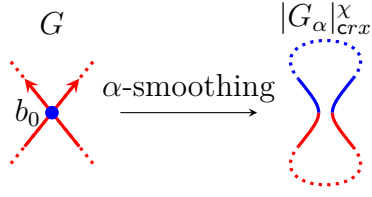
\begin{figure}[t]
\centering
\begin{tikzpicture}
\begin{scope}[shift={(-2,0)}]
\node[above] at (0,1) {$G$};
\draw[red,very thick,dotted] (-.6,-.75) -- (.6,.75);
\draw[red,very thick,dotted] (.6,-.75) -- (-.6,.75);
\draw[red,very thick,-stealth] (-.4,-.5) -- (.4,.5);
\draw[red,very thick,-stealth] (.4,-.5) -- (-.4,.5);
\fill[blue] (0,0) circle (.1) node[left,black]{$b_0$};
\end{scope}
\begin{scope}[shift={(2,0)}]
\node[above] at (0,1) {$|G_\alpha|_{\mathsf crx}^\chi$};
\draw[red,very thick] (.1,0) .. controls+(0,-.125)and+(-.2,.25) .. (.4,-.5);
\draw[red,very thick,dotted] (.4,-.5) .. controls+(.6,-.75)and+(-.6,-.75) .. (-.4,-.5);
\draw[red,very thick] (-.4,-.5) .. controls+(.2,.25)and+(0,-.125) .. (-.1,0);
\draw[blue,very thick] (.1,0) .. controls+(0,.125)and+(-.2,-.25) .. (.4,.5);
\draw[blue,very thick,dotted] (.4,.5) .. controls+(.6,.75)and+(-.6,.75) .. (-.4,.5);
\draw[blue,very thick] (-.4,.5) .. controls+(.2,-.25)and+(0,.125) .. (-.1,0);
\end{scope}
\draw[-stealth] (-1,0) -- node[above]{$\alpha$-smoothing} (1,0);
\end{tikzpicture}
\caption{The twisted arc in a smoothing (depicted in blue)}
\label{fig:twistarc}
\end{figure}
We also say an edge of $G_\alpha$ is \emph{twisted} if it lies in the twisted arc, and, in what follows, we depict them as blue arrows in pictures.
Hence, on a neighborhood of each crossing of $G$, $G_\alpha$ is depicted as in \cref{tab:twistsm}, where $G_{\alpha}$ and $G_{\alpha\pm v}$ are put in the same row provided both $\alpha$ and $\alpha\pm v$ are $G$-crux maps.
\begin{table}[tp]
\centering
\begin{tabular}{c||c|c}
  $G$ & $G_\alpha$ ($\alpha(v)=0$) & $G_\alpha$ ($\alpha(v)=\pm1$) \\\hline\rule[-3.5ex]{0pt}{8ex}
  & \diagSmoothUp & \diagSmoothW \\\cline{2-3}\rule[-3.5ex]{0pt}{8ex}
  \raisebox{1ex}{\smash{$\begin{matrix}\diagCrossNegUpWith{v}\\[2ex]\text{or}\\\diagCrossPosUpWith{v}\end{matrix}$}} & \diagSmoothUptwB & \diagSmoothWtwU{} or \diagSmoothWtwD \\\cline{2-3}\rule[-3.5ex]{0pt}{8ex}
  & \diagSmoothUptwL{} or \diagSmoothUptwR & \diagSmoothWtwB
\end{tabular}
\caption{Smoothings respecting twisted edges.}
\label{tab:twistsm}
\end{table}
Fix a checkerboard coloring $\chi$ on the complement of $G$, and let $v\in c(G)$ be a crossing of $G$ and $\alpha:c(G)\to\mathbb Z$ a $G$-crux map with $\alpha(v)=0$.
If $v$ is negative and $\alpha-v$ is a $G$-crux map, we define a morphism $\delta_v^{\mathsf{tw}}:|G_{\alpha-v}|_{\mathsf crx}^\chi\to|G_\alpha|_{\mathsf crx}^\chi\in\Cob(\varnothing,\varnothing)$ so that it is depicted as follows on a neighborhood of $v$:
\[
\delta_v^{\mathsf{tw}}\coloneqq
\begin{cases}
\delta_v:\:\left|\diagSmoothW\right|^\chi\to\left|\diagSmoothUp\right|^\chi\quad,\\[3.5ex]
\widetilde\mu:\:\left|\diagSmoothWtwUDA\right|^\chi\to\left|\diagSmoothUptwBDA\right|^\chi
\;\text{or}\quad
\left|\diagSmoothWtwDUA\right|^\chi\to\left|\diagSmoothUptwBUA\right|^\chi\quad,\\[5ex]
\widetilde\Delta:\:
\left|\diagSmoothWtwBLA\right|^\chi\to\left|\diagSmoothUptwRLA\right|^\chi
\;\text{or}\quad
\left|\diagSmoothWtwBRA\right|^\chi\to\left|\diagSmoothUptwLRA\right|^\chi\quad,
\end{cases}
\]
here we consider the twisted $S^1$-module structure on the twisted arc discussed in \cref{sec:fstder:twist}.
Similarly, if $v$ is a positive crossing with $\alpha+v$ being a $G$-crux map, then we define $\delta_v^{\mathsf{tw}}:|G_\alpha|_{\mathsf crx}^\chi\to|G_{\alpha+v}|_{\mathsf crx}^\chi$ by
\[
\delta_v^{\mathsf{tw}}\coloneqq
\begin{cases}
\delta_v:\:\left|\diagSmoothUp\right|^\chi\to\left|\diagSmoothW\right|^\chi\quad,\\[3.5ex]
\widetilde\mu:\:\left|\diagSmoothUptwRLA\right|^\chi\to\left|\diagSmoothWtwBLA\right|^\chi
\;\text{or}\;\:
\left|\diagSmoothUptwLRA\right|^\chi\to\left|\diagSmoothWtwBRA\right|^\chi\quad,\\[5ex]
\widetilde\Delta:\:
\left|\diagSmoothUptwBDA\right|^\chi\to\left|\diagSmoothWtwUDA\right|^\chi
\;\text{or}\quad
\left|\diagSmoothUptwBUA\right|^\chi\to\left|\diagSmoothWtwDUA\right|^\chi\quad.
\end{cases}
\]
For a general map $\alpha:c(G)\to\mathbb Z$, and for each crossing $v\in c(G)$, we define a morphism $d_v:|G_\alpha|^\chi\to |G_{\alpha+v}|^\chi\in\Cob(\varnothing,\varnothing)$ by $d_v\coloneqq\delta_v^{\mathsf{tw}}$ if it is defined and $d_v\coloneqq0$ otherwise.

\begin{lemma}
\label{lem:twdiff-comm}
Let $G$ and $\chi$ be as above, and suppose $\alpha:c(G)\to\mathbb Z$ is a map.
Then, for two distinct crossings $v,w\in c(G)$, the diagram in $\Cob(\varnothing,\varnothing)$ below commutes:
\begin{equation}
\label{eq:twdiff-comm:sq}
\begin{tikzcd}
{|G_\alpha|_{\mathsf{crx}}^\chi} \ar[r,"d_v"] \ar[d,"d_w"'] & {|G_{\alpha+v}|_{\mathsf{crx}}^\chi} \ar[d,"d_w"] \\
{|G_{\alpha+w}|_{\mathsf{crx}}^\chi} \ar[r,"d_w"] & {|G_{\alpha+v+w}|_{\mathsf{crx}}^\chi}
\end{tikzcd}
\quad.
\end{equation}
\end{lemma}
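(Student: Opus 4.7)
The plan is to exploit the disjointness of the crossings $v$ and $w$. Since $v \neq w$, they admit disjoint small neighborhoods in $\mathbb R^2\times[0,1]$ outside of which all four smoothings $G_\alpha$, $G_{\alpha+v}$, $G_{\alpha+w}$, $G_{\alpha+v+w}$ agree. Consequently the 2-bordisms underlying $d_v$ and $d_w$ are supported on disjoint saddle handles, and at the level of $\mathbf{Cob}_2$ the two compositions $d_w d_v$ and $d_v d_w$ are realized by the same underlying 2-bordism (two disjoint saddles attached in either order). The content of the lemma is then that the linear-combination coefficients and the potential vanishings dictated by the crux condition are consistent between the two paths around the square.

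First I would dispose of the trivial case: if any index $\alpha+\varepsilon v+\delta w$ (with $\varepsilon,\delta\in\{0,1\}$) falls outside the effective range of $G$, the corresponding corner is zero and both compositions factor through it. Next, assuming all four indices are effective, suppose all four corners are crux maps. Then the twist state at $v$ (which of the local strands carries the twisted arc) is a global connectivity datum that is manifestly invariant under a saddle in the disjoint region near $w$, and vice versa. Hence the local form of $d_v$ (one of $\delta_\pm$, $\widetilde\mu$, $\widetilde\Delta$) is the same whether read off at $\alpha$ or at $\alpha+w$, and likewise for $d_w$. Commutativity in this ``all-crux'' case is then immediate from the bifunctoriality of the disjoint-union tensor product in $\Cob(\varnothing,\varnothing)$.

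The subtle case is when one or more of the intermediate corners $\alpha+v$, $\alpha+w$ fails the crux condition while the other corners succeed. Concretely, the saddle at $v$ may split the distinguished twisted component of $|G_\alpha|$ so as to separate the two strands at $b_0$, making $\alpha+v$ non-crux; yet applying also the saddle at $w$ can re-unite them and make $\alpha+v+w$ crux again. In such a configuration, $d_v\colon|G_\alpha|_{\mathsf{crx}}^\chi\to 0$ kills the top composition trivially, and one must verify that the detour $d_v\circ d_w$ also vanishes. I would enumerate the possible twist-state transitions at $v$ and $w$ — using the local pictures of \cref{tab:twistsm} and its negative-crossing counterpart — and reduce each sub-case to an identity in a small picture category. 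The relevant identities will ultimately trace back to $\varepsilon\eta=0$ (the very relation that produced $\widetilde\mu$ and $\widetilde\Delta$) together with a single $(4Tu)$ manipulation relocating the genus produced by $\mu\Delta$.

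The main obstacle is therefore the bookkeeping: there are several distinct ways a saddle can separate or re-unite the two strands at $b_0$, and each must be matched against the disjointly independent behaviour at the other crossing. Once this topological dichotomy is tabulated, the algebraic verification is short because each local configuration involves at most one twisted arc and one saddle.
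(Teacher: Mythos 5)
Your high-level partition into the all-crux case, the case where an intermediate corner fails the crux condition, and the trivial case matches the paper's proof. However, your treatment of the all-crux case has a genuine gap, and it is the heart of the lemma.

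You assert that in the all-crux case ``commutativity is then immediate from the bifunctoriality of the disjoint-union tensor product in $\Cob(\varnothing,\varnothing)$.'' This is not correct. Bifunctoriality handles morphisms of the form $f\otimes\mathrm{id}$ and $\mathrm{id}\otimes g$ supported on genuinely disjoint tensor factors. But when both $d_v$ and $d_w$ are one of the twisted maps $\widetilde\mu$, $\widetilde\Delta$, they are not of that form: they both have the twisted arc $W$ as a tensor factor, because $\widetilde\mu = \mu - \mu\Delta\otimes\varepsilon$ and $\widetilde\Delta = \Delta - \mu\Delta\otimes\eta$ each carry a genus-one handle on $W$ itself. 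So the two compositions around the square are not literally the same 2-bordism, nor compositions of morphisms with disjoint support; they are two different linear combinations whose equality is an algebraic statement. The paper proves it by establishing that $(\widetilde\mu,\widetilde\Delta)$ makes the twisted arc into a compatible $S^1$-module and $S^1$-comodule (\cref{lem:twist-S1modcomod}) and then invoking the Frobenius-type interchange $(\mu_W\otimes S^1)(W\otimes\Delta)=\Delta_W\mu_W=(W\otimes\mu)(\Delta_W\otimes S^1)$ of \cref{lem:S1mod-unique}. That relation (together with associativity, coassociativity and mixed interchanges from the same two lemmas) is exactly what you need and cannot be replaced by an appeal to naturality of $\amalg$. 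Relatedly, your claim that ``the local form of $d_v$ is the same whether read off at $\alpha$ or at $\alpha+w$'' is not something you should rely on: a saddle at $w$ can reroute the distinguished twisted arc and change its incidence with the strands at $v$; the paper's algebraic argument is insensitive to this.

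For the mixed case (one corner non-crux) your instinct is closer, but the clean statements you want are not ``$\varepsilon\eta=0$ plus one $(4Tu)$ move''; they are precisely the vanishings $\widetilde\mu\Delta=0$ and $\mu\widetilde\Delta=0$, which are consequences of the absolute exactness of the Reidemeister-I sequences in \cref{prop:ROnePos-absex} and \cref{prop:ROneNeg-absex}. These give the required factorization through $0$ in the two problematic squares without any further case analysis, and they are the tool the paper uses.
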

\begin{proof}
In the case where $|G_\alpha|_{\mathsf{crx}}^\chi$, $|G_{\alpha+v}|_{\mathsf{crx}}^\chi$, $|G_{\alpha+w}|_{\mathsf{crx}}^\chi$, and $|G_{\alpha+v+w}|_{\mathsf{crx}}^\chi$ are all non-zero, the result follows from \cref{lem:twist-S1modcomod} and \cref{lem:S1mod-unique}.
It remains to show that the diagrams below commute:
\[
\begin{tikzcd}
\diagTrefoilCruxlO \ar[r,"\Delta"] \ar[d] & \diagTrefoilCruxll \ar[d,"\widetilde\mu"] \\
0 \ar[r] &  \diagTrefoilCruxOl
\end{tikzcd}
\ ,\quad
\begin{tikzcd}
\diagTrefoilCruxOl \ar[r,"\widetilde\Delta"] \ar[d] & \diagTrefoilCruxll \ar[d,"\mu"] \\
0 \ar[r] & \diagTrefoilCruxlO
\end{tikzcd}
\ .
\]
They immediately follow from \cref{prop:ROnePos-absex} and \cref{prop:ROneNeg-absex}.
\end{proof}

\begin{definition}
Let $G$ be a singular link-like graph with a unique double point, and let $\chi$ be a checkerboard coloring of the complement of $G$.
We define a $c(G)$-fold complex $\operatorname{Crx}(G^\chi)^\bullet$ in $\Cob(\varnothing,\varnothing)$ by $\operatorname{Crx}(G^\chi)^\alpha\coloneqq|G_\alpha|^\chi_{\mathsf{crx}}$ with the differential $d_v$ defined above.
We call it the \emph{crux complex} of $G$.
\end{definition}

\subsection{Long exact sequence}
\label{sec:fstder:longex}

Let $G$ be a singular link-like graph with a unique double point.
We compute the Khovanov homology of $G$ in terms of the crux complex of $G$.
For this purpose, we need to know the relation of crux complexes and Khovanov complexes.
We denote by $\Hsmooth{G}$ and $\Vsmooth{G}$ a link-like graph obtained from $G$ by replacing the double point with a wide edge and by resolving it respectively; i.e.
\[
G=\diagSingUp
\ ,\quad \Hsmooth{G}=\diagSmoothW
\ ,\quad \Vsmooth{G}=\diagSmoothUp
\quad.
\]
Under the identification
\[
\mathbf{MCh}_{V^4(G)}^{\mathsf b}\bigl(\Cob(\varnothing,\varnothing)\bigr)
\cong \mathbf{Ch}^{\mathsf b}\left(\mathbf{MCh}_{c(G)}(\Cob(\varnothing,\varnothing))\right)
\]
mentioned in \cref{ex:mulfold-cplx-disj}, the $V^4(G)$-fold complex $\operatorname{Sm}(G^\chi)^\bullet$ defined in \cref{sec:UKH-mulcplx:UKH-mcplx} can be regarded as the following chain complex in the additive category $\mathbf{MCh}_{c(G)}(\Cob(\varnothing,\varnothing))$:
\begin{equation}
\label{eq:sing-chain}
\operatorname{Sm}(\Hsmooth{G}^\chi)^\bullet
\xrightarrow{-\delta_-}\operatorname{Sm}(\Vsmooth{G}^\chi)^\bullet
\xrightarrow{\Phi}\operatorname{Sm}(\Vsmooth{G}^\chi)^\bullet
\xrightarrow{-\delta_+}\operatorname{Sm}(\Hsmooth{G}^\chi)^\bullet
\quad.
\end{equation}
On the other hand, we define morphisms $\iota:\operatorname{Crx}(G^\chi)^\bullet\to\operatorname{Sm}(\Hsmooth{G}^\chi)^\bullet$ and $\pi:\operatorname{Sm}(\Hsmooth{G}^\chi)^\bullet\to\operatorname{Crux}(G^\chi)^\bullet$ as follows: notice first that, if $\alpha:c(G)\to\mathbb Z$ is a $G$-crux map, with regard to the connected components involved with the unique double point of $G$, we can depict $|G_\alpha|^\chi_{\mathsf{crx}}$ and $|\Hsmooth[\alpha]{G}|^\chi$ as
\[
|G_\alpha|^\chi_{\mathsf{crx}}\:=\: \diagCruxCompCrx
\ ,\quad
|\Hsmooth[\alpha]{G}|^\chi\:=\: \diagCruxCompH
\quad.
\]
In this case, we set $\iota$ and $\pi$ to be the morphisms
\begin{equation}
\label{eq:crux-iotapi}
\iota\coloneqq\widetilde\Delta:\diagCruxCompCrx\to\diagCruxCompH
\ ,\quad\pi\coloneqq\widetilde\mu:\diagCruxCompH\to\diagCruxCompCrx
\quad,
\end{equation}
where $\widetilde\Delta$ and $\widetilde\mu$ are the twisted coaction and action of the top circle on the bottom.
For the other maps $\alpha:c(G)\to\mathbb Z$, we set $\iota$ and $\pi$ to be zero.

\begin{lemma}
\label{lem:iotapi-chmorph}
In the situation above, $\iota$ and $\pi$ define morphisms of $c(G)$-fold complexes.
\end{lemma}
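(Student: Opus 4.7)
The plan is to fix a crossing $v \in c(G)$ and a map $\alpha : c(G) \to \mathbb{Z}$, and to verify directly that
\[
d_v^\alpha \circ \iota^\alpha = \iota^{\alpha+v} \circ d_v^\alpha,
\qquad
d_v^\alpha \circ \pi^\alpha = \pi^{\alpha+v} \circ d_v^\alpha
\]
in $\Cob(\varnothing,\varnothing)$. Both composites differ from the identity only in small disks around $v$ and around the double point $b_0$, so the verification is local, and the cases can be organized by the position of $v$ relative to the twisted arc of $G_\alpha$ and by whether $\alpha$ and $\alpha+v$ are both $G$-crux maps.

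First I would dispose of the case in which $v$ and the twisted arc are disjoint in $G_\alpha$. Then the crux differential $d_v$ reduces to the ordinary saddle $\delta_\pm$ acting on circles disjoint from the component carrying the twisted arc, while $\iota$ and $\pi$ act non-trivially only on the twisted arc component. Thus the naturality square commutes by bifunctoriality of the disjoint union~\eqref{eq:disj-tensor}.

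The main case is when $v$ is incident to the twisted arc and both $\alpha$ and $\alpha+v$ are $G$-crux maps. Here $d_v$ on $\operatorname{Crx}(G^\chi)^\bullet$ is the twisted action $\widetilde\mu$ or coaction $\widetilde\Delta$ of the local circle through $v$ on the twisted arc (as dictated by \cref{tab:twistsm}), whereas $d_v$ on $\operatorname{Sm}(\Hsmooth{G}^\chi)^\bullet$ is the ordinary saddle $\delta_\pm$ coming from the $\mu$ or $\Delta$ of $S^1$. After local identification, the two sides of the square for $\iota$ collapse to the Frobenius-type identity
\[
(\widetilde\Delta \otimes S^1) \circ \widetilde\mu
= \widetilde\Delta \circ \widetilde\mu
= (I \otimes \mu) \circ (\widetilde\Delta \otimes S^1),
\]
and similarly for $\pi$. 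These are precisely instances of the compatibility of \cref{lem:S1mod-unique} applied to the twisted $I$-module and $I$-comodule structures of \cref{lem:twist-S1modcomod}, with the four geometric subcases (twisted arc on the left, on the right, on both strands, or passing through the vertex) each producing one such instance.

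The remaining cases are those in which exactly one of $\alpha$ and $\alpha+v$ fails to be a crux map; then $\iota$ or $\pi$ vanishes at that index and one side of the square is automatically zero. To show that the other side also vanishes, I will verify by a direct computation in the picture category that the relevant composite---always of the form ``apply $\widetilde\Delta$ or $\widetilde\mu$, then an ordinary saddle that closes the twisted arc off into a circle''---is killed by the $S$-relation and the $4Tu$-relation, in exactly the same manner as in the proofs of \cref{prop:ROnePos-absex} and \cref{prop:ROneNeg-absex}. The main obstacle will therefore be the bookkeeping of the numerous local configurations enumerated in \cref{tab:twistsm}; once organized, the conceptual content of the argument lies entirely in the single algebraic identity of \cref{lem:S1mod-unique}.
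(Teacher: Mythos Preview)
Your direct case-by-case verification is a valid strategy, but the paper takes a different and more economical route. Rather than checking the commutation squares by hand, the paper introduces two auxiliary singular link-like graphs $\Nresol{H}$ and $\Presol{H}$ obtained from $G$ by replacing the double point with a local picture containing both a double point and an extra crossing $w_\pm$. Under natural identifications $\operatorname{Crx}(\PNresol{H}^\chi)^\alpha \cong \operatorname{Crx}(G^\chi)^\alpha$ and $\operatorname{Crx}(\PNresol{H}^\chi)^{\alpha\mp w_\pm} \cong \operatorname{Sm}(\Hsmooth{G}^\chi)^\alpha$, the morphisms $\iota$ and $\pi$ become literally the crux differentials $d_{w_-}$ and $d_{w_+}$. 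Since \cref{lem:twdiff-comm} already guarantees that any two crux differentials commute, the result follows immediately.

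What this buys: your approach is self-contained but essentially reproves \cref{lem:twdiff-comm} in a special case---your three cases (disjoint from the twisted arc; both indices crux; exactly one index crux) and the tools you invoke (\cref{lem:S1mod-unique}, \cref{prop:ROnePos-absex}, \cref{prop:ROneNeg-absex}) are precisely the ingredients of that lemma's proof. The paper's trick recognizes that $\iota$ and $\pi$ are not merely \emph{analogous} to crux differentials but are \emph{instances} of them, so no new verification is needed. As a minor point, your displayed Frobenius-type identity does not type-check as written (the domains and codomains of the three expressions do not match), though the intended content---that the required commutation reduces to \cref{lem:S1mod-unique}---is correct.
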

\begin{proof}
We define two singular link-like graphs $\Nresol{H}$ and $\Presol{H}$ by replacing the double point of $G$ with the following pictures:
\[
G=\diagSingUp
\ ,\quad \Nresol{H}={\ooalign{\ensuremath{\diagRvSingN}\crcr\hss\raisebox{1.5ex}{\ensuremath{\mathrlap{\;\;w_-}}}\hss}}
\ ,\quad \Presol{H}={\ooalign{\ensuremath{\diagRvSingP}\crcr\hss\raisebox{1.5ex}{\ensuremath{\mathrlap{\;\;w_+}}}\hss}}
\quad.
\]
Hence, we may regard $V^4(\PNresol{H})=V^4(G)\cup\{w_{\pm}\}$ and $c(\PNresol{H})=c(G)\cup\{w_{\pm}\}$.
For a map $\alpha:c(G)\to\mathbb Z$, we think of it a map $\alpha:c(H^\pm)\to\mathbb Z$ with $\alpha(w_\pm)=0$.
In this case, we have canonical identifications
\[
\begin{gathered}
\operatorname{Crx}(\Nresol{H}^\chi)^\alpha
= \operatorname{Crx}(\Presol{H}^\chi)^\alpha
= \operatorname{Crx}(G^\chi)^\alpha
\quad,\\
\operatorname{Crx}(\Nresol{H}^\chi)^{\alpha-w_-}
= \operatorname{Crx}(\Presol{H}^\chi)^{\alpha+w_+}
= \operatorname{Sm}(\Hsmooth{G}^\chi)^\alpha
\quad.
\end{gathered}
\]
Under these identifications, the morphisms $\iota$ and $\pi$ are identified with the differentials $d_{w_-}$ and $d_{w_+}$ on $\operatorname{Crx}(\Nresol{H}^\chi)^\bullet$ and $\operatorname{Crx}(\Presol{H}^\chi)^\bullet$ respectively.
Thus, the result follows from \cref{lem:twdiff-comm}.
\end{proof}

\begin{proposition}
\label{prop:crux-sm-exact}
Let $G$, $\Hsmooth{G}$, and $\Vsmooth{G}$ be as above, and let $\chi$ be a checkerboard coloring on the complement of $G$.
Then, the following sequence is absolutely exact in $\Cob(\varnothing,\varnothing)$ for each $\alpha:c(G)\to\mathbb Z$:
\begin{equation}
\label{eq:crux-sm-exact:seq}
\begin{tikzcd}[column sep=2em,row sep=3ex]
0 \ar[r] & \operatorname{Crx}(G^\chi)^\alpha \ar[r,"\iota"] & \operatorname{Sm}(\Hsmooth{G}^\chi)^\alpha \ar[r,"-\delta_-"] \ar[d,phantom,""{coordinate,name=C}] & \operatorname{Sm}(\Vsmooth{G}^\chi)^\alpha \ar[dll,"\Phi",rounded corners=1.5ex,to path={%
  -- ([xshift=1.5em]\tikztostart.east) [pos=.5]\tikztonodes
  |- (C)
  -| ([xshift=-1.5em]\tikztotarget.west)
  -- (\tikztotarget)}] & \\
& \operatorname{Sm}(\Vsmooth{G}^\chi)^\alpha \ar[r,"-\delta_+"] & \operatorname{Sm}(\Hsmooth{G}^\chi)^\alpha \ar[r,"\pi"] & \operatorname{Crx}(G^\chi)^\alpha \ar[r] & 0
\end{tikzcd}
\quad.
\end{equation}
\end{proposition}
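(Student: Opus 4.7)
The plan is to prove absolute exactness by exhibiting an explicit chain contraction, invoking \cref{prop:absex-contractible}. The argument splits into three cases. If $\alpha$ lies outside the effective range, every term of the sequence vanishes and the statement is immediate. If $\alpha$ is effective but is not a $G$-crux map, then $\operatorname{Crx}(G^\chi)^\alpha = 0$ and the two arcs of the resolved double point lie on distinct components of $|\Vsmooth{G}_\alpha|^\chi$; the remaining four-term sequence then locally mirrors the FI absolute exact sequence of \cref{prop:FI-absex}, with a closed auxiliary circle playing the role of the interval $I$, and the contractions $\mathrm{id}\otimes\varepsilon$, $\mathrm{id}\otimes\eta\varepsilon$, $\mathrm{id}\otimes\eta$ from that proposition carry over with signs adjusted for the $-\delta_\pm$.

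The main case is that of a $G$-crux map $\alpha$, where the full six-term sequence is active. My plan is to exhibit the explicit chain contractions
\[
\theta_1 = \mathrm{id}\otimes\varepsilon,\quad \theta_2 = -\mathrm{id}\otimes\eta,\quad \theta_3 = 0,\quad \theta_4 = -\mathrm{id}\otimes\varepsilon,\quad \theta_5 = \mathrm{id}\otimes\eta.
\]
The outer identities $\theta_1\iota = \mathrm{id}$ and $\pi\theta_5 = \mathrm{id}$ reduce to $\varepsilon\eta = 0$ (the sphere relation~\ref{relBN:S}) after expanding $\iota = \widetilde\Delta$ and $\pi = \widetilde\mu$ via \cref{def:twist-S1}. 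The second and fifth identities $\iota\theta_1 + \theta_2(-\delta_-) = \mathrm{id}$ and $(-\delta_+)\theta_4 + \theta_5\pi = \mathrm{id}$ are precisely the $4Tu$-derived contraction identities of \cref{prop:ROnePos-absex} and \cref{prop:ROneNeg-absex}, applied with the crux component playing the role of $I$. Remarkably, with $\theta_3 = 0$ the middle identities $(-\delta_-)\theta_2 + \theta_3\Phi = \mathrm{id}$ and $\Phi\theta_3 + \theta_4(-\delta_+) = \mathrm{id}$ collapse to the unit and counit axioms $\mu(\mathrm{id}\otimes\eta) = \mathrm{id}$ and $(\mathrm{id}\otimes\varepsilon)\Delta = \mathrm{id}$ of the Frobenius structure on $S^1$, avoiding any direct engagement with the detailed structure of $\Phi$.

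The main obstacle is the local-to-global passage: \cref{prop:ROnePos-absex}, \cref{prop:ROneNeg-absex}, and \cref{prop:FI-absex} are stated in the tangle category $\Cob(\mathord-,\mathord-)$ where $I$ is a genuine interval object, while our sequence lives in $\Cob(\varnothing,\varnothing)$ of closed $1$-manifolds. The crux hypothesis on $\alpha$ is precisely what permits the translation: the marked twisted arc identifies a subarc of a closed crux component with the role of $I$, and the horizontal smoothing splits off the auxiliary $S^1$. Verifying that the tangle-level morphisms $\mathrm{id}_I\otimes\eta$ and $\mathrm{id}_I\otimes\varepsilon$ globalize correctly, and that the defining relations of $\Cob$ transport under external gluing via the monoidal structure~\eqref{eq:disj-tensor}, is the key technical point, although this is essentially formal since all relations used are local.
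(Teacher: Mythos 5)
Your argument follows essentially the same route as the paper's proof: reduce to the two local sequences (non-crux and crux), invoke \cref{prop:FI-absex} for the non-crux case, and use the contractions of \cref{prop:ROnePos-absex} and \cref{prop:ROneNeg-absex} for the crux case, the crux circle playing the role of the closed-up interval $I$. The paper states the crux step by observing that $\Phi$ vanishes in that six-term sequence and then quoting the two Reidemeister-I propositions; your choice $\theta_3=0$ encodes the same fact implicitly (indeed, $f_1\theta_2=\mathrm{id}_{X_2}$ together with $\Phi f_1=0$ from \cref{lem:deltaPhi=0} forces $\Phi=0$), so stating $\Phi=0$ explicitly would only make the ``remarkably'' unremarkable.
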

\begin{proof}
Unwinding the definition of the morphisms in \eqref{eq:crux-sm-exact:seq}, we are reduced to prove the following sequences are both absolutely exact:
\begin{gather}
\label{eq:prf:crux-sm-exact:noncrux}
0
\to \:\diagNonCruxH\:
\xrightarrow{-\delta_-} \:\diagNonCruxV\:
\xrightarrow{\Phi} \:\diagNonCruxV\:
\xrightarrow{-\delta_+} \:\diagNonCruxH\:
\to 0
\quad,\displaybreak[1]\\[2ex]
\label{eq:prf:crux-sm-exact:crux}
\begin{tikzcd}[ampersand replacement=\&,row sep=2.5ex]
0 \ar[r]
\& \diagCruxCompCrx \ar[r,"\widetilde\Delta"]
\& \diagCruxCompH \ar[r,"-\delta_-"] \ar[d,phantom,""{coordinate,name=C}]
\& \diagCruxCompV \ar[dll,"\Phi",rounded corners=1em,to path={%
  -- ([xshift=1.5em]\tikztostart.east) [pos=.5]\tikztonodes
  |- (C)
  -| ([xshift=-1.5em]\tikztotarget.west)
  -- (\tikztotarget)}] \& \\
\& \diagCruxCompV \ar[r,"-\delta_+"] \& \diagCruxCompH \ar[r,"\widetilde\mu"] \& \diagCruxCompCrx \ar[r] \& 0
\end{tikzcd}
\quad.
\end{gather}
Notice that, in the sequence~\eqref{eq:prf:crux-sm-exact:crux}, the morphism $\Phi$ vanishes.
Therefore, the result follows from \cref{prop:ROnePos-absex}, \cref{prop:ROneNeg-absex}, and \cref{prop:FI-absex}.
\end{proof}

It is finally revealed that the Khovanov homology of $G$ is computed in terms of the crux complex.
We denote by $\dblBrac{G^\chi}_{\mathsf{crx}}$ the total complex of the crux complex $\operatorname{Crx}(G^\chi)^\bullet$.

\begin{theorem}
\label{theo:Kh-crux}
Let $G$ be a singular link-like graph with a unique double point.
Then, there is a morphism of chain complexes
\[
\Xi:\dblBrac{G^\chi}_{\mathsf{crx}}[2]\to\dblBrac{G^\chi}_{\mathsf{crx}}[-2]
\]
in the $k$-linear category $\Cob(\varnothing,\varnothing)$ together with a chain homotopy equivalence
\[
\dblBrac{G^\chi}
\simeq\operatorname{Cone}(\Xi)
\quad.
\]
Furthermore, in each cohomological degree $i\in\mathbb Z$, the morphism $\Xi^i:\dblBrac{G^\chi}_{\mathsf{crx}}^{i-2}\to\dblBrac{G^\chi}_{\mathsf{crx}}^{i+2}$ is homogeneous in Euler grading $2$.
\end{theorem}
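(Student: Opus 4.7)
The plan is to deduce the claim from the general mapping-cone machinery of \cref{sec:absex:longex} applied to a specific $2$-fold complex built from \cref{prop:crux-sm-exact}. First, I would arrange the six terms of the sequence~\eqref{eq:crux-sm-exact:seq} as the horizontal columns of a $2$-fold complex $X^{i,\bullet}$ in $\mathbf{MCh}_{c(G)}(\Cob(\varnothing,\varnothing))$ by placing $\operatorname{Crx}(G^\chi)^\bullet$ at horizontal positions $i=-3$ and $i=2$, and by placing the four smoothing complexes appearing in~\eqref{eq:sing-chain} at the middle positions $i=-2,-1,0,1$, with horizontal differentials $\iota,-\delta_-,\Phi,-\delta_+,\pi$. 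After passing through the vertical total-complex functor this becomes a $2$-fold complex in $\Cob(\varnothing,\varnothing)$, and \cref{prop:crux-sm-exact} guarantees that each of its horizontal rows is absolutely exact.

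With this setup, \cref{prop:dblcplx-longex}, applied with $p=-3$ and $q=2$, directly produces a morphism $\Xi:\operatorname{Tot}(\sigma^{\ge 2}_{\mathsf H}X^\bullet)\to\operatorname{Tot}(\sigma^{\le -3}_{\mathsf H}X^\bullet)[1]$ of chain complexes together with an isomorphism $\operatorname{Tot}(\sigma^{\ge -2}_{\mathsf H}\sigma^{\le 1}_{\mathsf H}X^\bullet)\cong \operatorname{Cone}(\Xi)$. Under the shift convention $(C[k])^n=C^{n-k}$, the single-column stupid truncations at $i=-3$ and $i=2$ are identified as $\operatorname{Tot}(\sigma^{\ge 2}_{\mathsf H}X^\bullet)=\dblBrac{G^\chi}_{\mathsf{crx}}[2]$ and $\operatorname{Tot}(\sigma^{\le -3}_{\mathsf H}X^\bullet)[1]=\dblBrac{G^\chi}_{\mathsf{crx}}[-3][1]=\dblBrac{G^\chi}_{\mathsf{crx}}[-2]$, while the middle truncation $\sigma^{\ge -2}_{\mathsf H}\sigma^{\le 1}_{\mathsf H}X^\bullet$ is precisely the chain complex~\eqref{eq:sing-chain} representing $\operatorname{Sm}(G^\chi)^\bullet$, so its total complex equals $\dblBrac{G^\chi}$. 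The resulting isomorphism $\dblBrac{G^\chi}\cong\operatorname{Cone}(\Xi)$ is a fortiori a chain homotopy equivalence.

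For the Euler-grading claim I would trace the explicit formula $\Xi|_{X^{2,j}}=-\sum_{r\ge 5}\widetilde\Theta_r^{2,j}$ from \cref{def:Theta}, noting that only $r=5$ contributes because $X^{i,\bullet}=0$ for $i<-3$. The horizontal differentials $\pi,-\delta_+,\Phi,-\delta_-,\iota$ are Euler-homogeneous of degrees $-1,-1,-2,-1,-1$, so the chain contractions supplied by \cref{prop:ROnePos-absex}, \cref{prop:ROneNeg-absex}, and \cref{prop:FI-absex}---all built from caps, cups, and saddles---can be chosen Euler-homogeneous of degrees $+1,+1,+2,+1,+1$; the vertical differentials, being sums of saddle maps, are homogeneous of degree $-1$. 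Expanding $\Theta_5=\theta_{\mathsf H}d_{\mathsf V}\theta_{\mathsf H}d_{\mathsf V}\theta_{\mathsf H}d_{\mathsf V}\theta_{\mathsf H}d_{\mathsf V}\theta_{\mathsf H}$ then yields total Euler degree $(1+1+2+1+1)+4\cdot(-1)=+2$, as required.

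The main obstacle I anticipate is therefore bookkeeping rather than conceptual: one must arrange that the explicit chain contractions from the three absolute-exactness results fit together into a single family satisfying the squared-to-zero condition of \cref{lem:theta-sqzero} while remaining Euler-homogeneous. Since the normalization $\theta^n\coloneqq\zeta^nf^{n-1}\zeta^n$ in the proof of \cref{lem:theta-sqzero} is a composition of Euler-homogeneous morphisms whose degrees add up to the degree of $\zeta^n$, homogeneity is preserved, and this last step should present no real difficulty.
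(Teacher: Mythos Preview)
Your proposal is correct and follows essentially the same route as the paper: build the $2$-fold complex~\eqref{eq:prf:Kh-crux:hexseq} from \cref{prop:crux-sm-exact}, identify the truncations, and apply \cref{prop:dblcplx-longex} with $p=-3$, $q=2$. Your treatment of the Euler-grading claim is in fact more explicit than the paper's (which simply invokes ``direct computation''), and your observation that the normalization $\theta^n\mapsto\zeta^n f^{n-1}\zeta^n$ of \cref{lem:theta-sqzero} preserves Euler-homogeneity correctly closes the only potential gap.
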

\begin{proof}
Taking the total complexes in each term of the sequence~\eqref{eq:crux-sm-exact:seq}, we obtain the $2$-fold complex
\begin{equation}
\label{eq:prf:Kh-crux:hexseq}
\begin{tikzcd}[column sep=2em]
\cdots \ar[r] & 0 \ar[r]
& \dblBrac{G^\chi}_{\mathsf{crx}} \ar[r,"\iota"]
& \dblBrac{\Hsmooth{G}^\chi} \ar[r,"-\delta_-"] \ar[d,phantom,""{coordinate,name=C}]
& \dblBrac{\Vsmooth{G}^\chi} \ar[dll,"\Phi",rounded corners=.75em,to path={%
  -- ([xshift=1.5em]\tikztostart.east) [pos=.5]\tikztonodes
  |- (C)
  -| ([xshift=-1.5em]\tikztotarget.west)
  -- (\tikztotarget)}] && \\
&& \dblBrac{\Vsmooth{G}^\chi} \ar[r,"-\delta_+"]
& \dblBrac{\Hsmooth{G}^\chi} \ar[r,"\pi"]
& \dblBrac{G^\chi}_{\mathsf{crx}} \ar[r]
& 0 \ar[r] & \cdots
\end{tikzcd}
\quad,
\end{equation}
which we denote by $X^\bullet=\{X^{i,j}\}$ with the horizontal degrees so that $X^{-3,\bullet}=X^{2,\bullet}=\dblBrac{G^\chi}_{\mathsf{crx}}$.
We hence obtain a canonical isomorphisms
\[
\sigma^{\le -3}_{\mathsf H}X^\bullet
\cong \dblBrac{G^\chi}_{\mathsf{crx}}[-3]
\ ,\quad
\sigma^{\ge -2}_{\mathsf H}\sigma^{\le 1}_{\mathsf H}X^\bullet
\cong \dblBrac{G^\chi}
\ ,\quad
\sigma^{\ge 2}_{\mathsf H}X^\bullet
\cong \dblBrac{G^\chi}_{\mathsf{crx}}[2]
\quad.
\]
On the other hand, by virtue of \cref{prop:crux-sm-exact}, the sequence~\eqref{eq:prf:Kh-crux:hexseq} is absolutely exact in each vertical degree.
Therefore, the first statement directly follows from \cref{prop:dblcplx-longex}.

To verify the last statement, note that a chain contraction on the sequence~\cref{eq:prf:Kh-crux:hexseq} is explicitly given in \cref{prop:ROnePos-absex}, \cref{prop:ROneNeg-absex}, \cref{prop:FI-absex}.
Thus, one can describe $\Xi$ explicitly as in the proof of \cref{prop:dblcplx-longex}.
The last statement then follows from the direct computation.
\end{proof}

Consequently, \cref{theo:Kh-crux} yields a long exact sequence on variants of Khovanov homology.

\begin{corollary}
\label{cor:crux-homology-longex}
Let $G$ and $\chi$ be as in \cref{theo:Kh-crux}.
For every $k$-linear functor $Z:\Cob(\varnothing,\varnothing)\to\mathbf{Mod}_k$, there is a long exact sequence
\begin{equation}%
\label{eq:crux-homology-longex:seq}
\cdots
\to H^{i-2}Z_{h,t}\dblBrac{G^\chi}_{\mathsf{crx}}
\xrightarrow{\Xi_\ast} H^{i+2}Z_{h,t}\dblBrac{G^\chi}_{\mathsf{crx}}
\to H^iZ_{h,t}\dblBrac{G^\chi}
\to H^{i-1}Z_{h,t}\dblBrac{G^\chi}_{\mathsf{crx}}
\xrightarrow{\Xi_\ast}\cdots
\quad.
\end{equation}
\end{corollary}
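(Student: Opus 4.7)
The plan is to deduce the long exact sequence directly from \cref{theo:Kh-crux} by transporting everything through $Z_{h,t}$. First, I would observe that any $k$-linear functor between additive categories preserves finite direct sums, and hence preserves shifts and mapping cones of chain maps. Applying $Z_{h,t}$ to the chain homotopy equivalence $\dblBrac{G^\chi}\simeq\operatorname{Cone}(\Xi)$ provided by \cref{theo:Kh-crux} therefore yields a chain homotopy equivalence
\[
Z_{h,t}\dblBrac{G^\chi} \;\simeq\; \operatorname{Cone}\bigl(Z_{h,t}\Xi\colon Z_{h,t}\dblBrac{G^\chi}_{\mathsf{crx}}[2]\to Z_{h,t}\dblBrac{G^\chi}_{\mathsf{crx}}[-2]\bigr)
\]
of chain complexes of $k$-modules; in particular, the two sides have the same cohomology.

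Next, I would feed in the canonical termwise split short exact sequence
\[
0 \to Z_{h,t}\dblBrac{G^\chi}_{\mathsf{crx}}[-2] \to \operatorname{Cone}(Z_{h,t}\Xi) \to Z_{h,t}\dblBrac{G^\chi}_{\mathsf{crx}}[1] \to 0
\]
(using the paper's sign convention, under which $\operatorname{Cone}(f)/Y\cong X[-1]$ for $f\colon X\to Y$, applied to $X=Z_{h,t}\dblBrac{G^\chi}_{\mathsf{crx}}[2]$) and take the associated long exact sequence of cohomology. Its connecting homomorphism coincides, up to the conventional sign, with the map induced by $Z_{h,t}\Xi$. Rewriting the indices via $H^n(A[r])=H^{n-r}(A)$ then matches the terms precisely with those in \eqref{eq:crux-homology-longex:seq}; the gap of $4$ in cohomological degree between successive occurrences of $H^{\bullet}(Z_{h,t}\dblBrac{G^\chi}_{\mathsf{crx}})$ is exactly the net shift $[2]\to[-2]$ performed by $\Xi$.

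No step presents a real obstacle once \cref{theo:Kh-crux} is in hand; the argument is purely formal, needing only that $k$-linear functors commute with finite direct sums and the standard long exact sequence of a short exact sequence of chain complexes. The one thing to watch carefully is the indexing through the two shifts. Since nothing beyond $k$-linearity of $Z_{h,t}$ is used, the same derivation would apply verbatim to any $k$-linear functor $Z\colon\Cob(\varnothing,\varnothing)\to\mathbf{Mod}_k$, which is consistent with the more general phrasing at the start of the corollary.
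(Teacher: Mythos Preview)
Your proposal is correct and matches what the paper intends: the corollary is stated without proof because it follows immediately from \cref{theo:Kh-crux} via exactly the argument you describe---apply $Z_{h,t}$ (which, being $k$-linear, commutes with cones), then take the standard long exact sequence of the mapping cone. Your attention to the shift bookkeeping and your remark about the general functor $Z$ versus the specific $Z_{h,t}$ in the displayed formula are both well placed.
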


\Cref{cor:crux-homology-longex} is mainly applied in the case $Z=Z_{h,t}:\Cob(\varnothing,\varnothing)\to\mathbf{Mod}_k$ in \cref{def:TQFT-ht}.
Specifically, in Euler-graded case in the sense of \cref{rem:Euler-gr}, we have a further refinement.
Indeed, in this case, we endow the chain complex $Z_{h,t}\dblBrac{G^\chi}_{\mathsf{crx}}$ with a grading in the same manner as in \cref{rem:Kh-grading}.

\begin{corollary}
\label{cor:crux-longex-gr}
Let $G$ and $\chi$ be as in \cref{theo:Kh-crux}.
If the functor $Z_{h,t}$ defined in \cref{def:TQFT-ht} is Euler-graded, then for each $j\in\mathbb Z$, there is a long exact sequence as below:
\begin{equation}
\label{eq:crux-longex-gr:seq}
\begin{tikzcd}[column sep=1em]
\cdots \ar[r]
& H^{i-2}Z_{h,t}\left(\dblBrac{G^\chi}_{\mathsf{crx}}\right)^{\bullet,j-2} \ar[r,"\Xi_\ast"]
& H^{i+2}Z_{h,t}\left(\dblBrac{G^\chi}_{\mathsf{crx}}\right)^{\bullet,j+4} \ar[r] \ar[d,phantom,""{coordinate,name=C}]
& H^iZ_{h,t}\left(\dblBrac{G^\chi}\right)^{\bullet,j} \ar[dl,rounded corners=.75em,to path={%
  -- ([xshift=1.5em]\tikztostart.east)
  |- (C)
  -| ([xshift=-1.5em]\tikztotarget.west)
  -- (\tikztotarget)}] \\
&& H^{i-1}Z_{h,t}\left(\dblBrac{G^\chi}_{\mathsf{crx}}\right)^{\bullet,j-2} \ar[r,"\Xi_\ast"]
& \cdots
\end{tikzcd}
\quad.
\end{equation}
\end{corollary}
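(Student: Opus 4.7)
The claim is a bigraded refinement of \cref{cor:crux-homology-longex}, obtained by tracking Euler degrees through the chain homotopy equivalence of \cref{theo:Kh-crux}. The plan has three short steps.

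First, I would endow $Z_{h,t}\dblBrac{G^\chi}_{\mathsf{crx}}$ with a quantum bigrading in direct analogy with \cref{rem:Kh-grading}, setting
\[
Z_{h,t}(\dblBrac{G^\chi}_{\mathsf{crx}})^{i,j}
\coloneqq \bigoplus_{|\alpha|=i} Z_{h,t}(|G_\alpha|^\chi_{\mathsf{crx}})^{j-|\alpha|-\widetilde w(G)}.
\]
Since $c(G)$ consists only of ordinary crossings, the $q$-correction of \cref{rem:Kh-grading} reduces to $q(\alpha)=|\alpha|$. Every differential of $\operatorname{Crx}(G^\chi)^\bullet$ is built from $\delta_\pm$, $\widetilde\mu$, or $\widetilde\Delta$, each of Euler degree $-1$, so the total differential preserves the $j$-grading exactly as in the ordinary Khovanov complex.

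Next, I would check that $Z_{h,t}(\Xi)$ is homogeneous of a fixed quantum degree. By \cref{theo:Kh-crux}, each component $\Xi^i\colon \dblBrac{G^\chi}_{\mathsf{crx}}^{i-2}\to \dblBrac{G^\chi}_{\mathsf{crx}}^{i+2}$ is a sum of morphisms of Euler degree $+2$, so $Z_{h,t}(\Xi)$ raises the internal $Z_{h,t}$-grading by $2$. Combining this with the shift $|\beta|-|\alpha|=4$ in the $|\alpha|$-contribution yields a net quantum shift of $+6$, consistent with the transition $(i-2,j-2)\leadsto(i+2,j+4)$ prescribed in \eqref{eq:crux-longex-gr:seq}.

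Finally, the chain homotopy equivalence $\dblBrac{G^\chi}\simeq\operatorname{Cone}(\Xi)$ of \cref{theo:Kh-crux} is assembled from Euler-homogeneous morphisms, and hence becomes a chain homotopy equivalence of bigraded chain complexes of $k$-modules after applying the Euler-graded functor $Z_{h,t}$. The canonical short exact sequence associated with $\operatorname{Cone}(Z_{h,t}\Xi)$ splits degreewise by $j$, so the long exact sequence in cohomology restricts to each quantum degree and produces \eqref{eq:crux-longex-gr:seq}. The main bookkeeping task is the index tracking of the previous paragraph; once the Euler-to-quantum conversion is fixed, both the $[r]$-shifts and the mapping cone manifestly respect the bigrading, and no further obstacle arises.
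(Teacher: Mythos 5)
Your plan---bigrade $Z_{h,t}\dblBrac{G^\chi}_{\mathsf{crx}}$ as in \cref{rem:Kh-grading}, compute the quantum shift of $\Xi$, and read off the long exact sequence degreewise---is correct, and your computation that $\Xi$ raises quantum degree by $+6$ (Euler degree $+2$ together with the cohomological jump $|\beta|-|\alpha|=4$) is right.

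The concluding paragraph, however, hides the part of the bookkeeping that actually produces the specific indices in \eqref{eq:crux-longex-gr:seq}. The quantum grading of \cref{rem:Kh-grading} is $j = d + q(\alpha) + \widetilde w(G)$, and at the double point $\alpha^q(b_0)$ is \emph{not} linear in $\alpha(b_0)$: one has $\alpha^q(b_0)=0,-2,-3,1$ for $\alpha(b_0)=0,-1,-2,1$. Because of this nonlinear jump, knowing the Euler degree of a morphism is not enough to determine its quantum shift; one must also track the change in $\alpha^q(b_0)$. In particular, the remaining two arrows in the triangle are induced by $\iota=\widetilde\Delta$ (Euler degree $-1$, with $\alpha^q(b_0)$ dropping from $0$ to $-3$, hence net quantum shift $-4$) and $\pi=\widetilde\mu$ (Euler degree $-1$, with $\alpha^q(b_0)$ dropping from $1$ to $0$, hence net quantum shift $-2$). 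These shifts are exactly the displacements $(i+2,j+4)\to(i,j)$ and $(i,j)\to(i-1,j-2)$ that appear in the statement, and you never compute them. Moreover, the constituent maps $\widetilde\Theta_r$ of the chain homotopy equivalence in \cref{prop:dblcplx-longex} are \emph{not} of a single Euler degree (the contraction across the $\Phi$-row contributes degree $+2$, while the others contribute $+1$), so the assertion that the equivalence is ``assembled from Euler-homogeneous morphisms, and hence becomes a chain homotopy equivalence of bigraded chain complexes'' is not valid as stated; it is the quantum shift, not the Euler degree, that comes out uniform. To complete the argument you need to verify the quantum shifts of $\iota_*$ and $\pi_*$ explicitly, using the $q$-jump at $b_0$; with that in place the rest of your sketch goes through.
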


In the case $h=t=0$, the sequence~\eqref{eq:crux-longex-gr:seq} in particular induces a long exact sequence containing Khovanov homology of $G$ (see~\cref{rem:Kh-grading}).
Hence, taking Euler characteristics, we obtain an identity on Jones polynomials.
In the next result, we denote by $V(L)$ the Jones polynomial for a link $L$.

\begin{corollary}
\label{cor:crux-Jones}
Suppose $D$ is a singular link diagram with a unique double point, and set $L_+$ and $L_-$ to be the links associated with the diagrams obtained from $D$ by resolving the double point into positive and negative crossings.
Then, for every field $k$, the following identity holds:
\begin{equation}\label{eq:crux-Jones:skein}
\left.(V(L_+)-V(L_-))\right|_{\sqrt{t}=-q}
= \frac{q^2-q^{-4}}{q+q^{-1}}\sum_{i,j}(-1)^i q^j \dim_k H^iZ_{0,0}\left(\dblBrac{D^\chi}_{\mathsf{crx}}\right)^{\bullet,j}
\end{equation}
\end{corollary}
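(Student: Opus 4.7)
The plan is to take the graded Euler characteristic of the long exact sequence~\eqref{eq:crux-longex-gr:seq} and compare it with the analogous computation for the categorified Vassiliev skein~\eqref{eq:intro:cat-Vassiliev-skein}. Since $k$ is a field and all complexes in play are bounded, every term in~\eqref{eq:crux-longex-gr:seq} is a finite-dimensional $k$-vector space, so exactness forces the alternating sum of dimensions (with signs given by position in the LES) to vanish.

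Writing $\alpha_j := \sum_i (-1)^i \dim_k H^i Z_{0,0}(\dblBrac{D^\chi}_{\mathsf{crx}})^{\bullet,j}$ and $\beta_j := \sum_i (-1)^i \dim_k H^i Z_{0,0}(\dblBrac{D^\chi})^{\bullet,j}$, I would index each triple of~\eqref{eq:crux-longex-gr:seq} by the cohomological degree $i$, noting that the three terms carry bidegrees $(i-2,j-2)$, $(i+2,j+4)$, and $(i,j)$ respectively. Summing dimensions with alternating signs across the LES produces the identity $\alpha_{j-2}-\alpha_{j+4}+\beta_j=0$ for each $j$; multiplying by $q^j$ and summing over $j$ yields
\[
\sum_{i,j}(-1)^i q^j \dim_k H^i Z_{0,0}\bigl(\dblBrac{D^\chi}\bigr)^{\bullet,j}
\;=\; (q^{-4}-q^2)\sum_{i,j}(-1)^i q^j \dim_k H^i Z_{0,0}\bigl(\dblBrac{D^\chi}_{\mathsf{crx}}\bigr)^{\bullet,j}.
\]

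The second step is to compute the same Euler characteristic of $Z_{0,0}\dblBrac{D^\chi}$ in a different way, via the categorified Vassiliev skein LES~\eqref{eq:intro:cat-Vassiliev-skein} associated with the unique double point of $D$ (which arises by applying $Z_{0,0}$ to the third isomorphism in \cref{prop:skein-mcone}). Since $L_+$ and $L_-$ are genuine links, $Z_{0,0}\dblBrac{L_\pm^\chi}$ is the usual Khovanov complex, and the alternating sum of its dimensions is the unnormalized Jones polynomial of $L_\pm$, namely $(q+q^{-1})V(L_\pm)\big|_{\sqrt t=-q}$. The alternating sum applied to~\eqref{eq:intro:cat-Vassiliev-skein} then gives
\[
\sum_{i,j}(-1)^i q^j \dim_k H^i Z_{0,0}\bigl(\dblBrac{D^\chi}\bigr)^{\bullet,j}
\;=\; (q+q^{-1})\bigl(V(L_+)-V(L_-)\bigr)\big|_{\sqrt t=-q}.
\]

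Combining the two displays and dividing through by $q+q^{-1}$ yields~\eqref{eq:crux-Jones:skein}. The computation is purely formal and the only obstacle is notational: one must carefully track cohomological and quantum shifts dictated by \cref{theo:Kh-crux} and the mapping-cone sign convention, so as to recover the precise coefficient $\frac{q^2-q^{-4}}{q+q^{-1}}$ (rather than its negative) on the right-hand side of~\eqref{eq:crux-Jones:skein}. No further geometric or algebraic input beyond the two already-established long exact sequences is needed.
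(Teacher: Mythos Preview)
Your proposal is correct and follows essentially the same route as the paper: take graded Euler characteristics in the crux long exact sequence \eqref{eq:crux-longex-gr:seq} to relate $\chi_q(\dblBrac{D^\chi})$ to the crux sum, take Euler characteristics in the Vassiliev-skein sequence from \cref{prop:skein-mcone} to relate $\chi_q(\dblBrac{D^\chi})$ to $\hat V(L_+)-\hat V(L_-)$, and combine using $\hat V=(q+q^{-1})V$. Your caveat about the sign is apt---your intermediate relation $\alpha_{j-2}-\alpha_{j+4}+\beta_j=0$ literally yields the factor $q^{-4}-q^2$ rather than $q^2-q^{-4}$, so the sign does need to be pinned down by the mapping-cone conventions; the paper handles this only implicitly.
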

\begin{proof}
Taking the Euler characteristics in \eqref{eq:crux-longex-gr:seq}, we obtain
\begin{equation}
\label{eq:prf:crux-Jones:crux}
\sum_{i,j}(-1)^iq^j\dim_k \mathit{Kh}^{i,j}(D;k)
= (q^2-q^{-4})\dim_k H^iZ_{0,0}\left(\dblBrac{D^\chi}_{\mathsf{crx}}\right)^{\bullet,j}
\quad.
\end{equation}
On the other hand, by \cref{prop:skein-mcone}, we also have
\begin{multline}
\label{eq:prf:crux-Jones:Vassilliev}
\sum_{i,j}(-1)^iq^j\dim_k \mathit{Kh}^{i,j}(D;k) \\
= \sum_{i,j}(-1)^iq^j\dim_k\mathit{Kh}^{i,j}(L_+;k)
- \sum_{i,j}(-1)^iq^j\dim_k\mathit{Kh}^{i,j}(L_-;k)
\quad.
\end{multline}
Since the graded Euler characteristic of Khovanov homology is exactly the unnormalized Jones polynomial \cite{Khovanov2000}, we hence obtain the result by combining \eqref{eq:prf:crux-Jones:crux} and \eqref{eq:prf:crux-Jones:Vassilliev}.
\end{proof}

\begin{remark}
It is known that, if $\zeta_3\coloneqq e^{2\pi\sqrt{-1}/3}$ is the primitive cubic root of the unity, for every link $L$, we have $V_L(\zeta_3)=(-1)^{\#\pi_0(L)-1}$ (e.g.~see\cite[Table~16.3]{Lickorish1997}).
Actually, this identity also follows from \cref{cor:crux-Jones}.
Indeed, the equation~\eqref{eq:crux-Jones:skein} implies that any crossing change is trivial on Jones polynomial modulo the ideal generated by $1-t^3$.
Specifically, $V_L(t)$ equals the Jones polynomial of the trivial link with the same number of components as $L$, which is $(-1)^{\#\pi_0(L)-1}$.
Therefore, the identity $V_L(\zeta_3)=(-1)^{\#\pi_0(L)-1}$ follows.
\end{remark}

\section{Applications}
\label{sec:app}

In this last section, we make use of the crux complex to compute the Khovanov homology of several links with single double points.
As they are homotopy cofibers of the crossing-change operation $\widehat\Phi$ in \cref{prop:skein-mcone}, the results can be used to investigate $\widehat\Phi$ itself.

\subsection{Khovanov homology of twist knots}
\label{sec:app:twist}

First, we give an example of Khovanov homology for which the crux complex drastically reduces the computation.
Namely, for a non-negative integer $r$, let $G(r)$ be the singular knot diagram depicted in \cref{fig:twist-dbl}.
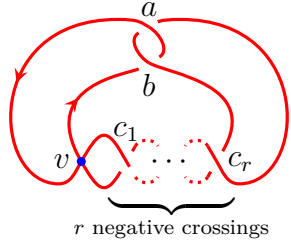
\begin{figure}[t]
\centering
\begin{tikzpicture}[scale=.66]
\useasboundingbox (-3,-3) rectangle (3,2);
\node[inner sep=3,circle] (CU) at (0,1.5) {};
\node[inner sep=3,circle] (CM) at (0,.7) {};
\coordinate (D) at (-1.5,-1.5);
\node[inner sep=3,circle] (CL) at (-.5,-1.5) {};
\node[inner sep=3,circle] (CR) at (1.5,-1.5) {};
\draw[red,very thick] (CU) .. controls+(30:.2)and+(-.5,0) .. (0.7,1.67) .. controls+(3,0)and+(1.5,0) .. (2,-2) to[out=180,in=-60] (CR.center) -- +(120:.4);
\draw[red,very thick,-stealth-] (CU.center) .. controls+(150:.2)and+(.5,0) .. (-0.7,1.67) .. controls+(-3,0)and+(-1.5,0) .. (-2,-2) to[out=0,in=-120] (D);
\draw[red,very thick] (CM.center) .. controls+(-30:.5)and+(60:2) .. (CR) -- +(-120:.4);
\draw[red,very thick,-stealth-] (D) .. controls+(120:2)and+(210:.5) .. (CM);
\draw[red,very thick] (CU.center) to[out=-30,in=30] (CM);
\draw[red,very thick] (CM.center) to[out=150,in=210] (CU);
\draw[red,very thick] (D) to[out=60,in=120,looseness=2] (CL) -- +(-60:.4);
\draw[red,very thick] (D) to[out=-60,in=240,looseness=2] (CL);
\draw[red,very thick] (CL) -- +(60:.4);
\draw[red,very thick,dotted] (CL) ++(60:.4) to[out=60,in=120] +(.5,0);
\draw[red,very thick,dotted] (CL) ++(-60:.4) to[out=-60,in=240] +(.5,0);
\draw[red,very thick,dotted] (CR) ++(120:.4) to[out=120,in=60] +(-.5,0);
\draw[red,very thick,dotted] (CR) ++(240:.4) to[out=240,in=-60] +(-.5,0);
\node at (.5,-1.5) {$\cdots$};
\node[below] at (.5,-2) {$\underbrace{\rule{4em}{0pt}}_{\text{$r$ negative crossings}}$};
\fill[blue] (D) circle(.1);
\node[left] at (D) {$v$};
\node[above=.7ex] at (CL) {$c_1$};
\node[right] at (CR) {$c_r$};
\node[above] at (CU) {$a$};
\node[below] at (CM) {$b$};
\end{tikzpicture}
\caption{A twist knot with a single double point}
\label{fig:twist-dbl}
\end{figure}%
We write
\[
c(G(r))=\{a,b,c_1,\dots,c_r\}
\]
as in the figure; hence, the crossings $a$ and $b$ are both positive (if $r$ is even) or negative (if $r$ is odd) while $c_1,\dots,c_r$ are always all negative crossings.

\begin{notation}
Throughout this subsection, we fix a checkerboard coloring for $G(r)$ and drop it from the notations.
\end{notation}

In the situation above, one can easily verify the following.

\begin{lemma}
\label{lem:twist-crux}
For the diagram $G(r)$ given above, a map $\alpha:c(G(r))\to\mathbb Z$ is a $G(r)$-crux map precisely if $\alpha(c_1)=\dots=\alpha(c_r)=-1$ and either of the following hold:
\begin{itemize}
  \item $r$ is even and
\[
(\alpha(a),\alpha(b))
= (-1,-1),\,(-1,0),\,(0,-1)
\quad;
\]
\item $r$ is odd and
\[
(\alpha(a),\alpha(b))
= (0,0),\,(0,1),\,(1,0)
\quad.
\]
\end{itemize}
\end{lemma}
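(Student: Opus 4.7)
The proof strategy is a direct verification: the effective-range constraint is automatic from the signs of the crossings (so $\alpha(a), \alpha(b)$ are both forced into $\{-1, 0\}$ when $r$ is odd, and $\{0, 1\}$ when $r$ is even, while each $\alpha(c_i) \in \{-1, 0\}$), and the only substantive content is determining for which $\alpha$ the two arcs emerging from the resolution of the double point $v$ lie in the same connected component of $|G_\alpha|$. Recall that $v$ is always resolved as $\diagSmoothUp$ since $\alpha(v)=0$, so there are exactly two such arcs; call them the left arc $\ell$ and the right arc $r$ (using positions in the figure).

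My plan is to reduce to a finite case analysis by first disposing of the twist region. The key claim is:

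\emph{If $\alpha(c_i) = 0$ for some $i \in \{1, \ldots, r\}$, then $\ell$ and $r$ lie in different components.}

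\noindent To prove this, I would trace through the twist region. Each vertical smoothing ($\alpha(c_i) = 0$) leaves the two twist strands parallel, while each horizontal smoothing ($\alpha(c_i) = -1$) joins them locally. Walking along the twist region from bottom to top and following the strand emerging from the lower arc of $v$, one sees that the first occurrence of a vertical smoothing immediately routes it into a closed loop together with the adjacent horizontal cap, whereas the strand from the upper arc of $v$ (which reaches the twist region only after passing through the clasp) ends up in a different component. This rules out all $\alpha$ with some $\alpha(c_i) = 0$ and pins down $\alpha(c_1) = \cdots = \alpha(c_r) = -1$.

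Having fixed the twist region, the diagram collapses (up to isotopy) to a small tangle consisting of the clasp crossings $a, b$ together with two arcs connecting their outputs to the resolved double point $v$; here a simple parity-of-$r$ computation shows these connecting arcs pair the boundary endpoints in one of two specific ways depending on $r \bmod 2$. With this reduction, only four smoothings of the clasp $(\alpha(a), \alpha(b))$ remain in each parity case, and I would enumerate them by direct inspection of the resulting diagrams: in exactly three of the four smoothings do $\ell$ and $r$ join up, and these correspond precisely to the stated pairs.

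The main obstacle is not conceptual but notational --- the connectivity trace through the twist region depends on parity and the placement of the smoothed vertical strands --- so the cleanest way to present this is probably to introduce a compact notation for "the component through which a strand exits the twist region," and then verify the claim by induction on the number of consecutive horizontal smoothings at the top of the twist region. Once that is in place, the remainder is a bounded finite check.
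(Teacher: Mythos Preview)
The paper gives no proof of this lemma beyond the phrase ``one can easily verify the following,'' so your direct connectivity analysis is exactly the intended argument.

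One point to resolve: your effective-range claim has the parities swapped relative to the lemma statement. You write that $\alpha(a),\alpha(b)\in\{-1,0\}$ for $r$ odd and $\{0,1\}$ for $r$ even, which matches the paper's text just before the lemma (``$a$ and $b$ are both positive if $r$ is even, negative if $r$ is odd''). But the lemma itself asserts the opposite: for $r$ even the crux values of $\alpha(a),\alpha(b)$ lie in $\{-1,0\}$, and for $r$ odd in $\{0,1\}$. The degree count $-r-(-1)^r$ in the lemma immediately following is consistent with the lemma statement rather than with the text, so it appears the parenthetical remark about the signs of $a,b$ is the misprint. Once you swap the parities in your setup, your plan --- rule out any $\alpha(c_i)=0$ by tracing connectivity through the twist region, then inspect the four remaining clasp smoothings --- goes through unchanged.
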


It follows that the complex $\dblBrac{G(r)}_{\mathsf{crx}}$ is isomorphic to the total complex of the following double complex in $\Cob(\varnothing,\varnothing)$ (up to degree shifts):
\[
\begin{tikzcd}
\diagTwistCruxOO \ar[r,"\mu"] \ar[d,"\widetilde\mu"'] & \diagTwistCruxlO \ar[d] \\
\diagTwistCruxOl \ar[r] & 0
\end{tikzcd}
\quad.
\]
Note that the morphism $\widetilde\mu:S^1\otimes S^1\to S^1$ is a split epimorphism in $\Cob(\varnothing,\varnothing)$ with kernel $\Delta:S^1\to S^1\otimes S^1$, one obtains the following result.

\begin{lemma}%
\label{lem:crux-twist}
In the situation above, the complex $\dblBrac{G(r)}_{\mathsf{crx}}$ is chain-homotopy equivalent to the complex
\begin{equation}
\label{eq:twist-cruxcore}
\cdots\to 0\to S^1\xrightarrow{\mu\Delta} S^1 \to 0 \to\cdots
\end{equation}
in $\Cob(\varnothing,\varnothing)$ concentrated in the degrees $-r-(-1)^r-1$ and $-r-(-1)^r$.
\end{lemma}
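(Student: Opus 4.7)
The plan is to use Lemma \ref{lem:twist-crux} to make the crux complex completely explicit as a three-term object, then to cancel a contractible summand via Gaussian elimination, and finally to verify the cohomological degrees.

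First, I would observe that by Lemma \ref{lem:twist-crux} only three maps $\alpha$ index nonzero terms of $\operatorname{Crx}(G(r))^\bullet$, and they differ only in the values $\alpha(a)$ and $\alpha(b)$. Tracing the twisted arc across these three smoothings via Table \ref{tab:twistsm} and the definitions of $\delta_v^{\mathsf{tw}}$, the differentials $d_a$ and $d_b$ come out to be the untwisted $\mu$ on one pair of adjacent cells and the twisted $\widetilde\mu$ on the other. This reproduces the double complex shown just before the lemma, whose total complex (up to a cohomological shift) is isomorphic to
\[
S^1 \otimes S^1 \xrightarrow{\binom{\mu}{\widetilde\mu}} S^1 \oplus S^1
\]
in $\Cob(\varnothing,\varnothing)$.

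Next, I would use the fact noted immediately before the statement of the lemma that $\widetilde\mu$ is a split epimorphism with kernel $\Delta$. Fix any section $s$ of $\widetilde\mu$ (for example $s = \mathrm{id}_{S^1} \otimes \eta$, which works by the computation in the proof of Proposition \ref{prop:ROnePos-absex}). The resulting decomposition $S^1 \otimes S^1 \cong \Delta(S^1) \oplus s(S^1)$ rewrites the above differential as the block matrix
\[
\begin{pmatrix} \mu\Delta & \mu s \\ 0 & \mathrm{id}_{S^1} \end{pmatrix}
\colon \Delta(S^1) \oplus s(S^1) \longrightarrow S^1 \oplus S^1.
\]
Since the lower-right entry is an isomorphism between direct summands of two consecutive terms, Gaussian elimination splits off a contractible subcomplex $s(S^1) \xrightarrow{\cong} S^1$ and leaves the two-term complex $\Delta(S^1) \xrightarrow{\mu\Delta} S^1$, which is isomorphic to $S^1 \xrightarrow{\mu\Delta} S^1$ via the monomorphism $\Delta$.

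Finally, the total degree in the crux complex is $|\alpha| = \alpha(a) + \alpha(b) - r$ since $\alpha(c_i) = -1$ for all $i$. For even $r$ one has $\alpha(a) + \alpha(b) \in \{-2, -1\}$, whereas for odd $r$ one has $\alpha(a) + \alpha(b) \in \{0, 1\}$; in either case the $S^1 \otimes S^1$ term sits in degree $-r - (-1)^r - 1$ and the two $S^1$'s sit in degree $-r - (-1)^r$, so the surviving complex is concentrated in the claimed degrees. The main obstacle is the bookkeeping in the first step: one must verify, by tracing the twisted arc through the three crux smoothings, that the horizontal differential is the untwisted $\mu$ while the vertical one is the twisted $\widetilde\mu$, rather than the opposite or any mixed combination. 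Once this is settled, the rest of the argument is purely algebraic and goes through as above.
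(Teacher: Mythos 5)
Your proof is correct and follows the same route the paper takes (the paper leaves the lemma without a written proof, only the remark that $\widetilde\mu$ is a split epimorphism with kernel $\Delta$). You identify the three nonzero terms via Lemma~\ref{lem:twist-crux}, recognize that the total complex is $S^1\otimes S^1 \to S^1\oplus S^1$ with components $\mu$ and $\widetilde\mu$, split $S^1\otimes S^1\cong \Delta(S^1)\oplus s(S^1)$ using the section $s=\mathrm{id}\otimes\eta$ (valid since $\widetilde\mu(\mathrm{id}\otimes\eta)=\mathrm{id}$ by the $S$-relation), and cancel the acyclic summand by Gaussian elimination; the degree computation then confirms the placement. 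This is exactly the argument the paper intends, spelled out explicitly.

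One small refinement to the worry you raise at the end: the case you really need to exclude is that both differentials out of $S^1\otimes S^1$ are of the same type (both $\mu$, or both $\widetilde\mu$), since then the reduced differential would be $\mu\widetilde\Delta=0$ or $\widetilde\mu\Delta=0$ and the complex would split completely. If one is $\mu$ and the other $\widetilde\mu$, the argument is symmetric under swapping horizontal and vertical, so getting ``the opposite'' of the picture would not actually hurt you. Also, strictly speaking $s(S^1)\xrightarrow{\cong}S^1$ is not a subcomplex (the off-diagonal $\mu s=\mathrm{id}$ is nonzero), so it is cleaner to say one performs a base change $\begin{psmallmatrix}\mathrm{id}&-\mu s\\0&\mathrm{id}\end{psmallmatrix}$ on the target to reach an honest direct-sum decomposition before splitting off the acyclic piece; the end result is the same.
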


\begin{proposition}
\label{prop:kh-twistsing}
The complex $\dblBrac{G(r)}$ is chain homotopic to the following complex in $\Cob(\varnothing,\varnothing)$:
\begin{equation}
\label{eq:kh-twistsing:cplx}
\begin{tikzcd}[column sep=2em,row sep=.5ex]
& \mathclap{\scriptstyle-r-(-1)^r-3}\quad & \quad\mathclap{\scriptstyle-r-(-1)^r-2} & & \mathclap{\scriptstyle-r-(-1)^r}\quad & \quad\mathclap{\scriptstyle-r-(-1)^r+1} & \\
\cdots \to 0 \ar[r] & S^1\ar[r,"\mu\Delta"] & S^1 \ar[r] & 0 \ar[r] & S^1 \ar[r,"\mu\Delta"] & S^1 \ar[r] & 0 \to \cdots
\end{tikzcd}
\quad,
\end{equation}
where the labels above indicate the cohomological degrees.
\end{proposition}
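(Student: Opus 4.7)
The plan is to combine Theorem~\ref{theo:Kh-crux} with Lemma~\ref{lem:crux-twist}, and then observe that for $G(r)$ the morphism~$\Xi$ is forced to vanish by degree reasons. More precisely, Theorem~\ref{theo:Kh-crux} yields a chain homotopy equivalence
\[
\dblBrac{G(r)}\simeq\operatorname{Cone}\bigl(\Xi:\dblBrac{G(r)}_{\mathsf{crx}}[2]\to\dblBrac{G(r)}_{\mathsf{crx}}[-2]\bigr)\,,
\]
and by Lemma~\ref{lem:crux-twist} we may replace $\dblBrac{G(r)}_{\mathsf{crx}}$ (up to chain homotopy) by the two-term complex~\eqref{eq:twist-cruxcore}, which is concentrated in cohomological degrees $N-1$ and $N$, where $N\coloneqq -r-(-1)^r$.

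First I would transport $\Xi$ across this chain homotopy equivalence to obtain a morphism between shifts of the two-term complex~\eqref{eq:twist-cruxcore}, and take the cone there instead; this is legitimate because mapping cones are invariant under chain homotopy equivalence in a $k$-linear category. At level $i$, the transported~$\Xi$ is a morphism from the crux summand in degree $i-2$ to the crux summand in degree $i+2$, so it is nonzero only if both $i-2\in\{N-1,N\}$ and $i+2\in\{N-1,N\}$. Since the two intervals $\{N+1,N+2\}$ and $\{N-3,N-2\}$ are disjoint, the transported~$\Xi$ is the zero morphism in every cohomological degree.

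Consequently, the cone splits as a direct sum of chain complexes,
\[
\operatorname{Cone}(\Xi)\;\simeq\;\dblBrac{G(r)}_{\mathsf{crx}}[-2]\;\oplus\;\dblBrac{G(r)}_{\mathsf{crx}}[2][1]\,,
\]
and substituting the two-term model for $\dblBrac{G(r)}_{\mathsf{crx}}$ produces exactly the complex~\eqref{eq:kh-twistsing:cplx}, after checking that the summand $\dblBrac{G(r)}_{\mathsf{crx}}[-2]$ contributes $S^1\xrightarrow{\mu\Delta}S^1$ in degrees $N-3$ and $N-2$, while $\dblBrac{G(r)}_{\mathsf{crx}}[2][1]$ contributes $S^1\xrightarrow{\mu\Delta}S^1$ in degrees $N$ and $N+1$ (the extra sign from the shift $[1]$ is harmless, as $\mu\Delta$ and $-\mu\Delta$ are interchangeable up to an isomorphism of the complex).

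The only subtle step is the vanishing of~$\Xi$: one should verify that ``transporting along a chain homotopy equivalence of the crux complex'' really does preserve the homotopy class of~$\Xi$, so that the degree argument applies. This is a general fact about mapping cones in $k$-linear categories, but it is the crux of where the simplification occurs; every other ingredient (the explicit identification of $G(r)$-crux maps via Lemma~\ref{lem:twist-crux}, the reduction of the resulting two-variable crux square to the arrow $\mu\Delta$, and the bookkeeping of cohomological degrees) is routine.
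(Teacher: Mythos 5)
Your argument is correct and is essentially the paper's: apply Theorem~\ref{theo:Kh-crux}, observe that $\Xi$ must be null-homotopic because (by Lemma~\ref{lem:crux-twist}) its source and target are, up to homotopy, concentrated in disjoint degree intervals, and then read off the resulting split cone. The only slip is notational: in the paper's shift convention $(C[k])^n=C^{n-k}$ the cone is $\operatorname{Cone}(\Xi)\simeq\dblBrac{G(r)}_{\mathsf{crx}}[-2]\oplus\dblBrac{G(r)}_{\mathsf{crx}}[1]$ (as written at \eqref{eq:prf:kh-twistsing:split}), not $\dblBrac{G(r)}_{\mathsf{crx}}[-2]\oplus\dblBrac{G(r)}_{\mathsf{crx}}[2][1]$; your degree bookkeeping (second summand concentrated in $\{N,N+1\}$) is nevertheless right, so you have merely written $[1]$ in the opposite sign convention from the one you used for $\Xi$'s source and target.
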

\begin{proof}
By \cref{theo:Kh-crux}, $\dblBrac{G(r)}$ is chain homotopic to the mapping cone of the morphism $\Xi:\dblBrac{G(r)}_{\mathsf{crx}}[2]\to\dblBrac{G(r)}_{\mathsf{crx}}[-2]$, which is null-homotopic for degree reason.
This implies that there is a chain homotopy equivalence
\begin{equation}
\label{eq:prf:kh-twistsing:split}
\dblBrac{G(r)}
\simeq \dblBrac{G(r)}_{\mathsf{crx}}[-2]\oplus\dblBrac{G(r)}_{\mathsf{crx}}[1]
\quad.
\end{equation}
The result hence follows from \cref{lem:crux-twist}.
\end{proof}

\begin{example}
Let $C_{h,t}$ be the Frobenius algebra over $k$ defined in \cref{def:FrobCht}.
Applying the associated TQFT $Z_{h,t}:\Cob(\varnothing,\varnothing)\to\mathbf{Mod}_k$ to the complex~\eqref{eq:kh-twistsing:cplx}, we obtain
\[
H^iZ_{h,t}\dblBrac{G(r)}\cong
\begin{cases}
\operatorname{Ann}_{C_{h,t}}(2x-h) & i=-r-(-1)^r-3,\,-r-(-1)^r,\\
C_{h,t}/(2x-h) & i=-r-(-1)^r-2,\,-r-(-1)^r+1,\\
0 & \text{otherwise},
\end{cases}
\]
where $\operatorname{Ann}_{C_{h,t}}(2x-h)$ is the \emph{annihilator} of the element $2x-h\in C_{h,t}$.
Note that $2x-h$ is invertible in $C_{h,t}$ if and only if $h^2+4t$ is invertible in the coefficient ring $k$, and, in this case, the homology $H^\ast Z\dblBrac{G(r)}$ vanishes.
Specifically, it vanishes for Lee homology \cite{Lee2005} with $\frac12\in k$ and Bar-Natan homology \cite{BarNatan2005}.
\end{example}

We use the result of \cref{prop:kh-twistsing} to determine the homology of twist knots.
Let us denote by $\Presol{G(r)}$ and $\Nresol{G(r)}$ the diagrams obtained from $G(r)$ by resolving the double point $v$ into positive and negative crossings respectively.
Specifically, $\Nresol{G(r)}$ is a diagram for the twist knot with $r+1$ half twists, so we also write $D(r+1)\coloneqq \Nresol{G(r)}$.
In addition, we write $D(0)$ the diagram in \cref{fig:unknot-ppself}.
\begin{figure}[t]
\centering
\begin{tikzpicture}[scale=.66]
\useasboundingbox (-3,-2.2) rectangle (3,1.7);
\node[inner sep=3,circle] (CU) at (0,1.5) {};
\node[inner sep=3,circle] (CM) at (0,.7) {};
\node[inner sep=5,circle] (D) at (-1.5,-1.5) {};
\node[inner sep=5,circle] (CR) at (1.5,-1.5) {};
\draw[red,very thick] (CU) .. controls+(30:.2)and+(-.5,0) .. (0.7,1.67) .. controls+(3,0)and+(2,0) .. (CR.south);
\draw[red,very thick,-stealth-] (CU.center) .. controls+(150:.2)and+(.5,0) .. (-0.7,1.67) .. controls+(-3,0)and+(-2,0) .. (D.south);
\draw[red,very thick] (CM.center) .. controls+(-30:.5)and+(90:1) .. (1.81,-0.58) to[out=-90,in=0] (CR.north);
\draw[red,very thick,-stealth-] (D.north) to[out=180,in=-90] (-1.81,-0.58) .. controls+(0,1)and+(210:.5) .. (CM);
\draw[red,very thick] (CU.center) to[out=-30,in=30] (CM);
\draw[red,very thick] (CM.center) to[out=150,in=210] (CU);
\draw[red,very thick] (D.south) -- (CR.south);
\draw[red,very thick] (D.north) -- (CR.north);
\end{tikzpicture}
\caption{The unknot with two-fold positive twists}
\label{fig:unknot-ppself}
\end{figure}
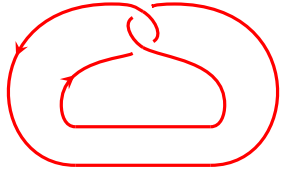
Note that the diagram $\Presol{G(r+1)}$ is equivalent to $D(r)$ via the Reidemeister move of type II.

\begin{proposition}
\label{prop:kh-twist}
For every non-negative integer $r$, there is a chain homotopy equivalence in the category $\Cob(\varnothing,\varnothing)$:
\begin{equation}%
\label{eq:kh-twist-isom}
\dblBrac{D(r)}
\simeq \begin{dcases*}
\dblBrac{\text{unknot}}\oplus\bigoplus_{i=1}^{r/2}\dblBrac{G(2i-1)}[1] & if $r$ is even,\\
\dblBrac{\text{trefoil}}\oplus\bigoplus_{i=1}^{(r-1)/2}\dblBrac{G(2i)}[1] & if $r$ is odd.
\end{dcases*}
\end{equation}
\end{proposition}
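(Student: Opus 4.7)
The proof proceeds by induction on $r$. For the base cases $r=0$ and $r=1$, the diagrams $D(0)$ (shown in \cref{fig:unknot-ppself}) and $D(1)=\Nresol{G(0)}$ represent the unknot and the trefoil respectively, so \cref{theo:UKH-inv} yields $\dblBrac{D(0)}\simeq\dblBrac{\text{unknot}}$ and $\dblBrac{D(1)}\simeq\dblBrac{\text{trefoil}}$, matching the empty direct sum in the formula.

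For the inductive step, the claim for $r+2$ reduces to establishing the single identity $\dblBrac{D(r+2)}\simeq\dblBrac{D(r)}\oplus\dblBrac{G(r+1)}[1]$. I apply the categorified Vassiliev skein (\cref{prop:skein-mcone}) to the singular link-like graph $G(r+1)$: by construction $\Nresol{G(r+1)}=D(r+2)$, while $\Presol{G(r+1)}$ reduces to $D(r)$ via a Reidemeister~II move cancelling the positive crossing created by the resolution against the adjacent negative crossing in the twist region, so $\dblBrac{\Presol{G(r+1)}}\simeq\dblBrac{D(r)}$ by \cref{theo:UKH-inv}. This produces a distinguished triangle
\[
\dblBrac{D(r+2)}\xrightarrow{\widehat\Phi}\dblBrac{D(r)}\to\dblBrac{G(r+1)}\to\dblBrac{D(r+2)}[-1]
\]
in the homotopy category of $\Cob(\varnothing,\varnothing)$. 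Under the shift convention of \cref{sec:UKH-mulcplx:mcone}, the desired direct sum decomposition is equivalent to $\widehat\Phi$ being a split epimorphism; equivalently, to the existence of a chain-level section $s\colon\dblBrac{D(r)}\to\dblBrac{D(r+2)}$ with $\widehat\Phi\circ s\simeq\mathrm{id}_{\dblBrac{D(r)}}$.

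The section $s$ is constructed by lifting the Reidemeister~II cobordism identifying $\Presol{G(r+1)}$ with $D(r)$ through the canonical inclusion of the positive-resolution summand of $\operatorname{Sm}(G(r+1))^\bullet$ into $\dblBrac{D(r+2)}$. The verification that $\widehat\Phi\circ s\simeq\mathrm{id}$ then relies on the explicit four-term form of $\dblBrac{G(r+1)}$ provided by \cref{prop:kh-twistsing}, together with the crux decomposition $\dblBrac{G(r+1)}\simeq\dblBrac{G(r+1)}_{\mathsf{crx}}[-2]\oplus\dblBrac{G(r+1)}_{\mathsf{crx}}[1]$ used in its proof, and the twisted Frobenius structure on $S^1$ introduced in \cref{sec:fstder:twist}. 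The main obstacle is precisely this chain-level bookkeeping: one must track the genus-one cobordism defining $\widehat\Phi$ against the Reidemeister~II cobordism at the level of the multi-fold complex $\operatorname{Sm}$, and produce the requisite null-homotopy from the FI-contraction data (\cref{prop:FI-absex}) together with the absolute-exactness machinery of \cref{sec:absex}, whereupon the splitting and hence the inductive formula follow.
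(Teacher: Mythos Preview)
Your overall architecture matches the paper's: induction on $r$, with the step reduced to showing that the distinguished triangle coming from the categorified Vassiliev skein at the double point of $G(r+1)$ (equivalently $G(r-1)$) splits. Where you and the paper diverge is in \emph{which} map of the triangle you attack and how.

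The paper does not try to build a section of $\widehat\Phi$. Instead it proves (\cref{lem:Gr-split}) that the inclusion $\dblBrac{\Presol{G(r)}}\hookrightarrow\dblBrac{G(r)}$ is null-homotopic. Composing with the equivalence $\dblBrac{G(r)}\simeq\dblBrac{G(r)}_{\mathsf{crx}}[-2]\oplus\dblBrac{G(r)}_{\mathsf{crx}}[1]$, the map is a $2\times 2$ matrix whose entries $\widetilde\Theta_3,\widetilde\Theta_4$ vanish for degree reasons, leaving only the projection $\pi:\dblBrac{\Hsmooth{G(r)}}\to\dblBrac{G(r)}_{\mathsf{crx}}$. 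This $\pi$ is then shown to factor through a quotient complex isomorphic to the Khovanov complex of the unknot diagram in \cref{fig:unknot-ppself}; writing out Bar-Natan's explicit Reidemeister~I homotopy equivalence, the resulting map $S^1\to S^1$ is $\widetilde\mu\Delta$, which is zero by \cref{prop:ROneNeg-absex}. That is the whole content of the splitting.

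Your last paragraph, by contrast, is not a proof but a list of ingredients. Two concrete problems: first, the phrase ``canonical inclusion of the positive-resolution summand of $\operatorname{Sm}(G(r+1))^\bullet$ into $\dblBrac{D(r+2)}$'' does not type-check, since $\dblBrac{D(r+2)}=\dblBrac{\Nresol{G(r+1)}}$ is the \emph{negative} resolution and receives no canonical map from the positive one; second, even granting some candidate $s$, you give no argument that $\widehat\Phi\circ s\simeq\mathrm{id}$ beyond naming \cref{prop:kh-twistsing} and \cref{prop:FI-absex}. The actual obstruction-killing step (the $\widetilde\mu\Delta=0$ computation above) never appears. If you want to salvage your route, it is cleaner to switch targets: show the map $\dblBrac{\Presol{G(r+1)}}\to\dblBrac{G(r+1)}$ is null-homotopic exactly as the paper does, rather than chasing a section of $\widehat\Phi$.
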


In order to prove \cref{prop:kh-twist}, we compute the sequence
\begin{equation}
\label{eq:Gr-triangle}
\dblBrac{G(r)}[1]
\twoheadrightarrow \dblBrac{\Nresol{G(r)}}
\xrightarrow{\widehat\Phi} \dblBrac{\Presol{G(r)}}
\hookrightarrow \dblBrac{G(r)}
\quad,
\end{equation}
which forms a distinguished triangle in the homotopy category of $\mathbf{Ch}^{\mathsf b}(\Cob(\varnothing,\varnothing))$.

\begin{lemma}
\label{lem:Gr-split}
For every non-negative integer $r\ge0$, the inclusion $\dblBrac{\Presol{G(r)}}\hookrightarrow\dblBrac{G(r)}$ is null-homotopic.
Consequently, the sequence~\eqref{eq:Gr-triangle} splits in the homotopy category.
\end{lemma}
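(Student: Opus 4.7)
By \cref{prop:skein-mcone}, $\dblBrac{G(r)} \cong \operatorname{Cone}(\widehat\Phi : \dblBrac{\Nresol{G(r)}} \to \dblBrac{\Presol{G(r)}})$, and $\iota$ is the canonical inclusion of $\dblBrac{\Presol{G(r)}}$ into this cone. Standard triangulated-category theory shows that $\iota$ is null-homotopic if and only if the triangle \eqref{eq:Gr-triangle} splits, equivalently, that $\widehat\Phi$ admits a chain-map section $s : \dblBrac{\Presol{G(r)}} \to \dblBrac{\Nresol{G(r)}}$ satisfying $\widehat\Phi \circ s \simeq \mathrm{id}$; moreover, the consequence that the triangle splits then follows automatically. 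The plan is thus to construct such a section explicitly by exploiting the decomposition already established in the proof of \cref{prop:kh-twistsing}.

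Setting $M \coloneqq \dblBrac{G(r)}_{\mathsf{crx}}$, the proof of \cref{prop:kh-twistsing} gives $\dblBrac{G(r)} \simeq M[-2] \oplus M[1]$, because the morphism $\Xi : M[2] \to M[-2]$ of \cref{theo:Kh-crux} is null-homotopic for degree reasons: by \cref{lem:crux-twist}, $M$ is homotopy-equivalent to a two-term complex concentrated in consecutive degrees $-r-(-1)^r-1$ and $-r-(-1)^r$, so any chain map from $M[2]$ to $M[-2]$ necessarily vanishes on cohomology and is null-homotopic. I would realize $\dblBrac{\Presol{G(r)}}$ as the sub-2-fold complex $\sigma_{\mathsf H}^{\ge 0}\sigma_{\mathsf H}^{\le 1} X^\bullet$ of the 2-fold complex $X^\bullet$ from \cref{prop:crux-sm-exact}, and $\dblBrac{G(r)}$ as $\sigma_{\mathsf H}^{\ge -2}\sigma_{\mathsf H}^{\le 1} X^\bullet$. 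Applying \cref{prop:dblcplx-longex} with the parameters $p=-1$, $q=2$ yields an analogous cone presentation of $\dblBrac{\Presol{G(r)}}$. Tracing $\iota$ through the morphisms $\alpha, \beta$ of \cref{lem:CXi2tot,lem:tot2CXi} then expresses it concretely as a map to $M[-2] \oplus M[1]$, and the candidate section $s$ is read backwards through these identifications using the explicit chain contractions $\mathrm{id}_I \otimes \varepsilon$, $\mathrm{id}_I \otimes \eta\varepsilon$, $\mathrm{id}_I \otimes \eta$ of \cref{prop:FI-absex}, lifted from $\Cob(\mathord-,\mathord-)$ to $\mathbf{MCh}_{c(G(r))}(\Cob(\varnothing,\varnothing))$ by their trivial commutation with the differentials at the crossings $c_1,\dots,c_r,a,b$.

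The verification $\widehat\Phi \circ s \simeq \mathrm{id}$ then reduces to a direct computation using the Frobenius relations in $\Cob(\varnothing,\varnothing)$, analogous to those in the proofs of \cref{prop:ROnePos-absex,prop:ROneNeg-absex,prop:FI-absex}. The main obstacle is bookkeeping: carefully tracking signs, shifts, and iterated compositions introduced by the successive applications of \cref{lem:mcplx-cone,prop:dblcplx-longex}. In particular, the degree-sparsity of $M$ is the key structural fact that makes the iterated homotopies $\Theta_r$ of \cref{def:Theta} vanish for $r\ge 2$ in the range relevant to $G(r)$, giving a closed-form expression for both $\Xi$ and the explicit null-homotopy of $\iota$; once this sparseness is exploited, only the first homotopy $\Theta_1=\theta_{\mathsf H}$ enters the final formula, so the verification becomes a finite check in the Frobenius algebra $S^1$.
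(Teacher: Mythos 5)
Your opening reduction is sound: $\iota$ null-homotopic $\Leftrightarrow$ the triangle~\eqref{eq:Gr-triangle} splits $\Leftrightarrow$ $\widehat\Phi$ admits a section, and the decomposition $\dblBrac{G(r)}\simeq M[-2]\oplus M[1]$ with $M=\dblBrac{G(r)}_{\mathsf{crx}}$ is exactly the right starting point --- the paper uses it too. However, the crucial step in your proposal rests on a false claim: that the chain contractions $\mathrm{id}_I\otimes\varepsilon$, $\mathrm{id}_I\otimes\eta\varepsilon$, $\mathrm{id}_I\otimes\eta$ of \cref{prop:FI-absex} ``lift from $\Cob(\mathord-,\mathord-)$ to $\mathbf{MCh}_{c(G(r))}(\Cob(\varnothing,\varnothing))$ by their trivial commutation with the differentials at the crossings $c_1,\dots,c_r,a,b$.'' They do \emph{not} commute with those differentials. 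The family $\theta_{\mathsf H}$ built from these local cobordisms is only a degreewise contraction, not a morphism of chain complexes in $\mathbf{MCh}_{c(G(r))}$; the paper's remark immediately following \cref{def:Theta} says precisely this. If the contractions lifted trivially, the entire horizontal sequence~\eqref{eq:prf:Kh-crux:hexseq} would be contractible in $\mathbf{Ch}(\Cob(\varnothing,\varnothing))$, forcing $\dblBrac{G(r)}\simeq 0$, which is false. This failure to commute is the reason the whole $\Theta_r$/$\widetilde\Theta_r$/$\Xi$ machinery of \cref{sec:absex:longex} is needed at all.

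Concretely, when you unwind the composition $\dblBrac{\Presol{G(r)}}\hookrightarrow\dblBrac{G(r)}\simeq M[-2]\oplus M[1]$, the matrix entries are $\widetilde\Theta_4$, $\widetilde\Theta_3$ (which do vanish for degree reasons, as you correctly anticipate) and $\pi[1]$, where $\pi:\dblBrac{\Hsmooth{G(r)}}\to\dblBrac{G(r)}_{\mathsf{crx}}$ is the twisted action $\widetilde\mu$ of \eqref{eq:crux-iotapi} --- not $\theta_{\mathsf H}$. So it is not true that ``only the first homotopy $\Theta_1$ enters.'' Unlike the $\widetilde\Theta_r$, this $\pi$ is a genuine chain map (by \cref{lem:iotapi-chmorph}), and showing it is null-homotopic is the real content of the lemma. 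The paper's proof does this by observing that $\pi$ vanishes on a subcomplex $C$, so it factors through the quotient $Q\coloneqq\dblBrac{\Hsmooth{G(r)}}/C$, which is a shift of the universal Khovanov complex of the unknot in \cref{fig:unknot-ppself}; two Reidemeister-I simplifications replace $Q$ by a single-term complex $\widetilde Q$, and a direct cobordism computation shows the induced $\widetilde Q\to M$ factors through $\widetilde\mu\Delta=0$ (by \cref{prop:ROneNeg-absex}). Nothing in your sketch supplies a substitute for this step, so the proposal has a genuine gap.
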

\begin{proof}
By virtue of the homotopy equivalence~\eqref{eq:prf:kh-twistsing:split}, it suffices to show that the composition below is null homotopic:
\begin{equation}
\label{eq:prf:Gr-split:comp}
\dblBrac{\Presol{G(r)}}
\hookrightarrow\dblBrac{G(r)}
\simeq\dblBrac{G(r)}_{\mathsf{crx}}[-2]\oplus\dblBrac{G(r)}_{\mathsf{crx}}[1]
\quad.
\end{equation}
We denote by $\Hsmooth{G(r)}$ and $\Vsmooth{G(r)}$ the diagrams obtained from $G(r)$ by reducing and replacing the double point as in \cref{sec:fstder:longex}.
Hence, by \cref{prop:skein-mcone}, we have
\[
\Presol{G(r)}\cong \operatorname{Cone}\left(
\dblBrac{\Vsmooth{G(r)}}[1]
\xrightarrow{\widehat\delta_+[1]}
\dblBrac{\Hsmooth{G(r)}}[1]
\right)
\quad.
\]
Using this isomorphism, we represent the morphism~\eqref{eq:prf:Gr-split:comp} by the following matrix:
\[
\begin{bmatrix}
\widetilde\Theta_4 & \widetilde\Theta_3 \\ \pi[1] & 0
\end{bmatrix}
:\dblBrac{\Hsmooth{G(r)}}[1]\oplus\dblBrac{\Vsmooth{G(r)}}
\to \dblBrac{G(r)}_{\mathsf{crx}}[-2]\oplus\dblBrac{G(r)}_{\mathsf{crx}}[1]
\quad,
\]
where $\widetilde\Theta_i$ is the morphisms defined in \eqref{eq:Theta-tilde} and $\pi:\dblBrac{\Hsmooth{G(r)}}\to\dblBrac{G(r)}_{\mathsf{crx}}$ is the one given in \eqref{eq:crux-iotapi}.
Notice that, for degree reasons, both $\widetilde\Theta_4$ and $\widetilde\Theta_3$ are zero.
Therefore, it suffices to verify that $\pi$ is null-homotopic.

To describe the morphism $\pi$, we define a map $\alpha_0:c(G(r))\to\mathbb Z$ by
\[
\alpha_0(v)=
\begin{cases*}
0 & if $r$ is odd and $v=a,b$,\\
-1 & otherwise.
\end{cases*}
\quad.
\]
By \cref{lem:twist-crux}, a map $\alpha:c(G(r))\to\mathbb Z$ is a $G(r)$-crux map precisely if it is of the form $\alpha=\alpha_0+ia+jb+\sum_sk_s c_s$ for $i,j,k_s\in\{0,1\}$.
Specifically, we write
\[
E(i,j,k)\coloneqq E_{c(G(r))}(\alpha_0+ia+jb+kc_1)
\quad.
\]
Using this notation, we can describe $\pi$ componentwisely as follows:
\begin{align*}
\diagTwistCruxHOOO\otimes E(0,0,0)
& \xrightarrow{\widetilde\mu\otimes\mathrm{id}} \diagTwistCruxOO\otimes E(0,0,0)
\quad,\displaybreak[1]\\
\diagTwistCruxHlOO\otimes E(1,0,0)
& \xrightarrow{\widetilde\mu\otimes\mathrm{id}} \diagTwistCruxlO\otimes E(1,0,0)
\quad,\displaybreak[1]\\
\diagTwistCruxHOlO\otimes E(0,1,0)
& \xrightarrow{\widetilde\mu\otimes\mathrm{id}} \diagTwistCruxOl\otimes E(0,1,0)
\quad,\displaybreak[1]\\
\text{the others}
&\xrightarrow{\phantom{\widetilde\mu\otimes\mathrm{id}}} 0\quad.\hfill
\end{align*}
We consider the subcomplex $C$ of $\dblBrac{\Hsmooth{G(r)}}$ consisting of the components associated to the map $\alpha:c(G(r))\to\mathbb Z$ with
\[
\begin{gathered}
\alpha(a)\ge \alpha_0(a)+1
\,,\ \alpha(b)\ge\alpha_0(b)+1
\quad,\\
\alpha(c_1)\ge\alpha(c_1)
\,,\ \dots\,,\ 
\alpha(c_r)\ge\alpha(c_r)
\quad.
\end{gathered}
\]
Clearly, the morphism $\pi:\dblBrac{\Hsmooth{G(r)}}\to\dblBrac{G(r)}_{\mathsf{crx}}$ vanishes on the subcomplex $C$.
Since the quotient complex $Q\coloneqq\dblBrac{\Hsmooth{G(r)}}/C$ exists in the category $\Cob(\varnothing,\varnothing)$, it follows that the morphism $\pi$ factors through a morphism $\widetilde\pi:Q\to\dblBrac{G(r)}_{\mathsf{crx}}$.
Note that the complex $Q$ is isomorphic to a shift of the universal Khovanov complex of the graph depicted in \cref{fig:unknot-ppself}.
Thus, the two-fold Reidemeister moves of type I yield a chain homotopy equivalence between $Q$ and the following complex $\widetilde Q$:
\begin{equation}
\label{eq:prf:Gr-split:unknot}
\widetilde Q\coloneqq
\Bigl\{\:\cdots\to 0\to \overset{\mathclap{-r-(-1)^r-1}}{S^1\rule{0pt}{3ex}}\to 0\to\cdots\:\Bigr\}
\quad.
\end{equation}
By virtue of the description in \cite[Section~4.3]{BarNatan2005}, the chain homotopy equivalence $\widetilde Q\xrightarrow\simeq Q$ is given by the sum of cobordisms
\[
\begin{tikzpicture}[xlen=.25pt,ylen=-.5pt,baseline=-.5ex]
\draw[red,very thick] (-155.41,50.47) to[quadratic={(-155.41,52.7)}] (-140.6,52.7) -- (99.37,52.7) to[quadratic={(119.4,52.7)}] (126.4,48.63) to[quadratic={(128.26,47.57)}] (128.26,46.78);
\draw[red,very thick,dotted] (128.26,46.78) to[quadratic={(128.26,44.57)}] (113.5,44.57) -- (-46.54,44.57) to[quadratic={(-66.54,44.57)}] (-59.5,40.5) to[quadratic={(-52.46,36.43)}] (-32.46,36.43) -- (127.5,36.43) to[quadratic={(127.9,36.43)}] (128.2,36.43);
\draw[red,very thick] (128.2,36.43) to[quadratic={(147.7,36.36)}] (154.6,32.37) to[quadratic={(156.41,31.31)}] (156.41,30.53);
\draw[red,very thick,dotted] (156.41,30.53) to[quadratic={(156.41,28.3)}] (141.6,28.3) -- (-97.66,28.3) -- (-98.37,28.3) to[quadratic={(-118.4,28.3)}] (-125.4,32.37) to[quadratic={(-132.5,36.43)}] (-112.5,36.43) to[quadratic={(-92.46,36.43)}] (-99.5,40.5) to[quadratic={(-106.5,44.57)}] (-126.5,44.57) to[quadratic={(-146.5,44.57)}] (-153.6,48.63) to[quadratic={(-155.41,49.69)}] (-155.41,50.47);
\draw[black] (-17.66,-41.34) -- (-17.66,-27.3);
\draw[black,thick,dotted]  (-17.66,-27.3) -- (-17.66,-11) .. controls +(0,30)and+(0,30) .. (18.66,-8) -- (18.66,-27.3);
\draw[black] (18.66,-27.3) -- (18.66,-37.66);
\draw[black] (62.34,-41.34) -- (62.34,-27.3);
\draw[black,thick,dotted] (62.34,-27.3) -- (62.34,-11) .. controls+(0,30)and+(0,30) .. (98.66,-8) -- (98.66,-27.3);
\draw[black] (98.66,-27.3) -- (98.66,-37.66);
\draw[black] (-155.4,50.48) -- (-155.4,-29.52);
\draw[black,thick,dotted] (-127.2,34.21) -- (-127.2,-35.43);
\draw[black] (-127.2,-35.43) -- (-127.2,-45.79);
\draw[black,thick,dotted] (-97.66,38.66) -- (-97.66,-27.3);
\draw[black] (-97.66,-27.3) -- (-97.66,-41.34);
\draw[black,thick,dotted] (-61.34,42.34) -- (-61.34,-27.3);
\draw[black] (-61.34,-27.3) -- (-61.34,-37.66);
\draw[black] (128.2,46.79) -- (128.2,6.792) -- (128.2,-33.21);
\draw[black] (156.4,30.52) -- (156.4,-9.478) -- (156.4,-49.48);
\draw[red,very thick] (-153.6,-31.37) to[quadratic={(-160.6,-27.3)}] (-140.6,-27.3) -- (99.37,-27.3) to[quadratic={(119.4,-27.3)}] (126.4,-31.37) to[quadratic={(133.5,-35.43)}] (113.5,-35.43) to[quadratic={(93.46,-35.43)}] (100.5,-39.5) to[quadratic={(107.5,-43.57)}] (127.5,-43.57) to[quadratic={(147.5,-43.57)}] (154.6,-47.63) to[quadratic={(161.6,-51.7)}] (141.6,-51.7) -- (-98.37,-51.7) to[quadratic={(-118.4,-51.7)}] (-125.4,-47.63) to[quadratic={(-132.5,-43.57)}] (-112.5,-43.57) to[quadratic={(-92.46,-43.57)}] (-99.5,-39.5) to[quadratic={(-106.5,-35.43)}] (-126.5,-35.43) to[quadratic={(-146.5,-35.43)}] (-153.6,-31.37);
\draw[red,very thick] (-59.5,-39.5) to[quadratic={(-66.54,-35.43)}] (-46.54,-35.43) to[quadratic={(-26.54,-35.43)}] (-19.5,-39.5) to[quadratic={(-12.46,-43.57)}] (-32.46,-43.57) to[quadratic={(-52.46,-43.57)}] (-59.5,-39.5);
\draw[red,very thick] (20.5,-39.5) to[quadratic={(13.46,-35.43)}] (33.46,-35.43) to[quadratic={(53.46,-35.43)}] (60.5,-39.5) to[quadratic={(67.54,-43.57)}] (47.54,-43.57) to[quadratic={(27.54,-43.57)}] (20.5,-39.5);
\end{tikzpicture}
\:-\:
\begin{tikzpicture}[xlen=.25pt,ylen=-.5pt,baseline=-.5ex]
\draw[red,very thick] (-155.41,50.47) to[quadratic={(-155.41,52.7)}] (-140.6,52.7) -- (99.37,52.7) to[quadratic={(119.4,52.7)}] (126.4,48.63) to[quadratic={(128.26,47.57)}] (128.26,46.78);
\draw[red,very thick,dotted] (128.26,46.78) to[quadratic={(128.26,44.57)}] (113.5,44.57) -- (-46.54,44.57) to[quadratic={(-66.54,44.57)}] (-59.5,40.5) to[quadratic={(-52.46,36.43)}] (-32.46,36.43) -- (127.5,36.43) to[quadratic={(127.9,36.43)}] (128.2,36.43);
\draw[red,very thick] (128.2,36.43) to[quadratic={(147.7,36.36)}] (154.6,32.37) to[quadratic={(156.41,31.31)}] (156.41,30.53);
\draw[red,very thick,dotted] (156.41,30.53) to[quadratic={(156.41,28.3)}] (141.6,28.3) -- (-97.66,28.3) -- (-98.37,28.3) to[quadratic={(-118.4,28.3)}] (-125.4,32.37) to[quadratic={(-132.5,36.43)}] (-112.5,36.43) to[quadratic={(-92.46,36.43)}] (-99.5,40.5) to[quadratic={(-106.5,44.57)}] (-126.5,44.57) to[quadratic={(-146.5,44.57)}] (-153.6,48.63) to[quadratic={(-155.41,49.69)}] (-155.41,50.47);
\draw[black] (-17.66,-41.34) -- (-17.66,-27.3);
\draw[black,thick,dotted] (-17.66,-27.3) .. controls+(0,60)and+(0,25) .. ++(75,29.69) .. controls+(0,-30)and+(0,20) .. (98.66,-27.3);
\draw[black] (98.66,-27.3) -- (98.66,-37.66);
\draw[black] (-155.4,50.48) -- (-155.4,-29.52);
\draw[black,thick,dotted] (-127.2,34.21) -- (-127.2,-35.43);
\draw[black] (-127.2,-35.43) -- (-127.2,-45.79);
\draw[black,thick,dotted] (-97.66,38.66) -- (-97.66,-27.3);
\draw[black] (-97.66,-27.3) -- (-97.66,-41.34);
\draw[black,thick,dotted] (-61.34,42.34) -- (-61.34,-27.3);
\draw[black] (-61.34,-27.3) -- (-61.34,-37.66);
\draw[black] (128.2,46.79) -- (128.2,26.79) -- (128.2,6.792) -- (128.2,-33.21);
\draw[black] (156.4,30.52) -- (156.4,10.52) -- (156.4,-9.478) -- (156.4,-49.48);
\draw[black] (18.66,-37.66) .. controls (18.66,-33.46)and(19.75,-30.02) .. (21.57,-27.3);
\draw[black,thick,dotted] (21.57,-27.3) .. controls (28.81,-16.48)and(47.76,-17.08) .. (57.05,-27.3);
\draw[black] (57.05,-27.3) .. controls (60.28,-30.85)and(62.34,-35.55) .. (62.34,-41.34);
\draw[black,thick,dotted] (80,5) .. controls+(0,12)and+(0,12) .. ++(27,3.68) .. controls+(0,-12)and+(0,-12) .. cycle;
\draw[red,very thick] (-153.6,-31.37) to[quadratic={(-160.6,-27.3)}] (-140.6,-27.3) -- (99.37,-27.3) to[quadratic={(119.4,-27.3)}] (126.4,-31.37) to[quadratic={(133.5,-35.43)}] (113.5,-35.43) to[quadratic={(93.46,-35.43)}] (100.5,-39.5) to[quadratic={(107.5,-43.57)}] (127.5,-43.57) to[quadratic={(147.5,-43.57)}] (154.6,-47.63) to[quadratic={(161.6,-51.7)}] (141.6,-51.7) -- (-98.37,-51.7) to[quadratic={(-118.4,-51.7)}] (-125.4,-47.63) to[quadratic={(-132.5,-43.57)}] (-112.5,-43.57) to[quadratic={(-92.46,-43.57)}] (-99.5,-39.5) to[quadratic={(-106.5,-35.43)}] (-126.5,-35.43) to[quadratic={(-146.5,-35.43)}] (-153.6,-31.37);
\draw[red,very thick] (-59.5,-39.5) to[quadratic={(-66.54,-35.43)}] (-46.54,-35.43) to[quadratic={(-26.54,-35.43)}] (-19.5,-39.5) to[quadratic={(-12.46,-43.57)}] (-32.46,-43.57) to[quadratic={(-52.46,-43.57)}] (-59.5,-39.5);
\draw[red,very thick] (20.5,-39.5) to[quadratic={(13.46,-35.43)}] (33.46,-35.43) to[quadratic={(53.46,-35.43)}] (60.5,-39.5) to[quadratic={(67.54,-43.57)}] (47.54,-43.57) to[quadratic={(27.54,-43.57)}] (20.5,-39.5);
\end{tikzpicture}
\::\:\diagTwistCruxHllO\to\diagTwistCruxHOOO
\]
in degree $-r-(-1)^r-1$ and zero in the other degrees.
It then turns out that the composition $\widetilde Q\xrightarrow\simeq Q\xrightarrow{\widetilde\pi}\dblBrac{G(r)}_{\mathsf{crx}}$ factors through $\widetilde\mu\Delta:S^1\to S^1$, which is zero by \cref{prop:ROneNeg-absex}.
This implies that $\widetilde\pi$ is null-homotopic.
Consequently, $\pi$ is also null-homotopic, and this completes the proof.
\end{proof}

\begin{proof}[Proof of \cref{prop:kh-twist}]
We prove the result by induction on $r$.
Since $D(0)$ and $D(1)$ are diagrams for the unknot and the trefoil respectively, the chain homotopy equivalence~\eqref{eq:kh-twist-isom} is obvious in the cases $r=0$ and $r=1$.
On the other hand, if $r\ge 2$, by virtue of \cref{lem:Gr-split}, the sequence~\eqref{eq:Gr-triangle} homotopically splits; in other words, there is a chain homotopy equivalence
\[
\dblBrac{\Nresol{G(r-1)}}
\simeq\dblBrac{\Presol{G(r-1)}}\oplus\dblBrac{G(r-1)}[1]
\quad.
\]
Since we have $\dblBrac{\Nresol{G(r-1)}}=\dblBrac{D(r)}$ and $\dblBrac{\Presol{G(r-1)}}\simeq\dblBrac{D(r-2)}$, we obtain
\[
\dblBrac{D(r)}
\simeq \dblBrac{D(r-2)}\oplus\dblBrac{G(r-1)}[1]
\quad,
\]
which completes the induction step.
\end{proof}

Finally, \cref{main:twistknot} directly follows from \cref{prop:kh-twist} and \cref{prop:kh-twistsing}.

\begin{remark}
Shumakovitch \cite{Shumakovitch2018} showed that Khovanov homology groups of a non-splitting alternating link $L$ is determined by its Jones polynomial and its signature in the cases where the coefficient ring is either the rational number field $\mathbb Q$ \cite{Lee2005}, the field of characteristic $2$ \cite{Shumakovitch2014}, or the ring of integers $\mathbb Z$.
Since twist knots are prime, \cref{prop:kh-twist} is also checked by his result for the functor $Z=Z_{0,0}$ with the above coefficients.
\end{remark}

\subsection{Homologically trivial crossing changes}
\label{sec:app:triv-genus1}

In this last section, we see some typical examples for which the genus-one morphism induces an isomorphism on homology groups.
More precisely, we will give some pairs of link diagrams $\Nresol{D}$ and $\Presol{D}$ which differ only by the sign of a common crossing $c_0$ such that the map
\begin{equation}
\label{eq:genus1-isom}
\widehat\Phi_\ast:H^\ast Z\dblBrac{\Nresol{D}}\to H^\ast Z\dblBrac{\Presol{D}}
\end{equation}
induced by the genus-one morphism at $c_0$ is an isomorphism for a functor $Z:\Cob(\varnothing,\varnothing)\to\mathbf{Mod}_k$.
Notice that, if we denote by $\Dresol{D}$ the singular link diagram obtained by replacing $c_0$ with a double point, then the above statement is equivalent to saying that the homology group $H^\ast Z\dblBrac{\Dresol{D}}_{\mathsf{crx}}$ vanishes by virtue of \cref{prop:skein-mcone} and \cref{cor:crux-homology-longex}.

The first one is regarding ``homologically trivial'' links.

\begin{proposition}
\label{prop:hunknot-genus1}
Let $\Nresol{D}$ and $\Presol{D}$ be link diagrams which differ only by the sign of a common crossing $c_0$, where $\Nresol{D}$ and $\Presol{D}$ are negative and positive respectively.
Suppose $Z:\Cob(\varnothing,\varnothing)\to\mathbf{Mod}_k$ is a functor such that $H^iZ\dblBrac{\Nresol{D}}=H^iZ\dblBrac{\Presol{D}}=0$ for all integer $i$ except a single degree $i=i_0\in\mathbb Z$.
Then, the morphism \eqref{eq:genus1-isom} is an isomorphism.
\end{proposition}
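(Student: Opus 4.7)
The plan is to reduce the claim to showing that $H^i Z\dblBrac{\Dresol{D}} = 0$ in every degree, where $\Dresol{D}$ denotes the singular link diagram obtained from $\Nresol{D}$ by replacing the crossing $c_0$ with a double point, and then to exploit the crux complex to verify this vanishing.

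First, \cref{prop:skein-mcone} yields a chain homotopy equivalence $\dblBrac{\Dresol{D}} \simeq \operatorname{Cone}(\widehat\Phi)$. Applying the $k$-linear functor $Z$ and passing to the long exact sequence of the mapping cone, one obtains
\[
\cdots \to H^i Z\dblBrac{\Nresol{D}} \xrightarrow{\widehat\Phi_\ast} H^i Z\dblBrac{\Presol{D}} \to H^i Z\dblBrac{\Dresol{D}} \to H^{i+1} Z\dblBrac{\Nresol{D}} \to \cdots,
\]
so $\widehat\Phi_\ast$ is an isomorphism in every degree if and only if $H^i Z\dblBrac{\Dresol{D}} = 0$ for every $i$. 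By hypothesis the two flanking groups vanish outside the single degree $i_0$, which by exactness immediately forces $H^i Z\dblBrac{\Dresol{D}} = 0$ for every $i \notin \{i_0-1, i_0\}$. It remains to handle these two exceptional degrees.

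For this I would invoke the crux long exact sequence of \cref{cor:crux-homology-longex}. Writing $A_j \coloneqq H^j Z\dblBrac{\Dresol{D}}_{\mathsf{crx}}$ and $B_j \coloneqq H^j Z\dblBrac{\Dresol{D}}$, this reads
\[
\cdots \to A_{i-2} \xrightarrow{\Xi_\ast} A_{i+2} \to B_i \to A_{i-1} \xrightarrow{\Xi_\ast} A_{i+3} \to \cdots.
\]
The vanishing $B_i = 0$ for $i \notin \{i_0-1, i_0\}$ already established translates, via exactness, into the statement that $\Xi_\ast \colon A_j \to A_{j+4}$ is an isomorphism for every $j$ outside the three-element ``bad'' set $\{i_0-3, i_0-2, i_0-1\}$.

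The main step is then a $4$-periodicity argument. The crux complex is a bounded multi-fold complex (its components vanish outside a finite set of $G$-crux maps), so after applying $Z$ we have $A_j = 0$ whenever $|j|$ is sufficiently large. The three bad indices occupy three distinct residue classes modulo $4$, leaving one residue class in which every $\Xi_\ast$ is an isomorphism; iterating from the vanishing tail immediately gives $A_j = 0$ throughout that class. In each of the other three classes the unique bad index $j_0 \in \{i_0-3, i_0-2, i_0-1\}$ is the sole obstruction, but since the source $j_0 - 4k$ of $\Xi_\ast \colon A_{j_0-4k} \to A_{j_0-4(k-1)}$ never lies in the bad set for $k \ge 1$, iterating the resulting isomorphisms $A_{j_0-4k} \cong A_{j_0}$ against the bounded tail below forces $A_{j_0} = 0$; the analogous iteration upwards then kills every $A_{j_0+4k}$ with $k \ge 1$. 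Once $A_j = 0$ for all $j$, the crux long exact sequence forces $B_j = 0$ for all $j$, and feeding this back into the first long exact sequence produces the desired isomorphism. The principal (though elementary) technical point is the residue-class bookkeeping that pivots around each bad index and uses boundedness to close the iteration.
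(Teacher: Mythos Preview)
Your proof is correct and follows essentially the same approach as the paper's: both reduce to the vanishing of the crux homology $H^\ast Z\dblBrac{\Dresol{D}}_{\mathsf{crx}}$, first use the mapping-cone long exact sequence to kill $H^iZ\dblBrac{\Dresol{D}}$ outside $\{i_0-1,i_0\}$, then use the crux long exact sequence to conclude that $\Xi_\ast$ is an isomorphism outside three consecutive degrees, and finally invoke boundedness to propagate vanishing through all residue classes mod $4$. The paper compresses this last step into a single sentence, whereas you spell out the residue-class bookkeeping; one small point worth tightening is that your ``analogous iteration upwards'' should start from $j_0+4$ (iterating toward the bounded upper tail) rather than from $j_0$, since $\Xi_\ast$ at $j_0$ itself is in the bad set.
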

\begin{proof}
Let $\Dresol{D}$ be as above.
Then, by the argument above, it suffices to show that $H^i Z\dblBrac{\Dresol{D}}_{\mathsf{crx}}=0$ for every $i\in\mathbb Z$.
Note that, in view of \cref{prop:skein-mcone}, we have an exact sequence
\[
H^iZ\dblBrac{\Presol{D}} \to H^iZ\dblBrac{\Dresol{D}}\to H^{i+1}Z\dblBrac{\Nresol{D}}
\quad.
\]
Thus, the assumption on $H^\ast Z\dblBrac{\Nresol{D}}$ and $H^\ast Z\dblBrac{\Presol{D}}$ implies $H^iZ\dblBrac{\Dresol{D}}=0$ for $i\le i_0-2$ and $i\ge i_0+1$.
On the other hand, by \cref{cor:crux-homology-longex}, we also have an exact sequence
\[
H^{i-1}Z\dblBrac{\Dresol{D}}
\to H^{i-2}Z\dblBrac{\Dresol{D}}_{\mathsf{crx}}
\xrightarrow{\Xi_\ast} H^{i+2}Z\dblBrac{\Dresol{D}}_{\mathsf{crx}}
\to H^iZ\dblBrac{\Dresol{D}}
\quad.
\]
It follows that the morphism $\Xi_\ast$ above is isomorphism for $i\le i_0-2$ and $i\ge i_0+2$.
Since the homology group $H^\ast Z\dblBrac{\Dresol{D}}_{\mathsf{crx}}$ is bounded, this implies that $H^i Z\dblBrac{\Dresol{D}}_{\mathsf{crx}}=0$ for every integer $i\in\mathbb Z$.
Therefore, The result follows.
\end{proof}

\begin{example}
Lee \cite{Lee2005} showed that her homology group is concentrated in degree $0$ for knots.
Hence, by \cref{prop:hunknot-genus1}, the genus-one morphism on knot diagrams always induces isomorphisms on Lee homology.
\end{example}

We next discuss crossing change at a \emph{reducible crossing}; i.e.~crossings $c_0$ in \cref{fig:reducible-cross}.
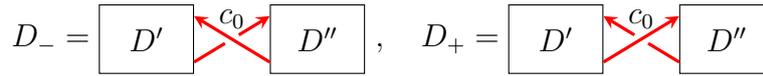
\begin{figure}[t]
\centering
\[
\Nresol{D}=
\begin{tikzpicture}[baseline=-.5ex]
\node[circle,inner sep=2] (C) at (0,0){};
\node[draw,rectangle,minimum height=5ex,minimum width=3em] (L) at (-1.25,0) {$D^\prime$};
\node[draw,rectangle,minimum height=5ex,minimum width=3em] (R) at (1.25,0) {$D^{\prime\prime}$};
\node[circle,inner sep=3] (LU) at (L.north east) {};
\node[circle,inner sep=3] (LD) at (L.south east) {};
\node[circle,inner sep=3] (RU) at (R.north west) {};
\node[circle,inner sep=3] (RD) at (R.south west) {};
\draw[red,very thick,-stealth] (LD.north) -- (C) (C) -- (RU.south);
\draw[red,very thick,-stealth] (RD.north) -- (LU.south);
\node[above] at (C) {$c_0$};
\end{tikzpicture}
\ ,\quad
\Presol{D}=
\begin{tikzpicture}[baseline=-.5ex]
\node[circle,inner sep=2] (C) at (0,0){};
\node[draw,rectangle,minimum height=5ex,minimum width=3em] (L) at (-1.25,0) {$D^\prime$};
\node[draw,rectangle,minimum height=5ex,minimum width=3em] (R) at (1.25,0) {$D^{\prime\prime}$};
\node[circle,inner sep=3] (LU) at (L.north east) {};
\node[circle,inner sep=3] (LD) at (L.south east) {};
\node[circle,inner sep=3] (RU) at (R.north west) {};
\node[circle,inner sep=3] (RD) at (R.south west) {};
\draw[red,very thick,-stealth] (LD.north) -- (RU.south);
\draw[red,very thick,-stealth] (RD.north) -- (C) (C) -- (LU.south);
\node[above] at (C) {$c_0$};
\end{tikzpicture}
\]
\caption{Reducible crossings}
\label{fig:reducible-cross}
\end{figure}

\begin{theorem}
\label{theo:reducible}
Let $\Nresol{D}$ and $\Presol{D}$ be diagrams as in \cref{fig:reducible-cross}.
Then, the genus-one morphism
\[
\widehat\Phi_\ast:\dblBrac{\Nresol{D}}\to \dblBrac{\Presol{D}}
\]
at the crossing $c_0$ is a chain homotopy equivalence.
\end{theorem}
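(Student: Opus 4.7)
The strategy is to apply \cref{theo:Kh-crux} to reduce the claim to a computation on the crux complex. By \cref{prop:skein-mcone}, the morphism $\widehat\Phi$ is a chain homotopy equivalence if and only if its cone $\dblBrac{\Dresol{D}}$ is contractible, where $\Dresol{D}$ is the singular diagram obtained from $\Nresol{D}$ by replacing $c_0$ with a double point. By \cref{theo:Kh-crux}, $\dblBrac{\Dresol{D}}\simeq\operatorname{Cone}(\Xi)$ for the morphism $\Xi\colon\dblBrac{\Dresol{D}}_{\mathsf{crx}}[2]\to\dblBrac{\Dresol{D}}_{\mathsf{crx}}[-2]$; thus it suffices to show that $\Xi$ is itself a chain homotopy equivalence.

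The first step is a geometric analysis of the crux complex in the reducible configuration. Since $D'$ has only the two external endpoints LU, LD and $D''$ only RU, RD, any smoothing of $D'$ as a tangle places LU and LD on the same arc-component of the 1-manifold $|D'_{\alpha'}|^\chi$, and analogously for $D''$. Combined with the two resolution edges at the double point (which bridge $D'$ and $D''$), a direct trace shows that the two edges at the resolved double point always belong to the same connected component of $|\Dresol{D}_\alpha|^\chi$; consequently every $\alpha\colon c(\Dresol{D})\to\mathbb{Z}$ in the effective range is a $\Dresol{D}$-crux map. Moreover, the $1$-manifold $|\Dresol{D}_\alpha|^\chi_{\mathsf{crx}}$ splits naturally as the disjoint union of a distinguished \emph{double-point circle} -- built from the four arcs at the reducible crossing -- together with the internal circles of $D'_{\alpha'}$ and $D''_{\alpha''}$.

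Next I would formalise this decomposition as an identification, up to a canonical cobordism, of $\operatorname{Crx}(\Dresol{D}^\chi)^\bullet$ with a complex built from the smoothings $\operatorname{Sm}(\widehat{D'}^{\chi'})^\bullet$ and $\operatorname{Sm}(\widehat{D''}^{\chi''})^\bullet$, where $\widehat{D'}$ and $\widehat{D''}$ are the closures of $D'$ and $D''$ obtained by joining their respective boundary endpoints with a local arc. More precisely, $|\Dresol{D}_\alpha|^\chi_{\mathsf{crx}}\otimes S^1$ is diffeomorphic to $|\widehat{D'}_{\alpha'}|^\chi\otimes|\widehat{D''}_{\alpha''}|^\chi$ in a canonical way, and the identification carries the twisted operations $\widetilde\mu$ and $\widetilde\Delta$ used in the crux complex to the ordinary $\mu$, $\Delta$ acting on the $S^1$-factor supporting the double-point circle. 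Under this identification, the iterated contractions $\widetilde\Theta_r$ of \cref{def:Theta}, constructed via the chain contractions of \cref{prop:ROnePos-absex,prop:ROneNeg-absex,prop:FI-absex}, are localised on that $S^1$-factor, so $\Xi$ factors as $\mathrm{id}\otimes\psi$ for an endomorphism $\psi$ of $S^1$ in $\Cob(\varnothing,\varnothing)$.

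The hard part will be the explicit verification that $\psi$ is an automorphism. I expect this to reduce, via iterated application of the $4Tu$ and $S$ relations together with the defining formulas $\widetilde\mu=\mu-\mu\Delta\otimes\varepsilon$ and $\widetilde\Delta=\Delta-\mu\Delta\otimes\eta$, to the identification $\psi=\pm\mathrm{id}_{S^1}$ after cancellation of the $\mu\Delta$-terms. Careful sign tracking through the formula $\widetilde\Theta_r=(-1)^{(r+1)(r+2i)/2}\Theta_r$ and the five-step telescoping needed to traverse the crux sequence is the main combinatorial challenge; once this is done, the invertibility of $\psi$ in $\Cob(\varnothing,\varnothing)$ yields $\operatorname{Cone}(\Xi)\simeq 0$, and hence $\widehat\Phi$ is a chain homotopy equivalence.
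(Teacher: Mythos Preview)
Your geometric analysis in the first step is backwards, and this is the crux of the matter. You claim that in the reducible configuration, after the vertical smoothing at the double point the two resolution edges ``bridge $D'$ and $D''$'' and hence lie on the same component. But look again at \cref{fig:reducible-cross}: the four endpoints at $c_0$ are LD, LU on the $D'$ side and RD, RU on the $D''$ side. The vertical smoothing $\diagSmoothUp$ produces a left arc joining LD to LU and a right arc joining RD to RU. Since, as you yourself note, LU and LD are already connected through $D'$ (and RU, RD through $D''$), the left resolution arc closes up into a circle living entirely on the $D'$ side, and the right one into a circle entirely on the $D''$ side. The two resolution edges are therefore \emph{never} on the same connected component, for any $\alpha$ in the effective range.

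Consequently there are no $\Dresol{D}$-crux maps at all, so $\dblBrac{\Dresol{D}}_{\mathsf{crx}}=0$ by definition. Then \cref{theo:Kh-crux} gives $\dblBrac{\Dresol{D}}\simeq\operatorname{Cone}(0\to 0)=0$ immediately, and \cref{prop:skein-mcone} finishes the proof. The paper's argument is exactly this two-line observation. Your elaborate programme of identifying $\Xi$ with $\mathrm{id}\otimes\psi$ and computing $\psi$ is built on the false premise that the crux complex is nontrivial; none of that machinery is needed here.
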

\begin{proof}
We denote by $\Dresol{D}$ the singular link diagram obtained from $\Nresol{D}$ by replacing $c_0$ to a double point.
Then, it is easily seen that there is no $\Dresol{D}$-crux map, so $\dblBrac{\Dresol{D}}_{\mathsf{crx}}=0$ by definition.
Hence, by \cref{theo:Kh-crux}, $\dblBrac{\Dresol{D}}$ is null-homotopic.
In view of \cref{prop:skein-mcone}, this implies that $\widehat\Phi$ is a chain homotopy equivalence.
\end{proof}

\bibliographystyle{plain}
\bibliography{../../mybiblio}

\end{document}